\documentclass[11pt,a4paper]{amsart}
\usepackage{enumitem}
\usepackage{color}
\usepackage[numbers,sort&compress]{natbib}
\usepackage{pifont,latexsym,ifthen,amsthm,rotating,calc,textcase,booktabs}
\usepackage{amsfonts,amssymb,amsbsy,amsmath}
\usepackage{amsthm}
\numberwithin{equation}{section}
\usepackage[dvipsnames, svgnames, x11names]{xcolor}
\usepackage{graphicx}
\usepackage{tikz}
\usepackage{stmaryrd}
\usepackage{graphicx}
\usepackage[latin1]{inputenc}
\usepackage[T1]{fontenc}
\usepackage[english]{babel}
\usepackage{listings}
\usepackage{xcolor,mathrsfs,url}
\usepackage{ifthen}
\usepackage{fancyhdr}
\usepackage{verbatim}
\usepackage{todonotes}
\usepackage{float}
\usepackage{subfigure}

\usepackage{hyperref}
\usepackage{appendix}
\usepackage[percent]{overpic}

\renewcommand{\Im}{\mathrm{Im}\,}

\newtheorem{theorem}{Theorem}[section]

\newtheorem{remark}[theorem]{Remark}
\newtheorem{proposition}[theorem]{Proposition}
\newtheorem{definition}[theorem]{Definition}

\newtheorem{RHP}[theorem]{RH Problem}
\newtheorem{assumption}[theorem]{Assumption}
\usepackage {caption}

\hypersetup{hypertex=true,
            colorlinks=true,
            linkcolor=blue,
            anchorcolor=blue,
            citecolor=red}

\newcommand*{\dif}{\mathop{}\!\mathrm{d}}
\newcommand*{\E}{\boldsymbol{E}}
\newcommand*{\I}{$\mathrm{I}$}
\newcommand*{\II}{$\mathrm{II}$}
\newcommand*{\III}{$\mathrm{III}$}
\newcommand*{\IV}{$\mathrm{IV}$}
\allowdisplaybreaks[4]

\begin{document}
\title[Painlev\'{e}  XXXIV asymptotics for defocusing NLS equation]
{Painlev\'{e}  XXXIV  asymptotics for the defocusing nonlinear Schr\"odinger equation with a  finite-genus algebro-geometric  background }
\author[Engui Fan,\ Gaozhan Li,\ Yiling Yang \  and  \ Lun Zhang]{Engui Fan$^{1}$,\ Gaozhan Li$^2$,\ Yiling Yang$^{3}$ \  and  \ Lun Zhang$^{4}$   }
\footnotetext[1]{\ School of Mathematical Sciences  and Key Laboratory   for Nonlinear Science, Fudan   University, Shanghai 200433, P. R. China. E-mail: \texttt{faneg@fudan.edu.cn}}

\footnotetext[2]{\ Department of Mathematics,
Tsinghua University,
Beijing 100084, P. R. China.
E-mail: \texttt{gzli@mail.tsinghua.edu.cn}}
\footnotetext[3]{\ College of Mathematics and Statistics,
	Chongqing University,
	Chongqing 401331,
	P. R. China. E-mail: \texttt{ylyang19@fudan.edu.cn}}
\footnotetext[4]{\ School of Mathematical Sciences, Center
for Applied Mathematics and Shanghai
Key Laboratory for Contemporary
Applied Mathematics, Fudan   University, Shanghai 200433, P. R. China. E-mail: \texttt{lunzhang@fudan.edu.cn} }

\maketitle
\begin{abstract}
In this paper, we consider the Cauchy problem for the defocusing nonlinear Schr$\ddot{\text{o}}$dinger equation with a finite genus algebro-geometric  background. Long-time asymptotics of the solution are derived in four space-time regions. It comes out that the leading-order term in the expansion is, up to a constant, given by the background solution with a shift of the parameter. The subleading term, however, decays at different rates for different regions. We particularly highlight that in the two transition regions, they are of order $\mathcal{O}(t^{-1/3})$ and the coefficients involve an integral of the Painlev\'e XXXIV transcendent. We establish our results by applying a nonlinear steepest descent analysis to the associated Riemann-Hilbert problems. 
\\[4pt]
{\bf Keywords:}   
defocusing NLS equation,  finite-genus algebro-geometric  background,  Riemann-Hilbert problems,  Painlev\'{e}  XXXIV  transcendent, long-time asymptotics.\\[4pt]
{\bf MSC:}  35Q55; 35P25;   35Q15; 35C20; 35G25.

\end{abstract}

\setcounter{tocdepth}{2} 
\tableofcontents

\quad

\section{Introduction}
The one-dimensional, cubic, defocusing nonlinear Schr$\ddot{\text{o}}$dinger (NLS) equation
\begin{equation}\label{NLS}
iq_t+q_{xx}-2|q|^2q=0,\qquad x\in\mathbb{R},
\end{equation}
is one of the most fundamental integrable equations in mathematical physics. Since its connection with the theory of superconductivity \cite{G1955} found in the 1950s, the NLS equation \eqref{NLS} is widely viewed as an important model in describing a variety of physical phenomena, which include water waves \cite{Lake1977}, surface gravity waves \cite{OO2001}, nonlinear optics \cite{PRL2006,C1964}, plasmas \cite{PRL1970} and Bose-Einstein condensates \cite{PRL2005}.


Due to the integrable structure behind, the NLS equation \eqref{NLS} exhibits rich mathematical properties as well. It has recently been proved that equation \eqref{NLS} is locally and globally well-posed in  $  H^s(\mathbb{R}) $ \cite{killpnls} and  admits  a $  H^s(\mathbb{R})$ conserved energy \cite{Duke} for $s>-1/2$. Zakharov and Shabat derived a Lax pair for \eqref{NLS} and performed the inverse scattering transform for the initial value problem in a seminal work \cite{ZS1}; see also \cite{Abl74,ZS2}. Of particular interests are the long-time asymptotics for the NLS equation under different backgrounds. For Schwartz initial data, Zakharov and Manakov first established the long-time asymptotics   of the solution  for  the associated Cauchy problem by using the inverse scattering transform  method \cite{ZS3}; see also the work of Its  based on the stationary phase method \cite{its1}. A rigorous derivation of the leading and high-order long-time  asymptotics was given by Deift and Zhou  \cite{PX2,PX21} with the aid of powerful nonlinear steepest descent method introduced in \cite{sdmRHp}. By reducing the regularity of initial data to the weighted Sobolev space  $H^{1,1} (\mathbb{R})$,  Deift and Zhou  further showed that the same leading asymptotics holds but with an estimate $\mathcal{O}(t^{-1/2-\kappa})$ of the error term for $ 0<\kappa<1/4$, in \cite{PX3}. An improved and sharp error estimate was later reported by Dieng and McLaughlin  via $\bar{\partial}$ methods \cite{DM}; see also \cite{MK}. Quite recently, Lee and Lenells \cite{LenellsRobin} considered the NLS equation on the half line with a Robin boundary condition at $x=0$ and with initial data in the weighted Sobolev space $H^{1,1} (\mathbb{R}^+)$. For finite density initial data, Vartanian computed both the leading and first correction terms of the long-time asymptotic expansion in a series of papers \cite{VAH1,VAH2,VAH3}. Cuccagna and Jenkins \cite{CJ} confirmed the soliton resolution conjecture by establishing the leading order approximation to the solution of \eqref{NLS} in the solitonic region $|x/(2t)|<1$. The long-time asymptotic results in the solitonless region $|x/(2t)|>1$ and two transition regions $|x/(2t)| \approx 1$ were later obtained by Wang and Fan in \cite{WF} and \cite{WZY}, respectively. We also refer to \cite{lenells,Jenkins,wang} and references therein for the studies of the NLS equation with step-like  initial data.

In this paper, we consider Cauchy problem for the  NLS equation \eqref{NLS} with a finite-genus algebro-geometric  background, that is,
\begin{align}
q(x,0)=q_0(x)\sim q^{(AG)}(x,t)\big|_{t=0},\qquad x\to\pm\infty,\label{boundray q0}
\end{align}
where $q^{(AG)}(x,t)=q^{(AG)}(x,t; \E, \hat{\E}, \boldsymbol{\phi})$ is an algebro-geometric solution of \eqref{NLS} depending on some real vectors $\E=(E_0,\ldots,E_{n}), \hat{\E}=(\hat{E}_0,\ldots,\hat{E}_{n}), \boldsymbol{\phi}=(\phi_1,\ldots,\phi_n)$ with $n\in \mathbb{N}_0:=\{0,1,2,\ldots\}$. Following \cite{AGbook,AGbook2}, the construction of $q^{(AG)}$ is standard, which makes use of the Baker-Akhiezer function on the Riemann surface of genus $n$ defined by the equation
\begin{equation}\label{def:w}
	w(z)^2:=\prod_{j=0}^n(z-E_j)(z-\hat{E}_j)
\end{equation}
with
\begin{align}\label{eq:order}
	E_0<\hat{E}_0<E_1<\hat{E}_1<\cdots<E_n<\hat{E}_n.
\end{align}
We refer to Section \ref{algeso} below for the precise definition of $q^{(AG)}$. For completeness of the Cauchy problem, we supplement the following boundary condition:
\begin{align}
	\int_{\mathbb{R}}|q(x,t)-q^{(AG)}(x,t)|\dif x<\infty, \quad t\geq 0, \label{bj}
\end{align}
which is a natural extension of \eqref{boundray q0} to $t > 0$; see also \cite{Monvel2} for similar conditions for the focusing NLS with step-like oscillating background.

It is the aim of the present work to establish long-time asymptotic behavior of the Cauchy problem \eqref{NLS}, \eqref{boundray q0} and \eqref{bj} for the NLS equation by implementing the nonlinear steepest descent analysis of the associated Riemann-Hilbert (RH) problem. As far as we know, there is currently no literature available to study such problem for the NLS equation \eqref{NLS}; see however \cite{Alice} for the invetigation in the context of KdV equation. It comes out that $q(x,t)$ exhibits qualitatively different behaviors in four different regions of the $(x,t)$-plane. We particularly highlight the Painlev\'e XXXIV asymptotics of the solution in two transition regions. Our results are stated in the next section.


\section{Statement of results}

Let $E_j, \hat{E}_j$, $j=0,1,\ldots,n$, be fixed $2n+2$ real numbers ordering according to \eqref{eq:order}, we now set
\begin{equation}\label{def xij}
\xi_j=-\frac{4\prod_{k=0}^{n+1}(E_j-z^g_k)}{\prod_{k=0}^n(E_j-z^f_k)},
\qquad\hat{\xi}_j=-\frac{4\prod_{k=0}^{n+1}(\hat{E}_j-z^g_k)}{\prod_{k=0}^n(\hat{E}_j-z^f_k)}, 
\end{equation}
where $z^f_k$ and $z^g_k$ are related to $E_j$ and $\hat{E}_j$ in the following way. Let
\begin{align}
f(z):=\int_{\hat{E}_0}^z\frac{\prod_{k=0}^n(s-z_k^f)}{w(s)}\dif s,
\qquad g(z):=\int_{\hat{E}_0}^z\frac{4\prod_{k=0}^{n+1}(s-z_k^g)}{w(s)}\dif s,\label{def fg}
\end{align}
be two hyperelliptic integrals, where $w$ in \eqref{def:w} is analytic in $\mathbb{C}\setminus (\cup_{j=0}^n[E_j,\hat{E}_j])$ with the branch chosen such that $w(z)\sim z^{n+1}$ as $z\to\infty$. The constants $z^f_k$ and $z^g_k$ are then uniquely determined by the conditions
\begin{equation}\label{def:fg}
\left\{
  \begin{array}{ll}
    f(z)=z+\mathcal{O}(1), & \hbox{$z\to\infty$,} \\
    \int^{\hat{E}_j}_{E_j}\dif f=0, & \hbox{$j=1,\ldots,n$,}
  \end{array}
\right.
\quad
\textrm{and}
\quad
\left\{
  \begin{array}{ll}
    g(z)=2z^2+\mathcal{O}(1), & \hbox{$z\to\infty$,} \\
    \int^{\hat{E}_j}_{E_j}\dif g=0, & \hbox{$j=1,\ldots,n$,}
  \end{array}
\right.
\end{equation}
respectively. Moreover, $\left( \{z_k^f\}_{k=0}^n\cup\{z_k^g\}_{k=0}^{n+1}\right) \cap \{E_j,\hat{E}_j\}_{j=0}^n=\emptyset$.

The points $\xi_j$ and $\hat \xi_j$ are actually critical points of a phase function $\theta$ defined in \eqref{theta}, with which we divide the $(x, t)$-half plane into several different space-time regions.
\begin{definition}\label{def:regions}
For any positive constant $C$, we define
\begin{itemize}
	\item  transition region  \I: $\cup_{j=0}^n\{\xi: |\xi-\hat{\xi}_j|t^{2/3}\leq C\}$,
	\item  transition region  \II: $\cup_{j=0}^n\{\xi: |\xi-\xi_j|t^{2/3}\leq C\}$,
    \item  Zakharov-Manakov region \III: $\xi\in(-\infty,\hat{\xi}_n)\cup_{j=1}^n(\xi_j,\hat{\xi}_{j-1})\cup(\xi_0,+\infty)$,
	\item  fast decaying region \IV: $\xi \in \cup_{j=0}^n(\hat{\xi}_j,\xi_j)$,
\end{itemize}
where $\xi:=x/t$.
\end{definition}
An illustration of the above four regions is given in Figure \ref{Fig x/t}. We will show long time asymptotics of $q(x,t)$ in these regions under the following assumptions. 

\begin{assumption}\label{asum1}
Throughout this paper, we make the following assumptions. 
\begin{itemize}
	\item [$\mathrm{(a)}$] There exists a positive constant $C$ such that $q_0(x)=q^{(AG)}(x,0)$ for $|x|>C$, i.e., the initial data $q_0$ is identically equal to the background outside a compact set. 
	
 \item [$\mathrm{(b)}$] Let $r_i$, $i=1,\ldots,4$, defined in \eqref{def r} be the reflection coefficients associated with the initial data. It is assumed that $|r_i(p)|=1$ for $p\in\{E_j, \hat{E}_j\}_{j=0}^n$ and  $r_i$ is bounded on $\cup_{j=0}^n(E_j,\hat{E}_j)$.
\end{itemize}
\end{assumption}
Item (a) of this assumption is to avoid some technical work during our asymptotic analysis, which actually only affects the error estimates. The assumption that $|r_i(p)|=1$ in item (b) is a generic one similar to \cite{DT1979}, and the boundness of $r_i$ on $\cup_{j=0}^n(E_j,\hat{E}_j)$ is to avoid the spectral singularity. We refer to Section \ref{sec:Jost} below for more explanations of Assumption  \ref{asum1}. 


\begin{figure}

\tikzset{every picture/.style={line width=0.75pt}} 

\begin{tikzpicture}[x=0.75pt,y=0.75pt,yscale=-0.65,xscale=0.65]

\draw [dash pattern={on 4.5pt off 4.5pt}] (50,397.8) -- (616.77,397.8)(332.06,79) -- (332.06,402.8)
(609.77,392.8) -- (616.77,397.8) -- (609.77,402.8) (327.06,86) -- (332.06,79) -- (337.06,86)  ;
\draw    (609.97,302.37) -- (332.06,397.8) ;
\draw      (614.75,102.75) -- (332.06,397.8) ;
\draw     (396.77,81.57) -- (332.06,397.8) ;
\draw      (222.56,80.23) -- (332.06,397.8) ;
\draw     (49.14,141.73) -- (332.06,397.8) ;
\draw     (50.77,312.37) -- (332.06,397.8) ;
\draw [draw opacity=0][fill={rgb, 255:red, 184; green, 233; blue, 134 }  ,fill opacity=0.3 ]   (332.06,397.8) -- (610.77,321.57) -- (610.77,321.57) -- (610.37,397.97) -- cycle ;
\draw [draw opacity=0][fill={rgb, 255:red, 248; green, 231; blue, 28 }  ,fill opacity=0.3 ]   (332.06,397.8) -- (610.89,286.06) -- (610.77,321.57) -- cycle ;
\draw [draw opacity=0][fill={rgb, 255:red, 74; green, 144; blue, 226 }  ,fill opacity=0.3 ]   (332.06,397.8) -- (616.03,125.03) -- (611.22,274.4) -- (610.89,286.06) -- cycle ;
\draw [draw opacity=0][fill={rgb, 255:red, 248; green, 231; blue, 28 }  ,fill opacity=0.3 ]   (609.53,77.03) -- (617.53,77.53) -- (616.03,125.03) -- (332.06,397.8) ;
\draw [draw opacity=0][fill={rgb, 255:red, 184; green, 233; blue, 134 }  ,fill opacity=0.3 ]   (332.06,397.8) -- (413.22,79.74) -- (609.53,77.03) -- cycle ;
\draw [draw opacity=0][fill={rgb, 255:red, 248; green, 231; blue, 28 }  ,fill opacity=0.3 ]   (332.06,397.8) -- (383.56,79.57) -- (413.22,79.74) -- cycle ;
\draw [draw opacity=0][fill={rgb, 255:red, 74; green, 144; blue, 226 }  ,fill opacity=0.3 ]   (332.06,397.8) -- (238.56,80.07) -- (383.56,79.57) -- cycle ;
\draw [draw opacity=0][fill={rgb, 255:red, 248; green, 231; blue, 28 }  ,fill opacity=0.3 ]   (238.56,80.07) -- (332.06,397.8) -- (207.56,79.9) -- cycle ;
\draw [draw opacity=0][fill={rgb, 255:red, 184; green, 233; blue, 134 }  ,fill opacity=0.3 ]   (207.56,79.9) -- (332.06,397.8) -- (48.81,130.4) -- (48.14,79.4) ;
\draw [draw opacity=0][fill={rgb, 255:red, 248; green, 231; blue, 28 }  ,fill opacity=0.3 ]   (48.81,130.4) -- (332.06,397.8) -- (49.48,160.06) ;
\draw [draw opacity=0][fill={rgb, 255:red, 74; green, 144; blue, 226 }  ,fill opacity=0.3 ]   (332.06,397.8) -- (51.14,296.4) -- (49.48,160.06) ;
\draw [draw opacity=0][fill={rgb, 255:red, 248; green, 231; blue, 28 }  ,fill opacity=0.3 ]   (51.14,296.4) -- (332.06,397.8) -- (50.81,328.73) ;
\draw [draw opacity=0][fill={rgb, 255:red, 184; green, 233; blue, 134 }  ,fill opacity=0.3 ]   (50.81,328.73) -- (332.06,397.8) -- (50.81,398.06) ;

\draw (614.4,293.73) node [anchor=north west][inner sep=0.75pt]  [font=\footnotesize,xscale=0.8,yscale=0.8]  {$\xi _{0}$};
\draw (620.97,88.38) node [anchor=north west][inner sep=0.75pt]  [font=\footnotesize,xscale=0.8,yscale=0.8]  {$\hat{\xi }_{0}$};
\draw (391.55,58.3) node [anchor=north west][inner sep=0.75pt]  [font=\footnotesize,xscale=0.8,yscale=0.8]  {$\xi _{1}$};
\draw (213.88,54.72) node [anchor=north west][inner sep=0.75pt]  [font=\footnotesize,xscale=0.8,yscale=0.8]  {$\hat{\xi }_{1}$};
\draw (26.55,129.63) node [anchor=north west][inner sep=0.75pt]  [font=\footnotesize,xscale=0.8,yscale=0.8]  {$\xi _{2}$};
\draw (24.88,300.38) node [anchor=north west][inner sep=0.75pt]  [font=\footnotesize,xscale=0.8,yscale=0.8]  {$\hat{\xi }_{2}$};
\draw (329,399.3) node [anchor=north west][inner sep=0.75pt]  [xscale=0.8,yscale=0.8]  {$0$};
\draw (620.96,395.63) node [anchor=north west][inner sep=0.75pt]  [xscale=0.8,yscale=0.8]  {$x$};
\draw (328.21,60.46) node [anchor=north west][inner sep=0.75pt]  [xscale=0.8,yscale=0.8]  {$t$};
\draw (74,364.61) node [anchor=north west][inner sep=0.75pt]  [font=\small,xscale=0.8,yscale=0.8]  {\III};
\draw (192.4,196.27) node [anchor=north west][inner sep=0.75pt]  [font=\small,xscale=0.8,yscale=0.8]  {\III};
\draw (442,180.67) node [anchor=north west][inner sep=0.75pt]  [font=\small,xscale=0.8,yscale=0.8]  {\III};
\draw (543.2,368.67) node [anchor=north west][inner sep=0.75pt]  [font=\small,xscale=0.8,yscale=0.8]  {\III};
\draw (82.8,253.07) node [anchor=north west][inner sep=0.75pt]  [font=\small,xscale=0.8,yscale=0.8]  {\IV};
\draw (292.8,143.47) node [anchor=north west][inner sep=0.75pt]  [font=\small,xscale=0.8,yscale=0.8]  {\IV};
\draw (544,244.67) node [anchor=north west][inner sep=0.75pt]  [font=\small,xscale=0.8,yscale=0.8]  {\IV};
\draw (52.77,315.77) node [anchor=north west][inner sep=0.75pt]  [font=\small,xscale=0.8,yscale=0.8]  {\I};
\draw (237.6,103.07) node [anchor=north west][inner sep=0.75pt]  [font=\small,xscale=0.8,yscale=0.8]  {\I};
\draw (604.8,113.87) node [anchor=north west][inner sep=0.75pt]  [font=\small,xscale=0.8,yscale=0.8]  {\I};
\draw (47.77,155.57) node [anchor=north west][inner sep=0.75pt]  [font=\small,xscale=0.8,yscale=0.8]  {\II};
\draw (396.37,83.77) node [anchor=north west][inner sep=0.75pt]  [font=\small,xscale=0.8,yscale=0.8]  {\II};
\draw (590.97,308.97) node [anchor=north west][inner sep=0.75pt]  [font=\small,xscale=0.8,yscale=0.8]  {\II};
	\end{tikzpicture}
	\caption{\footnotesize  Four different asymptotic regions given in Definition \ref{def:regions} with $n=2$.}
	\label{Fig x/t}
\end{figure}
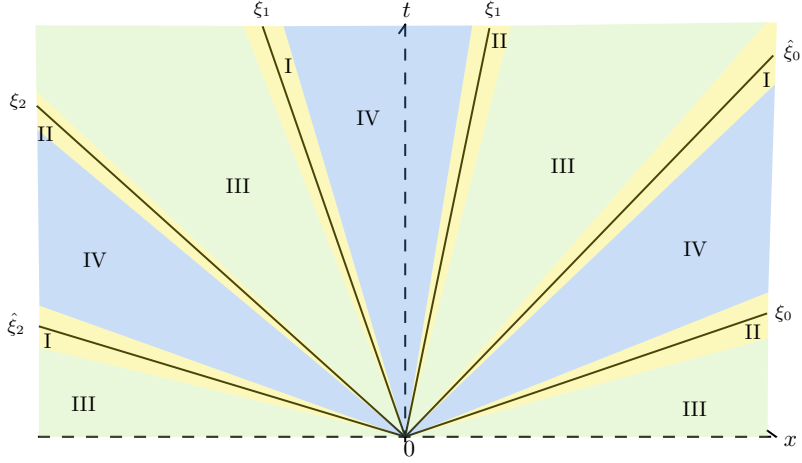

\begin{theorem} \label{mainthm}
Given  a finite genus algebro-geometric solution  $q^{(AG)}(x,t; \E, \hat{\E}, \boldsymbol{\phi})$ of the NLS equation \eqref{NLS},
let $q(x,t)$  be the solution of the Cauchy problem \eqref{NLS} and \eqref{boundray q0}, subject to the condition \eqref{bj}.
As $t\to\infty$, we have the following asymptotics of $q(x,t)$ in the regions \I--\IV given in Definition \ref{def:regions} under Assumption  \ref{asum1}. 
	\begin{itemize}\label{eq:asyq I}
		\item $\mathrm{(Asymptotics~in~transition~region~I)}$
		For $|\xi-\hat{\xi}_{j_0}|t^{2/3}\leq C$ with fixed $j_0\in\{0,\dots, n\}$,
		\begin{align}
			q(x,t)=
			e^{-2\delta(\infty)}q^{(AG)}(x,t;\E, \hat{\E}, \boldsymbol{\phi-\delta})
			 +H_{\hat{E}_{j_0}}a(s)t^{-1/3}+\mathcal{O}(t^{-\epsilon}), \quad  \epsilon\in(1/3,2/3), \label{rg1}
		\end{align}
where the real vector $\boldsymbol{\delta}=(\delta_1,\ldots,\delta_n)$ and the purely imaginary constant $\delta(\infty)$  are defined in \eqref{def deltaj I} and \eqref{def delta inf}, respectively, $H_{\hat{E}_{j_0}}$ given in \eqref{Hhj0} is a function of $x$ and $t$ constructed by the Riemann theta function and 
 \begin{equation}\label{def:a}
 a(s)=\int_{-\infty}^s\left(u(\zeta)+\frac{\zeta}{2}\right)\dif\zeta,
\end{equation}
 with
\begin{equation}
 s=-\frac{\theta^{(1,\hat{j_0})}}{(\theta^{(3,\hat{j_0})}(\xi))^{1/3}}(\xi-\hat{\xi}_{j_0})t^{2/3}.
\end{equation}
Here, $u(s)$ is unique the solution of the Painlev\'e XXXIV equation
\begin{align} \label{P34}
		 	u''(s)=4u(s)^2+2su(s)+\frac{u'(s)^2- {1}/{4}} {2u(s)}
\end{align}
satisfying
\begin{equation}\label{eq:uasy}
    u(s)=\left\{ \begin{array}{ll}
				-1/(4\sqrt{s})+\mathcal{O}(s^{-2}), &\qquad s\to +\infty,
				\\
				-s/2+\mathcal{O}(s^{-2}), &\qquad s\to -\infty,
			\end{array}
			\right .
\end{equation}
\begin{equation}
    \theta^{(1,\hat{j_0})}=-2\left[ \prod_{j=0}^n(\hat{E}_{j_0}-E_j)\cdot\prod_{j=0,\cdots,n,j\neq j_0}(\hat{E}_{j_0}-\hat{E}_{j})\right] ^{-\frac{1}{2}}\prod_{j=0}^n(\hat{E}_{j_0}-z_j^f),
\end{equation}
and
\begin{align}
 \theta^{(3,\hat{j_0})}(\hat{\xi}_{j_0})=&-\frac{\big( \hat{\xi}_{j_0}\prod_{j=0}^n(z-z^f_j)+4\prod_{j=0}^{n+1}(z-z^g_j)\big)'\Big|_{z=\hat{E}_{j_0}}}{\left[ \prod_{j=0}^n(\hat{E}_{j_0}-E_j)\cdot\prod_{j=0,\cdots,n,j\neq j_0}(\hat{E}_{j_0}-\hat{E}_{j})\right] ^{\frac{1}{2}}}.
\end{align}
	
	\item  $\mathrm{(Asymptotics~in~transition~region~II)}$ 
For $|\xi-\xi_{j_0}|t^{2/3}\leq C$ with fixed $j_0\in\{0,\cdots,n\}$,
		\begin{align}\label{eq:asyq II}
q(x,t)=e^{-2\delta(\infty)}q^{(AG)}(x,t;\E, \hat{\E}, \boldsymbol{\phi-\delta})
			 -\tilde{H}_{E_{j_0}}a(s)t^{-1/3}+\mathcal{O}(t^{-\epsilon}), \quad \epsilon\in(1/3,2/3),
		\end{align}
where the real vector $\boldsymbol{\delta}=(\delta_1,\ldots,\delta_n)$ and the  purely imaginary constant $\delta(\infty)$  are defined in \eqref{def deltaj II} and \eqref{def delta infII}, respectively, $\tilde{H}_{E_{j_0}}$ given in \eqref{HEj0} is a function of $x$ and $t$ constructed by the Riemann theta function and  $a(s)$ is an integral involving the Painlev\'e XXXIV transcendent defined through \eqref{def:a}, \eqref{P34} and \eqref{eq:uasy} with
		\begin{align*}
s=-\frac{\theta^{(1,j_0)}}{(\theta^{(3,j_0)}(\xi))^{1/3}}(\xi-\xi_{j_0})t^{2/3},
		\end{align*}
and where $\theta^{(1,j_0)}$ and $\theta^{(3,j_0)}(\xi)$ are given in \eqref{theta j01} and \eqref{theta j03}, respectively. 

	\item 
    $\mathrm{(Asymptotics~in~Zakharov-Manakov~region~III~and~fast ~decaying~region~IV) }$ For $\xi\in(-\infty,\hat{\xi}_n)\cup_{j=1}^n(\xi_j,\hat{\xi}_{j-1})\cup(\xi_0,+\infty)$,
		\begin{align}\label{eq:asyIII}
			& q(x,t)=e^{-2\delta(\infty)}q^{(AG)}(x,t;\E, \hat{\E}, \boldsymbol{\phi-\delta}) \nonumber \\
			&+\frac{2ie^{-2\delta(\infty)}}{2\sqrt{\theta^{(z_0,2)}(\xi)}}\left(\beta_{21}M^{(glo)}_{11}(z_0;\E, \hat{\E}, \boldsymbol{\phi-\delta})^2-\beta_{12}M^{(glo)}_{12}(z_0;\E, \hat{\E}, \boldsymbol{\phi-\delta})^2\right)t^{-1/2} \nonumber \\
			&
		 +\mathcal{O}(t^{-1}),
		\end{align}
        and for $\xi \in \cup_{j=0}^n(\hat{\xi}_j,\xi_j)$,
		\begin{align}\label{eq:asyq IV}
			q(x,t)= e^{-2\delta(\infty)}q^{(AG)}(x,t;\E, \hat{\E}, \boldsymbol{\phi-\delta})+\mathcal{O}(t^{-1}),
		\end{align}
		where the real vector $\boldsymbol{\delta}=(\delta_1,\ldots,\delta_n)$ and the  purely imaginary constant $\delta(\infty)$  are  defined in  \eqref{def deltaj III} and \eqref{def delta inf III}, respectively. In \eqref{eq:asyIII}, $\theta^{(z_0,2)}(\xi)$ is a nonzero function given in \eqref{def theta z0}, $M^{(glo)}_{11}$ and $M^{(glo)}_{12}$  given in \eqref{Mglo11} and \eqref{Mglo12} are two functions constructed by the Riemann theta function and
 \begin{align*}
 	\beta_{12}=\frac{\sqrt{2\pi}e^{\frac{1}{4}(\pi i-\log(1-|r_0|^2))}}{r_0\Gamma(i\log(1-|r_0|^2)/2\pi)},\qquad \beta_{21}=\frac{\log(1-|r_0|^2)}{2\pi \beta_{12}},
 \end{align*}
 with the constant $r_0$ defined in \eqref{def r0}.

	\end{itemize}

\end{theorem}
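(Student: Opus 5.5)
The plan is a Deift--Zhou nonlinear steepest descent analysis of the basic RH problem for $M(z;x,t)$ built from the Jost solutions normalized on the algebro-geometric background, as set up in Section \ref{sec:Jost}. The jump lives on $\mathbb{R}$, where the reflection coefficients $r_i$ enter, and on the gaps $\cup_j[E_j,\hat E_j]$. The oscillatory factor is $e^{\pm 2it\theta(z;\xi)}$ with $\theta=\xi f(z)+g(z)$ up to normalization. The critical points of $\theta$ therefore solve $\xi\prod_k(z-z_k^f)+4\prod_k(z-z_k^g)=0$, and a critical point collides with a branch point $E_j$ (resp. $\hat E_j$) exactly when $\xi=\xi_j$ (resp. $\hat\xi_j$). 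This is what produces the division into Regions \I--\IV.

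The first step is a sign-chart analysis of $\mathrm{Im}\,\theta$ for each $\xi$. In Region \III\ there is one real stationary point $z_0$ lying in a band. In Region \IV\ no stationary point lies in a band, since it sits inside a gap. In Regions \I, \II\ the stationary point is at distance $\mathcal{O}(t^{-1/3})$ from an endpoint. Next I introduce a scalar function $\delta(z)$, solving a scalar RH problem on the part of the bands with the wrong factorization and on the gaps. It is normalized so that $\delta(z)\to\delta(\infty)$, and it has constant jumps $\delta_j$ across the cycles. This removes the diagonal jumps and explains both the prefactor $e^{-2\delta(\infty)}$ and the phase shift $\boldsymbol{\phi}\mapsto\boldsymbol{\phi}-\boldsymbol{\delta}$.

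After the usual upper/lower triangular factorizations, I open lenses; Assumption \ref{asum1}(a) makes the $r_i$ analytic, so no $\bar\partial$ extension is needed. The lens jumps then decay exponentially away from stationary points and branch points. What remains on the gaps are constant off-diagonal jumps, and these are solved by the global parametrix $M^{(glo)}$ built from theta functions on the genus-$n$ surface. Reconstructing $q$ from $M^{(glo)}$ yields the leading term $e^{-2\delta(\infty)}q^{(AG)}(x,t;\E,\hat\E,\boldsymbol{\phi-\delta})$ in every region. Near branch points not merging with a stationary point, I use Bessel/Airy-type local parametrices; thanks to $|r_i|=1$ at the endpoints, these give only $\mathcal{O}(t^{-1})$ errors.

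In Region \III\ I place a parabolic cylinder parametrix at $z_0$ with $\nu=-\frac{1}{2\pi}\log(1-|r_0|^2)$. Expanding the small-norm problem gives the $t^{-1/2}$ correction $\beta_{12},\beta_{21}$ conjugated by $M^{(glo)}(z_0)$, with the factor $\sqrt{\theta^{(z_0,2)}}$ coming from the scaling. In Region \IV\ only endpoint parametrices contribute, which gives $\mathcal{O}(t^{-1})$.

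The main obstacle is the transition Regions \I, \II. Here I expand $\theta$ near $\hat E_{j_0}$ in the local variable $\sqrt{z-\hat E_{j_0}}$ as $\theta^{(1,\hat j_0)}(\xi-\hat\xi_{j_0})(z-\hat E_{j_0})^{1/2}+\theta^{(3,\hat j_0)}(z-\hat E_{j_0})^{3/2}/3+\dots$. With the scaling $\zeta=t^{2/3}(\theta^{(3)})^{2/3}(z-\hat E_{j_0})$, this becomes $\frac{4}{3}\zeta^{3/2}+s\zeta^{1/2}$ up to normalization. Using $|r|=1$ at the endpoint, the local jumps match those of the Painlev\'e XXXIV model RH problem (Its--Kuijlaars--\"Ostensson), whose solution exists for real $s$ and is associated with the stated $u(s)$. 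The difficulty is the matching: the model problem's $1/\zeta$ coefficient involves the Hamiltonian $a(s)=\int_{-\infty}^s(u+\zeta/2)$, and it produces a mismatch of order $t^{-1/3}$ on the disk boundary. I will handle this by a small-norm problem whose first correction is computed by residue at $\hat E_{j_0}$, conjugated by $M^{(glo)}(\hat E_{j_0})$. This yields $H_{\hat E_{j_0}}a(s)t^{-1/3}$. The next order, together with the $t^{-2/3}$ sized $\xi$-dependence of the expansion and the uniformity in $|s|\le C'$, gives the $\mathcal{O}(t^{-\epsilon})$ error. Region \II\ is identical at $E_{j_0}$, with the orientation reversed, which accounts for the minus sign in $\tilde H_{E_{j_0}}$.
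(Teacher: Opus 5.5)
Your architecture matches the paper's. It uses the scalar function $\delta$ (giving $e^{-2\delta(\infty)}$ and $\boldsymbol{\phi}\mapsto\boldsymbol{\phi}-\boldsymbol{\delta}$), lenses made possible by Assumption~\ref{asum1}(a), and the theta-function global parametrix. It then uses a parabolic cylinder parametrix in Region III, and a Painlev\'e XXXIV parametrix ($\alpha=-1/4$, $\omega=0$) whose $1/\zeta$ coefficient $ia(s)$ enters $E_1$ through a residue.

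\textbf{The gap: Region II.} The genuine gap is the claim that Region II is ``identical at $E_{j_0}$, with the orientation reversed'' using the same RH problem $M$. Which side of the saddle needs a factorization with a diagonal factor is fixed by the sign of $\Im\theta$, not by where the branch point is. The band jump of $M$ has $1-r_1r_2$ in its $(1,1)$ entry. It must therefore be factored as lower$\cdot(1-r_1r_2)^{\sigma_3}\cdot$upper to the left of $z_0$, and as upper$\cdot$lower to the right.

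In Region I the merging endpoint $\hat E_{j_0}$ lies to the left of $z_0$, on the diagonal side. That is what lets the paper do three things:
\begin{itemize}
\item end the cut of $\delta$ at $E_{j_0}$, where $1-r_1r_2=0$ and $e^{\delta}=\mathcal{O}((z-E_{j_0})^{1/2})$ compensates;
\item keep the short band $(\hat E_{j_0},z_0)$ as a full jump inside $U$;
\item have $\delta$ regular at $z_0$, with $\delta\to\frac{i}{2}\delta_{j_0}$ at $\hat E_{j_0}$.
\end{itemize}

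In Region II one has $z_0<E_{j_0}$, so with $M$ the merging endpoint sits on the non-diagonal side. Meanwhile the band of length $\mathcal{O}(1)$ ending at $z_0$ lies on the diagonal side, which forces the cut of $\delta$ to end at $z_0$ itself. This produces a factor $(z-z_0)^{i\nu}$ with $\nu=-\frac{1}{2\pi}\log(1-|r_1(z_0)|^2)$. Since $|r_1(E_{j_0})|=1$ and $1-|r_1|^2=|S_{11}|^{-2}$ vanishes linearly at $E_{j_0}$, $\nu$ is of order $\log t$. Consequently the local problem does not converge to the Painlev\'e XXXIV model.

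The paper avoids this by running Region II on the second RH problem $N$ (RH problem~\ref{RHP2}, built from $r_3,r_4$). Its band jump carries $1-r_3r_4$ in the $(2,2)$ entry, so the diagonal factor is needed to the right of the saddle. Then $\delta$ is cut on $(\hat E_{j_0},+\infty)$ and the short band $(z_0,E_{j_0})$ is kept as a full jump. Only then is the configuration the true mirror image of Region I. Invoking the reflection $q(x,t)\mapsto q(-x,t)$, which maps Region II of one background to Region I of the reflected one, would also work. Either way, some such device has to be stated.

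\textbf{Smaller points.}
\begin{itemize}
\item The Bessel/Airy parametrices at non-merging endpoints are unnecessary. The paper opens the left lens from an interior point $z_{j_0}$ of the gap $(E_{j_0},\hat E_{j_0})$ and the right lens from $z_0$, so the lens boundaries stay away from all other branch points. The zeros of $1-r_1r_2$ at the endpoints to the left are absorbed by $e^{\delta}=\mathcal{O}((z-p)^{1/2})$. This leaves only $-1/4$ singularities in $M^{(1)}$ and $M^{(glo)}$, which are removable for $E$.
\item In the residue computation the conjugating matrix cannot be $M^{(glo)}(\hat E_{j_0})$, since $M^{(glo)}$ blows up like $(z-\hat E_{j_0})^{-1/4}$. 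It is the analytic prefactor $H_1(\hat E_{j_0})$, which absorbs $G_1^{-1}$, $\frac{1}{\sqrt2}(I-i\sigma_1)$ and $(\zeta t^{-2/3})^{\sigma_3/4}$. The coefficient $H_{\hat E_{j_0}}$ is built from $(H_1)_{11}(\hat E_{j_0})^2$.
\item In Region IV the $\mathcal{O}(t^{-1})$ comes from the lens vertex $z_0$ inside the gap, not from endpoint parametrices. There $\Im\theta_+$ changes sign and $\theta-\theta(z_0)$ vanishes only linearly.
\item $z_0-\hat E_{j_0}=\mathcal{O}(t^{-2/3})$, not $\mathcal{O}(t^{-1/3})$, as your own scaling $\zeta\sim t^{2/3}(z-\hat E_{j_0})$ requires.
\end{itemize}
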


Our theorem indicates that for the Cauchy problem of NLS equation \eqref{NLS} with a finite-genus algebro-geometric background, the leading-order term of $q$ in long-time expansion is, up to a constant, given by the background solution with a shift of the parameter $\boldsymbol{\phi}$. The subleading term, however, decays at different rates for different regions. We particularly emphasize that in the two transition regions, they are of order $t^{-1/3}$ and the coefficients involve an integral of the Painlev\'e XXXIV transcendent. In general, the Painlev\'e XXXIV equation depends on a parameter $\alpha$ and reads as 
\begin{equation}\label{eq:P34}
    u''(s)=4u(s)^2+2su(s)+\frac{u'(s)^2-(2\alpha)^2}{2u(s)}.
\end{equation}
The equation \eqref{P34} corresponds to $\alpha=-1/4$ in \eqref{eq:P34}, and the particular solution characterized by the boundary condition \eqref{eq:uasy} is non-singular on the real line. 

The Painlev\'e transcendents and their higher order analogues play important roles in the studies of transient asymptotics of integrable PDEs. In the literature, the transient asymptotics was first understood in the case of the Korteweg-de Vries (KdV) equation by Segur and Ablowitz  \cite{AblwzP1} and of the modified Korteweg-de Vries (mKdV) equation by Deift and Zhou \cite{sdmRHp}, respectively, which are all given in terms of the Painlev\'e II transcendents. The appearances of  Painlev\'e II equation in asymptotics of other integrable systems can be found in
\cite{Monvel,lenellsboss,HL20,Miller1,WZY,XYZ}. In addition, Boutet de Monvel, Lenells  and Shepelsky found the  asymptotics of focusing NLS equation with step-like oscillating background in a transition zone between two genus-3 sectors is related to a model RH problem associated with the Painlev\'e IV equation \cite{Monvel5}. Bilman, Ling and Miller established a connection between fundamental rogue wave solutions of the focusing NLS equation in the limit of large order and certain members of the Painlev\'{e} III hierarchy \cite{Miller1}.
Huang and  Zhang  obtained Painlev\'{e}  II  hierarchy asymptotics   for the whole  mKdV hierarchy \cite{Huanglin}.
It has been known that the Painlev\'e XXXIV equation ever appears in the investigations of critical asymptotics of orthogonal polynomials and random matrix theory \cite{DXZ2019,ikj2008,XZ2011}.
However, to the best of our knowledge, there exist no results prior to this work that report Painlev\'e XXXIV asymptotics for integrable PDEs. 
 

\vspace{2mm}
\paragraph{\bf{Outline of the paper}}
The rest of this paper is organized as follows. In Section \ref{sec1}, we first give a precise definition of the algebro-geometric solution $q^{(AG)}$ and then establish two RH problems that characterize the Cauchy problem \eqref{NLS} and \eqref{boundray q0} based on the spectral analysis of  its Lax pair. We then carry out nonlinear steepest descent analysis \cite{sdmRHp} to the associated RH problems in Sections \ref{Sec 3}--\ref{Sec 5}, which corresponds to regions I, II and III-IV given in Definition \ref{def:regions}, respectively. The analysis consists of a series of explicit and invertible transformations which leads to a RH problem tending to the identity matrix as $t \to +\infty$. Despite the analysis in Zakharov-Manakov region III and fast decaying region IV is somehow standard, an important new feature of the analysis in transition regions is the involvement of a specified Painlev\'e XXXIV paramertix introduced in Appendix \ref{APP P34}. The outcome of our analysis is the proof of Theorem \ref{mainthm}, which is presented in Section \ref{sec:proof}.


\vspace{2mm}
\paragraph{\bf{Notation}} Throughout this paper, the following notation will be used.
\begin{itemize}
   \item[$\bullet$] If $A$ is a matrix, then $A_{ij}$ stands for its $(i,j)$-the entry.
	\item[$\bullet$] The norm in the classical space $ L^p (\mathbb{R})$, $ 1 \leqslant p \leqslant \infty $,  is written as $ \|\cdot \|_{L^p}$.  For a $2\times 2$ matrix $A$, we also define $\|A\|_{L^p}=\max_{i,j=1,2}\|A_{ij}\|_{L^p}$.
	\item[$\bullet$]
    For a complex-valued function $f(z)$, we use
	\begin{equation*}
		f^{*}(z):=\overline{f(\bar{z})}, \qquad z\in\mathbb{C},
	\end{equation*}
	to denote its Schwartz conjugation.
	\item[$\bullet$]For a region $U\subseteq \mathbb{C}$, we use $U^*$ to denote the conjugated region of $U$. We also set
	\begin{equation}\label{def:Cpm}
	\mathbb{C}^{\pm}:=\left\{z
		\in\mathbb{C}: \pm \text{Im}\ z>0 \right\}.
	\end{equation}
	\item[$\bullet$]As usual, the classical Pauli matrices $\{\sigma_j\}_{j=1,2,3}$ are defined by
	\begin{equation}\label{def:PauliM}
		\sigma_1:=\begin{pmatrix}0 & 1 \\ 1 & 0\end{pmatrix}, \quad
		\sigma_2:=\begin{pmatrix}0 & -i \\ i & 0\end{pmatrix}, \quad
		\sigma_3:=\begin{pmatrix}1 & 0 \\ 0 & -1\end{pmatrix}.
	\end{equation}
   For a $2\times 2$ matrix $A$, we also define
	\begin{equation}\label{def:hatsigma}
		e^{\hat{\sigma}_j}A:=e^{\sigma_j}Ae^{-\sigma_j}, \quad j=1,2,3.
	\end{equation}
	\item[$\bullet$]For any smooth oriented curve $\Sigma$, the Cauchy operator $\mathcal{C}$ on $\Sigma$ is defined  by
	\begin{align*}
		\mathcal{C}f(z)=\frac{1}{2\pi i}\int_{\Sigma}\frac{f(\zeta)}{\zeta-z}\dif\zeta, \qquad  z\in\mathbb{C}\setminus \Sigma.
	\end{align*}
	Given a function $f \in L^p(\Sigma)$, $1\leqslant p<\infty$,
	\begin{align}\label{def:opCpm}
		\mathcal{C}_\pm f(z):=\lim_{z'\to z\in\Sigma}\frac{1}{2\pi i}\int_{\Sigma}\frac{f(\zeta)}{\zeta-z'}\dif\zeta
	\end{align}
   stands for the positive/negative (according to the orientation of $\Sigma$) non-tangential boundary value of $\mathcal{C}f$.
	\item[$\bullet$]We write $a\lesssim b$ to denote the inequality $a\leqslant Cb$ for some constant $C>0$.
    \item Given a function $f$  and an orientated curve, $f_+$ and $f_-$ stand for the limiting values of $f$ when approaching the curve from the left and the right, respectively. Moreover, we always set the orientations of intervals on the real axis to be directed from the left to the right.

\end{itemize}



\section{Preliminaries}\label{sec1}


\subsection{The algebro-geometric solution} \label{algeso}
To give a precise definition of the algebro-geometric solution $q^{(AG)}$, we start with some ingredients from the theory of Riemann surface \cite{AGbook,BA,Farbook}.

Let $\mathcal{R}$ be the Riemann surface of genus $n$ defined by all points $P:=(w,z)\in \mathbb{C}^2$ satisfying \eqref{def:w} and \eqref{eq:order},
and $z=\pi(P)$ is a standard projection. The Riemann surface $\mathcal{R}$ can be built in the following way. We first glue two sheets along the cuts $(E_j,\hat{E}_j)$, $j=0,1,\ldots,n$, and then compactify the resulting surface by adding the point at infinity on each sheet. On the Riemann surface $\mathcal{R}$, we construct a canonical homology basis consisting of closed curves $a_j$ and $b_j$, $j=1,\ldots,n$. Here, $a_j$ is a closed curve on the upper sheet going counterclockwise around the interval $(E_j,\hat{E}_j)$, and $b_j$ starts from $(E_0, \hat{E}_0)$, goes on the upper sheet to $(E_j, \hat{E}_j)$, and returns on the lower sheet to the starting point; see Figure \ref{fig ajbj} for an illustration.
\begin{figure}	
	\tikzset{every picture/.style={line width=0.75pt}} 
	\begin{tikzpicture}[x=0.75pt,y=0.75pt,yscale=-0.7,xscale=0.7]
		\draw [color={rgb, 255:red, 74; green, 144; blue, 226 }  ,draw opacity=1 ]   (105.4,209.4) .. controls (111.4,169.4) and (214.9,172.9) .. (218.9,210.9) ;
		\draw [shift={(167.77,181.12)}, rotate = 182.35] [fill={rgb, 255:red, 74; green, 144; blue, 226 }  ,fill opacity=1 ][line width=0.08]  [draw opacity=0] (10.72,-5.15) -- (0,0) -- (10.72,5.15) -- (7.12,0) -- cycle    ;
		\draw [color={rgb, 255:red, 74; green, 144; blue, 226 }  ,draw opacity=1 ] [dash pattern={on 4.5pt off 4.5pt}]  (218.9,210.9) .. controls (212.9,250.9) and (109.4,247.4) .. (105.4,209.4) ;
		\draw [shift={(156.53,239.18)}, rotate = 2.35] [fill={rgb, 255:red, 74; green, 144; blue, 226 }  ,fill opacity=1 ][line width=0.08]  [draw opacity=0] (10.72,-5.15) -- (0,0) -- (10.72,5.15) -- (7.12,0) -- cycle    ;
		\draw [color={rgb, 255:red, 65; green, 117; blue, 5 }  ,draw opacity=1 ]   (186.9,211.4) .. controls (191.9,184.9) and (285.9,184.9) .. (290.9,211.9) ;
		\draw [shift={(244.09,191.7)}, rotate = 181.08] [fill={rgb, 255:red, 65; green, 117; blue, 5 }  ,fill opacity=1 ][line width=0.08]  [draw opacity=0] (10.72,-5.15) -- (0,0) -- (10.72,5.15) -- (7.12,0) -- cycle    ;
		\draw [color={rgb, 255:red, 65; green, 117; blue, 5 }  ,draw opacity=1 ]   (290.9,211.9) .. controls (286.9,234.4) and (191.9,232.4) .. (186.9,211.4) ;
		\draw [shift={(234.06,227.82)}, rotate = 1.47] [fill={rgb, 255:red, 65; green, 117; blue, 5 }  ,fill opacity=1 ][line width=0.08]  [draw opacity=0] (10.72,-5.15) -- (0,0) -- (10.72,5.15) -- (7.12,0) -- cycle    ;
		\draw [color={rgb, 255:red, 74; green, 144; blue, 226 }  ,draw opacity=1 ]   (73.4,209.9) .. controls (86.9,122.4) and (356.9,158.4) .. (354.4,212.9) ;
		\draw [shift={(217.83,158.01)}, rotate = 183.45] [fill={rgb, 255:red, 74; green, 144; blue, 226 }  ,fill opacity=1 ][line width=0.08]  [draw opacity=0] (10.72,-5.15) -- (0,0) -- (10.72,5.15) -- (7.12,0) -- cycle    ;
		\draw [color={rgb, 255:red, 74; green, 144; blue, 226 }  ,draw opacity=1 ] [dash pattern={on 4.5pt off 4.5pt}]  (354.4,212.9) .. controls (338.9,287.9) and (73.4,269.4) .. (73.4,209.9) ;
		\draw [shift={(208.09,261.84)}, rotate = 1.83] [fill={rgb, 255:red, 74; green, 144; blue, 226 }  ,fill opacity=1 ][line width=0.08]  [draw opacity=0] (10.72,-5.15) -- (0,0) -- (10.72,5.15) -- (7.12,0) -- cycle    ;
		\draw [color={rgb, 255:red, 74; green, 144; blue, 226 }  ,draw opacity=1 ]   (57.4,209.4) .. controls (81.4,77.4) and (557.9,157.9) .. (555.4,212.4) ;
		\draw [shift={(309.03,140.36)}, rotate = 184.23] [fill={rgb, 255:red, 74; green, 144; blue, 226 }  ,fill opacity=1 ][line width=0.08]  [draw opacity=0] (10.72,-5.15) -- (0,0) -- (10.72,5.15) -- (7.12,0) -- cycle    ;
		\draw [color={rgb, 255:red, 74; green, 144; blue, 226 }  ,draw opacity=1 ] [dash pattern={on 4.5pt off 4.5pt}]  (555.4,212.4) .. controls (539.9,287.4) and (31.9,316.4) .. (57.4,209.4) ;
		\draw [shift={(291.02,279.15)}, rotate = 358.31] [fill={rgb, 255:red, 74; green, 144; blue, 226 }  ,fill opacity=1 ][line width=0.08]  [draw opacity=0] (10.72,-5.15) -- (0,0) -- (10.72,5.15) -- (7.12,0) -- cycle    ;
		\draw [color={rgb, 255:red, 65; green, 117; blue, 5 }  ,draw opacity=1 ]   (335.4,212.9) .. controls (337.9,189.9) and (408.9,191.9) .. (411.4,213.39) ;
		\draw [shift={(378.35,196.68)}, rotate = 182.36] [fill={rgb, 255:red, 65; green, 117; blue, 5 }  ,fill opacity=1 ][line width=0.08]  [draw opacity=0] (10.72,-5.15) -- (0,0) -- (10.72,5.15) -- (7.12,0) -- cycle    ;
		\draw [color={rgb, 255:red, 65; green, 117; blue, 5 }  ,draw opacity=1 ]   (411.4,213.39) .. controls (408.4,232.4) and (345.4,236.4) .. (335.4,212.9) ;
		\draw [shift={(368.19,228.92)}, rotate = 1.75] [fill={rgb, 255:red, 65; green, 117; blue, 5 }  ,fill opacity=1 ][line width=0.08]  [draw opacity=0] (10.72,-5.15) -- (0,0) -- (10.72,5.15) -- (7.12,0) -- cycle    ;
		\draw [color={rgb, 255:red, 65; green, 117; blue, 5 }  ,draw opacity=1 ]   (520.9,214.4) .. controls (523.4,191.4) and (630.9,192.91) .. (633.4,214.4) ;
		\draw [shift={(582.09,197.81)}, rotate = 180.97] [fill={rgb, 255:red, 65; green, 117; blue, 5 }  ,fill opacity=1 ][line width=0.08]  [draw opacity=0] (10.72,-5.15) -- (0,0) -- (10.72,5.15) -- (7.12,0) -- cycle    ;
		\draw [color={rgb, 255:red, 65; green, 117; blue, 5 }  ,draw opacity=1 ]   (633.4,214.4) .. controls (630.4,233.41) and (530.9,237.9) .. (520.9,214.4) ;
		\draw [shift={(571.82,230.35)}, rotate = 0.18] [fill={rgb, 255:red, 65; green, 117; blue, 5 }  ,fill opacity=1 ][line width=0.08]  [draw opacity=0] (10.72,-5.15) -- (0,0) -- (10.72,5.15) -- (7.12,0) -- cycle    ;
		\draw [line width=1.5]    (41.9,209.9) -- (123.9,210.9) ;
		\draw [shift={(123.9,210.9)}, rotate = 0.7] [color={rgb, 255:red, 0; green, 0; blue, 0 }  ][fill={rgb, 255:red, 0; green, 0; blue, 0 }  ][line width=1.5]      (0, 0) circle [x radius= 4.36, y radius= 4.36]   ;
		\draw [shift={(41.9,209.9)}, rotate = 0.7] [color={rgb, 255:red, 0; green, 0; blue, 0 }  ][fill={rgb, 255:red, 0; green, 0; blue, 0 }  ][line width=1.5]      (0, 0) circle [x radius= 4.36, y radius= 4.36]   ;
		\draw  [dash pattern={on 0.84pt off 2.51pt}]  (428.4,214.4) -- (502.4,214.9) ;
		\draw [line width=1.5]    (186.9,211.4) -- (290.9,211.9) ;
		\draw [shift={(290.9,211.9)}, rotate = 0.28] [color={rgb, 255:red, 0; green, 0; blue, 0 }  ][fill={rgb, 255:red, 0; green, 0; blue, 0 }  ][line width=1.5]      (0, 0) circle [x radius= 4.36, y radius= 4.36]   ;
		\draw [shift={(186.9,211.4)}, rotate = 0.28] [color={rgb, 255:red, 0; green, 0; blue, 0 }  ][fill={rgb, 255:red, 0; green, 0; blue, 0 }  ][line width=1.5]      (0, 0) circle [x radius= 4.36, y radius= 4.36]   ;
		\draw [line width=1.5]    (335.4,212.9) -- (411.4,213.39) ;
		\draw [shift={(411.4,213.39)}, rotate = 0.37] [color={rgb, 255:red, 0; green, 0; blue, 0 }  ][fill={rgb, 255:red, 0; green, 0; blue, 0 }  ][line width=1.5]      (0, 0) circle [x radius= 4.36, y radius= 4.36]   ;
		\draw [shift={(335.4,212.9)}, rotate = 0.37] [color={rgb, 255:red, 0; green, 0; blue, 0 }  ][fill={rgb, 255:red, 0; green, 0; blue, 0 }  ][line width=1.5]      (0, 0) circle [x radius= 4.36, y radius= 4.36]   ;
		\draw [line width=1.5]    (520.9,214.4) -- (633.4,214.4) ;
		\draw [shift={(633.4,214.4)}, rotate = 0] [color={rgb, 255:red, 0; green, 0; blue, 0 }  ][fill={rgb, 255:red, 0; green, 0; blue, 0 }  ][line width=1.5]      (0, 0) circle [x radius= 4.36, y radius= 4.36]   ;
		\draw [shift={(520.9,214.4)}, rotate = 0] [color={rgb, 255:red, 0; green, 0; blue, 0 }  ][fill={rgb, 255:red, 0; green, 0; blue, 0 }  ][line width=1.5]      (0, 0) circle [x radius= 4.36, y radius= 4.36]   ;
		
		\draw (20,215) node [anchor=north west][inner sep=0.75pt]  [font=\footnotesize,xscale=0.8,yscale=0.8] [align=left] {$\displaystyle E_{0}$};
		\draw (132,212.5) node [anchor=north west][inner sep=0.75pt]  [font=\footnotesize,xscale=0.8,yscale=0.8] [align=left] {$\displaystyle \hat{E}_{0}$};
		\draw (166.5,216.5) node [anchor=north west][inner sep=0.75pt]  [font=\footnotesize,xscale=0.8,yscale=0.8] [align=left] {$\displaystyle E_{1}$};
		\draw (288.5,218) node [anchor=north west][inner sep=0.75pt]  [font=\footnotesize,xscale=0.8,yscale=0.8] [align=left] {$\displaystyle \hat{E}_{1}$};
		\draw (318,218.2) node [anchor=north west][inner sep=0.75pt]  [font=\footnotesize,xscale=0.8,yscale=0.8] [align=left] {$\displaystyle E_{2}$};
		\draw (413.4,216.39) node [anchor=north west][inner sep=0.75pt]  [font=\footnotesize,xscale=0.8,yscale=0.8] [align=left] {$\displaystyle \hat{E}_{2}$};
		\draw (504.4,217.9) node [anchor=north west][inner sep=0.75pt]  [font=\footnotesize,xscale=0.8,yscale=0.8] [align=left] {$\displaystyle E_{n}$};
		\draw (641.5,212.5) node [anchor=north west][inner sep=0.75pt]  [font=\footnotesize,xscale=0.8,yscale=0.8] [align=left] {$\displaystyle \hat{E}_{n}$};
		\draw (284.5,180.9) node [anchor=north west][inner sep=0.75pt]  [color={rgb, 255:red, 65; green, 117; blue, 5 }  ,opacity=1 ,xscale=0.8,yscale=0.8]  {$a_{1}$};
		\draw (155,185.9) node [anchor=north west][inner sep=0.75pt]  [color={rgb, 255:red, 74; green, 144; blue, 226}  ,opacity=1 ,xscale=0.8,yscale=0.8]  {$b_{1}$};
		\draw (285,145.9) node [anchor=north west][inner sep=0.75pt]  [color={rgb, 255:red, 74; green, 144; blue, 226}  ,opacity=1 ,xscale=0.8,yscale=0.8]  {$b_{2}$};
		\draw (395.5,123.4) node [anchor=north west][inner sep=0.75pt]  [color={rgb, 255:red, 74; green, 144; blue, 226}  ,opacity=1 ,xscale=0.8,yscale=0.8]  {$b_{n}$};
		\draw (630.5,180.9) node [anchor=north west][inner sep=0.75pt]  [color={rgb, 255:red, 65; green, 117; blue, 5}  ,opacity=1 ,xscale=0.8,yscale=0.8]  {$a_{n}$};
		\draw (403.07,180.9) node [anchor=north west][inner sep=0.75pt]  [color={rgb, 255:red, 65; green, 117; blue, 5}  ,opacity=1 ,xscale=0.8,yscale=0.8]  {$a_{2}$};	
	\end{tikzpicture}
	\caption{\footnotesize The canonical homology basis $\{a_j,b_j\}_{j=1}^n$ on the Riemann surface $\mathcal{R}$. Here the solid and dashed arcs
		indicate the parts on the upper and lower sheet, respectively. }\label{fig ajbj}
\end{figure}
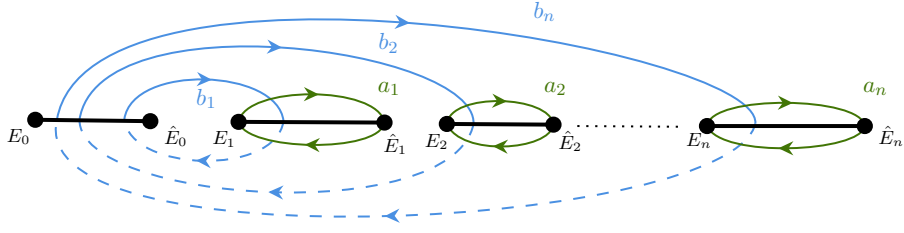
These cycles are chosen so that their intersection matrix reads
\begin{align*}
	a_i\circ b_j=\delta_{ij}, \qquad i,j=1,\ldots,n,
\end{align*}
where $\delta_{ij}$ denotes the Kronecker delta.

Let $\{\omega_j=\rho_j(z)\dif z\}_{j=1}^n$ be the canonical basis of holomorphic differentials
normalized with respect to the cycles $a_1,\ldots,a_n$, i.e.,
\begin{align*}
	\oint_{a_i} \omega_j=2\int_{E_i}^{\hat{E}_i}\rho_{j,+}(z)\dif z=\delta_{ij},
\end{align*}
where $\rho_j$ is analytic in $\mathbb{C}\setminus (\cup_{j=1}^n[E_j,\hat{E}_j])$.
The normalized holomorphic differentials then gives us the $b$-period matrix $B=(B_{ij})_{i,j=1}^n$ with
\begin{align*}
	B_{ij}:=\oint_{b_j}\omega_i=2\sum_{k=1}^{j}\int_{\hat{E}_{k-1}}^{E_k}\rho_i(z)\dif z,
\end{align*}
and the column vector of the Riemann constants
\begin{equation}\label{def:K}
\boldsymbol{K}:=(K_1,\ldots,K_n), \qquad K_j=\sum_{i=1}^nB_{ji}+\frac{j}{2}, \qquad j=1,\ldots,n.
\end{equation}

The Riemann theta function associated the matrix $B$ is defined by the Fourier series
\begin{equation}\label{def Theta}
	\Theta(\boldsymbol{\ell}):=\sum_{\boldsymbol{m}\in \mathbb{Z}^n}\exp\left(\pi i \boldsymbol{m}^TB\boldsymbol{m}+2\pi i\boldsymbol{m}^T\boldsymbol{\ell} \right) ,\qquad \boldsymbol{\ell}=(\ell_1,\ldots,\ell_n)\in\mathbb{C}^n.
\end{equation}
It is an even function with $\Theta(\boldsymbol{\ell})=\Theta(-\boldsymbol{\ell})$ and has periodicity properties
\begin{align*}
	\Theta(\boldsymbol{\ell}\pm \boldsymbol{e}_j)=\Theta(\boldsymbol{\ell}),\qquad \Theta(\boldsymbol{\ell}\pm B \boldsymbol{e}_j)=e^{\mp 2\pi i \ell_j-\pi i B_{jj}}\Theta(\boldsymbol{\ell}),
\end{align*}
where $\boldsymbol{e}_j = (0,\ldots, 0, 1, 0,\ldots, 0)$ is the $j$-th basis vector in $\mathbb{R}^n$. The Abel map $\mathcal{A}$ on $\mathcal{R}$ with the base point $\hat{P}_0=(0,\hat{E}_0)$ is defined as
\begin{equation}\label{def:Abel}
	\mathcal{A}(P):=(\mathcal{A}_1(P),\ldots, \mathcal{A}_n(P))^T,\qquad	\mathcal{A}_j(P):=\int_{\hat{P}_0}^{P}\omega_j.
\end{equation}

Recall the functions $f$ and $g$ given in \eqref{def:fg}, we also set
\begin{align}\label{def f0g0}
	f_0:=\lim_{z\to\infty}(f(z)-z),\qquad g_0:=\lim_{z\to\infty}(g(z)-2z^2),
\end{align}
and the real constants
\begin{align}\label{def:Bjfg}
	B_j^f:=\oint_{b_j}\dif f,\qquad B_j^g:=\oint_{b_j}\dif g, \qquad j=1,\ldots,n.
\end{align}

Let $\boldsymbol{\phi}=(\phi_1,\ldots,\phi_n)\in \mathbb{R}^n$ be an arbitrary real vector. Let  $\mathcal{D}=P_1+\cdots+P_n$ be the unique divisor of degree $n$ such that $P_j$, $j=1,\ldots,n$, are in the upper sheet of $\mathcal{R}$ with $\pi(P_j)\in (\hat{E}_{j-1},E_j)$ satisfying
\begin{align*}
	\prod_{i=0}^n(\pi(P_j)-E_i)=\prod_{i=0}^n(\pi(P_j)-\hat{E}_i).
\end{align*}
By \cite{AGbook,AGbook2,BA}, the finite-genus algebro-geometric solution $q^{(AG)}$ of the NLS equation \eqref{NLS} is given by
\begin{align}\label{def q AG}
q^{(AG)}(x,t;\E, \hat{\E}, \boldsymbol{\phi})&=\frac{i}{2}\left(\sum_{j=0}^{n}(\hat{E}_j-E_j) \right)e^{2i(f_0x+g_0t)}
\nonumber
\\
&\quad \times \dfrac{\Theta(\mathcal{A}(\infty)+\mathcal{A}(\mathcal{D})+\boldsymbol{K})\Theta(-\mathcal{A}(\infty)+\boldsymbol{C}(x,t;\boldsymbol{\phi})
+\mathcal{A}(\mathcal{D})+\boldsymbol{K})}{\Theta(-\mathcal{A}(\infty)+\mathcal{A}(\mathcal{D})+\boldsymbol{K})\Theta(\mathcal{A}(\infty)
+\boldsymbol{C}(x,t;\boldsymbol{\phi})+\mathcal{A}(\mathcal{D})+\boldsymbol{K})},
\end{align}
where 
$\boldsymbol{C}(x,t;\boldsymbol{\phi})=(c_1(x,t;\phi_1),\ldots,	c_n(x,t;\phi_n))^T$ is a column vector with
\begin{equation}
	c_j(x,t;\phi_j)=-\frac{1}{2\pi}\left(B_j^fx+B_j^gt+\phi_j \right) , \qquad j=1,\ldots,n.
\end{equation}

\subsection{The Jost functions}\label{sec:Jost}
The NLS equation \eqref{NLS} admits the following Lax pair \cite{ZS1}:
 \begin{equation}\label{laxx}
  \left\{
    \begin{array}{ll}
      \Psi_x = \left( - iz \sigma_3+ Q(q)\right) \Psi,  \\
      \Psi_t = \left( -2iz^2\sigma_3 +2zQ(q)-i(Q(q)^2+Q(q_x))\sigma_3\right) \Psi,
    \end{array}
  \right.
 \end{equation}
	where $\sigma_3$ is given in \eqref{def:PauliM} and
	\begin{equation*}
		Q(q)=\begin{pmatrix}
			0 & q \\
			\bar{q} & 0
		\end{pmatrix}.
	\end{equation*}

With $q=q^{(AG)}$ defined in \eqref{def q AG}, the associated Jost function $\Psi^{(AG)}$  of the Lax pair \eqref{laxx} is given by
(see \cite{BA})
\begin{align}
	\Psi^{(AG)}(z;x,t)=e^{i(f_0x+g_0t)\sigma_3}\mu^{(AG)}(z;x,t)e^{-i(f(z)x+g(z)t)\sigma_3},\label{def PsiAG}
\end{align}
where $f(z)$, $g(z)$ and $f_0$, $g_0$ are defined in \eqref{def fg} and \eqref{def f0g0}, respectively, and
\begin{align}
	\mu^{(AG)}(z;x,t)=&\frac{1}{2}\begin{pmatrix}
		(\varkappa(z)+\varkappa(z)^{-1})\frac{F_{11}(z;x,t,\boldsymbol{\phi})}{F_{11}(\infty;x,t,\boldsymbol{\phi})} & (\varkappa(z)-\varkappa(z)^{-1})\frac{F_{12}(z;x,t,\boldsymbol{\phi})}{F_{11}(\infty;x,t,\boldsymbol{\phi})}\\
		(\varkappa(z)-\varkappa(z)^{-1})\frac{F_{21}(z;x,t,\boldsymbol{\phi})}{F_{22}(\infty;x,t,\boldsymbol{\phi})} & 	(\varkappa(z)+\varkappa(z)^{-1})\frac{F_{22}(z;x,t,\boldsymbol{\phi})}{F_{22}(\infty;x,t,\boldsymbol{\phi})}
	\end{pmatrix}.\label{mu AG}
\end{align}
Here,
\begin{align}\label{def:varkappa}
	 \varkappa(z)=\left(\prod_{j=0}^n\frac{z-E_j}{z-\hat{E}_j} \right)^{1/4}
\end{align}
is analytic in $\mathbb{C}\setminus (\cup_{j=0}^n[E_j,\hat{E}_j])$ with the branch
chosen such that $\varkappa(\infty)=1$, and
\begin{align}
F_{11}(z;x,t,\boldsymbol{\phi})&
=\dfrac{\Theta(\mathcal{A}(z)+\boldsymbol{C}(x,t;\boldsymbol{\phi})+\mathcal{A}(\mathcal{D})+\boldsymbol{K})}{\Theta(\mathcal{A}(z)+\mathcal{A}(\mathcal{D})+\boldsymbol{K})},\\ 	F_{12}(z;x,t,\boldsymbol{\phi})&
=\dfrac{\Theta(-\mathcal{A}(z)+\boldsymbol{C}(x,t;\boldsymbol{\phi})+\mathcal{A}(\mathcal{D})+\boldsymbol{K})}{\Theta(-\mathcal{A}(z)+\mathcal{A}(\mathcal{D})+\boldsymbol{K})},\\ F_{21}(z;x,t,\boldsymbol{\phi})&=
\dfrac{\Theta(\mathcal{A}(z)+\boldsymbol{C}(x,t;\boldsymbol{\phi})-\mathcal{A}(\mathcal{D})-\boldsymbol{K})}{\Theta(\mathcal{A}(z)-\mathcal{A}(\mathcal{D})-\boldsymbol{K})},
\\	
F_{22}(z;x,t,\boldsymbol{\phi})&=
\dfrac{\Theta(\mathcal{A}(z)-\boldsymbol{C}(x,t;\boldsymbol{\phi})+\mathcal{A}(\mathcal{D})+\boldsymbol{K})}{\Theta(\mathcal{A}(z)+\mathcal{A}(\mathcal{D})+\boldsymbol{K})},
\label{def:F22}
\end{align}
with the same $\boldsymbol{\phi}$, $\boldsymbol{C}$, $\boldsymbol{K}$ and $\mathcal{D}$ as in \eqref{def q AG}.

By \cite{BA}, $\mu^{(AG)}$ solves the following RH problem.
\begin{RHP}\label{Pro mu0}
\hfill
	\begin{itemize}
		\item [$\mathrm{(a)}$] $\mu^{(AG)}(z)=\mu^{(AG)}(z;x,t)$ is analytic in $\mathbb{C}\setminus(\cup_{j=0}^n[E_j,\hat{E}_j])$.
		\item [$\mathrm{(b)}$]  $\mu^{(AG)}(z)$ satisfies the jump condition
$$ \mu_{+}^{(AG)}(z)=\mu_{-}^{(AG)}(z)J_0(z), \quad z\in (E_j,\hat E_j), \quad j=0,\ldots,n,$$
where
		\begin{align*}
			J_0(z)=J_0(z;x,t)=
				\begin{pmatrix}
					0 & -ie^{-i(B_j^fx+B_j^gt+\phi_j)}\\
					-ie^{i(B_j^fx+B_j^gt+\phi_j)} & 0
				\end{pmatrix}
		\end{align*}
        with $B_0^f=B_0^g=\phi_0=0$, $B_j^f, B_j^g$, $j=1,\ldots,n$, given in \eqref{def:Bjfg}.
		\item [$\mathrm{(c)}$] As $z\to\infty$, we have $\mu^{(AG)}(z)=I+\mathcal{O}(z^{-1})$.
        \item [$\mathrm{(d)}$] $\mu^{(AG)}(z)$ has at most $-1/4$-singularities near the branch points $E_j$ and $\hat{E}_j$, $j=0,1,\ldots,n$, i.e.,
        $$
        \mu^{(AG)}(z)=\mathcal{O}((z-p)^{-1/4}), \quad z\to p\in\{E_j, \hat{E}_j\}_{j=0}^n.
        $$
	\end{itemize}
\end{RHP}
In addition, the algebro-geometric solution $q^{(AG)}$ is related to $\mu^{(AG)}$  in the following way:
$$\underset{z\to\infty}{\lim}z\mu^{(AG)}_{12}(z)
=-\frac{i}{2}q^{(AG)}(x,t)e^{-2i(f_0x+g_0t)},  \quad \underset{z\to\infty}{\lim}z\mu^{(AG)}_{21}(z)=\frac{i}{2}\bar{q}^{(AG)}(x,t)e^{2i(f_0x+g_0t)}.$$

We next consider solutions of the Lax pair \eqref{laxx}  with  $q$  being the solution of Cauchy problem for the NLS equation \eqref{NLS} with the finite-genus algebro-geometric background \eqref{boundray q0}.
Let $\mu^\pm$ be solutions of  the  following Volterra integral equations:
\begin{align}\label{int mu}
	\mu^\pm(z;x,t)&= e^{i(f_0x+g_0t)\hat{\sigma}_3}\mu^{(AG)}(z;x,t)\nonumber\\
	&~~~+\int_{\pm\infty}^x\Gamma(z;s,t,x)Q(q-q^{(AG)})(s,t)\mu^\pm(z;s,t)e^{-i(f(z)-f_0)(s-x)\sigma_3}\dif s,
\end{align}
where
$\hat \sigma_3$ is defined in \eqref{def:hatsigma} and
\begin{align}\label{def Gamma}
	\Gamma(z;s,t,x)&=\Psi^{(AG)}(z;x,t)\Psi^{(AG)}(z;s,t)^{-1} \nonumber\\
	&=e^{i(f_0x+g_0t)\sigma_3}\mu^{(AG)}(z;x,t)e^{if(z)(s-x)\sigma_3}\mu^{(AG)}(z;s,t)^{-1}e^{-i(f_0s+g_0t)\sigma_3}.
\end{align}
By setting
\begin{align}\label{eq:muipm}
	\mu^\pm(z;x,t)&=\begin{pmatrix}
		\mu_{1}^\pm(z;x,t), & \mu_{2}^\pm(z;x,t)
	\end{pmatrix}, 
	\\
	\mu^{(AG)}(z;x,t)&=
	\begin{pmatrix}  \mu_{1}^{(AG)}(z;x,t), & \mu_{2}^{(AG)}(z;x,t)
		\end{pmatrix},  \label{def:muAG12}
\end{align}
where the subscripts $1$ and $2$  denote the first and the second column of a matrix, respectively, it is then readily seen that we can rewrite the integral equations in \eqref{int mu} as
\begin{align}
		\mu^\pm_1(z;x,t)=& \begin{pmatrix}
			1 & 0\\
			0 & e^{-2i(f_0x+g_0t)}
		\end{pmatrix}\mu^{(AG)}_1(z;x,t)\nonumber\\
	&+\int_{\pm\infty}^xe^{-i(f(z)-f_0)(s-x)}\Gamma(z;s,t,x)Q(q-q^{(AG)})(s,t)\mu^\pm_1(z;s,t)\dif s,\label{int mu1}\\
		\mu^\pm_2(z;x,t)=& \begin{pmatrix}
			e^{2i(f_0x+g_0t)} & 0\\
			0 & 1
		\end{pmatrix}\mu^{(AG)}_2(z;x,t)\nonumber\\
	&+\int_{\pm\infty}^xe^{i(f(z)-f_0)(s-x)}\Gamma(z;s,t,x)Q(q-q^{(AG)})(s,t)\mu^\pm_2(z;s,t)\dif s,\label{int mu2}
\end{align}
We collect some basic properties of $\mu^\pm$ in what follows.
\begin{proposition}\label{Pro mu1}
With $\mathbb{C}^{\pm}$ defined in \eqref{def:Cpm}, the following properties of $\mu_{i}^{\pm}$, $i=1,2$, in \eqref{eq:muipm} hold.
	\begin{itemize}
		\item [$\mathrm{(a)}$] $\mu^{-}_1 (z;x,t)$ and $\mu^{+}_2(z;x,t)$ exist uniquely for $z\in\overline{\mathbb{C}^{+}}\setminus \{E_j,\hat{E}_j\}_{j=0}^n$, which are
		 analytic in $\mathbb{C}^+$ and admit continuous extensions to $\overline{\mathbb{C}^{+}}\setminus \{E_j,\hat{E}_j\}_{j=0}^n$.
		\item [$\mathrm{(b)}$]  $\mu_{1}^+(z;x,t)$ and $\mu_{2}^-(z;x,t)$
		exist uniquely for $z\in\overline{\mathbb{C}^{-}}\setminus \{E_j,\hat{E}_j\}_{j=0}^n$, which are analytic for $z$ in $\mathbb{C}^-$ and admit continuous extensions to $\overline{\mathbb{C}^{-}}\setminus \{E_j,\hat{E}_j\}_{j=0}^n$.
		\item [$\mathrm{(c)}$] 
  $\det \mu^\pm(z;x,t)=1$, $z\in\mathbb{R}\setminus(\cup_{j=0}^n[E_j,\hat{E}_j])$.
		\item [$\mathrm{(d)}$]  $\mu_{i}^{\pm}$, $i=1,2$, satisfy the symmetry relations
		\begin{align}\label{eq:symm1}
\mu^\pm_1(z;x,t)=\sigma_1\overline{\mu^\pm_2(\bar{z};x,t)},\qquad
\mu^\pm_2(z;x,t)=\sigma_1\overline{\mu^\pm_1(\bar{z};x,t)},
		\end{align}
		where $\sigma_1$ is given in \eqref{def:PauliM}, provided both sides of the above equalities are well-defined.
		For $z\in (E_j,\hat{E}_j)$, $j=0,1,\ldots,n$, we have
		 \begin{align}
		 	\mu^+_{2,+}(z;x,t)&=-ie^{2i(f_0x+g_0t)-i(B_j^fx+B_j^gt+\phi_j)}\mu^+_{1,-}(z;x,t),\\ \mu^-_{2,-}(z;x,t)&=ie^{2i(f_0x+g_0t)-}i(B_j^fx+B_j^gt+\phi_j)\mu^-_{1,+}(z;x,t).
		 	\label{eq:symm2}
		 \end{align}
\item [$\mathrm{(e)}$]
As $z\to\infty$, we have
	\begin{align*}
		\begin{pmatrix}
			\mu^+_1(z;x,t)&\mu^-_2(z;x,t)
		\end{pmatrix}&=I+\mathcal{O}(z^{-1}), \qquad z\in {\mathbb{C}^{-}},
		\\
		\begin{pmatrix}
			\mu^-_1(z;x,t)&\mu^+_2(z;x,t)
		\end{pmatrix}&=I+\mathcal{O}(z^{-1}),\qquad z\in {\mathbb{C}^{+}}.
	\end{align*}
	\end{itemize}
\end{proposition}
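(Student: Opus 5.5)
We argue directly from the Volterra equations \eqref{int mu1}--\eqref{int mu2}, using the explicit structure of $\Gamma$ in \eqref{def Gamma} and the finite support of $q-q^{(AG)}$ coming from Assumption \ref{asum1}(a) together with \eqref{bj}.

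\emph{Parts (a) and (b).} Write $\Gamma(z;s,t,x)=e^{i(f_0x+g_0t)\sigma_3}\mu^{(AG)}(z;x,t)\,e^{if(z)(s-x)\sigma_3}\,\mu^{(AG)}(z;s,t)^{-1}e^{-i(f_0s+g_0t)\sigma_3}$. In \eqref{int mu1} the kernel is multiplied by $e^{-i(f(z)-f_0)(s-x)}$, so the exponential factors that occur are $1$ and $e^{-2if(z)(s-x)}$, up to harmless phases $e^{\pm i f_0(\cdot)}$. We first check that $\operatorname{Im} f(z)>0$ in $\mathbb{C}^+$ and $\operatorname{Im} f=0$ on $\mathbb{R}\setminus\cup_j[E_j,\hat E_j]$. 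This follows from \eqref{def:fg}: $f$ is real on the gaps, the normalization $\int_{E_j}^{\hat E_j}\dif f=0$ makes $\operatorname{Re} f$ single-valued, and $f(z)\sim z$ at infinity; a harmonic-function/maximum-principle argument then gives $\operatorname{Im} f>0$ in $\mathbb{C}^+$. Consequently, for $\mu_1^-$ (integration over $s<x$) we have $|e^{-2if(z)(s-x)}|=e^{2\operatorname{Im}f(z)(s-x)}\le 1$ in $\overline{\mathbb{C}^+}$; the same holds for $\mu_2^+$ with $s>x$.

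The theta quotients $F_{ij}$ are bounded in $x,t$, since the Jacobi inversion conditions on $\mathcal{D}$ keep the theta denominators nonvanishing for real $\boldsymbol{\phi}$. The factors $\varkappa^{\pm1}$ are bounded away from the branch points. Hence $\|\Gamma\|\le C(z)$ uniformly in $s,x$ for $z$ in compact subsets of $\overline{\mathbb{C}^+}\setminus\{E_j,\hat E_j\}$. The Neumann series then converges with the bound $\exp\!\big(C(z)\|q-q^{(AG)}\|_{L^1}\big)$, which gives existence and uniqueness. Analyticity in $\mathbb{C}^+$ follows from uniform convergence of analytic iterates, and continuity up to the real axis (including across the cuts, taking $\pm$ boundary values of $\mu^{(AG)}$) follows from dominated convergence. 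Part (b) is symmetric, using $\operatorname{Im} f<0$ in $\mathbb{C}^-$.

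\emph{Part (c).} Both $\mu^\pm$ are fundamental solutions of the $x$-part of \eqref{laxx} after conjugation by the exponential, and the trace of $-iz\sigma_3+Q$ vanishes, so $\det\mu^\pm$ is independent of $x$. Letting $x\to\pm\infty$, where $\mu^\pm\to e^{i(f_0x+g_0t)\hat\sigma_3}\mu^{(AG)}$, reduces the claim to $\det\mu^{(AG)}=1$. This holds because $\det\mu^{(AG)}$ has no jump (since $\det J_0=1$), has at most $-1/2$ singularities and hence only removable ones, and tends to $1$ at infinity; Liouville's theorem finishes the argument.

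\emph{Parts (d) and (e).} For the symmetry, we use $Q(q)=\sigma_1\overline{Q(q)}\sigma_1$ together with $\sigma_1\overline{\mu^{(AG)}(\bar z)}\sigma_1=\mu^{(AG)}(z)$. The latter follows from uniqueness for RH Problem \ref{Pro mu0}, since $\sigma_1\overline{J_0}\sigma_1=J_0^{-1}$-type relations hold on the cuts once boundary sides are swapped; also $f^*=f$. Therefore $\sigma_1\overline{\mu_2^\pm(\bar z)}$ solves the same Volterra equation as $\mu_1^\pm$, and uniqueness gives equality. For the cut relations, on $(E_j,\hat E_j)$ we have $f_+=-f_-+\text{const}$ with the constant tied to $B^f_j$ (similarly for $g$), and $\mu^{(AG)}_+=\mu^{(AG)}_-J_0$. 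Substituting these into \eqref{int mu2} shows that $\mu^+_{2,+}$ and the indicated multiple of $\mu^+_{1,-}$ satisfy identical Volterra equations, so they coincide. We expect this bookkeeping of the boundary values of $f$, $g$ and $\Gamma$ on the cuts to be the main obstacle. Finally, for (e), as $z\to\infty$ we have $\mu^{(AG)}=I+\mathcal{O}(z^{-1})$, and a Riemann--Lebesgue/integration-by-parts estimate on the oscillatory off-diagonal terms of the Volterra kernel gives the $\mathcal{O}(z^{-1})$ correction.
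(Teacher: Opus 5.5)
Your proposal is correct and follows the paper's route: Volterra iteration with the kernel controlled by the sign of $\operatorname{Im} f$, the Abel formula for the determinant, uniqueness of Volterra solutions for the symmetry and band relations, and a large-$z$ analysis of the integral equation for (e). Along the way you also fill in $\det\mu^{(AG)}=1$ and $\sigma_1\overline{\mu^{(AG)}(\bar z)}\sigma_1=\mu^{(AG)}(z)$ via uniqueness for the RH problem of $\mu^{(AG)}$. The one ingredient your maximum-principle argument for $\operatorname{Im} f>0$ in $\mathbb{C}^+$ leaves unstated is the boundary inequality $\operatorname{Im} f_+\ge 0$ on the bands $(E_j,\hat E_j)$; the paper gets this from $f'$ having exactly one zero $z_j^f$ in each band together with $\operatorname{Im} f$ vanishing at the band endpoints.
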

\begin{proof}
For items (a) and (b), we only present the proof of the properties for $\mu_1^+$, since the other claims can be proved in a similar manner. By \eqref{def Gamma}, \eqref{def fg} and the RH problem \ref{Pro mu0} for $\mu^{AG}$, it is easily seen that  $\Gamma(z;s,t,x)$ is a meromorphic matrix-valued function with at most $-1/2$-singularities at each branch point $E_j,\hat{E}_j$, $j=0,1,\ldots,n$. Thus, it is actually an entire function. From the definition of $f$ in \eqref{def fg} and \eqref{def:fg}, it follows that Im$f(z)=0$ on $\mathbb{R}\setminus(\cup_{j=0}^n(E_j,\hat{E}_j))$ and 
\begin{align*}
    f'(z)=\frac{\prod_{k=0}^n(z-z_k^f)}{w(z)}.
\end{align*}
Since $ \int^{\hat{E}_j}_{E_j}\dif f=0$, $j=1,\ldots,n$, the zeros of $f'(z)$, namely, $z_j^f$, $j=0,...,n$, are distributed such that each of the  intervals $(E_j,\hat{E}_j)$ contains exactly one zero. We thus conclude that  $f'(z)\neq0$ for $z\in\mathbb{R}\setminus(\cup_{j=0}^n(E_j,\hat{E}_j))$ and  the level curve $\Im f(z)=0$ in $\mathbb{C}^\pm$, if exists, does not intersect with $\mathbb{R}\setminus(\cup_{j=0}^n(E_j,\hat{E}_j))$. Also note that $\Im f(E_j)=\Im f(\hat{E}_j)=0$, $j=1,\ldots,n$, one has $\Im f_+(z)=-\Im f_-(z) > 0$ for $z\in \cup_{j=0}^n(E_j,\hat{E}_j)$. As a consequence, the level curve $\Im f(z)=0$ in $\mathbb{C}^\pm$ does not intersect with the real axis. On the other hand, since $f(z)\sim z$ as $z\to\infty$ (see \eqref{def:fg}), it follows that $\pm \Im f(z)
> 0$ for $z\in\mathbb{C}^\pm$. Particularly, $\Im(f(z)-f_0)=\Im f(z) \leq0$ for $z\in\overline{\mathbb{C}^{-}}\setminus \{E_j,\hat{E}_j\}_{j=0}^n$, where $f_0$ is the real constant defined in  \eqref{def f0g0}.
 Therefore, for $z\in\overline{\mathbb{C}^{-}}\setminus \{E_j,\hat{E}_j\}_{j=0}^n$,  $\Gamma(z;s,t,x)e^{-i(f(z)-f_0)(s-x)}$ is uniformly bounded for $s>x$, $t>0$, which implies that $e^{-i(f(z)-f_0)(\cdot-x)}\Gamma(z;\cdot,t,x)Q(q-q^{(AG)})(\cdot,t)\in L^1(x,\infty)$. The property of $\mu_1^+$ then follows from Picard-Lindel\"of iteration of the integral equation \eqref{int mu1}.
The item (c) is a consequence of the Abel formula. For the symmetry relations in item (d), one can check that  both sides of \eqref{eq:symm1}--\eqref{eq:symm2} satisfy the same integral equation, the equalities then follow from
the uniqueness of the solutions for the Volterra  integral equations \eqref{int mu1} and \eqref{int mu2}. Finally, to show the asymptotic formulas in item (e), we observe from \eqref{eq:muipm} and the integral equation \eqref{int mu} for $\mu^-$ that
\begin{align*}
	\mu^-_1(z;x,t)&=e^{i(f_0x+g_0t)\hat{\sigma}_3}\mu^{(AG)}(z;x,t)\left(\begin{pmatrix}
		1\\0\\
	\end{pmatrix}\right.\\
	&~~\left.+\int_{-\infty}^x\mathrm{diag}(1,e^{-2i(f(z)-f_0)(s-x)})\mu^{(AG)}(z;s,t)^{-1}e^{-i(f_0s+g_0t)\sigma_3}Q(q-q^{(AG)})(s,t)\mu^-_1(z;s,t)\dif s\right).
\end{align*}
Taking $z\to\infty$ in $\mathbb{C}^{+}$, by combining  the definition of $\Gamma$ in \eqref{def Gamma} and item (c)  of RH problem \ref{Pro mu0}, we have
\begin{multline}
	e^{-if(z)(s-x)}\Gamma(z;s,t,x)Q(q-q^{(AG)})(s,t)
	\\
	=\begin{pmatrix}
		(\bar{q}-\bar{q}^{(AG)})(s,t)\mathcal{O}(z^{-1})&(q-q^{(AG)})(s,t)\mathcal{O}(1)\\
		(\bar{q}-\bar{q}^{(AG)})(s,t)\mathcal{O}(z^{-2})& (\bar{q}-\bar{q}^{(AG)})(s,t)\mathcal{O}(z^{-1})\\
	\end{pmatrix}.\label{asy Gama}
\end{multline}
From Lebesgue's dominated convergence theorem, it then follows that
\begin{align*}
	\lim_{z\to\infty}\mu^-_1(z;x,t)=\begin{pmatrix}
		1\\0\\
	\end{pmatrix}+\int_{-\infty}^xe^{-i(f_0s+g_0t)}\begin{pmatrix}
		0&(q-q^{(AG)})(s,t)\mathcal{O}(1)\\0&0\\
	\end{pmatrix}\lim_{z\to\infty}\mu^-_1(z;s,t)\dif s,
\end{align*} which implies \begin{align*}
	\lim_{z\to\infty}\mu^-_1(z;x,t)=\begin{pmatrix}
		1\\0\\
	\end{pmatrix}.
\end{align*}
Furthermore, by combining  \eqref{asy Gama} with the integral equation \eqref{int mu1},  we have that as $z\to\infty$ in $\mathbb{C}^{+}$,
\begin{align*}
	z\mu^-_{21}(z;x,t)&=z\mu^{(AG)}_{21}(z;x,t)
	\\&~~+z\int_{-\infty}^x\mathcal{O}(z^{-2})\mu^-_{11}(z;s,t) + \mathcal{O}(z^{-1})(\bar{q}-\bar{q}^{(AG)})(s,t)\mu^-_{21}(z;s,t)\dif s=\mathcal{O}(1).
\end{align*}
Therefore,
\begin{align*}
	&z(\mu^-_{11}(z;x,t)-1)=z(\mu^{(AG)}_{11}(z;x,t)-1)
	\\&+z\int_{-\infty}^x\mathcal{O}(z^{-1})(\bar{q}-\bar{q}^{(AG)})(s,t)\mu^-_{11}(z;s,t) + (q-q^{(AG)})(s,t)\mu^-_{21}(z;s,t)\mathcal{O}(1)\dif s=\mathcal{O}(1).
\end{align*}
A combination of the above three formulas yields that
\begin{equation}
	\mu^-_1(z;x,t)=\begin{pmatrix}
		1\\0\\
	\end{pmatrix}+\mathcal{O}(z^{-1}).
\end{equation}
The large $z$ asymptotics for other $\mu^\pm_j$ in item $\mathrm{(e)}$ can be proved similarly, and we omit the details here.
\end{proof}

With the aid of $\mu^\pm$ in \eqref{int mu}, we define
\begin{align}
	\Psi^\pm(z;x,t):=\mu^\pm(z;x,t)e^{-i((f(z)-f_0)x+(g(z)-g_0)t)\sigma_3},\label{def Psi pm}
\end{align}
which are two linearly independent solutions of the Lax pair \eqref{laxx}. Thus, there exists a scattering matrix $S(z)$   independent of $(x, t)$  for $z\in\mathbb{R}\setminus(\cup_{j=0}^n[E_j,\hat{E}_j])$ such that
\begin{align*}
    \Psi^+(z;x,t)=\Psi^-(z;x,t)S(z),
\end{align*}
or equivalently,
\begin{align}\label{Sz}
	\mu^+(z;x,t)=\mu^-(z;x,t)e^{-i((f(z)-f_0)x+(g(z)-g_0)t)\hat{\sigma}_3}S(z).
\end{align}
 By further using \eqref{eq:muipm}, it is readily seen that the
entries of $S(z)$ are given by
\begin{align}
	&S_{11}(z)=\det(\mu^+_1(z),\mu^-_2(z)),\quad S_{12}(z)=e^{2i((f(z)-f_0)x+(g(z)-g_0)t)}\det(\mu^+_2(z),\mu^-_2(z)),\label{def S11}\\
&S_{21}(z)=e^{-2i((f(z)-f_0)x+(g(z)-g_0)t)}\det(\mu^-_1(z),\mu^+_1(z)), \quad S_{22}(z)=\det(\mu^-_1(z),\mu^+_2(z)).\label{def S21}
\end{align} We then define the following reflection coefficients from the scattering matrix $S(z)$ for $z\in \mathbb{R}\setminus(\cup_{j=0}^n[E_j,\hat{E}_j])$
\begin{align}
	\label{def r}
	r_1(z)=\frac{S_{12}(z)}{S_{11}(z)},\quad r_2(z)=\frac{S_{21}(z)}{S_{22}(z)}, \quad r_3(z)=\frac{S_{21}(z)}{S_{11}(z)},\quad r_4(z)=\frac{S_{12}(z)}{S_{22}(z)}.
\end{align}
In view of Proposition \ref{Pro mu1}, the following proposition is immediate.

\begin{proposition}\label{Pro a r}
The scattering data and the reflection coefficients admit the following properties.
\begin{itemize}
		\item [$\mathrm{(a)}$] The scattering data  $S_{11}(z)$ and $ S_{22}(z)$ are  analytic in $\mathbb{C}^-$ and $\mathbb{C}^+$, respectively, and  admit continuous extensions to $\overline{\mathbb{C}^{-}}\setminus \{E_j,\hat{E}_j\}_{j=0}^n$ and $\overline{\mathbb{C}^{+}}\setminus \{E_j,\hat{E}_j\}_{j=0}^n$,  respectively.

\item[$\mathrm{(b)}$] The definitions of $S_{12}(z)$ and $ S_{21}(z)$  can also be extended to $\mathbb{R}\setminus \{E_j,\hat{E}_j\}_{j=0}^n$. Particularly, for $z\in \cup_{j=0}^n(E_j,\hat{E}_j)$, we have
\begin{align*}
	&S_{12}(z)=\det(e^{i((f_+(z)-f_0)x+(g_+(z)-g_0)t)}\mu^+_2(z),e^{i((f_-(z)-f_0)x+(g_-(z)-g_0)t)}\mu^-_2(z)),\\ &S_{21}(z)=\det(e^{-i((f_+(z)-f_0)x+(g_+(z)-g_0)t)}\mu^-_1(z),e^{-i((f_-(z)-f_0)x+(g_-(z)-g_0)t)}\mu^+_1(z)).
\end{align*}
\item [$\mathrm{(c)}$] For $z\in\mathbb{R}\setminus(\cup_{j=0}^n[E_j,\hat{E}_j])$, $\det S(z)=S_{11}(z)S_{22}(z)-S_{12}(z)S_{21}(z)=1$.
		\item [$\mathrm{(d)}$]
		We have
		$$S_{11}(z)=\overline{S_{22}(\bar{z})},\qquad z\in\mathbb{C}^-,$$
		and  \begin{align*}
		  S_{11}(z)=\overline{S_{22}(z)},~~  S_{12}(z)=\overline{S_{21}(z)}, ~~
		  r_1(z)=\overline{r_2(z)},~~
		  r_3(z)=\overline{r_4(z)},
		\end{align*}
		for $z\in \mathbb{R}\setminus(\cup_{j=0}^n[E_j,\hat{E}_j])$. Moreover,  if $z\in(E_j,\hat{E}_j)$, $j=0,1,\ldots,n$, we have
		\begin{align*}
			&S_{11}(z)=S_{22}(z),\qquad S_{12}(z)=e^{-2i\phi_j}S_{21}(z),\\
			&r_1(z)=e^{-2i\phi_j}r_2(z),\qquad r_3(z)=e^{2i\phi_j}r_4(z).
		\end{align*}
	\item [$\mathrm{(e)}$] As $z\to\infty$, we have
	$$S_{11}(z),\ S_{22}(z)=1+\mathcal{O}(z^{-1}), $$ and  $$r_j(z),\ S_{12}(z),\ S_{21}(z)=\mathcal{O}(z^{-1}),\qquad j=1,\ldots,4. $$
	\end{itemize}
\end{proposition}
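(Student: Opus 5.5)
The plan is to read every item of Proposition \ref{Pro a r} off the Wronskian representations \eqref{def S11}--\eqref{def S21}, combined with the corresponding properties of $\mu^\pm$ in Proposition \ref{Pro mu1}.

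\emph{Item (a).} $S_{11}=\det(\mu_1^+,\mu_2^-)$ is a determinant of two columns. By Proposition \ref{Pro mu1}(b), both columns are analytic in $\mathbb{C}^-$ and continuous up to $\overline{\mathbb{C}^-}\setminus\{E_j,\hat E_j\}$. Since the determinant is $x$-independent (Abel formula, trace-free Lax matrix), it defines the required extension of $S_{11}$. The same argument with Proposition \ref{Pro mu1}(a) handles $S_{22}$.

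\emph{Item (b).} On the cuts $(E_j,\hat E_j)$ we return to \eqref{Sz} written in terms of $\Psi^\pm$. Here $f,g$ have boundary values $f_\pm,g_\pm$ from above and below, and the columns $\mu_2^+$ and $\mu_2^-$ are taken from their respective half-planes. Forming the Wronskian of the two solutions $\Psi^\pm_2$ of the $x$-equation gives the stated formula for $S_{12}$; it is well defined because a Wronskian of two solutions is independent of $x$. The formula for $S_{21}$ is obtained identically.

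\emph{Items (c) and (d).} For (c), take determinants in \eqref{Sz} and use $\det\mu^\pm=1$ from Proposition \ref{Pro mu1}(c), noting that the conjugation by $e^{\hat\sigma_3}$ preserves determinants. For (d), substitute the symmetry $\mu_1^\pm(z)=\sigma_1\overline{\mu_2^\pm(\bar z)}$ into $S_{11}=\det(\mu_1^+,\mu_2^-)$. Since $\det(\sigma_1 a,\sigma_1 b)=-\det(a,b)$, column exchanges give $\overline{S_{22}(\bar z)}$. On the real axis outside the cuts, $f,g,f_0,g_0$ are real, so the exponential factors conjugate correctly. This gives $S_{12}=\overline{S_{21}}$, and the relations for the $r_i$ follow by dividing.

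Inside a cut we need instead the jump relations \eqref{eq:symm2}. These relate $\mu_{2,+}^+$ to $\mu_{1,-}^+$ (and similarly for $\mu^-$) up to the phase $\mp i e^{2i(f_0x+g_0t)-i(B_j^fx+B_j^gt+\phi_j)}$. We also use that across the cut $f_+=-f_-$ up to the constant $B_j^f$, and likewise $g_+=-g_-$ up to $B_j^g$. Substituting these into the determinants, the $x,t$-dependent phases cancel and only $e^{\pm 2i\phi_j}$ survives. The result is $S_{11}=S_{22}$ and $S_{12}=e^{-2i\phi_j}S_{21}$, and the $r_i$ relations follow.

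\emph{Item (e).} This follows from Proposition \ref{Pro mu1}(e): both columns entering $S_{11}$ and $S_{22}$ are $e_i+\mathcal O(z^{-1})$, so these determinants are $1+\mathcal O(z^{-1})$. For $S_{12}$ and $S_{21}$, the two columns share the same leading vector, so the determinant is $\mathcal O(z^{-1})$. The oscillatory factor is harmless, since the determinant is independent of $x$ and can be evaluated, say, in the limit $x\to+\infty$.

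\emph{Main obstacle.} The hardest step is the bookkeeping in (d) on the cuts. One must track the boundary values of $f,g$ and the precise constants in \eqref{eq:symm2} to confirm that the phases cancel, leaving exactly $e^{-2i\phi_j}$. I would verify this first for $j=0$, where $B_0^f=B_0^g=\phi_0=0$, and then insert the $b$-period constants for general $j$.
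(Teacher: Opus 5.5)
Your route is the one the paper intends (it only says the proposition is immediate from Proposition \ref{Pro mu1}), and items (a)--(c), (e) and the off-cut part of (d) go through as you describe. The gap is the step you flagged as the main obstacle: on the cuts, the substitution you describe produces $-e^{-2i\phi_j}$, not $e^{-2i\phi_j}$.

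Put $\gamma=f_0x+g_0t$, $\beta_j=B_j^fx+B_j^gt+\phi_j$ and $a=-ie^{2i\gamma-i\beta_j}$. The jump $J_0$ is off-diagonal with determinant one, so \eqref{eq:symm2} says $\mu^+_{2,+}=a\,\mu^+_{1,-}$ and $\mu^-_{2,-}=-a\,\mu^-_{1,+}$. Equivalently, $\mu^+_{1,-}=a^{-1}\mu^+_{2,+}$ and $\mu^-_{1,+}=-a^{-1}\mu^-_{2,-}$. Insert these, together with $f_++f_-=B_j^f$ and $g_++g_-=B_j^g$ from \eqref{theta Ej}, into the formulas of (b):
\begin{align*}
S_{12}&=e^{i(B_j^fx+B_j^gt)-2i\gamma}\det(\mu^+_{2,+},\mu^-_{2,-})=e^{i(B_j^fx+B_j^gt)-2i\gamma}\,a\,\det(\mu^+_{1,-},\mu^-_{2,-})=-ie^{-i\phi_j}S_{11},\\
S_{21}&=e^{-i(B_j^fx+B_j^gt)+2i\gamma}\det(\mu^-_{1,+},\mu^+_{1,-})=e^{-i(B_j^fx+B_j^gt)+2i\gamma}\,a^{-1}\det(\mu^-_{1,+},\mu^+_{2,+})=ie^{i\phi_j}S_{22},\\
S_{22}&=\det(-a^{-1}\mu^-_{2,-},\,a\,\mu^+_{1,-})=-\det(\mu^-_{2,-},\mu^+_{1,-})=S_{11}.
\end{align*}
The $x,t$-dependent phases do cancel, but the constants $-i$ and $i$ do not. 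You get $S_{12}=-e^{-2i\phi_j}S_{21}$, hence $r_1=-e^{-2i\phi_j}r_2$ and $r_3=-e^{2i\phi_j}r_4$ on $(E_j,\hat E_j)$. In fact $r_1\equiv-ie^{-i\phi_j}$ and $r_2\equiv ie^{i\phi_j}$ there, away from zeros of $S_{11}$. The printed relation $S_{12}=e^{-2i\phi_j}S_{21}$ could therefore hold only where $S_{11}=0$. Your own planned check at $j=0$ exposes this: the claim there is $S_{12}=S_{21}$, whereas one finds $S_{12}=-iS_{11}=-S_{21}$.

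So the last two displayed relations of (d) cannot be derived from \eqref{eq:symm2} and the definitions in (b) as stated. The sign is not a slip that more careful bookkeeping removes. It is forced by the factor $-i$ in $J_0$, which comes from $\varkappa_+=-i\varkappa_-$ on each cut.

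To close the argument you must do one of two things. Either correct the statement to $S_{12}=-e^{-2i\phi_j}S_{21}$, $r_1=-e^{-2i\phi_j}r_2$, $r_3=-e^{2i\phi_j}r_4$. Or exhibit a different normalization of $\mu^{(AG)}$, or of the on-cut definitions in (b), under which the printed sign is true. Be aware that this choice propagates: Proposition \ref{Pro asy ar} reads off the value $r_1(p)=-e^{-i\phi_j}$ from (d).

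A minor point on (e): the exponential prefactor of $S_{12},S_{21}$ is harmless simply because it is unimodular on $\mathbb{R}\setminus\cup_j[E_j,\hat E_j]$. No limit $x\to+\infty$ is needed for that.
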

Since $\mu^{(AG)}(z)=\mathcal{O}((z-p)^{-1/4})$ as $z\to p \in \{E_j, \hat{E}_j\}_{j=0}^n$, it follows from the integral equations \eqref{int mu} that $\mu$ also admits a $-1/4$ singularity at $p$. As $z\to p$, $S_{jk}(z)$, $j,k=1,2$, generically have $-1/2$-singularities, which corresponds to $|r_i(p)|=1$, $i=1,\ldots,4$, as stated in item (b) of Assumption \ref{asum1}. The non-generic case is that $S_{11}(z)$ is continuous at $p$, which is rarely happening as discussed in \cite{DT1979}. 
The following proposition gives us the asymptotic behaviors of the scattering matrix  and the reflection coefficients at $p$ for the generic case.
\begin{proposition}\label{Pro asy ar}
	Let $p\in\{E_j, \hat{E}_j\}_{j=0}^n$. For the generic case, we have, as
	 $z\to p \text{ from }\mathbb{R}\setminus (\cup_{j=0}^n[E_j,\hat{E}_j])$,
	 	\begin{align}
	 	&S_{11}(z), \ S_{12}(z)=\mathcal{O}((z-p)^{-1/2}),\label{eq:Sloc}  \\
	 	&r_1(z)=-e^{-i\phi_j}+\mathcal{O}((z-p)^{1/2}),\quad  r_3(z)=-e^{i\phi_j}+\mathcal{O}((z-p)^{1/2}). \label{eq:riloc}
	 \end{align}
	
\end{proposition}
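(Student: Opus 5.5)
We describe the local structure of the Jost solutions near a branch point $p$ and substitute it into the Wronskian formulas \eqref{def S11}--\eqref{def S21} and Proposition \ref{Pro a r}(b). Fix $p\in\{E_j,\hat E_j\}$ and let $j$ be the index of the gap adjacent to $p$.

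\textbf{Local form of $\mu^{\pm}$.} Near $p$, the function $\varkappa(z)$ in \eqref{def:varkappa} behaves like $c\,(z-p)^{\pm1/4}$. The theta quotients $F_{ik}$ are analytic and nonvanishing at $p$, because $\mathcal{D}$ avoids the branch points. Hence, by \eqref{mu AG},
\[
\mu^{(AG)}(z)=(z-p)^{-1/4}\bigl(A_p+\mathcal{O}((z-p)^{1/2})\bigr),
\]
where $A_p$ is a rank-one matrix: the two columns carry the combinations $\varkappa\pm\varkappa^{-1}$, whose singular parts differ only by a sign. The kernel $\Gamma$ in \eqref{def Gamma} is entire in $z$, and $q-q^{(AG)}$ has compact support by Assumption \ref{asum1}(a). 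Iterating the Volterra equations \eqref{int mu1}--\eqref{int mu2} therefore shows that each column of $\mu^\pm$ has the same structure:
\[
\mu^\pm_i(z)=(z-p)^{-1/4}\bigl(v^\pm_i+\mathcal{O}((z-p)^{1/2})\bigr),
\]
where each $v^\pm_i$ is a multiple of a fixed vector $v_p$, up to the explicit diagonal phases.

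\textbf{Singularity of $S_{11}$ and $S_{12}$.} Inserting these expansions into $S_{11}=\det(\mu^+_1,\mu^-_2)$ and into the formula for $S_{12}$ gives
\[
S_{11},\,S_{12}=(z-p)^{-1/2}\bigl(\alpha_{p}+\mathcal{O}((z-p)^{1/2})\bigr),
\]
which proves \eqref{eq:Sloc}. The generic case is precisely the case $\alpha_p\neq0$ for $S_{11}$.

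\textbf{Limits of the reflection coefficients.} We then need
\[
r_1=\frac{S_{12}}{S_{11}}=\frac{\beta_p}{\alpha_p}+\mathcal{O}((z-p)^{1/2}),
\]
and the key step is to identify $\beta_p/\alpha_p=-e^{-i\phi_j}$. Here I would use the jump relations \eqref{eq:symm2}. At $p$, the boundary values from the two sides of the cut coincide with the limit taken along the real line outside the gap. These relations give $\mu^+_2(p)$ proportional to $\mu^+_1(p)$ with factor $-ie^{2i(f_0x+g_0t)-i(B_j^fx+B_j^gt+\phi_j)}$, and the analogous relation for $\mu^-$ with the opposite sign. Substituting into the leading coefficients, the leading term of $\det(\mu^+_2,\mu^-_2)$ becomes a fixed multiple of the leading term of $\det(\mu^+_1,\mu^-_2)$. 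The exponential factor $e^{2i((f-f_0)x+(g-g_0)t)}$ in \eqref{def S11}, evaluated at $p$ where $f(p)$ and $g(p)$ are real, combines with these phases. The $x$- and $t$-dependence cancels, as it must since $S$ is independent of $(x,t)$, leaving $-e^{-i\phi_j}$. A cleaner route to the same identity is to use the relations $S_{11}=S_{22}$ and $S_{12}=e^{-2i\phi_j}S_{21}$ on the gap from Proposition \ref{Pro a r}(d), together with $\det S=1$. Because $S_{11}S_{22}-S_{12}S_{21}=1$ while both products blow up like $(z-p)^{-1}$, the leading coefficients must satisfy $|\alpha_p|^2=|\beta_p|^2$; the relation on the gap then fixes the phase. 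The formula for $r_3=S_{21}/S_{11}$ follows by conjugation, using $S_{21}=\overline{S_{12}}$ and $S_{11}=\overline{S_{22}}$ on the real line.

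\textbf{Main obstacle.} I expect the main difficulty to be the bookkeeping of the branches of $(z-p)^{1/4}$ and of the boundary values $f_\pm$, $g_\pm$ at the endpoint. These determine the exact sign $-1$ and show that the phase is $e^{-i\phi_j}$ rather than its conjugate. I would handle this carefully at $p=\hat E_0$ first, where $\phi_0=0$, and then transport the argument to the other endpoints.
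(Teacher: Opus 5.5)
There is a genuine gap in how you identify the limit of $r_1$ (and $r_3$), and a smaller misstatement before it.

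\textbf{The structural claim.} The leading vectors cannot all be multiples of one fixed $v_p$. If they were, then $\alpha_p=\det(v^+_1,v^-_2)=0$, which is exactly the non-generic case. What $\det\mu^\pm=1$ (item (c) of Proposition \ref{Pro mu1}) actually gives is weaker: for each sign separately the two leading columns are parallel, $v^\pm_2=\lambda_p v^\pm_1$. The vectors $v^+_1$ and $v^-_1$ are independent precisely in the generic case. This corrected version is enough for \eqref{eq:Sloc}.

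\textbf{Your ``cleaner route'' does not fix the sign.} It is the paper's own argument: $|S_{11}|^2-|S_{12}|^2=1$ gives $|r_1(p)|=1$, and item (d) of Proposition \ref{Pro a r} is then used to fix the phase. But $r_1=e^{-2i\phi_j}r_2$ on the gap together with $r_2=\overline{r_1}$ outside gives at best $r_1(p)^2=e^{-2i\phi_j}$, i.e.\ $r_1(p)=\pm e^{-i\phi_j}$. Even this presupposes that $r_1$ on the gap has the same limit at $p$ as from outside. That is false: \eqref{eq:symm2} and item (b) give $S_{12}=-ie^{-i\phi_j}S_{11,-}$ on the whole gap, so there $r_1$ is a constant that differs from the outside limit by a factor $\pm i$. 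So neither this route nor the paper's proof decides the sign.

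\textbf{Your first route, done correctly, gives an endpoint-dependent sign.} As written it skips the point that decides the answer. Relation \eqref{eq:symm2} compares boundary values on opposite sides of the cut. Since $\mu^\pm\sim(z-p)^{-1/4}$, these do not coincide with the limit along the real line outside the gap. Write $\mu^\pm_i=\rho\,(v^\pm_i+\mathcal{O}((z-p)^{1/2}))$ with $\rho=(z-p)^{-1/4}$ cut along the gap, and set $\Lambda=2(f_0x+g_0t)-(B_j^fx+B_j^gt+\phi_j)$. Then \eqref{eq:symm2} gives, for both $\mu^+$ and $\mu^-$, $\lambda_p=-ie^{i\Lambda}\rho_-/\rho_+$. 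The branch ratio is $\rho_-/\rho_+=-i$ if $p=E_j$ but $+i$ if $p=\hat E_j$. Since $2f(p)=B_j^f$ and $2g(p)=B_j^g$ by \eqref{theta Ej}, inserting this into \eqref{def S11} yields
$r_1(E_j)=-e^{-i\phi_j}$ but $r_1(\hat E_j)=+e^{-i\phi_j}$, and $r_3(E_j)=e^{i\phi_j}$, $r_3(\hat E_j)=-e^{i\phi_j}$.

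\textbf{Your proposed test at $\hat E_0$ confirms this.} In genus zero, $\mu^{(AG)}=\frac12\varkappa\begin{pmatrix}1&1\\1&1\end{pmatrix}+\frac12\varkappa^{-1}\begin{pmatrix}1&-1\\-1&1\end{pmatrix}$. Under Assumption \ref{asum1}(a), $S(z)=\Psi^{(AG)}(z;-C,0)^{-1}T(z)\Psi^{(AG)}(z;C,0)$, with $T$ the entire transfer matrix over $[-C,C]$. As $z\to\hat E_0$ we have $\varkappa\to\infty$ and $f(\hat E_0)=0$, so $S=c\,\varkappa^2\begin{pmatrix}1&1\\-1&-1\end{pmatrix}+\mathcal{O}(1)$ with $c\neq0$ in the generic case. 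Hence $r_1\to+1$ and $r_3\to-1$.

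\textbf{An intrinsic check.} The common factor $\lambda_p$ forces $S_{22}/S_{11}\to-1$. Therefore $r_1r_3=(S_{11}S_{22}-1)/S_{11}^2\to-1$ at every branch point, whereas \eqref{eq:riloc} would give $+1$.

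So the uniform sign in \eqref{eq:riloc} cannot be proved. Carried out with the branch factor, your argument proves the endpoint-dependent version stated above instead.
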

\begin{proof}
Let $\nu^{\pm} (z;x,t)$ be the solution of the  integral equation
	\begin{align*}
		\nu^\pm(z;x,t)&= \begin{pmatrix}
		    1 & 0 \\
			0 & e^{-2i(f_0x+g_0t)}
		\end{pmatrix}
			\nonumber\\
		&~~ +\int_{\pm\infty}^xe^{if_0(s-x)}\Gamma(z;s,t,x)Q(q-q^{(AG)})(s,t)\nu^\pm(z;s,t)\Gamma(z;s,t,x)\dif s,
	\end{align*}
where  $\Gamma(z;s,t,x)$  is  defined in \eqref{def Gamma}. It then follows that $\nu^{\pm} (z;x,t)$  is  continuous, bounded on $\mathbb{R}$, and satisfies the symmetry relation $\sigma_1\overline{\nu^{\pm} (z;x,t)}\sigma_1=e^{-2i(f_0x+g_0t)}\nu^{\pm} (z;x,t)$. Moreover, as $z\to p \text{ from }\mathbb{C}^\mp$, 
	\begin{align*}
		\nu^{\pm} (z;x,t)= \nu^{\pm} (p;x,t)+\mathcal{O}((z-p)^{1/2}).
	\end{align*}

In view of the Volterra  integral equations \eqref{int mu1}  and \eqref{int mu2}, we note that 
	\begin{align}
	&\mu^\pm_1(z;x,t)=\nu^\pm(z;x,t)\mu^{(AG)}_1(z;x,t),\label{mu loc 1}\\
	&\mu^\pm_2(z;x,t)=e^{2i(f_0x+g_0t)}\nu^\pm(z;x,t)\mu^{(AG)}_2(z;x,t)\label{mu loc 2},
\end{align}
where $\mu^{(AG)}_i(z;x,t)$, $i=1,2$, is defined in \eqref{def:muAG12}. Since $S(z)$ is independent of $(x,t)$, we could simply take $(x,t)=(0,0)$ in  \eqref{def S11}, which leads to, for example, 
\begin{align*}
	S_{11}(z)
&=\frac{\big(\varkappa(z)^2+\varkappa(z)^{-2}\big)}{F_{11}(\infty)F_{22}(\infty)}\big(\nu^+_{11}(z)\nu^-_{22}(z)F_{11}(z)F_{22}(z)+\nu^+_{12}(z)\nu^-_{21}(z)F_{21}(z)F_{21}(z)\\
	&\qquad\qquad\qquad\qquad~~~-\nu^+_{21}(z)\nu^-_{12}(z)F_{11}(z)F_{22}(z)-\nu^+_{22}(z)\nu^-_{11}(z)F_{21}(z)F_{21}(z)\big)\\
	&~~~+\big(\varkappa(z)^2-\varkappa(z)^{-2}\big)\big(\nu^+_{11}(z)\nu^-_{21}(z)\frac{F_{11}(z)F_{12}(z)}{F_{11}^2(\infty)}+\nu^+_{12}(z)\nu^-_{22}(z)\frac{F_{21}(z)F_{22}(z)}{F_{22}^2(\infty)}\\
	&\qquad\qquad\qquad\qquad~~~-\nu^+_{21}(z)\nu^-_{11}(z)\frac{F_{11}(z)F_{12}(z)}{F_{11}^2(\infty)}-\nu^+_{22}(z)\nu^-_{12}(z)\frac{F_{21}(z)F_{22}(z)}{F_{22}^2(\infty)}\big)\\
	&~~~+\frac{2}{F_{11}(\infty)F_{22}(\infty)}\big(\nu^+_{11}(z)\nu^-_{22}(z)F_{11}(z)F_{22}(z)-\nu^+_{12}(z)\nu^-_{21}(z)F_{21}(z)F_{21}(z)\\
	&\qquad\qquad\qquad\qquad~~~+\nu^+_{21}(z)\nu^-_{12}(z)F_{11}(z)F_{22}(z)-\nu^+_{22}(z)\nu^-_{11}(z)F_{21}(z)F_{21}(z)\big),
\end{align*}
where $\varkappa$ and $F_{ik}$, $i,k=1,2$, are given in  \eqref{def:varkappa}--\eqref{def:F22}. We thus conclude \eqref{eq:Sloc}. 


To show the asymptotics of $r_1$ near $p$, we see from its definition in \eqref{def r} and \eqref{eq:Sloc} that $r_1$ is continuous at $p$. 
In addition, item (c) of Proposition \ref{Pro a r} implies  that  as $z\to p$  from $\mathbb{R}\setminus (\cup_{j=0}^n[E_j,\hat{E}_j])$,
\begin{align*}
 	\left|r_1(z)^2\right|=1-\frac{1}{\left|S_{11}(z)^2\right|}\to1.
 \end{align*}
This, together with item (d) of Proposition \ref{Pro a r}, gives us the asymptotics of $r_1$ in \eqref{eq:riloc}. 
The result for $r_3$ can be proved similarly, we omit the details here. 
\end{proof}

\begin{remark}\label{Remark 3.4}
Under item (a) of Assumption \ref{asum1}, $\mu^\pm(z;s,t)$ are analytic on $ \mathbb{C}\setminus (\cup_{j=0}^n[E_j,\hat{E}_j])$ since the  integral interval in the integral equations in \eqref{int mu} of $\mu^\pm(z;s,t)$ are bounded. Thus, the scattering matrix $S(z)$  is also analytic on $ \mathbb{C}\setminus (\cup_{j=0}^n[E_j,\hat{E}_j])$. The reflection coefficients will then have poles on the zeros of $S_{11}(z)$ and $S_{22}(z)$. However, the zeros of $S_{11}(z)$ and $S_{22}(z)$ on $\mathbb{C}\setminus\mathbb{R}$ are the eigenvalues of the $x$-part of the Lax pair \eqref{laxx}. Since the  spectral operator of the $x$-part of the Lax pair \eqref{laxx} is self-adjoint, its eigenvalues must be real, which implies that  $S_{11}(z), S_{22}(z)\neq 0$ on $\mathbb{C}\setminus\mathbb{R}$. By items (c) and (d) of Proposition \ref{Pro a r},  we note that $|S_{11}|^2=|S_{22}|^2\geq1$  on  $ \mathbb{R}\setminus (\cup_{j=0}^n[E_j,\hat{E}_j])$.  Therefore, the zeros of  $S_{11}(z)$ and $ S_{22}(z)$ will locate on $\cup_{j=0}^n(E_j,\hat{E}_j)$ for the generic case, which might lead to  the spectrum singularity of the Lax pair \eqref{laxx}. We exclude this situation in  item (b) of Assumption \ref{asum1}.   
\end{remark}


\subsection{Two RH problems}
With the aid of the functions $\mu_1^{\pm}$, $\mu_2^{\pm}$, $S_{11}$ and $S_{12}$, we define
\begin{align}
\label{def M}	
M(z)&=M(z;x,t):=	\left\{ \begin{array}{ll}
		\left(  \frac{\mu^-_1  (z;x,t)}{S_{22}(z)}  , \  \mu^+_2(z;x,t)\right), \qquad   z\in \mathbb{C}^+,\\
		\left( \mu^+_1(z;x,t), \ \frac{\mu^-_2(z;x,t)}{S_{11}(z)}\right)  , \qquad z\in \mathbb{C}^-,\\
	\end{array}\right.\\
	\label{def N}	N(z)&=N(z;x,t) :=	\left\{ \begin{array}{ll}
		\left(  \mu^-_1  (z;x,t)  , \  \frac{\mu^+_2(z;x,t)}{S_{22}(z)}\right),   \qquad  z\in \mathbb{C}^+,\\
		\left( \frac{\mu^+_1(z;x,t)}{S_{11}(z)}, \ \mu^-_2(z;x,t)\right)  , \qquad z\in \mathbb{C}^-.\\
	\end{array}\right.
\end{align}
It is then readily seen that $M$ satisfies the following RH problem.
\begin{RHP}\label{RHP1}
\hfil
		\begin{itemize}
		\item [$\mathrm{(a)}$] $M(z)$ is analytic in $\mathbb{C}\setminus \mathbb{R}$.
		\item [$\mathrm{(b)}$]  
           $M(z)$  satisfies the jump condition $M_{+}(z)=M_{-}(z)J(z)$ with
		\begin{align}\label{def:JumpM}
			J(z)=	\left\{ \begin{array}{ll}
			e^{i(f_0x+g_0t-(B_j^fx+B_j^gt+\phi_j)/2)\hat{\sigma}_3}\begin{pmatrix}
			 0&-i\\
			-i&0   
			\end{pmatrix}, & z\in (E_j,\hat{E}_j),\ j=0,\ldots,n,\\[12pt]
		\begin{pmatrix}
			1-r_1(z)r_2(z) & r_1(z)e^{2it\theta(z)}\\
			-r_2(z)e^{-2it\theta(z)} & 1
		\end{pmatrix}, & z\in \mathbb{R}\setminus (\cup_{j=0}^n[E_j,\hat{E}_j]),\\	\end{array}\right.
		\end{align}
		where
		\begin{align}
			\theta(z)=\theta(z;\xi):=-(f(z)-f_0)\xi-\left(g(z)-g_0 \right),\quad\xi=\frac{x}{t} \label{theta}.
		\end{align}
		\item [$\mathrm{(c)}$] As $z\to\infty$, we have $M(z)=I+\mathcal{O}(z^{-1})$.
		\item [$\mathrm{(d)}$] Let $p\in\{E_j,\hat{E}_j\}_{j=0}^n$, we have 
		\begin{align*}
			M(z)=(\mathcal{O}((z-p)^{1/4}),\mathcal{O}((z-p)^{-1/4})),\qquad z\to p\text{ from }\mathbb{C}^+,\\
			M(z)=(\mathcal{O}((z-p)^{-1/4}),\mathcal{O}((z-p)^{1/4})),\qquad z\to p \text{ from }\mathbb{C}^-.
  \end{align*}
	\end{itemize}
\end{RHP}
Similarly, $N$ solves the following RH problem. 
\begin{RHP} \label{RHP2}
\hfil
	\begin{itemize}
		\item [$\mathrm{(a)}$] $N(z)$ is analytic in $\mathbb{C}\setminus \mathbb{R}$. 
		\item [$\mathrm{(b)}$]    $N(z)$  satisfies the jump condition $N_{+}(z)=N_{-}(z)\tilde{J}(z)$ with
		\begin{align}\label{def:tildeJ}
				\tilde{J}(z )=	\left\{ \begin{array}{ll}
				e^{i(f_0x+g_0t-(B_j^fx+B_j^gt+\phi_j)/2)\hat{\sigma}_3}\begin{pmatrix}
					0 & -i\\
					-i& 0
				\end{pmatrix}, & z\in (E_j,\hat{E}_j),\ j=0,\ldots,n,
    \\
    [12pt]
				\begin{pmatrix}
					1 & r_4(z)e^{2it\theta(z)}\\
					-r_3(z)e^{-2it\theta(z)} & 1-r_3(z)r_4(z)
				\end{pmatrix}, & z\in \mathbb{R}\setminus (\cup_{j=0}^n[E_j,\hat{E}_j]),\\	\end{array}\right.
		\end{align}
        where $\theta$ is given in \eqref{theta}. 
		\item [$\mathrm{(c)}$] As $z\to\infty$, we have $N(z)=I+\mathcal{O}(z^{-1})$.
		\item [$\mathrm{(d)}$]  Let $p\in\{E_j,\hat{E}_j\}_{j=0}^n$, we have
			\begin{align*}
			N(z)=(\mathcal{O}((z-p)^{-1/4}),\mathcal{O}((z-p)^{1/4})),\ z\to p  \text{ from }\mathbb{C}^+,\\
			N(z)=(\mathcal{O}((z-p)^{1/4}),\mathcal{O}((z-p)^{-1/4})),\ z\to p \text{ from }\mathbb{C}^-.
		\end{align*}
	\end{itemize}
\end{RHP}
By Liouville's theorem, the above two RH problems are uniquely solvable. Moreover, they are related to the NLS equation in the following way.


\begin{proposition}\label{prop:reconstuction}
Let $M$ and $N$ be solutions of RH problems \ref{RHP1}  and \ref{RHP2},  respectively,  we have
	\begin{align}
	\lim_{z\to\infty}zM_{12}(z)=\lim_{z\to\infty}zN_{12}(z)=-\frac{i}{2}q,\qquad \lim_{z\to\infty}zM_{21}(z)=\lim_{z\to\infty}zN_{21}(z)=\frac{i}{2}\bar{q}	\label{res q},
	\end{align}
where the limit is taken nontangentially with respect to $\mathbb{R}$ and $q$ is the solution of the NLS equation \eqref{NLS}.
\end{proposition}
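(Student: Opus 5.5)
We would prove Proposition \ref{prop:reconstuction} by expanding the Jost functions at infinity and substituting the expansions into the definitions \eqref{def M} and \eqref{def N}. The key point is that in each half-plane the column of $M$ or $N$ that carries the off-diagonal information is a pure Jost column. Only diagonal entries are divided by scattering data, and $S_{11},S_{22}=1+\mathcal{O}(z^{-1})$ by Proposition \ref{Pro a r}(e). Those divisions therefore do not affect the $z^{-1}$ coefficient of the off-diagonal entries. Concretely, for $z\in\mathbb{C}^+$ we have $M_{12}=(\mu^+_2)_1$ and $N_{21}=(\mu^-_1)_2$. We also have $N_{12}=(\mu_2^+)_1/S_{22}=(\mu_2^+)_1(1+\mathcal{O}(z^{-1}))$ and $M_{21}=(\mu^-_1)_2(1+\mathcal{O}(z^{-1}))$. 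Since $(\mu^+_2)_1$ and $(\mu^-_1)_2$ are $\mathcal{O}(z^{-1})$ by Proposition \ref{Pro mu1}(e), both limits agree. Hence it suffices to compute $\lim z(\mu^\pm_2)_1$ and $\lim z(\mu^\pm_1)_2$ in the appropriate half-planes. By the symmetry \eqref{eq:symm1} we need only one of them, say $\lim z(\mu^+_2)_1$. The $\mathbb{C}^-$ case is identical with the roles of $\mu^\pm$ exchanged.

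To identify the coefficient, we would use the Lax pair rather than the integral equation. Write $\Psi^\pm=\mu^\pm e^{-i((f-f_0)x+(g-g_0)t)\sigma_3}$, which solves \eqref{laxx}. Then $\mu=\mu^\pm$ satisfies
\[
\mu_x=-iz[\sigma_3,\mu]\ \text{(up to a lower-order correction)}+Q\mu + i(f(z)-f_0-z)\mu\sigma_3 .
\]
More precisely,
\[
\mu_x+iz\sigma_3\mu-i(f(z)-f_0)\mu\sigma_3=Q(q)\mu .
\]
Since $f(z)-f_0=z+\mathcal{O}(z^{-1})$ by \eqref{def:fg} and \eqref{def f0g0}, we insert the ansatz $\mu=I+m_1(x,t)z^{-1}+o(z^{-1})$. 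This ansatz is justified by Proposition \ref{Pro mu1}(e), refined in the same way as its proof to give an honest $z^{-1}$ term, with the Volterra equation \eqref{int mu} controlling the remainder. Matching the $\mathcal{O}(1)$ terms gives $i[\sigma_3,m_1]=Q(q)$. Hence $(m_1)_{12}=q/(2i)=-\frac{i}{2}q$ and $(m_1)_{21}=\frac{i}{2}\bar q$, which is exactly \eqref{res q}. As a consistency check, the same computation applied to $\mu^{(AG)}$ through \eqref{def PsiAG} reproduces the stated formula for $\lim z\mu^{(AG)}_{12}$, including the phase $e^{-2i(f_0x+g_0t)}$ produced by the conjugation $e^{i(f_0x+g_0t)\hat\sigma_3}$ in \eqref{int mu}.

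The main obstacle is justifying that the expansion of $\mu^\pm$ holds to order $z^{-1}$ with an $x$-differentiable coefficient, uniformly as $z\to\infty$ nontangentially. We would handle it directly from \eqref{int mu} rather than through the differential equation. The estimates \eqref{asy Gama} show that the $(1,2)$ entry of $z\mu^+_2$ converges to
\[
e^{2i(f_0x+g_0t)}\lim z\mu^{(AG)}_{12}+\lim z\int_{+\infty}^x e^{2i(f(z)-f_0)(s-x)}(q-q^{(AG)})(s,t)\,\dif s\,(1+o(1)).
\]
Under Assumption \ref{asum1}(a), or more generally under \eqref{bj} together with the regularity of $q$, integration by parts in the oscillatory integral yields the factor $\frac{1}{2i}\cdot(-1)(q-q^{(AG)})(x,t)/z$. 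Adding this to the background term $-\frac{i}{2}q^{(AG)}$ gives $-\frac{i}{2}q$, with dominated convergence justifying the limits. The nontangential restriction is needed here because $\Im f$ controls the exponential only away from $\mathbb{R}$.
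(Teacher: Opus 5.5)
Your proposal is correct. Its skeleton matches the paper's: reduce \eqref{res q} to the $z^{-1}$ coefficient of the Jost functions $\mu^\pm$, then extract that coefficient from the Volterra equation \eqref{int mu}.

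The difference is in how that coefficient is justified. The paper invokes Remark \ref{Remark 3.4} (analyticity of $\mu^\pm$ near $z=\infty$, coming from a bounded integration interval) together with Proposition \ref{Pro mu1}(e). From these it posits a full expansion $\mu^\pm\sim I+\sum_j V_j^\pm z^{-j}$ and substitutes it into \eqref{int mu}. This is short and formally yields every $V_j^\pm$, but it leans on compact support of $q-q^{(AG)}$, which Assumption \ref{asum1}(a) supplies only at $t=0$.

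You instead compute the single limit you need. The ingredients are:
\begin{itemize}
\item the leading integrand $e^{2i(f(z)-f_0)(s-x)}(q-q^{(AG)})(s,t)$;
\item the bound $\Im f(z)\gtrsim |z|$ in a nontangential sector;
\item uniform bounds on $\mu_2^+(z;s,t)$ for $s\geq x$;
\item either integration by parts, or merely continuity of $q-q^{(AG)}$ at $x$, since the kernel $z e^{2i(f(z)-f_0)u}$, $u>0$, acts as an approximate identity.
\end{itemize}
Together these give the result under \eqref{bj}, with no expansion beyond first order. You also make explicit why dividing by $S_{11},S_{22}$ and using \eqref{eq:symm1} leaves the off-diagonal limits unchanged, which the paper leaves implicit. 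The Lax-pair identity $i[\sigma_3,m_1]=Q(q)$ is a good consistency check. As you note, though, it needs $\partial_x$ of the remainder to vanish, so the integral-equation computation should serve as the proof.

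Two small fixes.

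First, as written your boundary term has the wrong sign. In fact $\int_{+\infty}^x e^{2i(f(z)-f_0)(s-x)}(q-q^{(AG)})(s,t)\,\dif s=\frac{(q-q^{(AG)})(x,t)}{2i(f(z)-f_0)}+o(z^{-1})$. So after multiplying by $z$ it contributes $\frac{1}{2i}(q-q^{(AG)})=-\frac{i}{2}(q-q^{(AG)})$, and this is what combines with $-\frac{i}{2}q^{(AG)}$ to give $-\frac{i}{2}q$. With the factor $-\frac{1}{2i}$ you wrote, the sum would come out wrong.

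Second, your opening claim that only diagonal entries are divided by scattering data is inaccurate, since $M_{21}=(\mu_1^-)_2/S_{22}$ in $\mathbb{C}^+$. Your subsequent argument already handles this correctly, so that sentence should simply go.
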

\begin{proof} 
By the constructions of $M$ and $N$ in \eqref{def M}--\eqref{def N} and the definitions of $S_{11}$ and $S_{22}$  in \eqref{def S11}, it is equivalent to consider the large $z$ behaviors of $\mu_\pm$. From Remark \ref{Remark 3.4}, it follows that  $\mu_\pm$ is analytic near $z=\infty$. This, together with item (e) of Proposition \ref{Pro mu1}, implies that $\mu_\pm\sim I+\sum_{j=1}^\infty\frac{V_{j}^\pm}{z^j}$   as $z\to\infty$. Inserting this  ansatz into the equation \eqref{int mu}, it is readily seen that
\begin{align*}
		 (V^\pm_{1})_{12}=-\frac{i}{2}q,\qquad \left(V^\pm_{1}\right)_{21}=\frac{i}{2}\bar{q}.
		 \end{align*} 
This completes the proof.
\end{proof}
We will perform asymptotic analysis of RH problems \ref{RHP1}  and \ref{RHP2} in Sections \ref{Sec 3} and \ref{Sec 4}, respectively, to derive asymptotics of $q$ in  transition regions I and II.


\subsection{Signature table  and  matrix factorizations}\label{subsec2.3}

For later use, we record several matrix factorizations of the jump matrices in RH problems \ref{RHP1}  and \ref{RHP2} that we will refer to when deforming contours onto steepest descent paths according to the signature table of phase function $\theta$.

\paragraph{\textbf{The signature table of $\Im \theta$}} 
Recall the phase function $\theta$ defined in \eqref{theta}, we start with the following definition.
\begin{definition}\label{def z0}
	We call a point $z_0=z_0(\xi) \in \mathbb{R}$ is a  saddle point of $\theta$ if it satisfies one of the following conditions
	\begin{itemize}
		\item [\rm(a) ] $\theta'(z_0)=0$ for $z_0\in \mathbb{R}\setminus (\cup_{j=0}^n(E_j,\hat{E}_j))$;
		\item [\rm(b) ] $\Im \theta(z_0)=0$ for $z_0\in \cup_{j=0}^n(E_j,\hat{E}_j)$.
	\end{itemize}
\end{definition}

From \eqref{theta} and \eqref{def fg}, it follows that
\begin{align*}
	\theta'(z)=-\frac{1}{w(z)}\left[\xi\prod_{j=0}^n(z-z^f_j)+4\prod_{j=0}^{n+1}(z-z^g_j)\right],
\end{align*}
whose zeros are  the same with  the equation
\begin{align}\label{def F}
	F(z;\xi):=\xi\prod_{j=0}^n(z-z^f_j)+4\prod_{j=0}^{n+1}(z-z^g_j)=0.
\end{align}
Since $f(E_j)=f(\hat{E}_j)$ and $g(E_j)=g(\hat{E}_j)$ for $j=0,1,\ldots,n$, we have $\theta(E_j)=\theta(\hat{E}_j)$. This implies that $F$ has at least one zero on each interval $(E_j,\hat{E}_j)$. Note that $F$ is a polynomial of degree $n+2$, we still have one zero left. If this zero lies outside the union of these intervals, namely $\mathbb{R}\setminus (\cup_{j=0}^n(E_j,\hat{E}_j))$, it  corresponds to case (a) in Definition \ref{def z0}. On the other hand, if there are two zeros within any single interval $(E_j,\hat{E}_j)$, it corresponds to case (b) in  Definition \ref{def z0}. Notably, $\Im \theta(E_j)=\Im \theta(\hat{E}_j)=0$, which indicates that the two zeros are distinct rather than a double zero, otherwise $\theta'(z)$ maintains its sign, leading to a contradiction. 

For case (a), $z_0=z_0(\xi)\in \mathbb{R}\setminus (\cup_{j=0}^n(E_j,\hat{E}_j))$. In view of the facts
\begin{align}\label{F xij}
F(E_{j};\xi_{j})=F(\hat{E}_{j};\hat{\xi}_{j})=0, \qquad j=0,1,\ldots,n,
\end{align}
where $\xi_{j}$ and $\hat{\xi}_{j}$ are defined in \eqref{def xij}, it follows that $z_0(\hat{\xi}_{j})=\hat{E}_{j}$ and  $z_0(\xi_{j})=E_{j}$.  Moreover, on each interval of $ \mathbb{R}\setminus (\cup_{j=0}^n(E_j,\hat{E}_j))$, $z_0(\xi)$ is a monotonically decreasing function with respect to $\xi$. Indeed, by implicit function theorem, taking the interval $(\hat{E}_j,E_{j+1})$ as an example, we have
\begin{align*}
	\partial_\xi z_0(\xi)=-\frac{	\partial_\xi F(z_0;\xi)}{\partial_{z_0} F(z_0;\xi)}=-\frac{	\prod_{k=0}^n(z_0-z^f_k)}{\partial_{z_0} F(z_0;\xi)}.
\end{align*}
If $n-j$ is even, both $\partial_{z_0} F(z_0;\xi)$ and 
$\prod_{k=0}^n(z_0-z^f_k)$ are positive, while if $n-j$ is odd, both $\partial_{z_0} F(z_0;\xi)$ and $\prod_{k=0}^n(z_0-z^f_k)$ are negative, we thus conclude that $\partial_\xi z_0(\xi)<0$ and 
\begin{align*}
	&\xi\mapsto z_0(\xi),\\
	& \cup_{j=1}^{n}[\xi_j,\hat{\xi}_{j-1}]\cup(-\infty,\hat{\xi}_n]\cup[\xi_0,+\infty)\to \mathbb{R}\setminus \cup_{j=0}^n(E_j,\hat{E}_j),
\end{align*}
is a monotonically decreasing map. 

Similarly, for case (b), $\xi \mapsto z_0(\xi)$ is continuous and monotonically decreasing from $\cup_{j=0}^{n}(\hat{\xi}_j,\xi_{j})$ to $\cup_{j=0}^n(E_j,\hat{E}_j)$. Moreover, $z_0(\xi)$ tends to $E_j$ and $\hat{E}_j$ if $\xi$ tends to  $\xi_{j}$
and $\hat{\xi}_{j}$, respectively. In summary, we have that
$\xi\to z_0(\xi):\ \mathbb{R}\to\mathbb{R}$ is a continuous map; see Figure \ref{sign theta} for the signature table of $\Im \theta$.


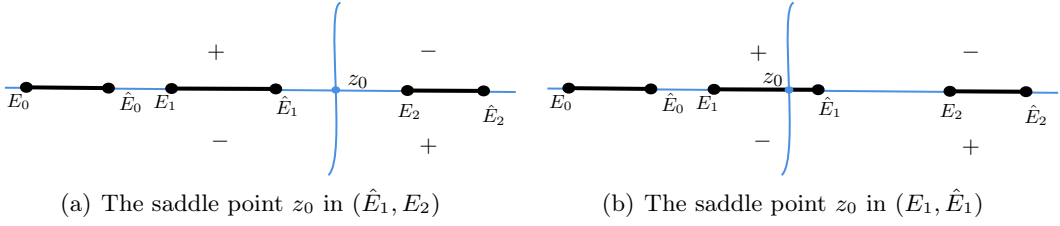
\begin{figure}
	\centering\subfigure[The saddle point $z_0$ in $(\hat{E}_1,E_2)$]{
		\tikzset{every picture/.style={line width=0.75pt}} 
		\begin{tikzpicture}[x=0.75pt,y=0.75pt,yscale=-0.4,xscale=0.5]
			\draw [color={rgb, 255:red, 74; green, 144; blue, 226 }  ,draw opacity=1 ]   (20.47,210.4) -- (530.47,215.8) ;
			\draw [line width=1.5]    (41.9,209.9) -- (123.9,210.9) ;
			\draw [shift={(123.9,210.9)}, rotate = 0.7] [color={rgb, 255:red, 0; green, 0; blue, 0 }  ][fill={rgb, 255:red, 0; green, 0; blue, 0 }  ][line width=1.5]      (0, 0) circle [x radius= 4.36, y radius= 4.36]   ;
			\draw [shift={(41.9,209.9)}, rotate = 0.7] [color={rgb, 255:red, 0; green, 0; blue, 0 }  ][fill={rgb, 255:red, 0; green, 0; blue, 0 }  ][line width=1.5]      (0, 0) circle [x radius= 4.36, y radius= 4.36]   ;

			\draw [line width=1.5]    (186.9,211.4) -- (290.9,211.9) ;
			\draw [shift={(290.9,211.9)}, rotate = 0.28] [color={rgb, 255:red, 0; green, 0; blue, 0 }  ][fill={rgb, 255:red, 0; green, 0; blue, 0 }  ][line width=1.5]      (0, 0) circle [x radius= 4.36, y radius= 4.36]   ;
			\draw [shift={(186.9,211.4)}, rotate = 0.28] [color={rgb, 255:red, 0; green, 0; blue, 0 }  ][fill={rgb, 255:red, 0; green, 0; blue, 0 }  ][line width=1.5]      (0, 0) circle [x radius= 4.36, y radius= 4.36]   ;
			\draw [line width=1.5]    (422.4,213.9) -- (498.4,214.39) ;
			\draw [shift={(498.4,214.39)}, rotate = 0.37] [color={rgb, 255:red, 0; green, 0; blue, 0 }  ][fill={rgb, 255:red, 0; green, 0; blue, 0 }  ][line width=1.5]      (0, 0) circle [x radius= 4.36, y radius= 4.36]   ;
			\draw [shift={(422.4,213.9)}, rotate = 0.37] [color={rgb, 255:red, 0; green, 0; blue, 0 }  ][fill={rgb, 255:red, 0; green, 0; blue, 0 }  ][line width=1.5]      (0, 0) circle [x radius= 4.36, y radius= 4.36]   ;
			\draw [color={rgb, 255:red, 74; green, 144; blue, 226 }  ,draw opacity=1 ]   (355.47,103.4) .. controls (339.47,108.4) and (362.47,298.4) .. (343.47,320.4) ;
			\draw [shift={(350.79,213.17)}, rotate = 88.57] [color={rgb, 255:red, 74; green, 144; blue, 226 }  ,draw opacity=1 ][fill={rgb, 255:red, 74; green, 144; blue, 226 }  ,fill opacity=1 ][line width=0.75]      (0, 0) circle [x radius= 3.35, y radius= 3.35]   ;
			
			\draw (20,215) node [anchor=north west][inner sep=0.75pt]  [font=\footnotesize,xscale=0.8,yscale=0.8] [align=left] {$\displaystyle E_{0}$};
			\draw (132,212.5) node [anchor=north west][inner sep=0.75pt]  [font=\footnotesize,xscale=0.8,yscale=0.8] [align=left] {$\displaystyle \hat{E}_{0}$};
			\draw (166.5,216.5) node [anchor=north west][inner sep=0.75pt]  [font=\footnotesize,xscale=0.8,yscale=0.8] [align=left] {$\displaystyle E_{1}$};
			\draw (288.5,218) node [anchor=north west][inner sep=0.75pt]  [font=\footnotesize,xscale=0.8,yscale=0.8] [align=left] {$\displaystyle \hat{E}_{1}$};
			\draw (410,227.2) node [anchor=north west][inner sep=0.75pt]  [font=\footnotesize,xscale=0.8,yscale=0.8] [align=left] {$\displaystyle E_{2}$};
			\draw (494.4,228.39) node [anchor=north west][inner sep=0.75pt]  [font=\footnotesize,xscale=0.8,yscale=0.8] [align=left] {$\displaystyle \hat{E}_{2}$};
			\draw (360,191.4) node [anchor=north west][inner sep=0.75pt]  [xscale=0.8,yscale=0.8]  {$z_{0}$};
			\draw (220,153.4) node [anchor=north west][inner sep=0.75pt]  [xscale=0.8,yscale=0.8]  {$+$};
			\draw (432,271.4) node [anchor=north west][inner sep=0.75pt]  [xscale=0.8,yscale=0.8]  {$+$};
			\draw (432,151.4) node [anchor=north west][inner sep=0.75pt]  [xscale=0.8,yscale=0.8]  {$-$};
			\draw (224,264.4) node [anchor=north west][inner sep=0.75pt]  [xscale=0.8,yscale=0.8]  {$-$};

	\end{tikzpicture}}
\subfigure[The saddle point $z_0$ in $(E_1,\hat{E}_1)$]{
	
	\tikzset{every picture/.style={line width=0.75pt}} 
	
	\begin{tikzpicture}[x=0.75pt,y=0.75pt,yscale=-0.4,xscale=0.5]
		\draw [color={rgb, 255:red, 74; green, 144; blue, 226 }  ,draw opacity=1 ]   (20.47,210.4) -- (530.47,215.8) ;
		\draw [line width=1.5]    (41.9,209.9) -- (123.9,210.9) ;
		\draw [shift={(123.9,210.9)}, rotate = 0.7] [color={rgb, 255:red, 0; green, 0; blue, 0 }  ][fill={rgb, 255:red, 0; green, 0; blue, 0 }  ][line width=1.5]      (0, 0) circle [x radius= 4.36, y radius= 4.36]   ;
		\draw [shift={(41.9,209.9)}, rotate = 0.7] [color={rgb, 255:red, 0; green, 0; blue, 0 }  ][fill={rgb, 255:red, 0; green, 0; blue, 0 }  ][line width=1.5]      (0, 0) circle [x radius= 4.36, y radius= 4.36]   ;
		\draw [line width=1.5]    (186.9,211.4) -- (290.9,211.9) ;
		\draw [shift={(290.9,211.9)}, rotate = 0.28] [color={rgb, 255:red, 0; green, 0; blue, 0 }  ][fill={rgb, 255:red, 0; green, 0; blue, 0 }  ][line width=1.5]      (0, 0) circle [x radius= 4.36, y radius= 4.36]   ;
		\draw [shift={(186.9,211.4)}, rotate = 0.28] [color={rgb, 255:red, 0; green, 0; blue, 0 }  ][fill={rgb, 255:red, 0; green, 0; blue, 0 }  ][line width=1.5]      (0, 0) circle [x radius= 4.36, y radius= 4.36]   ;
		\draw [line width=1.5]    (422.4,213.9) -- (498.4,214.39) ;
		\draw [shift={(498.4,214.39)}, rotate = 0.37] [color={rgb, 255:red, 0; green, 0; blue, 0 }  ][fill={rgb, 255:red, 0; green, 0; blue, 0 }  ][line width=1.5]      (0, 0) circle [x radius= 4.36, y radius= 4.36]   ;
		\draw [shift={(422.4,213.9)}, rotate = 0.37] [color={rgb, 255:red, 0; green, 0; blue, 0 }  ][fill={rgb, 255:red, 0; green, 0; blue, 0 }  ][line width=1.5]      (0, 0) circle [x radius= 4.36, y radius= 4.36]   ;
		\draw [color={rgb, 255:red, 74; green, 144; blue, 226 }  ,draw opacity=1 ]   (266.47,102.4) .. controls (250.47,107.4) and (273.47,297.4) .. (254.47,319.4) ;
		\draw [shift={(261.79,212.17)}, rotate = 88.57] [color={rgb, 255:red, 74; green, 144; blue, 226 }  ,draw opacity=1 ][fill={rgb, 255:red, 74; green, 144; blue, 226 }  ,fill opacity=1 ][line width=0.75]      (0, 0) circle [x radius= 3.35, y radius= 3.35]   ;
		
		\draw (20,215) node [anchor=north west][inner sep=0.75pt]  [font=\footnotesize,xscale=0.8,yscale=0.8] [align=left] {$\displaystyle E_{0}$};
		\draw (132,212.5) node [anchor=north west][inner sep=0.75pt]  [font=\footnotesize,xscale=0.8,yscale=0.8] [align=left] {$\displaystyle \hat{E}_{0}$};
		\draw (166.5,216.5) node [anchor=north west][inner sep=0.75pt]  [font=\footnotesize,xscale=0.8,yscale=0.8] [align=left] {$\displaystyle E_{1}$};
		\draw (288.5,218) node [anchor=north west][inner sep=0.75pt]  [font=\footnotesize,xscale=0.8,yscale=0.8] [align=left] {$\displaystyle \hat{E}_{1}$};
		\draw (410,227.2) node [anchor=north west][inner sep=0.75pt]  [font=\footnotesize,xscale=0.8,yscale=0.8] [align=left] {$\displaystyle E_{2}$};
		\draw (494.4,228.39) node [anchor=north west][inner sep=0.75pt]  [font=\footnotesize,xscale=0.8,yscale=0.8] [align=left] {$\displaystyle \hat{E}_{2}$};
		\draw (233,187.4) node [anchor=north west][inner sep=0.75pt]  [xscale=0.8,yscale=0.8]  {$z_{0}$};
		\draw (220,153.4) node [anchor=north west][inner sep=0.75pt]  [xscale=0.8,yscale=0.8]  {$+$};
		\draw (432,271.4) node [anchor=north west][inner sep=0.75pt]  [xscale=0.8,yscale=0.8]  {$+$};
		\draw (432,151.4) node [anchor=north west][inner sep=0.75pt]  [xscale=0.8,yscale=0.8]  {$-$};
		\draw (224,264.4) node [anchor=north west][inner sep=0.75pt]  [xscale=0.8,yscale=0.8]  {$-$};
	
\end{tikzpicture}}
\caption{\footnotesize  Two examples for the signature table of $\Im \theta$: (a) $\xi\in(\xi_2,\hat{\xi}_1)$; (b) $\xi\in(\hat{\xi}_1,\xi_1)$. $\Im \theta =0 $ on blue curve. The ``+'' represents where
$\Im \theta>0$ and ``-'' represents where
$\Im \theta<0$. }
\label{sign theta}
\end{figure}

\vspace{2mm}
\paragraph{\textbf{Matrix factorizations of the jump matrices}}
Recall the jump matrices $J$ and $\tilde{J}$ defined in \eqref{def:JumpM} and \eqref{def:tildeJ}, respectively, we note that they admit the following factorizations. 
\begin{itemize}
	\item If $z\in \cup_{j=0}^n(E_j,\hat{E}_j)$, 
	\begin{align}
	J(z)& =	\begin{pmatrix}
			1&r_1(z)e^{2it\theta_-(z)}\\
			0&1
		\end{pmatrix}	\begin{pmatrix}
			0&-ie^{i(t(\theta_+(z)+\theta_-(z))-\phi_j)}\\
			-ie^{-i(t(\theta_+(z)+\theta_-(z))-\phi_j)}&0
		\end{pmatrix}	
       \nonumber 
        \\  
        &\quad \times \begin{pmatrix} 
		1&0\\
		-r_2(z)e^{-2it\theta_+(z)}&1
	\end{pmatrix}\label{open jump J1}\\
&=	\begin{pmatrix}
	1&0\\
	-\frac{r_2(z)e^{-2it\theta_-(z)}}{1-r_1(z)r_2(z)}&1
\end{pmatrix}	\begin{pmatrix}
	0&-ie^{i(t(\theta_+(z)+\theta_-(z))-\phi_j)}\\
	-ie^{-i(t(\theta_+(z)+\theta_-(z))-\phi_j)}&0
\end{pmatrix}	\nonumber 
\\ 
&\quad \times \begin{pmatrix}
	1&\frac{r_1(z)e^{2it\theta_+(z)}}{1-r_1(z)r_2(z)}\\
	0&1
\end{pmatrix}.\label{open jump J2}
	\end{align}
\item If $z\in \mathbb{R}\setminus (\cup_{j=0}^n[E_j,\hat{E}_j])$,
	\begin{align}
		J(z)&=
		\begin{pmatrix}
		1&r_1(z)e^{2it\theta(z)}\\
		0&1
	\end{pmatrix}		\begin{pmatrix}
		1&0\\
		-r_2(z)e^{-2it\theta(z)}&1
	\end{pmatrix}\label{open jump J3}\\
	&=	\begin{pmatrix}
		1&0\\
		-\frac{r_2(z)e^{-2it\theta(z)}}{1-r_1(z)r_2(z)}&1
	\end{pmatrix}	(1-r_1(z)r_2(z))^{\sigma_3}	
    \begin{pmatrix}
		1&\frac{r_1(z)e^{2it\theta(z)}}{1-r_1(z)r_2(z)}
        \\
		0&1
	\end{pmatrix}.\label{open jump J4}
\end{align}
\end{itemize}
In the above factorizations, we have made use of 
the symmetry relations of the  reflection coefficients in item (d) of Proposition \ref{Pro a r} and the fact that for $z\in(E_j,\hat{E}_j)$, $j=0,1,\ldots,n$,
\begin{align}
	\theta_+(z)+\theta_-(z)=2f_0\xi+2g_0- B_j^f\xi-B_j^g.\label{theta Ej}
\end{align} 

Similarly, for the matrix $\tilde{J}$, one has
\begin{itemize}
	\item If $z\in \cup_{j=0}^n(E_j,\hat{E}_j)$,
	\begin{align}
		\tilde{J}(z) & =
		\begin{pmatrix}
			1&0\\
			-r_3(z)e^{-2it\theta_-(z)}&1
		\end{pmatrix}	\begin{pmatrix} 
		0&-ie^{i(t(\theta_+(z)+\theta_-(z))-\phi_j)}\\
		-ie^{-i(t(\theta_+(z)+\theta_-(z))-\phi_j)}&0
	\end{pmatrix} 
    \nonumber 
    \\
    & \quad \times \begin{pmatrix}
			1&r_4(z)e^{2it\theta_+(z)}\\
			0&1
		\end{pmatrix}\label{open jump tJ1}\\
		&=	\begin{pmatrix}
			1&\frac{r_4(z)e^{2it\theta_-(z)}}{1-r_3(z)r_4(z)}\\
			0&1
		\end{pmatrix}\begin{pmatrix}
		0&-ie^{i(t(\theta_+(z)+\theta_-(z))-\phi_j)}\\
		-ie^{-i(t(\theta_+(z)+\theta_-(z))-\phi_j)}&0
	\end{pmatrix}\nonumber
    \\
    & \quad \times \begin{pmatrix}
			1&0\\
			-\frac{r_3(z)e^{-2it\theta_+(z)}}{1-r_3(z)r_4(z)}0&1
		\end{pmatrix}.\label{open jump tJ2}
	\end{align}
	\item If $z\in \mathbb{R}\setminus (\cup_{j=0}^n[E_j,\hat{E}_j])$,
	\begin{align}
		\tilde J(z)&=
		 	\begin{pmatrix}
		1&0\\
		-r_3(z)e^{-2it\theta(z)}&1
		\end{pmatrix}\begin{pmatrix}
			1&r_4(z)e^{2it\theta(z)}\\
		0&1	
		\end{pmatrix}\label{open jump tJ3}\\
		&=	\begin{pmatrix}
		1&\frac{r_4(z)e^{-2it\theta(z)}}{1-r_3(z)r_4(z)}\\
		0&1
		\end{pmatrix}	(1-r_3(z)r_4(z))^{-\sigma_3}	\begin{pmatrix}
			1&0\\
		-\frac{r_3(z)e^{-2it\theta(z)}}{1-r_3(z)r_4(z)}&1	
		\end{pmatrix}.\label{open jump tJ4}
	\end{align}
\end{itemize}

\section{Asymptotic analysis in transition  region  I}\label{Sec 3}
In this section, we perform asymptotic analysis of RH problem \ref{RHP1} for $M$ to derive asymptotics of $q$ in transition region I. By Definition \ref{def:regions}, it is assumed that $-C\leq(\xi-\hat{\xi}_{j_0})t^{2/3}\leq0$ for some fixed $j_0\in\{0,\ldots,n\}$ throughout this section, since the analysis in the other half region is similar. Note that $\xi_{j_0+1}<\xi<\hat{\xi}_{j_0}$ for large $t$, it then follows from discussions in Section \ref{subsec2.3} that the saddle point $z_0\in[\hat{E}_{j_0},E_{j_0+1})$.


\subsection{First transformation}
The first transformation involves opening global lenses. To proceed, we start with the introduction of an auxiliary function. For  $-C\leq(\xi-\hat{\xi}_{j_0})t^{2/3}\leq0$, we set
 \begin{multline}\label{def delta 1}
 \delta(z)=\delta(z;\xi)
 \\
 :=\frac{w(z)}{2\pi i}\left[\sum_{j=1}^n\delta_j\int_{E_j}^{\hat{E}_j}\frac{i\dif s}{w_+(s)(s-z)} -\int_{(-\infty,E_{j_0})\setminus(\cup_{j=0}^{j_0-1}(E_j,\hat{E}_j))}\frac{\log (1-r_1(s)r_2(s))\dif s}{w(s)(s-z)}\right],
 \end{multline}
where the logarithm takes principal branch, the functions $w$, $r_1$ and $r_2$ are defined in \eqref{def:w} and \eqref{def r}, respectively, and the constants $\delta_j$, $j=1,\ldots,n$, are determined through the linear systems
 \begin{align}\label{def deltaj I}
 \int_{(-\infty,E_{j_0})\setminus(\cup_{j=0}^{j_0-1}(E_j,\hat{E}_j))}\frac{\log (1-r_1(s)r_2(s))s^k\dif s}{w(s)}=\sum_{j=1}^n\delta_j\int_{E_j}^{\hat{E}_j}\frac{is^k\dif s}{w_+(s)}, \quad k=0,\ldots,n-1.
 \end{align}
Here, it is understood that $(E_{j_0-1},\hat{E}_{j_0-1})=\emptyset$ if $j_0=0$. Since $\left\{\frac{s^k\dif s}{w(s)}\right\}_{k=0}^{n-1}$ forms a basis of the holomorphic differentials on the Riemann surface $\mathcal{R}$ defined by $w$, the coefficient matrix in linear system \eqref{def deltaj I} is invertible, which implies that $\delta_j$, $j=1,\ldots,n$, exist uniquely. In addition, from the symmetry relation of $r_1$ and $r_2$ in item (d) of Proposition \ref{Pro a r}, it turns out that each $\delta_j$ is real.

 \begin{proposition}\label{Pro delta1 }
 The function $\delta(z)$ defined in \eqref{def delta 1} admits the following properties.
 \begin{itemize}
 \item [$\mathrm{(a)}$]
 $\delta(z)$ is analytic in $\mathbb{C}\setminus(\cup_{j=0}^n[E_j,\hat{E}_j]\cup(-\infty,\hat{E}_{j_0}])$ and satisfies the symmetry relation $\delta(z)=-\overline{\delta(\bar{z})}$. Moreover,  we have 
 \begin{equation}\label{eq:deltaexp}
     \delta(z)=\delta(\infty)+\dfrac{\delta^{(1)}}{z}+\mathcal{O}(z^{-2}),\qquad z\to \infty,
 \end{equation}
 where
 	 \begin{align}\label{def delta inf}
 		\delta(\infty)& =\frac{1}{2\pi i}\left[\int_{(-\infty,E_{j_0})\setminus(\cup_{j=0}^{j_0-1}(E_j,\hat{E}_j))}\frac{\log (1-r_1(s)r_2(s))s^n\dif s}{w(s)}-\sum_{j=1}^n\delta_j\int_{E_j}^{\hat{E}_j}\frac{is^n\dif s}{w_+(s)} \right],\\
 		\delta^{(1)}& = \delta(\infty)\sum_{j=0}^{n}(E_j+\hat{E}_j)\nonumber\\
 		&\quad -\frac{1}{2\pi i}\left[\int_{(-\infty,E_{j_0})\setminus(\cup_{j=1}^{j_0-1}(E_j,\hat{E}_j))}\frac{\log (1-r_1(s)r_2(s))s^{n+1}\dif s}{w(s)}-\sum_{j=1}^n\delta_j\int_{E_j}^{\hat{E}_j}\frac{is^{n+1}\dif s}{w_+(s)} \right]. \label{def delta1}
 	\end{align}
 \item [$\mathrm{(b)}$ ] $\delta(z)$  satisfies the jump conditions
 \begin{align*}
 	&\delta_-(z)=\delta_+(z)+\log (1-r_1(z)r_2(z)),\qquad z\in (-\infty,E_{j_0})\setminus(\cup_{j=0}^{j_0-1}[E_j,\hat{E}_j]),\\
 	&\delta_+(z)+\delta_-(z)=i\delta_j,\hspace{3.5cm}z\in(E_j,\hat{E}_j),\ j=1,\ldots,n.
 \end{align*}
\item [$\mathrm{(c)}$] As $z\to p\in\{E_j \}_{j=0}^{j_0}\cup\{\hat{E}_j \}_{j=0}^{j_0-1}$ from
$\mathbb{C}^+$ (if $j_0=0, \ \{\hat{E}_j \}_{j=0}^{j_0-1}=\emptyset$), we have
\begin{align}\label{eq:estexpdelta1}
	e^{\delta(z)}=\mathcal{O}((z-p)^{1/2})
\end{align}
and 
\begin{align}\label{eq:estexpdelta2}
	\delta(z)=\frac{i}{2}\delta_{j_0}+\mathcal{O}((z-\hat{E}_{j_0})^{1/2}), \qquad \textrm{$z\to \hat{E}_{j_0}$ from $\mathbb{C} \setminus (-\infty, \hat{E}_{j_0})$}. 
\end{align}
\end{itemize}
\end{proposition}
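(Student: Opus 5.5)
The plan is to read off every property from the representation \eqref{def delta 1}, which has the form $\delta(z)=w(z)\,\mathcal{C}h(z)$, where $\mathcal{C}$ is the Cauchy operator and $h$ is the density $h(s)=\sum_j\delta_j\,\frac{i}{w_+(s)}\chi_{(E_j,\hat E_j)}(s)-\frac{\log(1-r_1r_2)}{w(s)}\chi_{\Sigma}(s)$ with $\Sigma=(-\infty,E_{j_0})\setminus\cup_{j<j_0}(E_j,\hat E_j)$.

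\emph{Analyticity and jumps.} The Cauchy integral is analytic off $\Sigma\cup\bigcup_{j\ge1}[E_j,\hat E_j]$, and $w$ is analytic off $\cup_j[E_j,\hat E_j]$. So $\delta$ is analytic in the stated domain, where $(-\infty,\hat E_{j_0}]$ absorbs $\Sigma$ together with the cut $[E_{j_0},\hat E_{j_0}]$ of $w$.

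For item (b), on $\Sigma$ the function $w$ has no jump, and Plemelj gives $\delta_+-\delta_-=w\cdot(-\log(1-r_1r_2)/w)$, which is the first relation. On $(E_j,\hat E_j)$ with $j\ge1$ we have $w_-=-w_+$. Writing $\delta_\pm=w_\pm(\mathcal{C}h)_\pm$, we get $\delta_++\delta_-=w_+\big((\mathcal{C}h)_+-(\mathcal{C}h)_-\big)=w_+\cdot i\delta_j/w_+=i\delta_j$.

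\emph{Symmetry.} By Proposition \ref{Pro a r}(d), $r_1r_2=|r_1|^2$ is real off the bands. I will check that $1-r_1r_2>0$ there, using $|S_{11}|^2\ge1$ and $\det S=1$, so the logarithm is real. Since the $\delta_j$ are real, $w^*=w$ with the appropriate sign conventions for $w_+$, and $\mathcal{C}$ of a density $h$ satisfying $\bar h=\pm h$ behaves accordingly under $z\mapsto\bar z$, a direct conjugation gives $\overline{\delta(\bar z)}=-\delta(z)$. Here the factor $1/(2\pi i)$ supplies the sign.

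\emph{Expansion at infinity.} I expand $\frac{1}{s-z}=-\sum_{k\ge0}s^kz^{-k-1}$. The conditions \eqref{def deltaj I} are exactly the statement that the moments $k=0,\dots,n-1$ of $h$ vanish. Hence $\mathcal{C}h(z)=-\frac{1}{2\pi i}\big(m_nz^{-n-1}+m_{n+1}z^{-n-2}+\dots\big)$, with $m_k=\int h\,s^k$. Multiplying by $w(z)=z^{n+1}\big(1-\tfrac12\sum(E_j+\hat E_j)z^{-1}+\dots\big)$ yields $\delta(\infty)$ and $\delta^{(1)}$ as in \eqref{def delta inf}--\eqref{def delta1}, once the signs are tracked. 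Convergence of the moments needs $\log(1-r_1r_2)=\mathcal{O}(z^{-2})$ at infinity, which follows from Proposition \ref{Pro a r}(e), so $s^{n+1}/w$ is integrable against it. Solvability of \eqref{def deltaj I} was already noted.

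\emph{Endpoint behaviour (c).} This is the main obstacle. At $p\in\{E_j\}_{j\le j_0}\cup\{\hat E_j\}_{j<j_0}$ the contour $\Sigma$ has an endpoint at which, by Proposition \ref{Pro asy ar}, $|r_1|\to1$. Thus $1-r_1r_2=1-|r_1|^2$ vanishes like $c|z-p|^{1/2}$; the square-root rate should follow from \eqref{eq:riloc} and the $(z-p)^{-1/2}$ growth of $S_{11}$, via $1-|r_1|^2=|S_{11}|^{-2}$. Hence $\log(1-r_1r_2)=\tfrac12\log|s-p|+\text{bounded}+o(1)$.

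I will split $h$ near $p$ as $\frac{\tfrac12\log(s-p)}{w(s)}$ plus a remainder, and evaluate $w(z)\,\mathcal{C}[\log(s-p)/w](z)$ explicitly, or compare with the standard local computation for a logarithmic density that is divided by $\sqrt{s-p}$. The claim is that this piece behaves like $\tfrac12\log(z-p)+\mathcal{O}(1)$, so that $e^\delta=\mathcal{O}((z-p)^{1/2})$. Concretely, I would use the identity $w(z)\frac{1}{2\pi i}\int_{\Sigma}\frac{\log(\cdot)}{w(s)(s-z)}ds$, which equals half the jump-matching solution; since the jump of $\delta$ across $\Sigma$ is $-\log(1-r_1r_2)$, the local function $\tfrac12\log(z-p)$ matches it up to $\mathcal{O}(1)$. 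The remainder density is Hölder, and $w(z)\to0$ at $p$, so its contribution stays bounded.

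At $\hat E_{j_0}$, $\Sigma$ does not touch the point; only the band $(E_{j_0},\hat E_{j_0})$ does. Since $w(z)=\mathcal{O}((z-\hat E_{j_0})^{1/2})$, the density $i\delta_{j_0}/w_+$ has a $-1/2$ singularity there. By the standard computation $w(z)\,\mathcal{C}[1/w_+](z)=\tfrac12+\mathcal{O}((z-\hat E_{j_0})^{1/2})$, which is consistent with the jump relation $\delta_++\delta_-=i\delta_{j_0}$. This gives $\delta(z)\to\tfrac i2\delta_{j_0}$ with a square-root error, and all other terms vanish because of the factor $w$.
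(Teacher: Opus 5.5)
You handle (a), (b), the symmetry and the limit at $\hat E_{j_0}$ correctly, essentially as the paper does. The paper says (a)--(b) follow from the definition and evaluates the band integral at $\hat E_{j_0}$ through an arctangent. One caveat: your moment expansion reproduces \eqref{def delta inf}, but it gives $\delta^{(1)}=-\tfrac12\delta(\infty)\sum_j(E_j+\hat E_j)-m_{n+1}/(2\pi i)$, not the printed \eqref{def delta1}. This is harmless, because $\delta^{(1)}$ only enters diagonally.

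The genuine gap is in (c) at $p\in\{E_j\}_{j\le j_0}\cup\{\hat E_j\}_{j<j_0}$, the step you flag yourself.

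\emph{The rate is wrong.} $1-r_1r_2=|S_{11}|^{-2}$ with $|S_{11}|\asymp|z-p|^{-1/2}$ vanishes linearly, so $\log(1-r_1r_2)=\log|s-p|+c_p+o(1)$. A coefficient of modulus one, as in \eqref{asy logr}, is what produces the exponent $\tfrac12$; your $\tfrac12\log|s-p|$ would produce $\tfrac14$.

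\emph{The matching step is not a computation.} $\tfrac12\log(z-p)$ has only a bounded jump across $\Sigma$. What must be matched is the pair $A\log(z-p)$ in $\mathbb{C}^+$ and $-A\log(z-p)$ in $\mathbb{C}^-$, which is forced by $\delta(z)=-\overline{\delta(\bar z)}$; its jump is $2A\log|s-p|$. With the jump $\delta_+-\delta_-=-\log(1-r_1r_2)$ that you derived in (b), this forces $A=-\tfrac12$.

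\emph{The claimed bound actually fails for this $\delta$.} More directly, (b) and the symmetry give $\mathrm{Re}\,\delta_+=-\tfrac12\log(1-r_1r_2)\ge 0$ on $\Sigma$, hence $|e^{\delta_+}|=(1-r_1r_2)^{-1/2}\to\infty$ at $p$. So for $\delta$ literally as in \eqref{def delta 1}, \eqref{eq:estexpdelta1} is false, and no correct local analysis can deliver it.

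The paper obtains \eqref{eq:estexpdelta1} from \eqref{asy logr}. The leading term $-\log(z-p)$ there has the wrong sign, since $0<1-r_1r_2\le 1$, and this cancels a sign slip in \eqref{def delta 1}. What \eqref{trans 1 I} actually requires is the opposite sign in front of the $\log$-integral, i.e.\ $\delta_+=\delta_-+\log(1-r_1r_2)$ on $\Sigma$. Two checks confirm this:
\begin{itemize}
\item With $G$ as in \eqref{def:G}, the jump of $M^{(1)}$ on the gaps left of $E_{j_0}$ is $\bigl(e^{\delta_--\delta_+}(1-r_1r_2)\bigr)^{\sigma_3}$, which equals $I$ only for that sign.
\item Only with that sign does $e^{2\delta}$ absorb the $(z-p)^{-1}$ singularity of $(1-r_1r_2)^{-1}=S_{11}S_{22}$ in $G$.
\end{itemize}

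For the corrected $\delta$ your plan works once both points above are fixed, and you get $A=\tfrac12$. This can be made rigorous in either of two ways.
\begin{itemize}
\item Use the paper's explicit model integral.
\item Compare with $\ell=\pm\tfrac12\log(z-p)$ in $\mathbb{C}^\pm$, with the logarithm cut along $\Sigma$. Then $\delta-\ell$ has a bounded jump across $\Sigma$, and $(\delta-\ell)_++(\delta-\ell)_-$ is constant on the adjacent band. Hence $(\delta-\ell)/w$ has jumps $\mathcal{O}(|s-p|^{-1/2})$ near $p$, which gives $\delta-\ell=\mathcal{O}(1)$.
\end{itemize}
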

\begin{proof}
The items (a) and (b) follow directly from \eqref{def deltaj I} and the definition of $\delta$ given in \eqref{def delta 1}. To show item (c), we
see from \eqref{eq:riloc} that as $z\to p\in\{E_j \}_{j=0}^{j_0}\cup\{\hat{E}_j \}_{j=0}^{j_0-1}$ from $\mathbb{C}^+$,
\begin{align}
 	\log(1-r_1(z)r_2(z))=-\log(z-p)+c_p +\mathcal{O}((z-p)^{1/2}),\label{asy logr}
 \end{align}
for some constant $c_p$, where the logarithm takes the principle branch.
We next define
    \begin{align}
   w^{(i)}(z)=\left[ -\prod_{k=0}^n(z-\hat{E}_{k})\cdot\prod_{k=0,\ldots,n,k\neq i}(z-E_k)\right] ^{\frac{1}{2}},\label{def w}\quad i=0,\ldots,n,
    \end{align}
where the principal branch is used for the square root and the cut is choose such that $w^{(j)}(z)$ is continuous on $(\hat{E}_{j-1},\hat{E}_{j})$.
As $z\to E_j$ from $\mathbb{C}^+$, $j=0,\ldots,j_0$, it is then readily seen from \eqref{asy logr} and \eqref{def delta 1} that 
\begin{align}\label{eq:estdelta1}
    	\delta(z) & = -\frac{w(z)}{2\pi i}\int_{E_j-\epsilon}^{E_j}\frac{\log (1-r_1(s)r_2(s))\dif s}{w^{(j)}(s)\sqrt{E_j-s}(s-z)}+\mathcal{O}(1) \nonumber 
        \\
    	& = \frac{w(z)}{2\pi i}\int_{-\infty}^{E_j}\frac{\log |E_j-s|\dif s}{w^{(j)}(E_j)\sqrt{|E_j-s|}(s-z)}+\mathcal{O}(1)
        \nonumber
        \\
    & = w(z)\frac{\log (z-E_j)}{2w^{(j)}(E_j)\sqrt{z-E_j}}+\mathcal{O}(1) = \frac{1}{2}\log(z-E_j)+\mathcal{O}(1),
\end{align}
where $\epsilon>0$ is a small fixed constant. Similarly, by setting  
    \begin{align}
    \hat{w}^{(i)}(z):=\left[ \prod_{k=0}^n(z-E_k)\cdot\prod_{k=0,\ldots,n,k\neq i}(z-\hat{E}_{k})\right] ^{\frac{1}{2}},\quad i=0,\ldots,n, 
        \label{def hatw}
    \end{align} 
we have, as  $z\to \hat{E}_j$ from $\mathbb{C}^+$, $j=0,\ldots,j_0-1$,
    \begin{align}\label{eq:estdelta2}
    	\delta(z)
    	&=-\frac{w(z)}{2\pi i}\int_{\hat{E}_j}^{\hat{E}_j+\epsilon}\frac{\log (1-r_1(s)r_2(s))\dif s}{\hat{w}^{(j)}(s)\sqrt{s-\hat{E}_j}(s-z)}+\mathcal{O}(1) \nonumber \\
    	&=\frac{w(z)}{2\pi i}\int_{\hat{E}_j}^{\infty}\frac{\log |s-\hat{E}_j|\dif s}{i\hat{w}^{(j)}(\hat{E}_j)\sqrt{|\hat{E}_j-s|}(s-z)}+\mathcal{O}(1) \nonumber \\
    	&= w(z)\frac{\log (z-\hat{E}_j)}{2\hat{w}^{(j)}(\hat{E}_j)\sqrt{z-\hat{E}_j}}+\mathcal{O}(1)
    	=\frac{1}{2}\log(z-\hat{E}_j)+\mathcal{O}(1).
    \end{align}
A combination of \eqref{eq:estdelta1} and \eqref{eq:estdelta2} gives us \eqref{eq:estexpdelta1}. 

Finally, as $z\to\hat{E}_{j_0}$ from $\mathbb{C} \setminus (-\infty, \hat{E}_{j_0})$, one has 
    \begin{align*}
    		\delta(z)&=\frac{w(z)}{2\pi i}\delta_{j_0}\int_{E_{j_0}}^{\hat{E}_{j_0}}\frac{i\dif s}{\hat{w}^{(j_0)}(s)\sqrt{s-\hat{E}_{j_0}}(s-z)}+\mathcal{O}((z-\hat{E}_{j_0})^{1/2})\\
    		&=\frac{w(z)}{2\pi i\hat{w}^{(j_0)}(\hat{E}_j)}\delta_{j_0}\int_{E_{j_0}}^{\hat{E}_{j_0}}\frac{\dif s}{\sqrt{|\hat{E}_{j_0}-s|}(s-z)}+\mathcal{O}((z-\hat{E}_{j_0})^{1/2})\\
    		&=\frac{w(z)}{2\pi i\hat{w}^{(j_0)}(\hat{E}_j)\sqrt{z-\hat{E}_{j_0}}}\delta_{j_0}\left(2\arctan\frac{\sqrt{z-\hat{E}_{j_0}}}{\sqrt{\hat{E}_{j_0}-E_{j_0}}}-\pi\right)+\mathcal{O}((z-\hat{E}_{j_0})^{1/2})\\
    		&=\frac{i}{2}\delta_{j_0}+\mathcal{O}((z-\hat{E}_{j_0})^{1/2}),
    \end{align*} 
as required. This completes the proof.
 \end{proof}

Let $z_{j_0}\in(E_{j_0},\hat{E}_{j_0})$ be a fixed point, we next set
\begin{align}
&\Omega_1:=\Omega_1(\xi)=\{z\in\mathbb{C}:\ 0\leq\arg(z-z_0)\leq\varphi_0\},\ \Sigma_1:=\Sigma_1(\xi)=z_0+e^{i\varphi_0}\mathbb{R}^+,\label{def Omega sigma1}	\\
	&\Omega_2:=\Omega_2(\xi)=\{z\in\mathbb{C}:\ \pi-\varphi_0\leq\arg(z-z_{j_0})\leq\pi\},\ \Sigma_2:=\Sigma_2(\xi)=z_{j_0}+e^{i(\pi-\varphi_0)}\mathbb{R}^+,\label{def Omega sigma2}
\end{align}
where $\varphi_0$ is chosen such that 
\begin{equation}\label{eq:inequality1}
    \Im \theta(z) \left\{
   \begin{array}{ll}
     <0, & \hbox{$z\in\Sigma_1\setminus\{z_0\}$,} \\
     >0, & \hbox{$z\in\Sigma_2\setminus\{z_{j_0}\}$,}
   \end{array}
 \right.
\end{equation}
see Figure \ref{fig I 1} for an illustration. 
The first transformation of RH problem \ref{RHP1} for $M$ is defined by
\begin{align}\label{trans 1 I}
 	M^{(1)}(z):=M^{(1)}(z;\xi,t)=e^{\delta(\infty)\sigma_3}M(z)G(z)e^{-\delta(z)\sigma_3},
 \end{align}
where $\delta$ is given in \eqref{def delta 1} and 
\begin{equation}\label{def:G}
 G(z): = G(z;\xi) =\left\{
   \begin{array}{ll}
     \begin{pmatrix}
 			1 & 0\\
 		r_2(z)e^{-2it\theta(z)} & 1
 	\end{pmatrix}, & \hbox{$z\in\Omega_1$,} \\[8pt]
     \begin{pmatrix}
 		1 & r_1(z)e^{2it\theta(z)} \\
 		0 & 1
 	\end{pmatrix}, & \hbox{$z\in\Omega_1^*$,}
    \\[8pt]
   \begin{pmatrix}
 	1 &	-\frac{ r_1(z)e^{2it\theta(z)}}{1-r_1(z)r_2(z)}\\
 	0 & 1
 	\end{pmatrix}, & \hbox{$z\in\Omega_2$,} \\[12pt]
     \begin{pmatrix}
 		1 &	0\\
 	-\frac{r_2(z)e^{-2it\theta(z)}}{1-r_1(z)r_2(z)} & 1
 	\end{pmatrix}, & \hbox{$z\in\Omega_2^*$,}
    \\
     I, & \hbox{elsewhere,}
   \end{array}
 \right.
\end{equation}
is a matrix-valued function. In view of the factorizations of the jump matrix $J$ given in \eqref{open jump J1}--\eqref{open jump J4},  it is then readily seen that $M^{(1)}$ satisfies the following RH problem.

\begin{RHP}
 \label{RHPM1}
 \hfil
	\begin{itemize}
		\item [$\mathrm{(a)}$] $M^{(1)}(z)$ is analytic in $\mathbb{C}\setminus\Sigma^{(1)}$, where  \begin{equation}\label{def:Sigma1}
		    \Sigma^{(1)}:=\Sigma^{(1)}(\xi)= (-\infty,z_0]\cup (\cup_{j=0}^n[E_j,\hat{E}_j])\cup\Sigma_1\cup\Sigma_1^*\cup\Sigma_2\cup\Sigma_2^*.
		\end{equation}
		\item [$\mathrm{(b)}$]  For $z\in\Sigma^{(1)}$, $M^{(1)}$  satisfies the jump condition $$M^{(1)}_{+}(z)=M^{(1)}_{-}(z)J^{(1)}(z),$$ where
		\begin{align}\label{def:J1}
				J^{(1)}(z)=	\left\{ \begin{array}{ll}
				e^{i(f_0x+g_0t-(B_j^fx+B_j^gt+\phi_j-\delta_j)/2)\hat{\sigma}_3}\begin{pmatrix}
				0 & -i\\
				-i& 0
			\end{pmatrix}, & z\in (E_j,\hat{E}_j),\\[4pt]
					\begin{pmatrix}
					1 & 0\\
					-r_2(z)e^{-2it\theta(z)-2\delta(z)} & 1	
				\end{pmatrix},&  z\in\Sigma_1,\\[9pt]
			\begin{pmatrix}
					1 & r_1(z)e^{2it\theta(z)+2\delta(z)} \\
					0 & 1
				\end{pmatrix}, &z\in\Sigma_1^*,\\[9pt]
				\begin{pmatrix}
			1 &	\frac{ r_1(z)e^{2it\theta(z)+2\delta(z)}}{1-r_1(z)r_2(z)}\\
			0 & 1
			\end{pmatrix},&z\in\Sigma_2,\\[12pt]
			\begin{pmatrix}
				1 &	0\\
			\frac{-r_2(z)e^{-2it\theta(z)-2\delta(z)}}{1-r_1(z)r_2(z)} & 1
			\end{pmatrix},&z\in\Sigma_2^*,\\[12pt]
		\begin{pmatrix}
			1-r_1(z)r_2(z) & r_1(z)e^{2it\theta(z)+2\delta(z)}\\
			-r_2(z)e^{-2it\theta(z)-2\delta(z)} & 1
		\end{pmatrix}, & z\in (\hat{E}_{j_0},z_0),
				\end{array}\right.
		\end{align}
         with $\phi_0=\delta_0=0$ and $j=0,\ldots,n$.
		\item [$\mathrm{(c)}$] As $z\to\infty$, we have $M^{(1)}(z)=I+\mathcal{O}(z^{-1})$.
		\item [$\mathrm{(d)}$]  Let $p\in\{E_j,\hat{E}_j\}_{j=0}^n\setminus\{\hat{E}_{j_0}\}$, we have
		\begin{align*}
			M^{(1)}(z)=\mathcal{O}((z-p)^{-1/4}),\qquad z\to p.
		\end{align*}
	\end{itemize}
\end{RHP}

 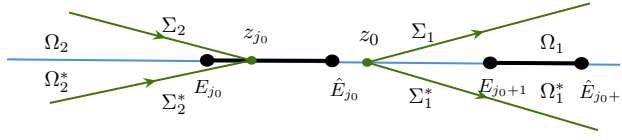
\begin{figure}

 \tikzset{every picture/.style={line width=0.75pt}} 

 \begin{tikzpicture}[x=0.75pt,y=0.75pt,yscale=-0.5,xscale=0.6]
 	\draw [color={rgb, 255:red, 74; green, 144; blue, 226 }  ,draw opacity=1 ]   (20.47,210.4) -- (530.47,215.8) ;

 	\draw [line width=1.5]    (186.9,211.4) -- (290.9,211.9) ;
 	\draw [shift={(290.9,211.9)}, rotate = 0.28] [color={rgb, 255:red, 0; green, 0; blue, 0 }  ][fill={rgb, 255:red, 0; green, 0; blue, 0 }  ][line width=1.5]      (0, 0) circle [x radius= 4.36, y radius= 4.36]   ;
 	\draw [shift={(186.9,211.4)}, rotate = 0.28] [color={rgb, 255:red, 0; green, 0; blue, 0 }  ][fill={rgb, 255:red, 0; green, 0; blue, 0 }  ][line width=1.5]      (0, 0) circle [x radius= 4.36, y radius= 4.36]   ;
 	\draw [line width=1.5]    (422.4,213.9) -- (498.4,214.39) ;
 	\draw [shift={(498.4,214.39)}, rotate = 0.37] [color={rgb, 255:red, 0; green, 0; blue, 0 }  ][fill={rgb, 255:red, 0; green, 0; blue, 0 }  ][line width=1.5]      (0, 0) circle [x radius= 4.36, y radius= 4.36]   ;
 	\draw [shift={(422.4,213.9)}, rotate = 0.37] [color={rgb, 255:red, 0; green, 0; blue, 0 }  ][fill={rgb, 255:red, 0; green, 0; blue, 0 }  ][line width=1.5]      (0, 0) circle [x radius= 4.36, y radius= 4.36]   ;
 	\draw [color={rgb, 255:red, 65; green, 117; blue, 5 }  ,draw opacity=1 ]   (72,163.27) -- (223.9,211.4) ;
 	\draw [shift={(223.9,211.4)}, rotate = 17.58] [color={rgb, 255:red, 65; green, 117; blue, 5 }  ,draw opacity=1 ][fill={rgb, 255:red, 65; green, 117; blue, 5 }  ,fill opacity=1 ][line width=0.75]      (0, 0) circle [x radius= 3.35, y radius= 3.35]   ;
 	\draw [shift={(152.72,188.85)}, rotate = 197.58] [fill={rgb, 255:red, 65; green, 117; blue, 5 }  ,fill opacity=1 ][line width=0.08]  [draw opacity=0] (10.72,-5.15) -- (0,0) -- (10.72,5.15) -- (7.12,0) -- cycle    ;
 	\draw [color={rgb, 255:red, 65; green, 117; blue, 5 }  ,draw opacity=1 ]   (320,213.5) -- (511.97,281.57) ;
 	\draw [shift={(420.7,249.2)}, rotate = 199.52] [fill={rgb, 255:red, 65; green, 117; blue, 5 }  ,fill opacity=1 ][line width=0.08]  [draw opacity=0] (10.72,-5.15) -- (0,0) -- (10.72,5.15) -- (7.12,0) -- cycle    ;
 		\draw [shift={(320,213.5)}, rotate = 17.58] [color={rgb, 255:red, 65; green, 117; blue, 5 }  ,draw opacity=1 ][fill={rgb, 255:red, 65; green, 117; blue, 5 }  ,fill opacity=1 ][line width=0.75]      (0, 0) circle [x radius= 3.35, y radius= 3.35]   ;
 	\draw [color={rgb, 255:red, 65; green, 117; blue, 5 }  ,draw opacity=1 ]   (320,213.5) -- (505.47,154.07) ;
 	\draw [shift={(417.49,182.26)}, rotate = 162.23] [fill={rgb, 255:red, 65; green, 117; blue, 5 }  ,fill opacity=1 ][line width=0.08]  [draw opacity=0] (10.72,-5.15) -- (0,0) -- (10.72,5.15) -- (7.12,0) -- cycle    ;
 	\draw [color={rgb, 255:red, 65; green, 117; blue, 5 }  ,draw opacity=1 ]   (56,254.27) -- (223.9,211.4) ;
 	\draw [shift={(144.79,231.6)}, rotate = 165.68] [fill={rgb, 255:red, 65; green, 117; blue, 5 }  ,fill opacity=1 ][line width=0.08]  [draw opacity=0] (10.72,-5.15) -- (0,0) -- (10.72,5.15) -- (7.12,0) -- cycle    ;
 	
 	\draw (173.5,228.5) node [anchor=north west][inner sep=0.75pt]  [font=\footnotesize,xscale=0.8,yscale=0.8] [align=left] {$\displaystyle E_{j_0}$};
 	\draw (287,226.5) node [anchor=north west][inner sep=0.75pt]  [font=\footnotesize,xscale=0.8,yscale=0.8] [align=left] {$\displaystyle \hat{E}_{j_0}$};
 	\draw (410,227.2) node [anchor=north west][inner sep=0.75pt]  [font=\footnotesize,xscale=0.8,yscale=0.8] [align=left] {$\displaystyle E_{j_0+1}$};
 	\draw (494.4,228.39) node [anchor=north west][inner sep=0.75pt]  [font=\footnotesize,xscale=0.8,yscale=0.8] [align=left] {$\displaystyle \hat{E}_{j_0+1}$};
 	\draw (311,179.9) node [anchor=north west][inner sep=0.75pt]  [xscale=0.8,yscale=0.8]  {$z_{0}$};
 	\draw (147,165.9) node [anchor=north west][inner sep=0.75pt]  [font=\small,xscale=0.8,yscale=0.8]  {$\Sigma _{2}$};
 	\draw (147,243.4) node [anchor=north west][inner sep=0.75pt]  [font=\small,xscale=0.8,yscale=0.8]  {$\Sigma _{2}^{*}$};
 	\draw (354.5,172.4) node [anchor=north west][inner sep=0.75pt]  [font=\small,xscale=0.8,yscale=0.8]  {$\Sigma _{1}$};
 	\draw (352.5,236.9) node [anchor=north west][inner sep=0.75pt]  [font=\small,xscale=0.8,yscale=0.8]  {$\Sigma _{1}^{*}$};
 	\draw (49.5,182.9) node [anchor=north west][inner sep=0.75pt]  [font=\small,xscale=0.8,yscale=0.8]  {$\Omega _{2}$};
 	\draw (49.5,220.4) node [anchor=north west][inner sep=0.75pt]  [font=\small,xscale=0.8,yscale=0.8]  {$\Omega _{2}^{*}$};
 	\draw (462,183.4) node [anchor=north west][inner sep=0.75pt]  [font=\small,xscale=0.8,yscale=0.8]  {$\Omega _{1}$};
 	\draw (462,231.9) node [anchor=north west][inner sep=0.75pt]  [font=\small,xscale=0.8,yscale=0.8]  {$\Omega _{1}^{*}$};
 	\draw (215,177.4) node [anchor=north west][inner sep=0.75pt]  [font=\small,xscale=0.8,yscale=0.8]  {$z_{j_0}$};
 \end{tikzpicture}
 	\caption{\footnotesize  The contours $\Sigma_i$ and the regions $\Omega_i$, $i=1,2$, in \eqref{def Omega sigma1} and \eqref{def Omega sigma2}.}
 	\label{fig I 1}
 \end{figure}

\begin{remark}
From Proposition \ref{Pro asy ar},  it follows that $ 
1-r_1(p)r_2(p)=0$ for $p\in\{E_j \}_{j=0}^{j_0}\cup\{\hat{E}_j \}_{j=0}^{j_0-1}$. This leads to a pole singularity for the function $G$ in \eqref{def:G}. By introducing the auxiliary function $\delta$, we could improve the singularity as stated in item (d) of the RH problem for $M^{(1)}$.
\end{remark}

We are now at the stage of approximating $M^{(1)}$ in global and local manners. To that end, let 
\begin{align}\label{def Uxi I}
	U:=   \{z:\ |z-\hat{E}_{j_0}|\leq c_0\}
\end{align}
be a small disk around $\hat{E}_{j_0}$, where 
\begin{align}
	c_0:=\min\left\{\frac{1}{2}(\hat{E}_{j_0}-z_{j_0}),\ \frac{1}{2}(E_{j_0+1}-\hat{E}_{j_0}),\ 2(z_0-\hat{E}_{j_0})t^\epsilon\right\},\qquad\epsilon\in {(1/3,2/3)}.\label{def c0}
\end{align}
Note that as $t\to \infty$, we have
\begin{align}\label{eq:estzEj0}
z_0-\hat{E}_{j_0} =\mathcal{O} (\hat{\xi}_{j_0}-\xi)=\mathcal{O}(t^{-2/3}).
\end{align}
Indeed, as discussed in Section \ref{subsec2.3}, for $\xi\in[\xi_{j_0+1},\hat{\xi}_{j_0}]$, $z_0(\xi)$ is defined implicitly by the equation $F(z_0(\xi);\xi)=0$ with $F$ given in \eqref{def F} and $z_0(\hat{\xi}_{j_0})=\hat{E}_{j_0}$. Since $\hat{E}_{j_0}$ is not a multiple root  of $F(z;\hat{\xi}_{j_0})$, one has $\partial_z F(\hat{E}_{j_0};\hat{\xi}_{j_0})\neq0$, which implies \eqref{eq:estzEj0}. Thus, $U$ is a shrinking disk and  $z_0 \in U$ for large $t$.

In what follows, we will construct global parametrix $M^{(glo)}$ and local parametrix $M^{(loc)}$ in Sections \ref{subsec mod I} and \ref{subsec loc I}, respectively, such that 
 	\begin{align}\label{construct M1}
 	M^{(1)}(z)=	\left\{ \begin{array}{ll}
 	E(z)M^{(glo)}(z), & z\in \mathbb{C}\setminus U ,\\
 	 E(z)M^{(loc)}(z),&  z\in U,\\
 	\end{array}\right.
 \end{align}
where $U$ is defined in \eqref{def Uxi I} and $E$ is the error function that satisfies a small norm RH problem as shown in Section \ref{subsec E}.

\subsection{Global parametrix}\label{subsec mod I}
By ignoring the exponentially small part of $J^{(1)}$ in \eqref{def:J1} according to \eqref{eq:inequality1},  we arrive at the following global parametrix.
 \begin{RHP}
 	\hfill\label{RHPM mod}
 	\begin{itemize}
 		\item [$\mathrm{(a)}$] $M^{(glo)}(z)$ is analytic in $\mathbb{C}\setminus(\cup_{j=0}^n[E_j,\hat{E}_j])$.
 		\item [$\mathrm{(b)}$]  For $z\in\cup_{j=0}^n(E_j,\hat{E}_j)$, $M^{(glo)}(z)$  satisfies the jump condition \begin{equation}\label{eq:Mglojump}
 		    M^{(glo)}_{+}(z)=M^{(glo)}_{-}(z)J^{(glo)}(z),
 		\end{equation}
        where for $j=0,\ldots,n$,
 		\begin{align*}
 			\small	J^{(glo)}(z;x,t)=
 				e^{i(f_0x+g_0t-(B_j^fx+B_j^gt+\phi_j-\delta_j)/2)\hat{\sigma}_3}\begin{pmatrix}
 					0 & -i\\
 					-i& 0
 				\end{pmatrix}, \qquad z\in (E_j,\hat{E}_j).
 		\end{align*}
 		\item [$\mathrm{(c)}$] As $z\to\infty$, we have $M^{(glo)}(z)=I+\mathcal{O}(z^{-1})$.
 		\item [$\mathrm{(d)}$] Let $p\in\{E_j,\hat{E}_j\}_{j=0}^n$, we have
 		\begin{align*}
 			M^{(glo)}(z)=\mathcal{O}((z-p)^{-1/4}),\qquad z\to p.
 		\end{align*}
 	\end{itemize}
 \end{RHP}
Similar to RH problem \ref{Pro mu0}, the above RH problem can be solved explicitly by using the planar matrix Baker-Akhiezer function. More precisely, we have
\begin{align}
	\label{def Mmod}
    M^{(glo)}(z)=e^{-i(f_0x+g_0t)\sigma_3}N^{(glo)}(\infty)^{-1}N^{(glo)}(z)e^{i(f_0x+g_0t)\sigma_3},
\end{align}
where $N^{(glo)}(z)=\left(N^{(glo)}_{ij}(z)\right)_{i,j=1,2}$ with 
	\begin{align}
	&	N^{(glo)}_{11}(z)=\frac{1}{2}(\varkappa(z)+\varkappa(z)^{-1})\dfrac{\Theta(\mathcal{A}(z)+\boldsymbol{C}(x,t;\boldsymbol{\phi-\delta})+\mathcal{A}(\mathcal{D})+\boldsymbol{K})}{\Theta(\mathcal{A}(z)+\mathcal{A}(\mathcal{D})+\boldsymbol{K})},\label{N}\\ 
    & N^{(glo)}_{12}(z)=\frac{1}{2}(\varkappa(z)-\varkappa(z)^{-1})\dfrac{\Theta(-\mathcal{A}(z)+\boldsymbol{C}(x,t;\boldsymbol{\phi-\delta})+\mathcal{A}(\mathcal{D})+\boldsymbol{K})}{\Theta(-\mathcal{A}(z)+\mathcal{A}(\mathcal{D})+\boldsymbol{K})},\\
	&	N^{(glo)}_{21}(z)=\frac{1}{2}(\varkappa(z)-\varkappa(z)^{-1})\dfrac{\Theta(\mathcal{A}(z)+\boldsymbol{C}(x,t;\boldsymbol{\phi-\delta})-\mathcal{A}(\mathcal{D})-\boldsymbol{K})}{\Theta(\mathcal{A}(z)-\mathcal{A}(\mathcal{D})-\boldsymbol{K})} ,\\
	&N^{(glo)}_{22}(z)=\frac{1}{2}(\varkappa(z)+\varkappa(z)^{-1})\dfrac{\Theta(\mathcal{A}(z)-\boldsymbol{C}(x,t;\boldsymbol{\phi-\delta})+\mathcal{A}(\mathcal{D})+\boldsymbol{K})}{\Theta(\mathcal{A}(z)+\mathcal{A}(\mathcal{D})+\boldsymbol{K})}.\label{eq:Nglob22}
	\end{align}
Here, $\varkappa$ is defined in \eqref{def:varkappa}, $\Theta$ is the Riemann theta function given in \eqref{def Theta} with the same  $\boldsymbol{C}$, $\boldsymbol{K}$ and $\mathcal{D}$ as in \eqref{def q AG},  
$\boldsymbol{\phi}=(\phi_1,\cdots,\phi_n)$ and 
$\boldsymbol{\delta}=(\delta_1,\cdots,\delta_n)$. 
In addition, we have 
 \begin{align}
 \label{asy Mmod}	M^{(glo)}(z)=I+\frac{M^{(glo)}_1}{z}+\mathcal{O}(z^{-2}), \qquad z\to\infty, 
 \end{align}
 and
 \begin{align}
 	\left(M^{(glo)}_{1}\right)_{12}=-\frac{i}{2}q^{(AG)}(x,t;\E, \hat{\E}, \boldsymbol{\phi-\delta}),\label{asy Mmod12}
 \end{align}
where $q^{(AG)}$ is defined in \eqref{def q AG}.

\subsection{Local parametrix}\label{subsec loc I}
The local parametrix $M^{(loc)}(z)$ reads as follows.
\begin{RHP}
	\hfil\label{Mloc1}
	\begin{itemize}
		\item [$\mathrm{(a)}$] $M^{(loc)}(z)$ is analytic in $U\setminus\Sigma^{(1)}$, where $U$ and $\Sigma^{(1)}$ are defined in \eqref{def Uxi I} and \eqref{def:Sigma1}, respectively. 
		\item [$\mathrm{(b)}$]  For $z\in\Sigma^{(1)}\cap U$, $M^{(loc)}(z)$  satisfies the jump condition $$M^{(loc)}_{+}(z)=M^{(loc)}_{-}(z)J^{(1)}(z),$$
        where $J^{(1)}$ is defined in \eqref{def:J1}. 
		\item [$\mathrm{(c)}$] $M^{(loc)}(z)$ has at most $-1/4$-singularity at $\hat{E}_{j_0}$.
         \item [$\mathrm{(d)}$] As $t\to\infty$, $M^{(loc)}(z)$ matches $M^{(glo)}(z)$ on the boundary $\partial U$ of $U$.
	\end{itemize}
\end{RHP}

To solve the above RH problem, we need to understand the local behavior of the function $\theta$ defined in \eqref{theta} near $z=\hat{E}_{j_0}$, which is given in the following proposition.
	\begin{proposition}
	For any fixed $\xi$, we have, as  $z\to\hat{E}_{j_0}$, 
\begin{align}\label{eq:thetaexp}
\theta(z)=\theta^{(0,\hat{j_0})}+(\xi-\hat{\xi}_{j_0})\theta^{(1,\hat{j_0})}(z-\hat{E}_{j_0})^{1/2}+\frac{2}{3}
\theta^{(3,\hat{j_0})}(z-\hat{E}_{j_0})^{3/2}+\mathcal{O}\left((z-\hat{E}_{j_0})^{5/2}\right),
\end{align}
where  
\begin{align}
&\theta^{(0,\hat{j_0})}=\theta(\hat{E}_{j_0};\xi)=f_0\xi+g_0-\frac{1}{2}(B_{j_0}^f\xi+B_{j_0}^g),\\
&\theta^{(1,\hat{j_0})}=-\frac{2\prod_{j=0}^n(\hat{E}_{j_0}-z_j^f)}{\hat{w}^{(j_0)}(\hat{E}_{j_0})},\label{theta hj01}
    \end{align}
    and
\begin{align*}
\theta^{(3,\hat{j_0})}:=\theta^{(3,\hat{j_0})}(\xi)=\frac{-1}{\hat{w}^{(j_0)}(\hat{E}_{j_0})^2}\left[ \partial_z F(\hat{E}_{j_0};\xi)\hat{w}^{(j_0)}(\hat{E}_{j_0})
-F(\hat{E}_{j_0};\xi)(\hat{w}^{(j_0)})'(\hat{E}_{j_0})\right].
\end{align*}
Here, $f_0$, $g_0$ and $B_{j_0}^f$, $B_{j_0}^g$ are given in \eqref{def f0g0} and \eqref{def:Bjfg}, the functions $\hat{w}^{(j_0)}$ and $F$ are defined in \eqref{def hatw} and \eqref{def F}, respectively. Moreover,  $\theta^{(3,\hat{j_0})}(\xi) < 0$ in a small neighborhood of $\hat{\xi}_{j_0}$.
\end{proposition}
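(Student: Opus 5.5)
The plan is to expand $\theta'(z)$ near $\hat{E}_{j_0}$ and integrate term by term. From \eqref{theta} and \eqref{def fg},
\[
\theta'(z)=-\frac{F(z;\xi)}{w(z)},
\]
with $F$ as in \eqref{def F}. Near $\hat{E}_{j_0}$ we write $w(z)=\hat{w}^{(j_0)}(z)(z-\hat{E}_{j_0})^{1/2}$, where $\hat{w}^{(j_0)}$ from \eqref{def hatw} is analytic and nonvanishing in a neighbourhood of $\hat{E}_{j_0}$. Hence $F(z;\xi)/\hat{w}^{(j_0)}(z)$ is analytic there, and we Taylor expand it:
\[
\frac{F(z;\xi)}{\hat{w}^{(j_0)}(z)}=a_0+a_1(z-\hat{E}_{j_0})+a_2(z-\hat{E}_{j_0})^2+\cdots,
\]
with
\[
a_0=\frac{F(\hat{E}_{j_0};\xi)}{\hat{w}^{(j_0)}(\hat{E}_{j_0})},\qquad a_1=\frac{\partial_zF\,\hat{w}^{(j_0)}-F\,(\hat{w}^{(j_0)})'}{(\hat{w}^{(j_0)})^2}\Big|_{\hat{E}_{j_0}}.
\]
Integrating $\theta'(z)=-\sum_k a_k (z-\hat{E}_{j_0})^{k-1/2}$ from $\hat{E}_{j_0}$ gives
\[
\theta(z)=\theta(\hat{E}_{j_0})-2a_0(z-\hat{E}_{j_0})^{1/2}-\tfrac{2}{3}a_1(z-\hat{E}_{j_0})^{3/2}+\mathcal{O}\big((z-\hat{E}_{j_0})^{5/2}\big).
\]
This already produces $\theta^{(3,\hat{j_0})}=-a_1$, which is exactly the stated formula.

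For the coefficient of $(z-\hat{E}_{j_0})^{1/2}$, I use \eqref{F xij}, namely $F(\hat{E}_{j_0};\hat{\xi}_{j_0})=0$. Since $F$ is affine in $\xi$, this gives
\[
F(\hat{E}_{j_0};\xi)=(\xi-\hat{\xi}_{j_0})\prod_{j=0}^n(\hat{E}_{j_0}-z_j^f),
\]
so $-2a_0=(\xi-\hat{\xi}_{j_0})\theta^{(1,\hat{j_0})}$ with $\theta^{(1,\hat{j_0})}$ as in \eqref{theta hj01}.

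For the constant term, $\theta(\hat{E}_{j_0})$ equals the average $\tfrac12(\theta_++\theta_-)$ on $(E_{j_0},\hat{E}_{j_0})$, because the square-root term vanishes at the endpoint. By \eqref{theta Ej} this average is $f_0\xi+g_0-\tfrac12(B^f_{j_0}\xi+B^g_{j_0})$. It does not depend on which side we approach from, since only even-in-root terms survive at the endpoint.

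For the sign claim, continuity in $\xi$ reduces it to showing $\theta^{(3,\hat{j_0})}(\hat{\xi}_{j_0})<0$. At $\xi=\hat{\xi}_{j_0}$ we have $F=0$, so
\[
\theta^{(3,\hat{j_0})}(\hat{\xi}_{j_0})=-\frac{\partial_zF(\hat{E}_{j_0};\hat{\xi}_{j_0})}{\hat{w}^{(j_0)}(\hat{E}_{j_0})}.
\]
First, $\partial_zF(\hat{E}_{j_0};\hat{\xi}_{j_0})\neq0$: $\hat{E}_{j_0}$ is a simple root, as already noted before \eqref{eq:estzEj0}. So it remains to fix the sign, and this is the main obstacle, since it is bookkeeping of branch choices. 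I would argue from the signature table. For $\xi$ slightly less than $\hat{\xi}_{j_0}$, the saddle point $z_0$ lies in $(\hat{E}_{j_0},E_{j_0+1})$ close to $\hat{E}_{j_0}$. On the gap, $\theta$ is real and $\theta'$ changes sign at $z_0$, with the orientation dictated by Figure \ref{sign theta} (that is, $\Im\theta<0$ above the real axis to the right of $z_0$). In local variables $\zeta=z-\hat{E}_{j_0}>0$ we have
\[
\theta'(z)\approx \tfrac12(\xi-\hat{\xi}_{j_0})\theta^{(1,\hat{j_0})}\zeta^{-1/2}+\theta^{(3,\hat{j_0})}\zeta^{1/2},
\]
and the requirement $\Im\theta<0$ for $z=z_0+i\varepsilon$ to the right of $z_0$ forces $\theta''(z_0)<0$. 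That in turn forces $\theta^{(3,\hat{j_0})}<0$.

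As a cross-check, I would verify the sign directly. The polynomial $F(\cdot;\hat{\xi}_{j_0})$ has leading coefficient $4>0$ and simple real zeros, one in each band except the $j_0$-th, which contains $\hat{E}_{j_0}$ itself. Counting the zeros to the right of $\hat{E}_{j_0}$ gives $\operatorname{sgn}\partial_zF=(-1)^{n-j_0}$. The branch of $\hat{w}^{(j_0)}$ at $\hat{E}_{j_0}$ also has sign $(-1)^{n-j_0}$, coming from the factors $(\hat{E}_{j_0}-E_k)$ and $(\hat{E}_{j_0}-\hat{E}_k)$ with $k>j_0$, each negative. Consistency of these two parities with the branch convention $w\sim z^{n+1}$ should yield the stated negative sign. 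If the branch conventions turn out delicate, I would simply rely on the signature-table argument above.
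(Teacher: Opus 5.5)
Your proposal is correct. The expansion follows the paper's route: Taylor-expanding $F(\cdot;\xi)/\hat w^{(j_0)}$ and integrating term by term is the same computation as the paper's two integrations by parts. In both, the factor $\xi-\hat\xi_{j_0}$ comes from $F(\hat E_{j_0};\hat\xi_{j_0})=0$. Your justification of the constant term via the average $\tfrac12(\theta_++\theta_-)$ on the band is also fine.

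Where you go beyond the paper is the sign of $\theta^{(3,\hat{j_0})}(\hat\xi_{j_0})$. The paper only rewrites it as $-\partial_zF(\hat E_{j_0};\hat\xi_{j_0})/\hat w^{(j_0)}(\hat E_{j_0})$ and asserts it is negative; you actually justify it. Of your two arguments, the parity count is the one to keep. The signature-table argument rests on the figure, and the figure is itself equivalent to $\theta''(z_0)=-\partial_zF(z_0)/w(z_0)<0$ on the gap, which is the same parity computation.

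Tighten two points in the parity count.
\begin{itemize}
\item At $\xi=\hat\xi_{j_0}$ the band $(E_{j_0},\hat E_{j_0})$ still contains an interior zero of $F$, forced by $\int_{E_{j_0}}^{\hat E_{j_0}}\dif\theta=0$. So $\hat E_{j_0}$ is the extra $(n+2)$-nd zero, not a replacement for the band zero. This does not affect your count of the $n-j_0$ zeros to the right of $\hat E_{j_0}$.
\item The sign $(-1)^{n-j_0}$ of $\hat w^{(j_0)}(\hat E_{j_0})$ cannot be read off the radicand, which is positive. It is forced by the factorization $w(z)=\hat w^{(j_0)}(z)(z-\hat E_{j_0})^{1/2}$ together with $\operatorname{sgn} w=(-1)^{n-j_0}$ on $(\hat E_{j_0},E_{j_0+1})$, which follows from $w\sim z^{n+1}$. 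A literal principal-branch reading of \eqref{def hatw} would reverse the conclusion whenever $n-j_0$ is odd.
\end{itemize}
With these fixes, $\operatorname{sgn}\partial_zF(\hat E_{j_0};\hat\xi_{j_0})=\operatorname{sgn}\hat w^{(j_0)}(\hat E_{j_0})=(-1)^{n-j_0}$ gives $\theta^{(3,\hat{j_0})}(\hat\xi_{j_0})<0$ rigorously.
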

\begin{proof}
Recall that $\theta(z)=\theta(z;\xi):=-(f(z)-f_0)\xi-\left(g(z)-g_0 \right)$.  By \eqref{def fg}, it follows that 
		\begin{align*}
			\theta(z)& =f_0\xi+g_0-\int_{\hat{E}_{0}}^z\frac{\xi\prod_{k=0}^n(s-z_k^f)+4\prod_{k=0}^{n+1}(s-z_k^g)}{w(s)}\dif s
            \\
			& =\theta(\hat{E}_{j_0};\xi)-\int_{\hat{E}_{j_0}}^z\frac{F(s;\xi)}{\hat{w}^{(j_0)}(s)\sqrt{s-\hat{E}_{j_0}}}\dif s,
		\end{align*}
where $\hat{w}^{(j_0)}$ and $F$ are given in \eqref{def hatw} and \eqref{def F}, respectively.
 We then obtain from integration by parts that
		\begin{align*}
			\theta(z)&=\theta(\hat{E}_{j_0};\xi)-2(z-\hat{E}_{j_0})^{1/2}\frac{F(z;\xi)}{\hat{w}^{(j_0)}(z)}+ \int_{\hat{E}_{j_0}}^z2(s-\hat{E}_{j_0})^{1/2}\partial_s\left(\frac{F(s;\xi)}{\hat{w}^{(j_0)}(s)}\right) \dif s\\
			&=\theta(\hat{E}_{j_0};\xi)-2(z-\hat{E}_{j_0})^{1/2}\frac{F(\hat{E}_{j_0};\xi)}{\hat{w}^{(j_0)}(\hat{E}_{j_0})}-2(z-\hat{E}_{j_0})^{1/2}\left(\frac{F(z;\xi)}{\hat{w}^{(j_0)}(z)}-\frac{F(\hat{E}_{j_0};\xi)}{\hat{w}^{(j_0)}(\hat{E}_{j_0})} \right) \\
			&~~~~+\frac{4}{3}(z-\hat{E}_{j_0})^{3/2}\partial_z\left(\frac{F(z;\xi)}{\hat{w}^{(j_0)}(z)}\right) -\int_{\hat{E}_{j_0}}^z\frac{4}{3}(s-\hat{E}_{j_0})^{3/2}\partial_s^2\left(\frac{F(s;\xi)}{\hat{w}^{(j_0)}(s)}\right) \dif s.
		\end{align*}
By expanding the functions $\frac{F(z;\xi)}{\hat{w}^{(j_0)}(z)}$ and $\partial_z\left(\frac{F(z;\xi)}{\hat{w}^{(j_0)}(z)}\right)$ at $z=\hat{E}_{j_0}$, it is readily seen that, for any fixed $\xi$, 
		\begin{align}
			\theta(z)& = \theta(\hat{E}_{j_0};\xi)-2(z-\hat{E}_{j_0})^{1/2}\frac{F(\hat{E}_{j_0};\xi)}{\hat{w}^{(j_0)}(\hat{E}_{j_0})}\nonumber\\
			&~~-\frac{2}{3}(z-\hat{E}_{j_0})^{3/2}\left(\frac{\hat{w}^{(j_0)}(\hat{E}_{j_0})\partial_zF(\hat{E}_{j_0};\xi)-F(\hat{E}_{j_0};\xi)(\hat{w}^{(j_0)})'(\hat{E}_{j_0})}{\hat{w}^{(j_0)}(\hat{E}_{j_0})^2}\right)
            \nonumber 
            \\
            &~~+\mathcal{O}\left((z-\hat{E}_{j_0})^{5/2}\right), \qquad z\to\hat{E}_{j_0}. \label{equ 4.35}
		\end{align} 
		Substituting \eqref{def xij} into \eqref{def F}, we have  $F(\hat{E}_{j_0};\hat{\xi}_{j_0})=0$. It thereby follows that
		  \begin{align*}
		  	F(\hat{E}_{j_0};\xi)=F(\hat{E}_{j_0};\xi)-F(\hat{E}_{j_0};\hat{\xi}_{j_0})=(\xi-\hat{\xi}_{j_0})\prod_{j=0}^n(\hat{E}_{j_0}-z^f_j).
		  \end{align*}
	 Combining the above equation and \eqref{equ 4.35}, we finally obtain \eqref{eq:thetaexp}. In addition, the fact  $F(\hat{E}_{j_0};\hat{\xi}_{j_0})=0$ also implies that
		\begin{align}\label{theta hj03}
			\theta^{(3,\hat{j_0})}(\hat{\xi}_{j_0})=-\frac{\partial_z F(\hat{E}_{j_0};\hat{\xi}_{j_0})}{\hat{w}^{(j_0)}(\hat{E}_{j_0})},
		\end{align}
		and we actually have $\theta^{(3,\hat{j_0})}(\hat{\xi}_{j_0})<0$. By continuity, $\theta^{(3,\hat{j_0})}(\xi) < 0$ in a small neighborhood of $\hat{\xi}_{j_0}$. This completes the proof.
	\end{proof}
	

The expansion \eqref{eq:thetaexp} invokes us to define
\begin{align}\label{def zeta1}
    \zeta(z):=\zeta(z;\xi)= 
    \left(\frac{3it}{2}(\theta(\hat{E}_{j_0})+(\xi-\hat{\xi}_{j_0})\theta^{(1,\hat{j_0})}(z-\hat{E}_{j_0})^{1/2}-
    \theta(z))\right)^{2/3}, \quad  z\in U. 
\end{align}
which is a one-to-one conformal mapping in $U$ with respect to $z$. Moreover, it is easily seen that 
\begin{align}
    \zeta(\hat{E}_{j_0})=0,\qquad \zeta'(\hat{E}_{j_0})=-|\theta^{(3,\hat{j_0})}(\xi)|^{2/3}t^{2/3}<0.\label{d zeta1}
\end{align}
By taking the principal branch for the fractions, we have 
\begin{align*}
 \zeta(z)^{3/2}=\left\{\begin{array}{ll}
-it\theta^{(3,\hat{j_0})}(\xi)(z-\hat{E}_{j_0})^{3/2}(1+\mathcal{O}(z-\hat{E}_{j_0})) ,&z\to \hat{E}_{j_0}~\textrm{from}~\mathbb{C}^+,\\
 it\theta^{(3,\hat{j_0})}(\xi)(z-\hat{E}_{j_0})^{3/2}(1+\mathcal{O}(z-\hat{E}_{j_0})) ,&z\to \hat{E}_{j_0}~\textrm{from}~\mathbb{C}^-.
\end{array}\right.
\end{align*}
and 
\begin{align}
 \zeta(z)^{1/2}=\left\{\begin{array}{ll}
-it^{1/3}|\theta^{(3,\hat{j_0})}(\xi)|^{1/3}(z-\hat{E}_{j_0})^{1/2}(1+\mathcal{O}(z-\hat{E}_{j_0})) ,&z\to \hat{E}_{j_0}~\textrm{from}~ \mathbb{C}^+,\\
 it^{1/3}|\theta^{(3,\hat{j_0})}(\xi)|^{1/3}(z-\hat{E}_{j_0})^{1/2}(1+\mathcal{O}(z-\hat{E}_{j_0})) ,&z\to \hat{E}_{j_0}~\textrm{from}~\mathbb{C}^-.
\end{array}\right.\label{equzeta1/2}
\end{align}

Let
\begin{align}\label{def:S}
S(z;\xi):=\left\{\begin{array}{ll}
 -it(\xi-\hat{\xi}_{j_0})\theta^{(1,\hat{j_0})}\frac{(z-\hat{E}_{j_0})^{1/2}}{\zeta(z)^{1/2}},&z\in \mathbb{C}^+\cap U,\\
  it(\xi-\hat{\xi}_{j_0})\theta^{(1,\hat{j_0})}\frac{(z-\hat{E}_{j_0})^{1/2}}{\zeta(z)^{1/2}},&z\in \mathbb{C}^-\cap U.
\end{array}\right.
\end{align}
Although  it appears to have different definitions in $\mathbb{C}^\pm\cap U$, by \eqref{equzeta1/2}, $S(z;\xi)$  is also analytic in $U$ with respect to $z$. Note that for large positive $t$, $\partial_zS(\hat{E}_{j_0};\xi)\neq0$ and
\begin{align}	\label{def s1}
	 s:=S(\hat{E}_{j_0};\xi)=-\frac{\theta^{(1,\hat{j_0})}}{|\theta^{(3,\hat{j_0})}(\xi)|^{1/3}}(\xi-\hat{\xi}_{j_0})t^{2/3} \in \mathbb{R}.
\end{align}
From \eqref{def zeta1} and \eqref{def:S}, it is readily seen that 
\begin{align}\label{asy theta+}
	it\theta(z)=\left\{ \begin{array}{ll}
 	it\theta(\hat{E}_{j_0})-S(z;\xi)\zeta(z)^{1/2}-\frac{2}{3}\zeta(z)^{3/2}, & z\in \mathbb{C}^+\cap U, 
     \\
it\theta(\hat{E}_{j_0})+S(z;\xi)\zeta(z)^{1/2}+\frac{2}{3}\zeta(z)^{3/2},&  z\in \mathbb{C}^-\cap U.
 	\end{array}\right.
\end{align}

The expansion of $\theta$ above inspires us to work in the $\zeta$-plane. We thus set 
\begin{align}
  U^{(\zeta)}:= \zeta(U),  \label{def Uzeta}
\end{align}
a neighborhood of the origin in the $\zeta$-plane and define the contours 
\begin{equation}\label{def Sigmalocal}
    \begin{aligned}
    \Sigma_1^{(loc)}&:=
 	(\zeta(\Sigma_1^*\cap U))\cup\{
 	\zeta(\Sigma_1^*\cap \partial U)+e^{(\pi-\varphi)i}\mathbb{R}^+\},
    \\
    \Sigma^{(loc,\zeta)}&:=(\zeta(z_0),+\infty)\cup\Sigma_1^{(loc)}\cup\Sigma_1^{(loc)*},
\end{aligned}
\end{equation}
see the right picture of Figure \ref{Fig loc jump}. Clearly, 
$\zeta(z)$ gives a bijection between $U^{(\zeta)}$ in the $\zeta$-plane and $U$ in the $z$-plane. 
\begin{figure}	
\tikzset{every picture/.style={line width=0.75pt}} 
	\begin{tikzpicture}[x=0.75pt,y=0.75pt,yscale=-0.7,xscale=0.7]

\draw [color={rgb, 255:red, 74; green, 144; blue, 226 }  ,draw opacity=1 ]   (125.9,223.9) -- (162,224.5) ;
\draw [shift={(148.95,224.28)}, rotate = 180.95] [fill={rgb, 255:red, 74; green, 144; blue, 226 }  ,fill opacity=1 ][line width=0.08]  [draw opacity=0] (10.72,-5.15) -- (0,0) -- (10.72,5.15) -- (7.12,0) -- cycle    ;
\draw [line width=1.5]    (17.4,223.9) -- (125.9,223.9) ;
\draw [shift={(125.9,223.9)}, rotate = 0] [color={rgb, 255:red, 0; green, 0; blue, 0 }  ][fill={rgb, 255:red, 0; green, 0; blue, 0 }  ][line width=1.5]      (0, 0) circle [x radius= 4.36, y radius= 4.36]   ;
\draw [shift={(78.45,223.9)}, rotate = 180] [fill={rgb, 255:red, 0; green, 0; blue, 0 }  ][line width=0.08]  [draw opacity=0] (13.4,-6.43) -- (0,0) -- (13.4,6.44) -- (8.9,0) -- cycle    ;
\draw [color={rgb, 255:red, 65; green, 117; blue, 5 }  ,draw opacity=1 ]   (162,224.5) -- (228,259.21) ;
\draw [shift={(199.43,244.18)}, rotate = 207.74] [fill={rgb, 255:red, 65; green, 117; blue, 5 }  ,fill opacity=1 ][line width=0.08]  [draw opacity=0] (10.72,-5.15) -- (0,0) -- (10.72,5.15) -- (7.12,0) -- cycle    ;
\draw [shift={(162,224.5)}, rotate = 27.74] [color={rgb, 255:red, 65; green, 117; blue, 5 }  ,draw opacity=1 ][fill={rgb, 255:red, 65; green, 117; blue, 5 }  ,fill opacity=1 ][line width=0.75]      (0, 0) circle [x radius= 3.35, y radius= 3.35]   ;
\draw [color={rgb, 255:red, 65; green, 117; blue, 5 }  ,draw opacity=1 ]   (162,224.5) -- (230,194.21) ;
\draw [shift={(200.57,207.32)}, rotate = 155.99] [fill={rgb, 255:red, 65; green, 117; blue, 5 }  ,fill opacity=1 ][line width=0.08]  [draw opacity=0] (10.72,-5.15) -- (0,0) -- (10.72,5.15) -- (7.12,0) -- cycle    ;
\draw  [dash pattern={on 0.84pt off 2.51pt}] (17.4,223.9) .. controls (17.4,163.98) and (65.98,115.4) .. (125.9,115.4) .. controls (185.82,115.4) and (234.4,163.98) .. (234.4,223.9) .. controls (234.4,283.82) and (185.82,332.4) .. (125.9,332.4) .. controls (65.98,332.4) and (17.4,283.82) .. (17.4,223.9) -- cycle ;
\draw [line width=1.5]    (334.1,223.42) -- (259,222.26) ;
\draw [shift={(256,222.21)}, rotate = 0.89] [color={rgb, 255:red, 0; green, 0; blue, 0 }  ][line width=1.5]    (14.21,-4.28) .. controls (9.04,-1.82) and (4.3,-0.39) .. (0,0) .. controls (4.3,0.39) and (9.04,1.82) .. (14.21,4.28)   ;
\draw [shift={(337.1,223.47)}, rotate = 180.89] [color={rgb, 255:red, 0; green, 0; blue, 0 }  ][line width=1.5]    (14.21,-4.28) .. controls (9.04,-1.82) and (4.3,-0.39) .. (0,0) .. controls (4.3,0.39) and (9.04,1.82) .. (14.21,4.28)   ;
\draw [color={rgb, 255:red, 74; green, 144; blue, 226 }  ,draw opacity=1 ]   (470,227.21) -- (539.9,226.9) ;
\draw [shift={(509.95,227.03)}, rotate = 179.75] [fill={rgb, 255:red, 74; green, 144; blue, 226 }  ,fill opacity=1 ][line width=0.08]  [draw opacity=0] (10.72,-5.15) -- (0,0) -- (10.72,5.15) -- (7.12,0) -- cycle    ;
\draw [line width=1.5]    (641,227.21) -- (539.9,226.9) ;
\draw [shift={(539.9,226.9)}, rotate = 180.17] [color={rgb, 255:red, 0; green, 0; blue, 0 }  ][fill={rgb, 255:red, 0; green, 0; blue, 0 }  ][line width=1.5]      (0, 0) circle [x radius= 4.36, y radius= 4.36]   ;
\draw [shift={(598.75,227.08)}, rotate = 180.17] [fill={rgb, 255:red, 0; green, 0; blue, 0 }  ][line width=0.08]  [draw opacity=0] (13.4,-6.43) -- (0,0) -- (13.4,6.44) -- (8.9,0) -- cycle    ;
\draw [color={rgb, 255:red, 65; green, 117; blue, 5 }  ,draw opacity=1 ]   (364.1,139.38) -- (470,227.21) ;
\draw [shift={(420.9,186.49)}, rotate = 219.67] [fill={rgb, 255:red, 65; green, 117; blue, 5 }  ,fill opacity=1 ][line width=0.08]  [draw opacity=0] (10.72,-5.15) -- (0,0) -- (10.72,5.15) -- (7.12,0) -- cycle    ;
\draw [color={rgb, 255:red, 65; green, 117; blue, 5 }  ,draw opacity=1 ]   (356.1,304.38) -- (470,227.21) ;
\draw [shift={(470,227.21)}, rotate = 325.88] [color={rgb, 255:red, 65; green, 117; blue, 5 }  ,draw opacity=1 ][fill={rgb, 255:red, 65; green, 117; blue, 5 }  ,fill opacity=1 ][line width=0.75]      (0, 0) circle [x radius= 3.35, y radius= 3.35]   ;
\draw [shift={(417.19,262.99)}, rotate = 145.88] [fill={rgb, 255:red, 65; green, 117; blue, 5 }  ,fill opacity=1 ][line width=0.08]  [draw opacity=0] (10.72,-5.15) -- (0,0) -- (10.72,5.15) -- (7.12,0) -- cycle    ;
\draw  [dash pattern={on 0.84pt off 2.51pt}] (361.5,227.21) .. controls (361.5,167.29) and (410.08,118.71) .. (470,118.71) .. controls (529.92,118.71) and (578.5,167.29) .. (578.5,227.21) .. controls (578.5,287.13) and (529.92,335.71) .. (470,335.71) .. controls (410.08,335.71) and (361.5,287.13) .. (361.5,227.21) -- cycle ;

\draw (120,235.5) node [anchor=north west][inner sep=0.75pt]  [font=\footnotesize,xscale=0.8,yscale=0.8] [align=left] {$\displaystyle \hat{E}_{j_0}$};
\draw (144,201.9) node [anchor=north west][inner sep=0.75pt]  [xscale=0.8,yscale=0.8]  {$z_{0}$};
\draw (181.5,184.4) node [anchor=north west][inner sep=0.75pt]  [font=\small,xscale=0.8,yscale=0.8]  {$\Sigma _{1}$};
\draw (187.5,248.9) node [anchor=north west][inner sep=0.75pt]  [font=\small,xscale=0.8,yscale=0.8]  {$\Sigma _{1}^{*}$};
\draw (96,47) node [anchor=north west][inner sep=0.75pt]  [xscale=0.8,yscale=0.8] [align=left] {$\displaystyle z$-plane};
\draw (108,96.4) node [anchor=north west][inner sep=0.75pt]  [font=\footnotesize,xscale=0.8,yscale=0.8]  {$U$};
\draw (269,198.4) node [anchor=north west][inner sep=0.75pt]  [xscale=0.8,yscale=0.8]  {$z\longleftrightarrow \zeta $};
\draw (536,234) node [anchor=north west][inner sep=0.75pt]  [xscale=0.8,yscale=0.8] [align=left] {0};
\draw (422.5,161.4) node [anchor=north west][inner sep=0.75pt]  [font=\small,xscale=0.8,yscale=0.8]  {$\Sigma _{1}^{( loc)}$};
\draw (420.5,261.4) node [anchor=north west][inner sep=0.75pt]  [font=\small,xscale=0.8,yscale=0.8]  {$\Sigma _{1}^{( loc) *}$};
\draw (469,202.4) node [anchor=north west][inner sep=0.75pt]  [font=\small,xscale=0.8,yscale=0.8]  {$\zeta ( z_{0})$};
\draw (456,54) node [anchor=north west][inner sep=0.75pt]  [xscale=0.8,yscale=0.8] [align=left] {$\displaystyle \zeta $-plane};
\draw (461,97.4) node [anchor=north west][inner sep=0.75pt]  [font=\footnotesize,xscale=0.8,yscale=0.8]  {$U^{( \zeta )}$};
	\end{tikzpicture}
\caption{\footnotesize  The contour  $\Sigma^{(1)}\cap U$ in the $z$-plane and  the contour $\Sigma^{(loc,\zeta)}\cap U^{(\zeta)}$ in the $\zeta$-plane under the mapping \eqref{def zeta1}. }
\label{Fig loc jump}
\end{figure}
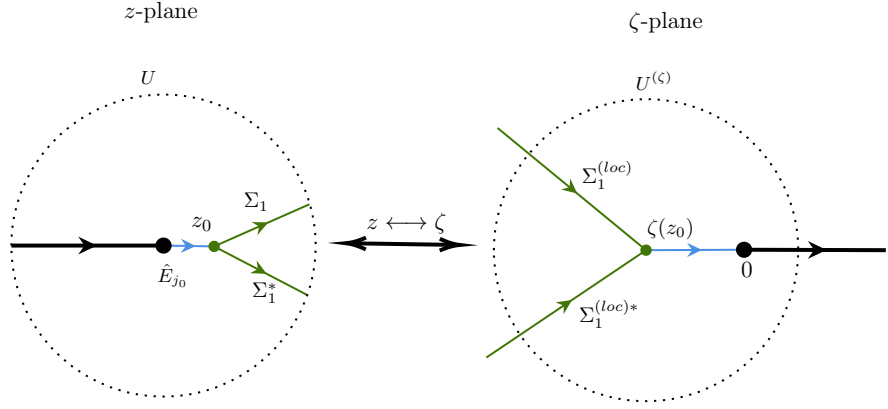
By defining
\begin{align}\label{def:Jloc}
	J^{(loc)}(\zeta) :=	\left\{ \begin{array}{ll}
		e^{i(f_0x+g_0t-(B_{j_0}^fx+B_{j_0}^gt+\phi_{j_0}-\delta_{j_0})/2)\hat{\sigma}_3} \begin{pmatrix} 
			0 & i\\
			i& 0
		\end{pmatrix} , & \zeta\in \mathbb{R}^+,\\[12pt]
		 \begin{pmatrix} 
			1 & -r_1(z(\zeta))e^{2it\theta(z(\zeta))+2\delta(z(\zeta))} \\
			0 & 1
		\end{pmatrix} ,&  \zeta\in\Sigma_1^{(loc)}\cap U^{( \zeta )},\\[12pt]
		 \begin{pmatrix} 
			1 & 0\\
		r_2(z(\zeta))e^{-2it\theta(z(\zeta))-2\delta(z(\zeta))} & 1	
		\end{pmatrix}, &\zeta\in\Sigma_1^{(loc)*}\cap U^{( \zeta )},\\[12pt]
		\begin{pmatrix} 
			1 & -r_1(z(\zeta))e^{2it\theta(z(\zeta))+2\delta(z(\zeta))}\\
			r_2(z(\zeta))e^{-2it\theta(z(\zeta))-2\delta(z(\zeta))} & 1-r_1(z(\zeta))r_2(z(\zeta))
		\end{pmatrix} , & \zeta\in (\zeta(z_0),0),
	\end{array}\right.
\end{align}
it is readily seen that $J^{(loc)}(\zeta(z))=J^{(1)}(z)$ for $z\in\Sigma^{(1)}\cap U$. We split the analysis into three steps in what follows.

\vspace{3mm}

\paragraph{\textbf{Step 1: construction of a model RH problem}}
As $t\to\infty$,  it follows from the definition of $U$ in \eqref{def Uxi I} and \eqref{def c0} that 
\begin{align*}
    z-\hat{E}_{j_0}=\mathcal{O}(t^{\epsilon-2/3}), \qquad z\in U.
\end{align*}
In view of \eqref{eq:riloc} and item (c) of  Proposition \ref{Pro delta1 }, one has, for $z\in U \setminus (-\infty, \hat{E}_{j_0})$ and large $t$, 
	\begin{align*}
	&r_1(z)=-e^{-i\phi_{j_0}}+\mathcal{O}((z-\hat{E}_{j_0})^{1/2})=-e^{-i\phi_{j_0}}+\mathcal{O}(t^{-1/3+\epsilon/2}),\\
    &\delta(z)=\frac{i}{2}\delta_{j_0}+\mathcal{O}((z-\hat{E}_{j_0})^{1/2})=\frac{i}{2}\delta_{j_0}+\mathcal{O}(t^{-1/3+\epsilon/2}),
\end{align*} 
where recall that the constants $\delta_j$ are determined through the linear systems \eqref{def deltaj I}. Together with the expansion of $\theta$ in \eqref{asy theta+} and the fact that $2t\theta(\hat{E}_{j_0})=f_0x+g_0t-(B_{j_0}^fx+B_{j_0}^gt)$ (see  \eqref{theta Ej}), we are lead to consider the following matrix-valued function defined on $\Sigma^{(loc,\zeta)}$:
	\begin{align}\label{def Jloc1}
 	J^{(loc,1)}(\zeta):& = e^{it\theta(\hat{E}_{j_0})\hat{\sigma}_3}\nonumber\\
	&\quad \times	\left\{ \begin{array}{ll}
		e^{-i(\phi_{j_0}-\delta_{j_0})\hat{\sigma}_3/2}\begin{pmatrix} 
			0 & i\\
			i& 0
		\end{pmatrix}, & \zeta\in \mathbb{R}^+,\\
		\begin{pmatrix} 
			1 & e^{-i\phi_{j_0}+2\hat{\theta}(\zeta)+i\delta_{j_0}} \\
			0 & 1
		\end{pmatrix} ,&  \zeta\in\Sigma_1^{(loc)},\\
		\begin{pmatrix} 
			1 & 0\\
			e^{i\phi_{j_0}+2\hat{\theta}(\zeta)-i\delta_{j_0}} & 1	
		\end{pmatrix} , &\zeta\in\Sigma_1^{(loc)*},\\
		\begin{pmatrix} 
			1 & 0\\
			e^{i\phi_{j_0}+2\hat{\theta}(\zeta)-i\delta_{j_0}} & 1	
		\end{pmatrix} \begin{pmatrix} 
			1 & e^{-i\phi_{j_0}+2\hat{\theta}(\zeta)+i\delta_{j_0}} \\
			0 & 1
		\end{pmatrix}, & \zeta\in (\zeta(z_0),0),
	\end{array}\right.
\end{align}	
where
\begin{align}\label{def:htheta}
	\hat{\theta}(\zeta):=\hat{\theta}(\zeta;s)=s\zeta^{1/2}+\frac{2}{3}\zeta^{3/2}.
\end{align}

Note that as $t\to\infty$, $J^{(loc)}(\zeta)$ is well-approximated by $J^{(loc,1)}(\zeta)$ for $\zeta \in \Sigma_1^{(loc)}\cap U^{(\zeta)}$. We next build a matrix-valued function whose jump on $\Sigma^{(loc,\zeta)}$ is given by $J^{(loc,1)}$. The construction is made with the aid of the 
Painlev\'e XXXIV parametrix $M^{(P_{34})}(\zeta;s,\alpha,\omega)$ introduced in Appendix \ref{APP P34}. This parametrix depends on two parameters $\alpha$ and $\omega$ in general, and the one relevant to our case corresponds to $\alpha=-1/4$ and $\omega=0$. More precisely, we define 
\begin{align}\label{def MP34}
	M^{(loc,1)}(\zeta):=\begin{pmatrix}
		1 & 0\\
		ia(s) & 1
	\end{pmatrix}M^{(P_{34})}(\zeta;s,-1/4,0)e^{\hat{\theta}(\zeta)\sigma_3}G_1(\zeta)G_2(\zeta),
\end{align}
where $a$ defined in \eqref{def:a} is an integral of the Painlev\'e XXXIV transcendent,  \begin{align}\label{def G1 I}
	G_1(\zeta) :=	\left\{ \begin{array}{ll}
		\begin{pmatrix} 
			0 &  	-1\\
			1 & 0
		\end{pmatrix}e^{\pi i \sigma_3/4}e^{-i(t\theta(\hat{E}_{j_0})-\phi_{j_0}+\delta_{j_0})\sigma_3/2},&  \zeta\in\mathbb{C}^+,\\
		e^{\pi i \sigma_3/4}e^{-i(t\theta(\hat{E}_{j_0})-\phi_{j_0}+\delta_{j_0})\sigma_3/2} , & \zeta\in\mathbb{C}^-.
	\end{array}\right.
\end{align}
and 
\begin{equation} \label{def:G2}
G_2(\zeta):=e^{it\theta(\hat{E}_{j_0})\hat{\sigma}_3}	\left\{ \begin{array}{ll}
		\begin{pmatrix} 
			1 & e^{-i\phi_{j_0}+2\hat{\theta}(\zeta)+i\delta_{j_0}} \\
			0 & 1
		\end{pmatrix} , &  \quad \zeta\in\Omega_1^{(loc)},\\
		\begin{pmatrix} 
			1 & 0\\
			e^{i\phi_{j_0}+2\hat{\theta}(\zeta)-i\delta_{j_0}} & 1	
		\end{pmatrix} , &\quad \zeta\in\Omega_1^{(loc)*},\\
        I, &\quad \text{ elsewhere},
	\end{array}\right.
\end{equation}
with the region $\Omega_1^{(loc)}$ illustrated in the right picture of Figure \ref{Fig move loc}. 
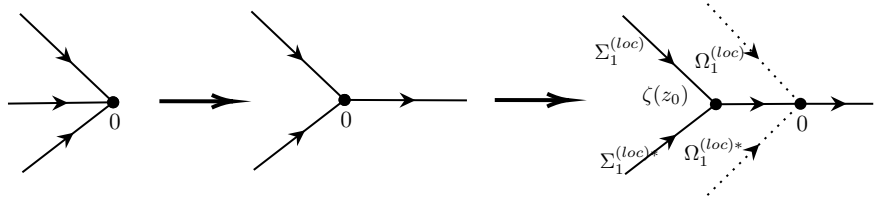
\begin{figure}[h]
	\tikzset{every picture/.style={line width=0.75pt}} 
	\begin{tikzpicture}[x=0.75pt,y=0.75pt,yscale=-0.75,xscale=0.75]
\draw [color={rgb, 255:red, 0; green, 0; blue, 0 }  ,draw opacity=1 ]   (549.07,224.03) -- (605.38,223.73) ;
\draw [shift={(582.23,223.86)}, rotate = 179.7] [fill={rgb, 255:red, 0; green, 0; blue, 0 }  ,fill opacity=1 ][line width=0.08]  [draw opacity=0] (10.72,-5.15) -- (0,0) -- (10.72,5.15) -- (7.12,0) -- cycle    ;
\draw [line width=0.75]    (653.8,223.71) -- (605.38,223.73) ;
\draw [shift={(605.38,223.73)}, rotate = 179.97] [color={rgb, 255:red, 0; green, 0; blue, 0 }  ][fill={rgb, 255:red, 0; green, 0; blue, 0 }  ][line width=0.75]      (0, 0) circle [x radius= 3.35, y radius= 3.35]   ;
\draw [shift={(636.09,223.72)}, rotate = 179.97] [fill={rgb, 255:red, 0; green, 0; blue, 0 }  ][line width=0.08]  [draw opacity=0] (10.72,-5.15) -- (0,0) -- (10.72,5.15) -- (7.12,0) -- cycle    ;
\draw [color={rgb, 255:red, 0; green, 0; blue, 0 }  ,draw opacity=1 ]   (487.04,164.66) -- (549.07,224.03) ;
\draw [shift={(521.67,197.8)}, rotate = 223.75] [fill={rgb, 255:red, 0; green, 0; blue, 0 }  ,fill opacity=1 ][line width=0.08]  [draw opacity=0] (10.72,-5.15) -- (0,0) -- (10.72,5.15) -- (7.12,0) -- cycle    ;
\draw [color={rgb, 255:red, 0; green, 0; blue, 0 }  ,draw opacity=1 ]   (488.65,270.96) -- (549.07,224.03) ;
\draw [shift={(549.07,224.03)}, rotate = 322.17] [color={rgb, 255:red, 0; green, 0; blue, 0 }  ,draw opacity=1 ][fill={rgb, 255:red, 0; green, 0; blue, 0 }  ,fill opacity=1 ][line width=0.75]      (0, 0) circle [x radius= 3.35, y radius= 3.35]   ;
\draw [shift={(522.81,244.43)}, rotate = 142.17] [fill={rgb, 255:red, 0; green, 0; blue, 0 }  ,fill opacity=1 ][line width=0.08]  [draw opacity=0] (10.72,-5.15) -- (0,0) -- (10.72,5.15) -- (7.12,0) -- cycle    ;
\draw [line width=0.75]    (383.38,221.43) -- (301.93,221.13) ;
\draw [shift={(301.93,221.13)}, rotate = 180.21] [color={rgb, 255:red, 0; green, 0; blue, 0 }  ][fill={rgb, 255:red, 0; green, 0; blue, 0 }  ][line width=0.75]      (0, 0) circle [x radius= 3.35, y radius= 3.35]   ;
\draw [shift={(349.15,221.31)}, rotate = 180.21] [fill={rgb, 255:red, 0; green, 0; blue, 0 }  ][line width=0.08]  [draw opacity=0] (10.72,-5.15) -- (0,0) -- (10.72,5.15) -- (7.12,0) -- cycle    ;
\draw [color={rgb, 255:red, 0; green, 0; blue, 0 }  ,draw opacity=1 ]   (239.9,161.76) -- (301.93,221.13) ;
\draw [shift={(274.52,194.9)}, rotate = 223.75] [fill={rgb, 255:red, 0; green, 0; blue, 0 }  ,fill opacity=1 ][line width=0.08]  [draw opacity=0] (10.72,-5.15) -- (0,0) -- (10.72,5.15) -- (7.12,0) -- cycle    ;
\draw [color={rgb, 255:red, 0; green, 0; blue, 0 }  ,draw opacity=1 ]   (241.51,268.06) -- (301.93,221.13) ;
\draw [shift={(301.93,221.13)}, rotate = 322.17] [color={rgb, 255:red, 0; green, 0; blue, 0 }  ,draw opacity=1 ][fill={rgb, 255:red, 0; green, 0; blue, 0 }  ,fill opacity=1 ][line width=0.75]      (0, 0) circle [x radius= 3.35, y radius= 3.35]   ;
\draw [shift={(275.67,241.53)}, rotate = 142.17] [fill={rgb, 255:red, 0; green, 0; blue, 0 }  ,fill opacity=1 ][line width=0.08]  [draw opacity=0] (10.72,-5.15) -- (0,0) -- (10.72,5.15) -- (7.12,0) -- cycle    ;
\draw [line width=1.5]    (401.9,221.2) -- (447.98,221.28) ;
\draw [shift={(450.98,221.28)}, rotate = 180.1] [color={rgb, 255:red, 0; green, 0; blue, 0 }  ][line width=1.5]    (14.21,-4.28) .. controls (9.04,-1.82) and (4.3,-0.39) .. (0,0) .. controls (4.3,0.39) and (9.04,1.82) .. (14.21,4.28)   ;
\draw  [dash pattern={on 0.84pt off 2.51pt}]  (543.43,156.68) -- (605.38,223.73) ;
\draw [shift={(577.8,193.88)}, rotate = 227.27] [fill={rgb, 255:red, 0; green, 0; blue, 0 }  ][line width=0.08]  [draw opacity=0] (10.72,-5.15) -- (0,0) -- (10.72,5.15) -- (7.12,0) -- cycle    ;
\draw  [dash pattern={on 0.84pt off 2.51pt}]  (543.43,285) -- (605.38,223.73) ;
\draw [shift={(577.96,250.85)}, rotate = 135.32] [fill={rgb, 255:red, 0; green, 0; blue, 0 }  ][line width=0.08]  [draw opacity=0] (10.72,-5.15) -- (0,0) -- (10.72,5.15) -- (7.12,0) -- cycle    ;
\draw [line width=0.75]    (77.9,223.6) -- (147.93,222.73) ;
\draw [shift={(147.93,222.73)}, rotate = 359.29] [color={rgb, 255:red, 0; green, 0; blue, 0 }  ][fill={rgb, 255:red, 0; green, 0; blue, 0 }  ][line width=0.75]      (0, 0) circle [x radius= 3.35, y radius= 3.35]   ;
\draw [shift={(117.91,223.11)}, rotate = 179.29] [fill={rgb, 255:red, 0; green, 0; blue, 0 }  ][line width=0.08]  [draw opacity=0] (10.72,-5.15) -- (0,0) -- (10.72,5.15) -- (7.12,0) -- cycle    ;
\draw [color={rgb, 255:red, 0; green, 0; blue, 0 }  ,draw opacity=1 ]   (85.9,163.36) -- (147.93,222.73) ;
\draw [shift={(120.52,196.5)}, rotate = 223.75] [fill={rgb, 255:red, 0; green, 0; blue, 0 }  ,fill opacity=1 ][line width=0.08]  [draw opacity=0] (10.72,-5.15) -- (0,0) -- (10.72,5.15) -- (7.12,0) -- cycle    ;
\draw [color={rgb, 255:red, 0; green, 0; blue, 0 }  ,draw opacity=1 ]   (87.51,269.66) -- (147.93,222.73) ;
\draw [shift={(147.93,222.73)}, rotate = 322.17] [color={rgb, 255:red, 0; green, 0; blue, 0 }  ,draw opacity=1 ][fill={rgb, 255:red, 0; green, 0; blue, 0 }  ,fill opacity=1 ][line width=0.75]      (0, 0) circle [x radius= 3.35, y radius= 3.35]   ;
\draw [shift={(121.67,243.13)}, rotate = 142.17] [fill={rgb, 255:red, 0; green, 0; blue, 0 }  ,fill opacity=1 ][line width=0.08]  [draw opacity=0] (10.72,-5.15) -- (0,0) -- (10.72,5.15) -- (7.12,0) -- cycle    ;
\draw [line width=1.5]    (178.7,222) -- (224.78,222.08) ;
\draw [shift={(227.78,222.08)}, rotate = 180.1] [color={rgb, 255:red, 0; green, 0; blue, 0 }  ][line width=1.5]    (14.21,-4.28) .. controls (9.04,-1.82) and (4.3,-0.39) .. (0,0) .. controls (4.3,0.39) and (9.04,1.82) .. (14.21,4.28)   ;

\draw (601.17,230.17) node [anchor=north west][inner sep=0.75pt]  [xscale=0.8,yscale=0.8] [align=left] {0};
\draw (466.65,177.45) node [anchor=north west][inner sep=0.75pt]  [font=\small,xscale=0.8,yscale=0.8]  {$\Sigma _{1}^{( loc)}$};
\draw (470.15,250.64) node [anchor=north west][inner sep=0.75pt]  [font=\small,xscale=0.8,yscale=0.8]  {$\Sigma _{1}^{( loc) *}$};
\draw (498.33,209.53) node [anchor=north west][inner sep=0.75pt]  [font=\small,xscale=0.8,yscale=0.8]  {$\zeta ( z_{0})$};
\draw (297.72,227.57) node [anchor=north west][inner sep=0.75pt]  [xscale=0.8,yscale=0.8] [align=left] {0};
\draw (533.4,183.11) node [anchor=north west][inner sep=0.75pt]  [font=\small,xscale=0.8,yscale=0.8]  {$\Omega _{1}^{( loc)}$};
\draw (526.25,244.9) node [anchor=north west][inner sep=0.75pt]  [font=\small,xscale=0.8,yscale=0.8]  {$\Omega _{1}^{( loc) *}$};
\draw (143.72,229.17) node [anchor=north west][inner sep=0.75pt]  [xscale=0.8,yscale=0.8] [align=left] {0};
\end{tikzpicture}
	\caption{\footnotesize  The jump contours of the RH problems for $M^{(P_{34})}$
		(left), $\hat{M}$ (middle) and $M^{(loc,1)}$ (right). }
	\label{Fig move loc}
\end{figure}

\begin{proposition}\label{Mloc,1}
The function $M^{(loc,1)}$ defined in \eqref{def MP34} solves the following RH problem.
		\begin{itemize}
			\item [$\mathrm{(a)}$] $M^{(loc,1)}(\zeta)$ is analytic in $\mathbb{C}\setminus\Sigma^{(loc,\zeta)}$, where $\Sigma^{(loc,\zeta)}$ is defined in \eqref{def Sigmalocal}. 
			\item [$\mathrm{(b)}$]  
			For $\zeta\in\Sigma^{(loc,\zeta)}$, $M^{(loc,1)}(\zeta)$ satisfies the jump condition $$M^{(loc,1)}_+(\zeta)=M^{(loc,1)}_{-}(\zeta)J^{(loc,1)}(\zeta),$$
			where $J^{(loc,1)}$ is defined in \eqref{def Jloc1}.
			\item [$\mathrm{(c)}$] $M^{(loc,1)}(\zeta)$ has at most $-1/4$-singularity at $0$.
			\item [$\mathrm{(d)}$] As $\zeta\to\infty$, we have
			\begin{align}\label{eq:asyMloc1}
				M^{(loc,1)}(\zeta)=	\left (I
				+\mathcal{O}\left(  \zeta^{-1}\right) \right)
				\frac{\zeta^{-\frac{1}{4}\sigma_3}}{\sqrt{2}}
				\begin{pmatrix}
					1 & i
					\\
					i & 1
				\end{pmatrix}G_1(\zeta),
			\end{align}
			with $G_1(\zeta) $ given in \eqref{def G1 I}.
		\end{itemize}
\end{proposition}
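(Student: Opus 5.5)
The proof is a verification: analyticity, jumps, local behaviour and large-$\zeta$ asymptotics all follow from the corresponding properties of the Painlev\'e XXXIV parametrix $M^{(P_{34})}(\zeta;s,-1/4,0)$ in Appendix \ref{APP P34}. I will take those properties as the starting point. Namely, $M^{(P_{34})}$ is analytic off the model contour (left picture of Figure \ref{Fig move loc}), which consists of rays from the origin together with $\mathbb{R}^\pm$. It has constant jumps there: a triangular Stokes matrix on each oblique ray, the anti-diagonal matrix $\begin{pmatrix}0&1\\-1&0\end{pmatrix}$ (or its analogue) on $\mathbb{R}^+$, and $e^{\pm 2\pi i\alpha\sigma_3}$-type jumps on $\mathbb{R}^-$, which become trivial for $\alpha=-1/4$, $\omega=0$ up to the conjugations below. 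It is $\mathcal{O}(\zeta^{-1/4})$ at the origin for $\alpha=-1/4$. At infinity,
\[
M^{(P_{34})}(\zeta)=\Bigl(I+\tfrac{m_1(s)}{\zeta}+\cdots\Bigr)\zeta^{-\sigma_3/4}\tfrac{1}{\sqrt2}\begin{pmatrix}1&i\\ i&1\end{pmatrix}e^{-\hat\theta(\zeta)\sigma_3},
\]
where $(m_1)_{21}$ is, up to a factor of $i$, $-a(s)$ or a related Hamiltonian quantity.

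\textbf{Step 1: analyticity and the origin.} Analyticity off $\Sigma^{(loc,\zeta)}$ comes from three observations. $G_1$ is piecewise constant with a jump only on $\mathbb{R}$. $e^{\hat\theta\sigma_3}$ has a jump only on $\mathbb{R}^-$ (where $\zeta^{1/2}$ changes sign) and is otherwise analytic. $G_2$ is analytic inside $\Omega_1^{(loc)}$ and $\Omega_1^{(loc)*}$. Multiplication by the constant lower-triangular matrix $\begin{pmatrix}1&0\\ ia(s)&1\end{pmatrix}$ changes nothing. For (c), $G_1$ is bounded and $G_2$, $e^{\hat\theta\sigma_3}$ are bounded near $0$, so the $-1/4$ bound at $0$ is inherited from $M^{(P_{34})}$.

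\textbf{Step 2: the jumps.} I check (b) piece by piece, computing $M^{(loc,1)}_-{}^{-1}M^{(loc,1)}_+$.

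On $\mathbb{R}^+$: $\hat\theta$ is continuous and $G_2=I$, so the jump is $G_{1,-}^{-1}e^{-\hat\theta\sigma_3}J^{(P_{34})}e^{\hat\theta\sigma_3}G_{1,+}$. The anti-diagonal part of $J^{(P_{34})}$ commutes suitably with the diagonal factors (the exponentials flip sign), and combined with $\begin{pmatrix}0&-1\\1&0\end{pmatrix}$ it produces $e^{i(t\theta(\hat E_{j_0})-\phi_{j_0}+\delta_{j_0})\hat\sigma_3/2}\begin{pmatrix}0&i\\ i&0\end{pmatrix}$, i.e. the first line of \eqref{def Jloc1}.

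On the oblique rays of the model contour inside $\Omega_1^{(loc)}$: $G_2$ is chosen exactly so that the Stokes jump of $M^{(P_{34})}$, conjugated by $e^{\hat\theta\sigma_3}G_1$, is cancelled. This is why those rays become the dashed (jump-free) lines in Figure \ref{Fig move loc}. The factor $G_2$ then reappears as a jump on $\Sigma_1^{(loc)}$ and $\Sigma_1^{(loc)*}$, giving the second and third lines of \eqref{def Jloc1}.

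On $(\zeta(z_0),0)$: the sign change of $\zeta^{1/2}$, the jump of $G_1$ across $\mathbb{R}^-$, and the $\mathbb{R}^-$ jump of $M^{(P_{34})}$ (trivial in the appropriate gauge when $\alpha=-1/4$) combine with the two triangular factors of $G_2$ from above and below. The result should be the product in the fourth line.

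On $(-\infty,\zeta(z_0))$, where $\Omega_1^{(loc)}$ is absent: the same combination must give the identity. This is the consistency check that forces the particular constant phases $e^{\pi i\sigma_3/4}$ and $e^{-i(\cdots)\sigma_3/2}$ in $G_1$.

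\textbf{Step 3: behaviour at infinity.} For large $\zeta$ off the lens, $G_2=I$. Inserting the expansion of $M^{(P_{34})}$, the factor $e^{-\hat\theta\sigma_3}$ cancels $e^{\hat\theta\sigma_3}$ and leaves $(I+m_1/\zeta)\zeta^{-\sigma_3/4}\frac1{\sqrt2}\begin{pmatrix}1&i\\ i&1\end{pmatrix}G_1$. Inside the lens, $G_2-I$ involves $e^{2\hat\theta}$, and $\Re\hat\theta\to-\infty$ there by the choice of angles, so $G_2$ contributes only exponentially small corrections, which are absorbed into the error after conjugation by $\zeta^{-\sigma_3/4}$.

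Moving $\begin{pmatrix}1&0\\ ia&1\end{pmatrix}$ past $\zeta^{-\sigma_3/4}$ produces a term $ia\,\zeta^{-1/2}$ in the $(2,1)$ entry, which is not $\mathcal{O}(\zeta^{-1})$. It must cancel against the $\zeta^{-1/2}$-level contribution: the $(2,1)$ entry of $\zeta^{-\sigma_3/4}m_1\zeta^{\sigma_3/4}/\zeta$ is of order $\zeta^{-1/2}$ and equals $(m_1)_{21}\zeta^{-1/2}$. So the requirement is $(m_1)_{21}=-ia(s)$, or the analogous identity for the appendix's normalization, with $a(s)$ defined by \eqref{def:a}. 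I expect this to be the main obstacle. It requires the appendix's identification of the residue entry of $M^{(P_{34})}$ with $\int_{-\infty}^s(u+\zeta/2)\,d\zeta$, obtained from the Lax pair in $s$ (where $\partial_s$ of that entry equals $u+s/2$) together with the decay as $s\to-\infty$ from \eqref{eq:uasy}. I would first attempt this step by differentiating the RH problem in $s$. Once $(m_1)_{21}$ is identified, the remaining entries of the error are $\mathcal{O}(\zeta^{-1})$ or $\mathcal{O}(\zeta^{-1/2}\cdot\zeta^{-1/2})$, which gives \eqref{eq:asyMloc1}.
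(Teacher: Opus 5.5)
The genuine gap is in your Step 3. It comes from using the wrong normalization of the model problem at infinity. Item (c) of RH problem \ref{RH P34}, i.e.\ \eqref{eq:Psi-infinity}, reads
$M^{(P_{34})}=\begin{pmatrix}1&0\\-ia(s)&1\end{pmatrix}\bigl(I+M^{(P_{34})}_1/\zeta+\mathcal{O}(\zeta^{-2})\bigr)\zeta^{-\sigma_3/4}\frac{1}{\sqrt2}\begin{pmatrix}1&i\\ i&1\end{pmatrix}e^{-\hat{\theta}(\zeta)\sigma_3}$.
This has a constant lower-triangular factor on the left, which your version omits. The factor $\begin{pmatrix}1&0\\ ia(s)&1\end{pmatrix}$ in \eqref{def MP34} is there precisely to cancel it. After that, (d) is immediate: $e^{-\hat{\theta}\sigma_3}e^{\hat{\theta}\sigma_3}=I$, and $G_2-I$ is exponentially small in the lens. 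Your handling of that last point is correct, and more explicit than the paper's one-line remark.

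Under your normalization, (d) would in fact be false whenever $a(s)\neq 0$. Off the lens, $M^{(loc,1)}\bigl(\zeta^{-\sigma_3/4}\tfrac{1}{\sqrt2}\begin{pmatrix}1&i\\ i&1\end{pmatrix}G_1\bigr)^{-1}=\begin{pmatrix}1&0\\ ia&1\end{pmatrix}(I+m_1/\zeta+\cdots)\to\begin{pmatrix}1&0\\ ia&1\end{pmatrix}\neq I$, and no value of $m_1$ can remove an $\mathcal{O}(1)$ discrepancy. The cancellation you propose also rests on a conjugation slip: $(I+m_1/\zeta)\zeta^{-\sigma_3/4}=\zeta^{-\sigma_3/4}\bigl(I+\zeta^{\sigma_3/4}m_1\zeta^{-\sigma_3/4}/\zeta\bigr)$, whose $(2,1)$ entry is $(m_1)_{21}\zeta^{-3/2}$, not $(m_1)_{21}\zeta^{-1/2}$. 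Finally, the identification $(M^{(P_{34})}_1)_{12}=ia(s)$ and the integral formula for $a$, which you flag as the main obstacle, are not needed for this proposition at all. They are used only later, in \eqref{asy loc1}, and the paper imports them from the literature in Appendix \ref{APP P34}.

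Step 2 is also built on misplaced model jumps. By \eqref{eq:Psi-jump}, $J^{(P_{34})}=\begin{pmatrix}1&\omega\\0&1\end{pmatrix}=I$ on $\Sigma_1=\mathbb{R}^+$ (since $\omega=0$), the anti-diagonal $\begin{pmatrix}0&1\\-1&0\end{pmatrix}$ sits on $\Sigma_3=\mathbb{R}^-$, and the oblique rays carry $\begin{pmatrix}1&0\\ e^{\pm2\alpha\pi i}&1\end{pmatrix}$. You have the two half-lines interchanged, and in any case $e^{\pm2\pi i\alpha\sigma_3}=\mp i\sigma_3$ is not trivial at $\alpha=-1/4$. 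With your assignment the $\mathbb{R}^+$ computation fails. Since $\zeta^{1/2}$ is continuous there, $e^{-\hat{\theta}\sigma_3}\begin{pmatrix}0&1\\-1&0\end{pmatrix}e^{\hat{\theta}\sigma_3}$ has non-constant entries $\pm e^{\mp2\hat{\theta}}$. Composing it with the anti-diagonal relative jump of $G_1$ would then give a diagonal matrix.

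The correct bookkeeping, which is the paper's, goes through $\hat{M}=\begin{pmatrix}1&0\\ ia(s)&1\end{pmatrix}M^{(P_{34})}e^{\hat{\theta}\sigma_3}G_1$:
\begin{itemize}
\item on $\mathbb{R}^+$ the whole jump of $\hat{M}$ is $G_{1,-}^{-1}G_{1,+}$, which is the anti-diagonal jump in the first line of \eqref{def Jloc1};
\item on $\mathbb{R}^-$ the anti-diagonal $J^{(P_{34})}$, the sign change $\hat{\theta}_+=-\hat{\theta}_-$ and the jump of $G_1$ cancel, so $\hat{M}$ has no jump on the entire negative axis;
\item conjugation by $G_1$ makes the Stokes matrix on $\Sigma_2$ upper triangular;
\item $M^{(loc,1)}=\hat{M}G_2$ then relocates the jumps on $\Sigma_2,\Sigma_4$ to $\Sigma_1^{(loc)},\Sigma_1^{(loc)*}$.
\end{itemize}
In particular, the jump on $(\zeta(z_0),0)$ comes only from the two triangular pieces of $G_2$, and $(-\infty,\zeta(z_0))$ needs no separate consistency check. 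Your overall architecture for (a)--(c) is the same as the paper's; it is the input data from Appendix \ref{APP P34} that must be corrected.
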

\begin{proof}
Items (c) and (d) can be checked directly from items (c) and (d) of RH problem \ref{RH P34} for $M^{(P_{34})}$ and the definition of $M^{(loc,1)}$. We thus focus on items (a) and (b). To proceed, we first consider 
	\begin{align}\label{tranP341}
		\hat{M}(\zeta):=\begin{pmatrix}
			1 & 0\\
			ia(s) & 1
		\end{pmatrix}M^{(P_{34})}(\zeta;s,-1/4,0)e^{\hat{\theta}(\zeta)\sigma_3}G_1(\zeta). 
	\end{align}
The jump contour of $M^{(P_{34})}(\zeta;s,-1/4,0)$ is shown in the left picture of Figure \ref{Fig move loc}. Thus, it is readily seen that $\hat{M}(\zeta)$ is analytic for  $\zeta\in \mathbb{C} \setminus \{\cup_{j=1}^4\Sigma_j\}$, where $\Sigma_j$, $j=1,2,3,4$, is defined in \eqref{def jumpcureve 34}. Moreover, with the jump matrix $J^{(P_{34})}$ of $M^{(P_{34})}$ given in \eqref{eq:Psi-jump},
the jump of $\hat{M}$ on each $\Sigma_j$ reads
	\begin{align*}
		\hat{M}_-(\zeta)^{-1}\hat{M}_+(\zeta)&=G_{1,-}(\zeta)^{-1}e^{-\hat{\theta}_-(\zeta)\sigma_3}J^{(P_{34})}(\zeta)e^{\hat{\theta}_+(\zeta)\sigma_3}G_{1,+}(\zeta)\\
&=	e^{it\theta(\hat{E}_{j_0})\hat{\sigma}_3}\left\{ \begin{array}{ll}
				e^{-i(\phi_{j_0}-\delta_{j_0})\hat{\sigma}_3/2}\begin{pmatrix} 
					0 & i\\
					i& 0
				\end{pmatrix}, & \zeta\in \Sigma_1,\\
				\begin{pmatrix} 
					1 & e^{-i\phi_{j_0}+2\hat{\theta}(\zeta)+i\delta_{j_0}} \\
					0 & 1
				\end{pmatrix} ,&  \zeta\in\Sigma_2,\\
				I&  \zeta\in\Sigma_3,\\
				\begin{pmatrix} 
					1 & 0\\
					e^{i\phi_{j_0}+2\hat{\theta}(\zeta)-i\delta_{j_0}} & 1	
				\end{pmatrix} , &\zeta\in\Sigma_4.
			\end{array}\right.
	\end{align*}
Since $e^{it\theta(\hat{E}_{j_0})\hat{\sigma}_3}I=I$, $\hat{M}$ has no jump on the negative axis and its jump contour is shown in the middle picture of Figure \ref{Fig move loc}. 
Next, in view of \eqref{tranP341} and \eqref{def MP34}, we have
 \begin{align*}
     M^{(loc,1)}(\zeta)=\hat{M}(\zeta)G_2(\zeta).
 \end{align*}
By \eqref{def:G2}, this transformation particularly removes the jump of $\hat{M}$ on $\Sigma_2$ and $\Sigma_4$ to  $\Sigma_{1}^{( loc)}$ and $\Sigma_{1}^{( loc),*}$, respectively. It is then easily seen that the jump contour of $M^{(loc,1)}(\zeta)$ is shown in the right picture of Figure \ref{Fig move loc} and satisfies the jump condition indicated in item (b).  
\end{proof}

\vspace{2mm}
\paragraph{\textbf{Step 2: error estimate}}
As aforementioned, $J^{(loc,1)}$ in \eqref{def Jloc1} is an approximation of $J^{(loc)}$ for large $t$. It is natural to expect that  $M^{(loc)}$  is well-approximated by  $M^{(loc,1)}$ for large $t$. To see this, we start with introduction of an RH problem, whose  jump is the same as in $M^{(loc)}$ and large $\zeta$ asymptotics is the same as  $M^{(loc,1)}$.
\begin{RHP}\label{RHP:Mloc0}
	\hfil
	\begin{itemize}
		\item [$\mathrm{(a)}$] $M^{(loc,0)}(\zeta)$ is analytic in $\mathbb{C}\setminus\Sigma^{(loc,\zeta)}$, 
        where $\Sigma^{(loc,\zeta)}$ is defined in \eqref{def Sigmalocal}. 
		\item [$\mathrm{(b)}$]  For $\zeta\in\Sigma^{(loc,\zeta)}$, $M^{(loc,0)}(\zeta)$ satisfies the jump condition $$M^{(loc,0)}_+(\zeta)=M^{(loc,0)}_{-}(\zeta)J^{(loc)}(\zeta),$$
        where $J^{(loc)}$ is defined in \eqref{def:Jloc}.   
	\item [$\mathrm{(c)}$] $M^{(loc,0)}(\zeta)$ has at most $-1/4$-singularity at $0$.	
    \item [$\mathrm{(d)}$] As $\zeta\to\infty$, we have 
	\begin{align}\label{M loc asyz1}
		M^{(loc,0)}(\zeta)=\left (I
		+\mathcal{O}\left(  \zeta^{-1}\right) \right)	\frac{\zeta^{-\frac{1}{4}\sigma_3}}{\sqrt{2}}
		\begin{pmatrix}
			1 & i
			\\
			i & 1
		\end{pmatrix}G_1(\zeta)\textit{.}
	\end{align}
		
	\end{itemize}
\end{RHP}
 This invokes us to consider the following RH problem \begin{align}\label{def Xi}
	\Xi(\zeta):=M^{(loc,0)}(\zeta)M^{(loc,1)}(\zeta)^{-1}. 
\end{align} We give a rigorous proof of this fact by establishing an estimate of 
$\Xi$ in the following proposition.
\begin{proposition}\label{Pro Xi 1}
	As $t\to\infty$, $\Xi(\zeta)$ exists uniquely and satisfies
	\begin{align}\label{eq:estXi}
		\Xi(\zeta)=I+\mathcal{O}(t^{-1/3}).
	\end{align}
\end{proposition}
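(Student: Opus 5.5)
We treat $\Xi$ as the solution of a small-norm RH problem and read off its size from how close $J^{(loc)}$ is to $J^{(loc,1)}$. Since $M^{(loc,0)}$ and $M^{(loc,1)}$ have the same jump contour $\Sigma^{(loc,\zeta)}$, the same normalization \eqref{M loc asyz1} versus \eqref{eq:asyMloc1} at infinity, and at most $-1/4$-singularities at $0$, the quotient \eqref{def Xi} satisfies a RH problem with the following properties. It has jump
\[
J^{(\Xi)}(\zeta)=M^{(loc,1)}_-(\zeta)\,J^{(loc)}(\zeta)J^{(loc,1)}(\zeta)^{-1}\,M^{(loc,1)}_-(\zeta)^{-1},\qquad \zeta\in\Sigma^{(loc,\zeta)}.
\]
It tends to $I+\mathcal{O}(\zeta^{-1})$ at infinity, because the factor $G_1$ and the $\zeta^{-\sigma_3/4}$ prefactors cancel. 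Its singularity at the origin is at worst $\mathcal{O}(\zeta^{-1/2})$ and therefore removable. On $\mathbb{R}^+$ the two jumps coincide exactly, since both equal the constant off-diagonal matrix after we use $2t\theta(\hat E_{j_0})$ and \eqref{eq:estexpdelta2}. The remaining jump therefore lives only on $\Sigma_1^{(loc)}\cup\Sigma_1^{(loc)*}\cup(\zeta(z_0),0)$.

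\textbf{Step 1: estimate the jump discrepancy.} On these rays we write $J^{(loc)}-J^{(loc,1)}$ entrywise. A typical off-diagonal entry is
\[
\bigl(r_1(z)e^{2\delta(z)}+e^{-i\phi_{j_0}+i\delta_{j_0}}\bigr)e^{2it\theta(z)},
\]
and by \eqref{asy theta+} we have $e^{2it\theta}=e^{2it\theta(\hat E_{j_0})}e^{\mp2\hat\theta(\zeta)}$. Proposition \ref{Pro asy ar} and Proposition \ref{Pro delta1 } give
\[
r_1(z)e^{2\delta(z)}+e^{-i\phi_{j_0}+i\delta_{j_0}}=\mathcal{O}\bigl((z-\hat E_{j_0})^{1/2}\bigr).
\]
By \eqref{d zeta1}, $(z-\hat E_{j_0})^{1/2}=\mathcal{O}(|\zeta|^{1/2}t^{-1/3})$. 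Hence each entry of $J^{(loc)}J^{(loc,1)\,-1}-I$ is bounded by
\[
C\,t^{-1/3}|\zeta|^{1/2}e^{-c|\zeta|^{3/2}}.
\]
Here the exponential decay of $e^{\mp2\hat\theta}$ on the rays comes from \eqref{eq:inequality1} and the choice of $\varphi_0$. Because $s$ stays bounded, the term $s\zeta^{1/2}$ does not spoil the decay. The segment $(\zeta(z_0),0)$ has length $\mathcal{O}(1)$ by \eqref{eq:estzEj0}, and there the same $\mathcal{O}(t^{-1/3})$ bound holds. The $(2,2)$ entry $1-r_1r_2$ needs separate care: it must be compared with the product form in \eqref{def Jloc1}, and both vanish like $(z-\hat E_{j_0})^{1/2}$ after we multiply by the $e^{\pm2\delta}$ factors.

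\textbf{Step 2: conjugate by $M^{(loc,1)}_-$.} This is the main obstacle, since $M^{(loc,1)}$ grows like $|\zeta|^{1/4}$ at infinity by \eqref{eq:asyMloc1} and has a $|\zeta|^{-1/4}$ singularity at $0$. Both are polynomial and $s$-uniform, because $M^{(P_{34})}$ is continuous in $s$ on compact sets, as the Painlevé XXXIV solution is pole-free on $\mathbb{R}$. The polynomial growth is absorbed by $e^{-c|\zeta|^{3/2}}$. Near $0$, the vanishing factor $|\zeta|^{1/2}$ beats $|\zeta|^{-1/4}\cdot|\zeta|^{-1/4}$. We therefore obtain
\[
\|J^{(\Xi)}-I\|_{L^1\cap L^2\cap L^\infty(\Sigma^{(loc,\zeta)})}=\mathcal{O}(t^{-1/3}).
\]
One technical point remains: the contours extend beyond $U^{(\zeta)}$, where $J^{(loc)}$ is not defined by the $z$-variable. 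We interpret $J^{(loc)}$ there as equal to $J^{(loc,1)}$, or note that the leftover contributions are exponentially small in $t$, since $U^{(\zeta)}$ has radius of order $t^{\epsilon}$.

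\textbf{Step 3: conclude by small-norm theory.} The Cauchy operator $\mathcal{C}_-$ is bounded on $L^2(\Sigma^{(loc,\zeta)})$, and the operator $f\mapsto\mathcal{C}_-(f(J^{(\Xi)}-I))$ has norm $\mathcal{O}(t^{-1/3})$. The operator $I-\mathcal{C}_{J^{(\Xi)}}$ is therefore invertible for large $t$ by a Neumann series, which gives existence and uniqueness of $\Xi$ (uniqueness also follows from Liouville's theorem, since $\det\Xi\equiv1$). The integral representation
\[
\Xi=I+\mathcal{C}\bigl(\mu(J^{(\Xi)}-I)\bigr)
\]
then yields $\Xi(\zeta)=I+\mathcal{O}(t^{-1/3})$ uniformly for $\zeta$ away from the contour. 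Near the contour we deform the rays slightly to get the same bound there.
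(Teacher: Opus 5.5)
Your proposal is correct and follows essentially the same route as the paper. Both arguments reduce the jump of $\Xi$ to $\Sigma_1^{(loc)}\cup\Sigma_1^{(loc)*}\cup(\zeta(z_0),0)$, bound the jump discrepancy by $t^{-1/3}|\zeta|^{1/2}$ near the origin and by an exponentially damped term for large $|\zeta|$, absorb the $|\zeta|^{\mp 1/4}$ behaviour of $M^{(loc,1)}_-$ to get an $\mathcal{O}(t^{-1/3})$ bound in $L^1\cap L^2\cap L^\infty$, and then invoke small-norm theory. The one slip is that \eqref{asy theta+} carries $S(z;\xi)$ rather than the constant $s$, so $e^{2it\theta}$ also contains the factor $e^{\mp 2(S(z;\xi)-s)\zeta^{1/2}}$; this adds an $\mathcal{O}(t^{-2/3}|\zeta|^{3/2})$ term to the discrepancy, which the paper keeps explicitly and which is harmless after the exponential damping.
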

\begin{proof}
In view of RH problems for  $M^{(loc,0)}$ and $M^{(loc,1)}$, it is readily seen from \eqref{def Xi} that $\Xi(\zeta)$ is analytic in $\mathbb{C}\setminus ((\zeta(z_0),0)\cup\Sigma_1^{(loc)}\cup\Sigma_1^{(loc)*})$ and satisfies
$$\Xi_+(\zeta)=\Xi_-(\zeta)J^{(\Xi)}, 
\qquad \zeta \in (\zeta(z_0),0)\cup\Sigma_1^{(loc)}\cup\Sigma_1^{(loc)*},$$ 
where
	\begin{align*}
		J^{(\Xi)}(\zeta)=M^{(loc,1)}_-(\zeta)J^{(loc)}(\zeta)J^{(loc,1)}(\zeta)^{-1}M^{(loc,1)}_-(\zeta)^{-1}.
	\end{align*}

To proceed, we recall the following  factorizations of matrices $J^{(loc)}$ and $J^{(loc,1)}$:
\begin{align*}
	J^{(loc)}(\zeta)=(I-w^{(-)}(\zeta))^{-1}(I+w^{(+)}(\zeta)),\qquad	J^{(loc,1)}(\zeta)=(I-w^{(-,1)}(\zeta))^{-1}(I+w^{(+,1)}(\zeta)),
\end{align*}
where
\begin{align}
	&w^{(+)}(\zeta)=\left\{ \begin{array}{ll}
	J^{(loc)}(\zeta)-I, & \zeta\in \mathbb{R}^+,\\
		\begin{pmatrix} 
			0 & r_1(z(\zeta))e^{2it\theta(z(\zeta))+2\delta(z(\zeta))} \\
			0 & 0
		\end{pmatrix} ,&  \zeta\in(\Sigma_1^{(loc)}\cap U^{( \zeta )})\cup(\zeta(z_0),0),\\
		0, & \text{elsewhere},
	\end{array}\right. \label{def:w+}
      \\[4pt]
	&w^{(-)}(\zeta)=\left\{ \begin{array}{ll}
	\begin{pmatrix} 
		0 & 0\\ r_2(z(\zeta))e^{-2it\theta(z(\zeta))+2\delta(z(\zeta))}  & 0
	\end{pmatrix} ,&  \zeta\in(\Sigma_1^{(loc)*} \cap U^{( \zeta )})\cup(\zeta(z_0),0),\\
	0, & \text{elsewhere},
\end{array}\right.
\label{def:w-}
\\[4pt]
&w^{(+,1)}(\zeta)=	\left\{ \begin{array}{ll}
	J^{(loc,1)}(\zeta)-I, & \zeta\in \mathbb{R}^+,\\
	e^{it\theta(\hat{E}_{j_0})\hat{\sigma}_3}\begin{pmatrix} 
		0 & -e^{-i\phi_{j_0}+2\hat{\theta}(\zeta)+i\delta_{j_0}} \\
		0 & 0
	\end{pmatrix} ,&  \zeta\in\Sigma_1^{(loc)}\cup(\zeta(z_0),0),\\
	0, & \text{elsewhere},
\end{array}\right. \label{def:w+1}\\[4pt]
&w^{(-,1)}(\zeta)=	\left\{ \begin{array}{ll}
e^{it\theta(\hat{E}_{j_0})\hat{\sigma}_3}\begin{pmatrix}
	0 &  0\\
	-e^{i\phi_{j_0}+2\hat{\theta}(\zeta)-i\delta_{j_0}} & 0
\end{pmatrix},&  \zeta\in\Sigma_1^{(loc)*}\cup(\zeta(z_0),0),\\
0, & \text{elsewhere}.
\end{array}\right. \label{def:w-1}
\end{align}
We see from RH problem for $M^{(loc,1)}$ given in Proposition \ref{Mloc,1} that  there exists a positive constant $C$ such that 
$$
|M^{(loc,1)}|\lesssim \left\{ \begin{array}{ll}
	|\zeta|^{-1/4}, & |\zeta|\leq C, 
    \\
	|\zeta|^{1/4}, & |\zeta|\geq C. 
\end{array}\right. 
$$
Here, for a $2\times 2$ matrix $A$, $|A|:=\max_{i,j=1,2}\{|A_{ij}|\}$. Thus, if $\zeta\in\Sigma_1^{(loc)}\cup(\zeta(z_0),0)$ and  $|\zeta|\leq C$, we obtain from Propositions \ref{Pro asy ar} and \ref{Pro delta1 }, the definition of $\zeta$ in \eqref{def zeta1}, \eqref{asy theta+}, \eqref{def:w+} and \eqref{def:w+1} that
\begin{align*}
	&|w^{(+,1)}(\zeta)-w^{(+)}(\zeta)|=| r_1(\hat{E}_{j_0})e^{2\hat{\theta}(\zeta)+2\delta(\hat{E}_{j_0})+2it\theta(\hat{E}_{j_0})}-r_1(z(\zeta))e^{2it\theta(z(\zeta))+2\delta(z(\zeta))}|\\
	&\leq |r_1(\hat{E}_{j_0})e^{2\delta(\hat{E}_{j_0})}-r_1(z(\zeta))\textbf{}e^{2\delta(z(\zeta))}||e^{2it\theta(z(\zeta))}|+|r_1(z(\zeta))e^{2it\theta(z(\zeta))+2\delta(z(\zeta))}||1-e^{(S(z(\zeta))-s)\zeta^{1/2}}|\\
	&\lesssim t^{-1/3}|\zeta^{1/2}|+t^{-2/3}|\zeta^{3/2}|,
\end{align*}
which gives us that
\begin{align*}
	\frac{1}{|\zeta|^{1/2}}|w^{(+,1)}(\zeta)-w^{(+)}(\zeta)|\lesssim t^{-1/3}+t^{-2/3}|\zeta|\lesssim t^{-1/3}.
\end{align*}
Similarly,  by \eqref{def:w-} and \eqref{def:w-1}, one has $|\frac{1}{\zeta^{1/2}}(w^{(-,1)}(\zeta)-w^{(-)}(\zeta))|\lesssim t^{-1/3}+t^{-2/3}|\zeta|\lesssim t^{-1/3}$. This, together with the above estimate, implies that
\begin{align*}
	&\|M^{(loc,1)}_-(\zeta)(J^{(loc)}(\zeta)J^{(loc,1)}(\zeta)^{-1}-I)M^{(loc,1)}_-(\zeta)^{-1}\|_{L^\infty\cap L^1\cap L^2(\Sigma_1^{(loc)}\cup(\zeta(z_0),0)\cap\{\zeta:|\zeta\leq C\})}\\
	&\lesssim\|\frac{1}{\zeta^{1/2}}(w^{(+,1)}(\zeta)-w^{(+)}(\zeta))\|_{L^\infty\cap L^1\cap L^2(\Sigma_1^{(loc)}\cup(\zeta(z_0),0)\cap\{\zeta:|\zeta\leq C\})}\\
	&\quad+\|\frac{1}{\zeta^{1/2}}(w^{(-,1)}(\zeta)-w^{(-)}(\zeta))\|_{L^\infty\cap L^1\cap L^2(\Sigma_1^{(loc)}\cup(\zeta(z_0),0)\cap\{\zeta:|\zeta\leq C\})}
\lesssim  t^{-1/3}.
\end{align*}
As for $|\zeta|\geq C$, if $\zeta\notin  U^{( \zeta )}$, from the definition of $U^{( \zeta )}$ in \eqref{def Uzeta}, it is obvious that $|w^{(+,1)}(\zeta)|\lesssim e^{-ct^{\epsilon/2}}$ for some $c>0$, and if  $\zeta\in  U^{( \zeta )}$, 
\begin{align*}
	&|w^{(+,1)}(\zeta)-w^{(+)}(\zeta)|\\
	&\leq |r_1(\hat{E}_{j_0})e^{2\delta(\hat{E}_{j_0})}-r_1(z(\zeta))e^{2\delta(z(\zeta))}||e^{2it\theta(z(\zeta))}|+|r_1(z(\zeta))e^{2it\theta(z(\zeta))+2\delta(z(\zeta))}||1-e^{(S(z(\zeta))-s)\zeta^{1/2}}|\\
	&\lesssim |e^{2it\theta(z(\zeta))}|(1+t^{-2/3}|\zeta|^{3/2}).
\end{align*}
Since $|w^{(-,1)}(\zeta)-w^{(-)}(\zeta)|$ admits similar estimates, it follows that 
\begin{align*}
	&\|M^{(loc,1)}_-(\zeta)(J^{(loc)}(\zeta)J^{(loc,1)}(\zeta)^{-1}-I)M^{(loc,1)}_-(\zeta)^{-1}\|_{L^\infty\cap L^1\cap L^2(\Sigma_1^{(loc)}\cup(\zeta(z_0),0)\cap\{\zeta:|\zeta\geq C\})}\\	&\lesssim\||\zeta|^{1/2}(w^{(+,1)}(\zeta)-w^{(+)}(\zeta))\|_{L^\infty\cap L^1\cap L^2(\Sigma_1^{(loc)}\cup(\zeta(z_0),0)\cap\{\zeta:|\zeta\geq C\})}\\
	&\quad +\||\zeta|^{1/2}(w^{(-,1)}(\zeta)-w^{(-)}(\zeta))\|_{L^\infty\cap L^1\cap L^2(\Sigma_1^{(loc)}\cup(\zeta(z_0),0)\cap\{\zeta:|\zeta\geq C\})}\lesssim e^{-\tilde{C}t^{\epsilon/2}},
\end{align*}
for some $\tilde{C}>0$. In conclusion, we arrive at 
\begin{align*}
	\|M^{(loc,1)}_-(\zeta)(J^{(loc)}(\zeta)J^{(loc,1)}(\zeta)^{-1}-I)M^{(loc,1)}_-(\zeta)^{-1}\|_{L^\infty\cap L^1\cap L^2(\Sigma_1^{(loc)}\cup(\zeta(z_0),0))}\lesssim t^{-1/3}.
\end{align*}
It is thereby inferred from small norm RH problem arguments \cite{PX3} that $\Xi$ exists uniquely,  which also yields the estimate \eqref{eq:estXi}.
\end{proof}

From Proposition \ref{Pro Xi 1}, it is immediate that $M^{(loc,0)}$ is uniquely solvable for large positive $t$. Moreover, as $\zeta\to\infty$, we see from \eqref{def MP34},  \eqref{eq:Psi-infinity} and \eqref{eq:12entry} that  
\begin{align}\label{asy Mloc0I}
	M^{(loc,0)}=\left (I+\frac{M^{(loc,0)}_1}{\zeta}
	+\mathcal{O}\left(  \zeta^{-2}\right) \right)	\frac{\zeta^{-\frac{1}{4}\sigma_3}}{\sqrt{2}}
	\begin{pmatrix}
		1 & i
		\\
		i & 1
	\end{pmatrix}G_1(\zeta),
\end{align}
where  
\begin{align}\label{asy loc1}
	(M^{(loc,0)}_1)_{12}=(M^{(P_{34})}_1)_{12}+\mathcal{O}(t^{-1/3}) =ia(s)+\mathcal{O}(t^{-1/3})
\end{align}
with $a(s)$ and $s$ given in \eqref{def:a}
and \eqref{def s1}, respectively. 

\vspace{2mm}
\paragraph{\textbf{Step 3: construction of $M^{(loc)}$}}
With the aid of $M^{(loc,0)}$, we finally define
\begin{align}\label{def Mloc}
	M^{(loc)}(z)=H_1(z)t^{\sigma_3/6}M^{(loc,0)}(\zeta(z)), \qquad z\in U, 
\end{align}
where
\begin{align}\label{def H1 I}
	H_1(z)=	M^{(glo)}(z)G_1(\zeta(z))^{-1}\frac{1}{\sqrt{2}}(I-i\sigma_1)\left( \zeta(z)t^{-2/3}\right) ^{\sigma_3/4}.
\end{align}
Here, $M^{(glo)}$, $G_1$ and $\theta^{(3,\hat{j_0})}$ are defined in \eqref{def Mmod}, \eqref{def G1 I} and \eqref{theta hj03}, respectively. 

We claim that $H_1(z)$ is analytic in $U$. Indeed,  for $z\in U\cap(-\infty, \hat{E}_{j_0})$, from the jump condition of $M^{(glo)}$ in \eqref{eq:Mglojump}, straightforward calculations shows that
\begin{align*}
	&H_{1,+}(z)=M^{(glo)}_+(z)G_{1,+}(\zeta(z))^{-1}\frac{1}{\sqrt{2}}(I-i\sigma_1)\left( \zeta(z)t^{-2/3}\right) ^{\sigma_3/4}\\
    &=M^{(glo)}_-(z)e^{i(t\theta(\hat{E}_{j_0})-\phi_{j_0}+\delta_{j_0})\hat{\sigma}_3/2}\begin{pmatrix} 
		0 & -i\\
		-i& 0
	\end{pmatrix} 	e^{-\pi i \sigma_3/4}e^{i(t\theta(\hat{E}_{j_0})-\phi_{j_0}+\delta_{j_0})\sigma_3/2}\\
	&\quad \times \frac{1}{\sqrt{2}}(I-i\sigma_1)\left( \zeta(z)t^{-2/3}\right) ^{\sigma_3/4}\\
	&=M^{(glo)}_-(z)\begin{pmatrix} 
		0 &  	e^{-\pi i /4}e^{i(t\theta(\hat{E}_{j_0})-\phi_{j_0}+\delta_{j_0})/2}\\
		-e^{\pi i /4}e^{-i(t\theta(\hat{E}_{j_0})-\phi_{j_0}+\delta_{j_0})/2} & 0
	\end{pmatrix}\\&\quad \times \frac{1}{\sqrt{2}}(I-i\sigma_1)\left( \zeta(z)t^{-2/3}\right) ^{\sigma_3/4}=H_{1,-}(z).
\end{align*}
On the other hand,  for $z \in U\cap(\hat{E}_{j_0},+\infty)$, we have 
\begin{align*}
	&H_{1,+}(z)=M^{(glo)}(z)G_{1,+}(\zeta(z))^{-1}\frac{1}{\sqrt{2}}(I-i\sigma_1)\left( \zeta(z)t^{-2/3}\right)_+ ^{\sigma_3/4}\\
    &=M^{(glo)}(z)G_{1,-}(\zeta(z))^{-1}G_{1,-}(\zeta(z))G_{1,+}(\zeta(z))^{-1}\frac{1}{\sqrt{2}}(I-i\sigma_1)(-i\sigma_3)\left( \zeta(z)t^{-2/3}\right) ^{\sigma_3/4}_-\\
	&=M^{(glo)}(z)G_{1,-}(\zeta(z))^{-1}\begin{pmatrix} 
		0 &  	-e^{-\pi i /4}e^{i(t\theta(\hat{E}_{j_0})-\phi_{j_0}+\delta_{j_0})/2}\\
		e^{\pi i /4}e^{-i(t\theta(\hat{E}_{j_0})-\phi_{j_0}+\delta_{j_0})/2} & 0
	\end{pmatrix} 	e^{i(t\theta(\hat{E}_{j_0})-\phi_{j_0}+\delta_{j_0})\sigma_3/2}
   \\
	&\quad \times \frac{e^{-\pi i \sigma_3/4}}{\sqrt{2}}(I-i\sigma_1)(-i\sigma_3)\left( \zeta(z)t^{-2/3}\right) _-^{\sigma_3/4}\\
    &=M^{(glo)}(z)G_{1,-}(\zeta(z))^{-1}\frac{1}{\sqrt{2}}(I-i\sigma_1)\left( \zeta(z)t^{-2/3}\right) _-^{\sigma_3/4}=H_{1,-}(z).
\end{align*}
It then follows from the above two formulas that $H_1$ is meromorphic in $U$. By its definition, $H_1$ admits at most $-1/2$-singularity at $\hat{E}_{j_0}$. Thus, $\hat{E}_{j_0}$ is a removable singular point, as required. As a consequence, one can check from \eqref{def Mloc} and items (a)--(c) of RH problem \ref{RHP:Mloc0} that $M^{(loc)}$ satisfies items (a)--(c) of RH problem \ref{Mloc1}. The next proposition shows that $M^{(loc)}$ also satisfies the matching condition, hence solves RH problem \ref{Mloc1}.

\begin{proposition}\label{Pro asy mloc}
	For $z\in \partial U$, we have, as $t\to\infty$,
	\begin{align}\label{eq:matching}
		M^{(loc)}(z)M^{(glo)}(z)^{-1}=I-\frac{t^{1/3}}{\zeta(z)}H_1(z)\begin{pmatrix} 
			0 & ia(s)\\
			0& 0
		\end{pmatrix} H_1(z)^{-1}+\mathcal{O}(t^{1/3-2\epsilon}),
	\end{align}
	where $a$ is given in \eqref{def:a}.
\end{proposition}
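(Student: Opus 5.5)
The plan is to substitute the definition \eqref{def Mloc} of $M^{(loc)}$ together with the large-$\zeta$ expansion \eqref{asy Mloc0I} of $M^{(loc,0)}$ into $M^{(loc)}M^{(glo)}(z)^{-1}$ for $z\in\partial U$. Two scales have to be checked first. On $\partial U$ we have $|z-\hat{E}_{j_0}|\asymp c_0$, and by \eqref{def c0} and \eqref{eq:estzEj0} $c_0\asymp t^{\epsilon-2/3}$. By \eqref{d zeta1}, $\zeta(z)\asymp t^{2/3}(z-\hat{E}_{j_0})$, so $|\zeta(z)|\asymp t^{\epsilon}\to\infty$ uniformly on $\partial U$. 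Hence \eqref{asy Mloc0I} applies. I also need its remainder $\mathcal{O}(\zeta^{-2})$ to be uniform in $s$ (which lies in a compact set, since $|\xi-\hat\xi_{j_0}|t^{2/3}\le C$) and in $t$. This follows from Proposition \ref{Pro Xi 1}: $M^{(loc,0)}=\Xi M^{(loc,1)}$ with $\Xi=I+\mathcal{O}(t^{-1/3})$, and the Painlev\'e XXXIV parametrix expansion is uniform for $s$ in compacts.

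Next I write
\[
M^{(loc,0)}(\zeta)=\Big(I+\tfrac{M_1^{(loc,0)}}{\zeta}+\mathcal{O}(\zeta^{-2})\Big)\tfrac{\zeta^{-\sigma_3/4}}{\sqrt2}(I+i\sigma_1)G_1(\zeta),
\]
and use $H_1=M^{(glo)}G_1^{-1}\tfrac{1}{\sqrt2}(I-i\sigma_1)(\zeta t^{-2/3})^{\sigma_3/4}$. Since $\tfrac12(I+i\sigma_1)(I-i\sigma_1)=I$, the outer factors cancel:
\[
(\zeta t^{-2/3})^{\sigma_3/4}\,t^{\sigma_3/6}\,\zeta^{-\sigma_3/4}=I .
\]
Writing $t^{\sigma_3/6}M^{(loc,0)}=\big(t^{\sigma_3/6}(I+M_1/\zeta+\cdots)t^{-\sigma_3/6}\big)\,t^{\sigma_3/6}\zeta^{-\sigma_3/4}\cdots$, I get exactly
\[
M^{(loc)}M^{(glo)-1}=H_1\,t^{\sigma_3/6}\Big(I+\tfrac{M_1^{(loc,0)}}{\zeta}+\mathcal{O}(\zeta^{-2})\Big)t^{-\sigma_3/6}H_1^{-1}.
\]

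The final step is bookkeeping of powers of $t$. Conjugation by $t^{\sigma_3/6}$ multiplies the $(1,2)$ entry by $t^{1/3}$, the $(2,1)$ entry by $t^{-1/3}$, and leaves the diagonal unchanged. By \eqref{asy loc1}, $(M_1^{(loc,0)})_{12}=ia(s)+\mathcal{O}(t^{-1/3})$, so the $(1,2)$ term gives $\frac{t^{1/3}}{\zeta}ia(s)$ plus an error $\mathcal{O}(t^{-\epsilon})$. The other entries of $M_1^{(loc,0)}$ are bounded, so they contribute $\mathcal{O}(t^{1/3}/\zeta)=\mathcal{O}(t^{1/3-\epsilon})$ at worst. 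The $\zeta^{-2}$ remainder contributes $\mathcal{O}(t^{1/3}|\zeta|^{-2})=\mathcal{O}(t^{1/3-2\epsilon})$.

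The sign in front of the main term, and whether the diagonal and $(2,1)$ entries of $M_1$ are negligible, I expect to be the main obstacle. The crude bound $t^{1/3-\epsilon}$ for the diagonal is larger than the claimed $t^{1/3-2\epsilon}$. So I would check from the Painlev\'e XXXIV asymptotics \eqref{eq:Psi-infinity} in the appendix that, after the left multiplication by $\begin{pmatrix}1&0\\ ia(s)&1\end{pmatrix}$ in \eqref{def MP34}, the diagonal entries of the $1/\zeta$ coefficient vanish or are of lower order, or else absorb them through the precise structure of \eqref{eq:12entry}. I also need $H_1$ and $H_1^{-1}$ to be bounded on $\partial U$. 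This holds because $M^{(glo)}$ is bounded there (it stays away from branch points other than $\hat E_{j_0}$, and near $\hat E_{j_0}$ its $(z-\hat E_{j_0})^{-1/4}$ blow-up is compensated by $(\zeta t^{-2/3})^{\sigma_3/4}\asymp (z-\hat E_{j_0})^{\sigma_3/4}$), and because $\det H_1=1$. Finally, I would fix the sign of the leading term to match \eqref{eq:matching} using the orientation conventions in \eqref{asy Mloc0I}.
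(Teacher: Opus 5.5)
Your reduction is the paper's own: inserting \eqref{def Mloc}, \eqref{def H1 I} and \eqref{asy Mloc0I}, the outer factors cancel and
\[
M^{(loc)}(z)M^{(glo)}(z)^{-1}=H_1(z)\,t^{\sigma_3/6}\Big(I+\frac{M^{(loc,0)}_1}{\zeta(z)}+\mathcal{O}(\zeta^{-2})\Big)t^{-\sigma_3/6}H_1(z)^{-1}.
\]
That part is fine. The gaps are in the two points you postpone, because the fixes you propose cannot be carried out.

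\textbf{The sign.} The identity above is purely algebraic, and no contour orientation enters it. So the correction is $+\frac{t^{1/3}}{\zeta(z)}H_1\begin{pmatrix}0&ia(s)\\0&0\end{pmatrix}H_1^{-1}$, and no convention in \eqref{asy Mloc0I} can turn it into a minus. The paper's own computation also ends with a plus sign, so the minus in \eqref{eq:matching} is inconsistent with its proof. You should state the result with $+$ rather than try to match it. Orientation matters only afterwards. The relation $E_+=E_-J^{(E)}$ with $J^{(E)}=M^{(loc)}(M^{(glo)})^{-1}$ forces $\partial U$ to be oriented clockwise. With that orientation and the $+$ sign, the residue computation gives \eqref{eq:asyE1}.

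\textbf{The error term.} Your count for the diagonal contradicts your own correct remark that conjugation by $t^{\sigma_3/6}$ leaves the diagonal unchanged. The diagonal of $M^{(loc,0)}_1$ contributes $\mathcal{O}(|\zeta|^{-1})=\mathcal{O}(t^{-\epsilon})$, and the $(2,1)$ entry contributes $\mathcal{O}(t^{-1/3-\epsilon})$. Your remedy of showing that the diagonal vanishes fails. The identity $\det M^{(P_{34})}\equiv1$ only gives $\mathrm{tr}\,M_1^{(P_{34})}=0$, and $(M_1^{(P_{34})})_{11}$ is nonzero in general. For example, in the Airy degeneration $\alpha=0$, $\omega=1$, where $M^{(P_{34})}(\zeta)$ is the Airy parametrix evaluated at $\zeta+s$, one finds $(M_1^{(P_{34})})_{11}=\frac{s^4}{32}-\frac{s}{4}$.

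Even without the diagonal, the $\mathcal{O}(t^{-1/3})$ error in \eqref{asy loc1} produces an $\mathcal{O}(t^{-\epsilon})$ term in the $(1,2)$ entry, as you yourself computed. Since $\epsilon>1/3$, we have $t^{-\epsilon}\gg t^{1/3-2\epsilon}$. So this argument proves the matching only with remainder $\mathcal{O}(t^{-\epsilon})$. The same holds for the paper's proof, which simply replaces $t^{\hat\sigma_3/6}M^{(loc,0)}_1$ by its leading $(1,2)$ part. The weaker bound is enough downstream. The norm $\|J^{(E)}-I\|_{L^\infty(\partial U)}$ is still $\mathcal{O}(t^{1/3-\epsilon})$. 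Since $\partial U$ has length $\mathcal{O}(t^{\epsilon-2/3})$, the extra terms contribute only $\mathcal{O}(t^{-2/3})$ to $E_1$, which lies within the $\mathcal{O}(t^{-\epsilon})$ of \eqref{eq:asyE1}.

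\textbf{Two smaller points.} First, boundedness of $H_1^{\pm1}$ on $\partial U$ does not follow from the size compensation you describe. Right multiplication by $(\zeta t^{-2/3})^{\sigma_3/4}$ actually makes the second column worse. It follows instead from the analyticity of $H_1$ near $\hat E_{j_0}$ (the paper's argument works on a fixed disc) together with $\det H_1=1$. Second, \eqref{def c0} and \eqref{eq:estzEj0} give only $c_0\lesssim t^{\epsilon-2/3}$. So $|\zeta|\asymp t^{\epsilon}$ on $\partial U$ requires choosing the radius of $U$ to be $\asymp t^{\epsilon-2/3}$; the paper also assumes this tacitly.
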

\begin{proof}
For $z\in \partial U$, we note from \eqref{def Uxi I} and \eqref{def zeta1} that  $|\zeta(z)| \sim t^{\epsilon}$ as $t\to \infty$. 
It then follows from
\eqref{def Mloc} and the large $\zeta$ asymptotics of $M^{(loc,0)}$ given in \eqref{asy Mloc0I} and \eqref{asy loc1} that, as $t\to\infty$,
\begin{align*}
&M^{(loc)}(z)M^{(glo)}(z)^{-1}-I=H_1(z)t^{\sigma_3/6}M^{(loc,0)}(\zeta(z))M^{(glo)}(z)^{-1}-I\\
&=H_1(z)t^{\sigma_3/6}\left (I+\frac{M^{(loc,0)}_1}{\zeta(z)}
+\mathcal{O}\left(  \zeta^{-2}\right) \right)	t^{-\sigma_3/6}H_1(z)^{-1}-I\\
&=H_1(z)\left(t^{\hat{\sigma}_3/6}\frac{M^{(loc,0)}_1}{\zeta(z)}+\mathcal{O}(t^{-2\epsilon})\right)H_1(z)^{-1}\\
&=\frac{t^{1/3}}{\zeta(z)}H_1(z)\begin{pmatrix} 
	0 & ia(s)\\
	0& 0
\end{pmatrix} H_1(z)^{-1}+\mathcal{O}(t^{1/3-2\epsilon}),
\end{align*}
as indicated in \eqref{eq:matching}. Here
we note that $t^{1/3}/\zeta(z) =\mathcal{O}(t^{1/3-\epsilon}) = o(1) $ for $z\in \partial U$, since  $\epsilon \in (1/3,2/3)$.
\end{proof}

\subsection{The small norm RH problem}\label{subsec E}
From \eqref{construct M1} and the RH problems for $M^{(1)}$, $M^{(glo)}$ and $M^{(loc)}$, it follows that
\begin{equation}
    E(z):=\left\{ \begin{array}{ll}
			M^{(1)}(z)M^{(glo)}(z)^{-1}, & z\in \mathbb{C}\setminus U ,\\
		M^{(1)}(z)M^{(loc)}(z)^{-1},&  z\in U,
	\end{array}\right.
\end{equation}
is meromorphic in $U$ with at most $-1/2$-singularity at $z=\hat{E}_{j_0}$, so $E$ is analytic in $U$. Moreover, it is readily seen that $E$ satisfies the following RH problem. 
\begin{RHP}
 \label{RHP E I} \hfil
	\begin{itemize}
		\item [\rm(a)] $E(z)$ is analytic in  $\mathbb{C}\setminus\Sigma^{(E)}$, where
		\begin{align*}
		\Sigma^{(E)}:=\partial U\cup \Sigma_2\cup \Sigma_2^*\cup\Sigma_1\cup \Sigma_1^*\setminus U;	
		\end{align*}
	see Figure \ref{Fig E I} for an illustration.
		\item [\rm(b)]  For $z\in\Sigma^{(E)}$, $E(z)$ satisfies the jump condition
		$$E_+(z)=E_-(z)J^{(E)}(z),$$ where
		\begin{align*}
			J^{(E)}(z)=	\left\{ \begin{array}{ll}
				M^{(glo)}(z)J^{(1)}(z)M^{(glo)}(z)^{-1}, & z\in \Sigma^{(E)} \setminus \partial U,\\
				M^{(loc)}(z)M^{(glo)}(z)^{-1}, & z\in \partial U.\\	\end{array}\right.
		\end{align*}
		\item [\rm(c)] As $z\to\infty$, we have $E(z)=I+\mathcal{O}(z^{-1})$.
	\end{itemize}
\end{RHP}
\begin{figure}	
	\tikzset{every picture/.style={line width=0.75pt}} 
	
	\begin{tikzpicture}[x=0.75pt,y=0.75pt,yscale=-0.6,xscale=0.6]
		
		\draw [color={rgb, 255:red, 74; green, 144; blue, 226 }  ,draw opacity=1 ] [dash pattern={on 0.84pt off 2.51pt}]  (82,209.42) -- (493,215.42) ;
		\draw [line width=1.5]    (114,209.92) -- (302,211.92) ;
		\draw [shift={(302,211.92)}, rotate = 0.61] [color={rgb, 255:red, 0; green, 0; blue, 0 }  ][fill={rgb, 255:red, 0; green, 0; blue, 0 }  ][line width=1.5]      (0, 0) circle [x radius= 4.36, y radius= 4.36]   ;
		\draw [shift={(114,209.92)}, rotate = 0.61] [color={rgb, 255:red, 0; green, 0; blue, 0 }  ][fill={rgb, 255:red, 0; green, 0; blue, 0 }  ][line width=1.5]      (0, 0) circle [x radius= 4.36, y radius= 4.36]   ;
		\draw [color={rgb, 255:red, 65; green, 117; blue, 5 }  ,draw opacity=1 ]   (85.83,165.4) -- (223.9,211.4) ;
		\draw [shift={(223.9,211.4)}, rotate = 18.43] [color={rgb, 255:red, 65; green, 117; blue, 5 }  ,draw opacity=1 ][fill={rgb, 255:red, 65; green, 117; blue, 5 }  ,fill opacity=1 ][line width=0.75]      (0, 0) circle [x radius= 3.35, y radius= 3.35]   ;
		\draw [shift={(159.61,189.98)}, rotate = 198.43] [fill={rgb, 255:red, 65; green, 117; blue, 5 }  ,fill opacity=1 ][line width=0.08]  [draw opacity=0] (10.72,-5.15) -- (0,0) -- (10.72,5.15) -- (7.12,0) -- cycle    ;
		\draw [color={rgb, 255:red, 65; green, 117; blue, 5 }  ,draw opacity=1 ]   (363.67,232.83) -- (493.27,271.11) ;
		\draw [shift={(433.26,253.39)}, rotate = 196.45] [fill={rgb, 255:red, 65; green, 117; blue, 5 }  ,fill opacity=1 ][line width=0.08]  [draw opacity=0] (10.72,-5.15) -- (0,0) -- (10.72,5.15) -- (7.12,0) -- cycle    ;
		\draw [color={rgb, 255:red, 65; green, 117; blue, 5 }  ,draw opacity=1 ]   (364,191.67) -- (494.27,163.11) ;
		\draw [shift={(434.02,176.32)}, rotate = 167.64] [fill={rgb, 255:red, 65; green, 117; blue, 5 }  ,fill opacity=1 ][line width=0.08]  [draw opacity=0] (10.72,-5.15) -- (0,0) -- (10.72,5.15) -- (7.12,0) -- cycle    ;
		\draw [color={rgb, 255:red, 65; green, 117; blue, 5 }  ,draw opacity=1 ]   (82.33,272.4) -- (223.9,211.4) ;
		\draw [shift={(157.71,239.92)}, rotate = 156.69] [fill={rgb, 255:red, 65; green, 117; blue, 5 }  ,fill opacity=1 ][line width=0.08]  [draw opacity=0] (10.72,-5.15) -- (0,0) -- (10.72,5.15) -- (7.12,0) -- cycle    ;
		\draw  [color={rgb, 255:red, 65; green, 117; blue, 5 }  ,draw opacity=1 ] (237.68,211.92) .. controls (237.68,176.39) and (266.47,147.59) .. (302,147.59) .. controls (337.53,147.59) and (366.32,176.39) .. (366.32,211.92) .. controls (366.32,247.44) and (337.53,276.24) .. (302,276.24) .. controls (266.47,276.24) and (237.68,247.44) .. (237.68,211.92) -- cycle ;
		\draw  [fill={rgb, 255:red, 0; green, 0; blue, 0 }  ,fill opacity=1 ] (347.38,212.92) .. controls (347.38,213.76) and (346.7,214.44) .. (345.86,214.44) .. controls (345.01,214.44) and (344.33,213.76) .. (344.33,212.92) .. controls (344.33,212.08) and (345.01,211.4) .. (345.86,211.4) .. controls (346.7,211.4) and (347.38,212.08) .. (347.38,212.92) -- cycle ;
		\draw [color={rgb, 255:red, 65; green, 117; blue, 5 }  ,draw opacity=1 ]   (298.33,147.67) -- (309.04,148.11) ;
		\draw [shift={(308.68,148.1)}, rotate = 182.38] [fill={rgb, 255:red, 65; green, 117; blue, 5 }  ,fill opacity=1 ][line width=0.08]  [draw opacity=0] (10.72,-5.15) -- (0,0) -- (10.72,5.15) -- (7.12,0) -- cycle    ;
		
		\draw (106,218.7) node [anchor=north west][inner sep=0.75pt]  [font=\footnotesize,xscale=0.8,yscale=0.8] [align=left] {$\displaystyle E_{j_{0}}$};
		\draw (295.9,224.39) node [anchor=north west][inner sep=0.75pt]  [font=\footnotesize,xscale=0.8,yscale=0.8] [align=left] {$\displaystyle \hat{E}_{j_{0}}$};
		\draw (339.33,216.57) node [anchor=north west][inner sep=0.75pt]  [xscale=0.8,yscale=0.8]  {$z_{0}$};
		\draw (163.5,164.9) node [anchor=north west][inner sep=0.75pt]  [font=\small,xscale=0.8,yscale=0.8]  {$\Sigma _{2}$};
		\draw (131,250.4) node [anchor=north west][inner sep=0.75pt]  [font=\small,xscale=0.8,yscale=0.8]  {$\Sigma _{2}^{*}$};
		\draw (383.17,163.57) node [anchor=north west][inner sep=0.75pt]  [font=\small,xscale=0.8,yscale=0.8]  {$\Sigma _{1}$};
		\draw (382.5,249.4) node [anchor=north west][inner sep=0.75pt]  [font=\small,xscale=0.8,yscale=0.8]  {$\Sigma _{1}^{*}$};
		\draw (82.23,186.68) node [anchor=north west][inner sep=0.75pt]  [font=\small,xscale=0.8,yscale=0.8]  {$\Omega _{2}$};
		\draw (85,234.4) node [anchor=north west][inner sep=0.75pt]  [font=\small,xscale=0.8,yscale=0.8]  {$\Omega _{2}^{*}$};
		\draw (455,197.4) node [anchor=north west][inner sep=0.75pt]  [font=\small,xscale=0.8,yscale=0.8]  {$\Omega _{1}$};
		\draw (455,215.82) node [anchor=north west][inner sep=0.75pt]  [font=\small,xscale=0.8,yscale=0.8]  {$\Omega _{1}^{*}$};
		\draw (214,219.9) node [anchor=north west][inner sep=0.75pt]  [font=\small,xscale=0.8,yscale=0.8]  {$z_{j_0}$};
		\draw (283.33,122.4) node [anchor=north west][inner sep=0.75pt]  [font=\small,xscale=0.8,yscale=0.8]  {$\partial U$};
	\end{tikzpicture}
	\caption{\footnotesize  The  jump contour $\Sigma^{(E)}$ of RH problem \ref{RHP E I} for $E$ (the green curves).}
	\label{Fig E I}
\end{figure}
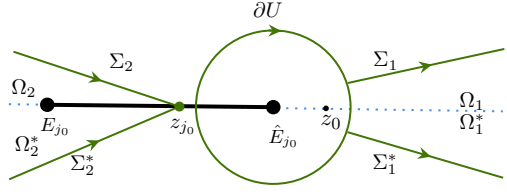
A direct calculation  shows that
\begin{equation}\label{E(z):BCrep}
	\parallel J^{(E)}(z)-I\parallel_{L^p}=\left\{\begin{array}{llll}
		\mathcal{O}(\exp\left\{-ct^{\epsilon/2}\right\}),  & z\in  \Sigma^{(E)}\setminus  \partial U,\\[6pt]
		\mathcal{O}(t^{-\kappa_p}),   & z\in\partial U,
	\end{array}\right.
\end{equation}
for some positive $c$ with $\kappa_\infty=\epsilon-1/3$ and $\kappa_2=\epsilon/2$. It then follows from the small norm RH problem theory \cite{PX3} that there exists a unique solution to RH problem \ref{RHP E I} for large positive $t$. Moreover, according to \cite{BC}, we have
\begin{equation}\label{expression:E(z)}
	E(z)=I+\frac{1}{2\pi i}\int_{\Sigma^{(E)}}\dfrac{\varpi(\lambda  ) (J^{(E)}(\lambda  )-I)}{\lambda  -z}\dif \lambda  ,
\end{equation}
where $\varpi\in I+ L^2(\Sigma^{(E)})$ is the unique solution of the Fredholm-type equation
\begin{align}
	\varpi=I+\mathcal{C}_E\varpi.\label{equ: varpi}
\end{align}
Here, $\mathcal{C}_E$: $L^2(\Sigma^{(E)})\to L^2(\Sigma^{(E)})$ is an integral operator defined by $\mathcal{C}_E(f)(z)=\mathcal{C}_-\left( f(J^{(E)}(z) -I)\right) $
with $\mathcal{C}_-$ being the Cauchy projection operator on $\Sigma^{(E)}$ defined in \eqref{def:opCpm}. Thus, 
\begin{align}\label{eq:estCE}
	\parallel \mathcal{C}_E\parallel\leqslant\parallel \mathcal{C}_-\parallel \parallel J^{(E)}(z)-I\parallel_{L^\infty }\lesssim t^{-(\epsilon-1/3)},
\end{align}
which implies that  $1-\mathcal{C}_E$ is invertible  for   sufficiently large $t$ (recall that  $\epsilon\in (1/3, 2/3)$) and $\varpi$  exists  uniquely with
\begin{align*}
	\varpi=I+(1-\mathcal{C}_E)^{-1}(\mathcal{C}_EI).
\end{align*}
Moreover,
\begin{align}
	\|\varpi-I\|_{L^2}\lesssim \|\mathcal{C}_EI\|_{L^2}\lesssim\|J^{(E)}(z)-I\|_{L^2}\lesssim  t^{-\epsilon/2}.\label{normrho}
\end{align}

By \eqref{expression:E(z)}, it is readily seen that
\begin{align}\label{eq:Eneari}
	E(z)=I+\frac{E_1}{z}+\mathcal{O}(z^{-2}), \qquad z\to \infty,
\end{align}
where
\begin{align}
	E_1&=-\frac{1}{2\pi i}\int_{\Sigma^{(E)}}\varpi(\lambda  ) (J^{(E)}(\lambda  )-I)\dif \lambda. \label{def:E1}
\end{align}
We conclude this section with the large $t$ asymptotics  of $E_1$.
\begin{proposition}
	\label{asyE}
	As $t\to +\infty$, we have
	\begin{align}\label{eq:asyE1}
		E_1&=-\frac{t^{-1/3}}{|\theta^{(3,\hat{j_0})}(\hat{\xi}_{j_0})|^{2/3}}H_1(\hat{E}_{j_0})\begin{pmatrix} 
			0 & ia\\
			0& 0
		\end{pmatrix} H_1(\hat{E}_{j_0})^{-1}+\mathcal{O}(t^{-\epsilon}),
	\end{align}
    where $\theta^{(3,\hat{j_0})}$, $H_1$ and $a$ are defined in \eqref{theta hj03}, \eqref{def H1 I} and \eqref{def:a}, respectively. 
\end{proposition}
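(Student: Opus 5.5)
The plan is to start from the representation \eqref{def:E1} and split
\[
E_1=-\frac{1}{2\pi i}\oint_{\partial U}(J^{(E)}-I)\,\dif\lambda-\frac{1}{2\pi i}\int_{\Sigma^{(E)}\setminus\partial U}(J^{(E)}-I)\,\dif\lambda-\frac{1}{2\pi i}\int_{\Sigma^{(E)}}(\varpi-I)(J^{(E)}-I)\,\dif\lambda .
\]
The second term is $\mathcal{O}(e^{-ct^{\epsilon/2}})$ by \eqref{E(z):BCrep}. For the third term I will use Cauchy--Schwarz together with \eqref{normrho} and the $L^2$ bound on $\partial U$. This gives $\|\varpi-I\|_{L^2}\|J^{(E)}-I\|_{L^2(\partial U)}\lesssim t^{-\epsilon/2}\cdot t^{-\epsilon/2}=t^{-\epsilon}$, which is already the claimed error. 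If the $L^2$ norm on the shrinking circle is instead computed as $\|J^{(E)}-I\|_{L^\infty}|\partial U|^{1/2}$, one obtains $t^{1/3-\epsilon}\cdot t^{(\epsilon-2/3)/2}$, and this product is also $\lesssim t^{-\epsilon}$ in the admissible range. So the third term is absorbed into $\mathcal{O}(t^{-\epsilon})$ with either bookkeeping.

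The main contribution is the contour integral over $\partial U$. I will insert the matching expansion \eqref{eq:matching} from Proposition \ref{Pro asy mloc}. The remainder $\mathcal{O}(t^{1/3-2\epsilon})$, integrated over a circle of length $\sim c_0=\mathcal{O}(t^{\epsilon-2/3})$, contributes $\mathcal{O}(t^{-1/3-\epsilon})$, which is negligible. The leading part is
\[
\frac{1}{2\pi i}\oint_{\partial U}\frac{t^{1/3}}{\zeta(\lambda)}H_1(\lambda)\begin{pmatrix}0&ia(s)\\0&0\end{pmatrix}H_1(\lambda)^{-1}\dif\lambda .
\]
Since $H_1$ is analytic in $U$ and $\zeta$ is conformal in $U$ with a simple zero at $\hat{E}_{j_0}$, the integrand is meromorphic in $U$ with a single simple pole. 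Taking into account the clockwise orientation of $\partial U$ in Figure \ref{Fig E I}, together with the overall minus sign in \eqref{def:E1}, the residue theorem gives the contribution
\[
\pm\frac{t^{1/3}}{\zeta'(\hat{E}_{j_0})}H_1(\hat{E}_{j_0})\begin{pmatrix}0&ia(s)\\0&0\end{pmatrix}H_1(\hat{E}_{j_0})^{-1}.
\]

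Next I will use \eqref{d zeta1}, namely $\zeta'(\hat{E}_{j_0})=-|\theta^{(3,\hat{j_0})}(\xi)|^{2/3}t^{2/3}$, so the prefactor becomes $\mp t^{-1/3}|\theta^{(3,\hat{j_0})}(\xi)|^{-2/3}$. Fixing the sign through the orientation bookkeeping produces the stated minus sign. Finally, in region I we have $\xi-\hat{\xi}_{j_0}=\mathcal{O}(t^{-2/3})$, and $\theta^{(3,\hat{j_0})}$ is smooth and nonzero near $\hat{\xi}_{j_0}$. Hence replacing $\theta^{(3,\hat{j_0})}(\xi)$ by $\theta^{(3,\hat{j_0})}(\hat{\xi}_{j_0})$ costs $\mathcal{O}(t^{-1/3}\cdot t^{-2/3})=\mathcal{O}(t^{-1})$. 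The boundedness of $H_1(\hat{E}_{j_0})$ and $H_1^{-1}$, and of $a(s)$ for $s$ bounded, comes from $M^{(glo)}$ and $(\zeta t^{-2/3})^{\sigma_3/4}$ being $\mathcal{O}(1)$, because $\zeta t^{-2/3}\sim\zeta'(\hat{E}_{j_0})t^{-2/3}(z-\hat{E}_{j_0})$.

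I expect the main obstacle to be the uniformity of the $H_1$ bound and of the error estimate in the small norm problem as $U$ shrinks with $t$. Concretely, I need $H_1(\lambda)$, $H_1(\lambda)^{-1}$ to stay bounded uniformly on $\partial U$, so that the residue computation and the remainder from \eqref{eq:matching} are controlled uniformly. My approach there is to note that $(\zeta t^{-2/3})^{\sigma_3/4}$ involves only the $t$-independent rescaled conformal map $\zeta(z)t^{-2/3}$, that $M^{(glo)}$ is bounded away from its branch points except at $\hat{E}_{j_0}$, and that the $-1/4$ singularity of $M^{(glo)}$ at $\hat{E}_{j_0}$ is cancelled by the factor $(\zeta t^{-2/3})^{\sigma_3/4}$. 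This cancellation is the same mechanism that makes $\hat{E}_{j_0}$ a removable singularity of $H_1$, as already shown.
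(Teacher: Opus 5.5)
Your route is the paper's: isolate $\partial U$ in \eqref{def:E1}, bound the $(\varpi-I)(J^{(E)}-I)$ term by Cauchy--Schwarz using \eqref{E(z):BCrep} and \eqref{normrho}, insert the matching expansion, and evaluate by a residue at $\hat E_{j_0}$ with \eqref{d zeta1}. The details you add (the exponentially small pieces of $\Sigma^{(E)}$, and replacing $\theta^{(3,\hat{j_0})}(\xi)$ by $\theta^{(3,\hat{j_0})}(\hat\xi_{j_0})$ at cost $\mathcal{O}(t^{-1})$) are correct.

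The step you leave open is the sign, and with the ingredients you name it comes out wrong. Clockwise orientation of $\partial U$ is indeed forced: $J^{(E)}=M^{(loc)}(M^{(glo)})^{-1}$ requires the exterior to be the $+$ side. But combining it with \eqref{eq:matching} as printed gives
\[
E_1\approx\frac{1}{2\pi i}\oint_{\partial U,\ \mathrm{cw}}\frac{t^{1/3}}{\zeta(\lambda)}H_1(\lambda)\begin{pmatrix}0&ia\\0&0\end{pmatrix}H_1(\lambda)^{-1}\dif\lambda=-\frac{t^{1/3}}{\zeta'(\hat E_{j_0})}H_1(\hat E_{j_0})\begin{pmatrix}0&ia\\0&0\end{pmatrix}H_1(\hat E_{j_0})^{-1},
\]
which by \eqref{d zeta1} has the opposite sign to the claim.

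What actually settles it is that the matching term carries a plus sign. From \eqref{def H1 I} you get $H_1^{-1}M^{(glo)}=(\zeta t^{-2/3})^{-\sigma_3/4}\frac{1}{\sqrt2}(I+i\sigma_1)G_1=t^{\sigma_3/6}\zeta^{-\sigma_3/4}\frac{1}{\sqrt2}(I+i\sigma_1)G_1$. Then \eqref{def Mloc} and \eqref{asy Mloc0I} give $M^{(loc)}(M^{(glo)})^{-1}=H_1\big(I+t^{\hat\sigma_3/6}M^{(loc,0)}_1\zeta^{-1}+\cdots\big)H_1^{-1}=I+\frac{t^{1/3}}{\zeta}H_1\begin{pmatrix}0&ia\\0&0\end{pmatrix}H_1^{-1}+\cdots$. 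This is also what the last line of the proof of Proposition \ref{Pro asy mloc} produces; the minus sign in the displayed statement is a misprint. Now $-\frac{1}{2\pi i}\oint_{\mathrm{cw}}=\frac{1}{2\pi i}\oint_{\mathrm{ccw}}$ yields $\frac{t^{1/3}}{\zeta'(\hat E_{j_0})}H_1(\hat E_{j_0})\begin{pmatrix}0&ia\\0&0\end{pmatrix}H_1(\hat E_{j_0})^{-1}$, which is the stated formula. The paper's written proof uses the printed minus sign and evaluates the residue as if $\partial U$ were counterclockwise, so its two slips cancel. Orientation bookkeeping alone therefore cannot produce the stated sign; you have to recompute the sign of the matching term.

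Three smaller points.

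\begin{enumerate}
\item Your ``either bookkeeping'' remark does not describe two different estimates. The quantity $t^{1/3-\epsilon}\cdot t^{(\epsilon-2/3)/2}=t^{-\epsilon/2}$ is exactly the $\kappa_2=\epsilon/2$ bound, not something $\lesssim t^{-\epsilon}$. The rate $t^{-\epsilon}$ appears only after multiplying by $\|\varpi-I\|_{L^2}\lesssim t^{-\epsilon/2}$.
\item The pointwise remainder in the matching on $\partial U$ is really $\mathcal{O}(t^{-\epsilon})$, not $\mathcal{O}(t^{1/3-2\epsilon})$. The diagonal entries of $M^{(loc,0)}_1$, and the $\mathcal{O}(t^{-1/3})$ error in \eqref{asy loc1} after amplification by $t^{1/3}$, give terms of size $|\zeta|^{-1}\sim t^{-\epsilon}$. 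These dominate $t^{1/3-2\epsilon}$ when $\epsilon>1/3$. Integrated over a circle of length $\mathcal{O}(t^{\epsilon-2/3})$ they contribute only $\mathcal{O}(t^{-2/3})\subset\mathcal{O}(t^{-\epsilon})$, so your conclusion survives.
\item The boundedness of $H_1(\hat E_{j_0})$ does not come from $M^{(glo)}$ and $(\zeta t^{-2/3})^{\sigma_3/4}$ being $\mathcal{O}(1)$; neither is bounded at $\hat E_{j_0}$. Instead, $H_1$ extends analytically to a disk of fixed radius about $\hat E_{j_0}$, on whose boundary all factors are uniformly bounded, so the maximum principle applies. Since $\det H_1=1$, this also controls $H_1^{-1}$.
\end{enumerate}
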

\begin{proof}
It follows from  the estimates in \eqref{E(z):BCrep} and \eqref{normrho} that
\begin{align*}
    |\int_{\partial U}(\varpi-I)(J^{(E)}(\lambda  )-I)\dif \lambda|\lesssim \|\varpi-I\|_{L^2}\|J^{(E)}-I\|_{L^2}\lesssim  t^{-\epsilon}.
\end{align*}
This, together with \eqref{def zeta1}, \eqref{d zeta1}, \eqref{def:E1} and Proposition \ref{Pro asy mloc}, implies that
\begin{align*}
		E_1&=-\frac{1}{2\pi i}\int_{\partial U}(J^{(E)}(\lambda  )-I)\dif \lambda+\mathcal{O}(t^{-\epsilon})\\
		&=\frac{1}{2\pi i}\int_{\partial U}\frac{t^{1/3}}{\zeta(\lambda)}H_1(\lambda) \begin{pmatrix} 
			0 & ia\\
			0& 0
		\end{pmatrix} H_1(\lambda)^{-1}\dif \lambda+\mathcal{O}(t^{-\epsilon})\\
		&=-\frac{t^{-1/3}}{|\theta^{(3,\hat{j_0})}(\hat{\xi}_{j_0})|^{2/3}}H_1(\hat{E}_{j_0}) \begin{pmatrix} 
			0 & ia\\
			0& 0
		\end{pmatrix}  H_1(\hat{E}_{j_0})^{-1}+\mathcal{O}(t^{-\epsilon}),
\end{align*}
where we have made use of  the residue theorem in the third equality.
\end{proof}

\section{Asymptotic analysis in transition  region II}\label{Sec 4}
In this section, we perform asymptotic analysis of RH problem \ref{RHP2} for $N$ to derive asymptotics of $q$ in transition region II. By Definition \ref{def:regions}, it is assumed that $0<(\xi-\xi_{j_0})t^{2/3}\leq C$ for some fixed $j_0\in\{0,\ldots,n\}$ throughout this section, since the analysis in the other half region is similar. Note that $\xi_{j_0}<\xi<\hat{\xi}_{j_0-1}$ for large $t$ in this case, it then follows from discussions in Section \ref{subsec2.3} that the saddle point $z_0\in[\hat{E}_{j_0-1},E_{j_0})$. As the analysis is similar to that given in  Section \ref{Sec 3}, we only give a sketch of the procedure in this case.

In the next two sections, we adopt the same notations (such as $\delta$, $M^{(1)}$, $N$, $G$, ...) as those used before, they should be understood in different contexts. We believe this will not cause any confusion.

\vspace{2mm}
 \paragraph{\textbf{First transformation}}
To define the first transformation, we introduce, similar to \eqref{def delta 1}, the auxiliary function
	\begin{align}\label{def delta II}
	&\delta(z):=\frac{w(z)}{2\pi i}\left[\sum_{j=1}^n\delta_j\int_{E_j}^{\hat{E}_j}\frac{i\dif s}{w_+(s)(s-z)}+\int_{(\hat{E}_{j_0},+\infty)\setminus(\cup_{j=j_0}^{n}(E_j,\hat{E}_j))}\frac{\log (1-r_3(s)r_4(s))\dif s}{w(s)(s-z)} \right],
\end{align}
where the logarithm takes principal branch, the functions $w$, $r_3$ and $r_4$ are defined in \eqref{def:w} and \eqref{def r}, respectively. In \eqref{def delta II}, the real constants $\delta_j$, $j=1,\ldots,n$, are determined through the linear systems 
\begin{align}
	\int_{(\hat{E}_{j_0},+\infty)\setminus(\cup_{j=j_0}^{n}(E_j,\hat{E}_j))}\frac{\log (1-r_3(s)r_4(s))s^k\dif s}{w(s)}+\sum_{j=1}^n\delta_j\int_{E_j}^{\hat{E}_j}\frac{is^k\dif s}{w_+(s)}=0,\quad k=0,\ldots,n-1. \label{def deltaj II}
\end{align}
The following proposition is an analogue of Proposition \ref{Pro delta1 } in this case, we omit the proof here. 
\begin{proposition}
The function $\delta$ defined in \eqref{def delta II} admits the following properties.
\begin{itemize}
	\item [$\mathrm{(a)}$ ]$\delta$ is analytic on $\mathbb{C}\setminus ((z_0,+\infty)\cup (\cup_{j=0}^n[E_j,\hat{E}_j]))$ and satisfies
	the symmetry relation $\delta(z)=-\overline{\delta(\bar{z})}$. Moreover, as $z\to\infty$, we have $$\delta(z)=\delta(\infty)+\dfrac{\delta^{(1)}}{z}+\mathcal{O}(z^{-2}),$$ where
	\begin{align}\label{def delta infII}
		\delta(\infty)&=-\frac{1}{2\pi i}\left[\int_{(\hat{E}_{j_0},+\infty)\setminus(\cup_{j=j_0}^{n}(E_j,\hat{E}_j))}\frac{\log (1-r_3(s)r_4(s))s^n\dif s}{w(s)}+\sum_{j=1}^n\delta_j\int_{E_j}^{\hat{E}_j}\frac{is^n\dif s}{w_+(s)} \right], \\
		\delta^{(1)}&=-\delta(\infty)\sum_{j=0}^{n}(E_j+\hat{E}_j) \nonumber \\
		&\quad -\frac{1}{2\pi i}\left[\int_{(\hat{E}_{j_0},+\infty)\setminus(\cup_{j=j_0}^{n}(E_j,\hat{E}_j))}\frac{\log (1-r_3(s)r_4(s))s^{n+1}\dif s}{w(s)}+\sum_{j=1}^n\delta_j\int_{E_j}^{\hat{E}_j}\frac{is^{n+1}\dif s}{w_+(s)} \right]. \label{def delta1 II}
	\end{align}\normalsize
	\item [$\mathrm{(b)}$] $\delta$  satisfies the jump conditions
	\begin{align*}
		&\delta_+(z)=\delta_-(z)+\log (1-r_3(z)r_4(z)),&& \qquad z\in (\hat{E}_{j_0},+\infty)\setminus(\cup_{j=j_0}^{n}[E_j,\hat{E}_j]),\\
		&\delta_+(z)+\delta_-(z)=i\delta_j, && 
        \qquad z\in(E_j,\hat{E}_j), ~ j=1,\ldots,n.
	\end{align*}
	\item [$\mathrm{(c)}$ ] As $z\to p\in\{E_j \}_{j=j_0+1}^{n}\cup\{\hat{E}_j \}_{j=j_0}^{n}$ from $\mathbb{C}^+$, we have 
	\begin{align*}
		e^{\delta(z)}=\mathcal{O}((z-p)^{-1/2})
	\end{align*}
and
	\begin{align*}
		\delta(z)=\frac{i}{2}\delta_{j_0}+\mathcal{O}((z-\hat{E}_{j_0})^{1/2}), \qquad 
        \textrm{ $z\to E_{j_0}$ from $\mathbb{C}\setminus (-\infty, E_{j_0})$. }
	\end{align*}
\end{itemize}
\end{proposition}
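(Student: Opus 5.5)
The plan is to mirror the proof of Proposition \ref{Pro delta1 }, treating the three items in turn, since the $\delta$ in \eqref{def delta II} has the same structure with the log-integral moved to the right half-line and $r_1r_2$ replaced by $r_3r_4$.

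\emph{Items (a) and (b).} For (a), analyticity is immediate from \eqref{def delta II}: the Cauchy-type integrals are analytic off their contours, and the prefactor $w$ is analytic off $\cup_{j}[E_j,\hat{E}_j]$. For the symmetry $\delta(z)=-\overline{\delta(\bar z)}$, I would use that $w^*=w$ and that $\log(1-r_3r_4)$ is real on $\mathbb{R}\setminus\cup_j[E_j,\hat E_j]$. The latter holds because $r_3=\overline{r_4}$ by item (d) of Proposition \ref{Pro a r}, so $1-r_3r_4=1-|r_4|^2$. Together with the $\delta_j$ being real, this gives the symmetry. For the large-$z$ expansion, expand $(s-z)^{-1}=-\sum_{k\ge0}s^kz^{-k-1}$ and multiply by $w(z)=z^{n+1}\bigl(1-\tfrac12\sum_j(E_j+\hat E_j)z^{-1}+\cdots\bigr)$. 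The moment conditions \eqref{def deltaj II} kill the coefficients of $s^k$ for $k\le n-1$, so the positive powers of $z$ vanish. The $s^n$ and $s^{n+1}$ moments then produce $\delta(\infty)$ and $\delta^{(1)}$ as in \eqref{def delta infII}--\eqref{def delta1 II}. I will need to fix the sign conventions for $\delta^{(1)}$ when carrying out this expansion.

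For (b), apply the Plemelj formula. On the log contour $w$ has no jump, so $\delta_+-\delta_-=w\cdot\frac{1}{2\pi i}\cdot 2\pi i\,\log(1-r_3r_4)/w=\log(1-r_3r_4)$. On a band $(E_j,\hat E_j)$ we have $w_+=-w_-$. The Cauchy integral of $i\delta_j/w_+$ jumps by $i\delta_j/w_+$, which gives $\delta_++\delta_-=w_+\cdot i\delta_j/w_+=i\delta_j$. The log term contributes nothing here, since it is continuous there and multiplied by $w_\pm$ of opposite signs.

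\emph{Item (c).} Near $p\in\{E_j\}_{j>j_0}\cup\{\hat E_j\}_{j\ge j_0}$, Proposition \ref{Pro asy ar} gives $|r_3(p)|=1$. This yields $\log(1-r_3r_4)=\log(z-p)\cdot(-1)+c_p+\mathcal{O}((z-p)^{1/2})$, exactly as in \eqref{asy logr}. Factoring $w(s)=w^{(j)}(s)\sqrt{\cdot}$ or $\hat w^{(j)}(s)\sqrt{\cdot}$ as in \eqref{def w}, \eqref{def hatw}, I would repeat the local computation \eqref{eq:estdelta1}--\eqref{eq:estdelta2}. The only change is that the log integral now enters with a $+$ sign in \eqref{def delta II}, so $\delta(z)=-\tfrac12\log(z-p)+\mathcal{O}(1)$, which gives $e^{\delta}=\mathcal{O}((z-p)^{-1/2})$.

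Near the endpoint $E_{j_0}$ of the band (approached from outside the log contour), the dominant term comes from the band integral $\delta_{j_0}\int_{E_{j_0}}^{\hat E_{j_0}}$. The explicit arctan evaluation used at the end of the proof of Proposition \ref{Pro delta1 } then gives the value $\tfrac{i}{2}\delta_{j_0}$. The remaining terms contribute $\mathcal{O}((z-E_{j_0})^{1/2})$ because they are multiplied by $w(z)$, which vanishes like a square root. (The statement writes $\hat E_{j_0}$ in the error term, which appears to be a typo for $E_{j_0}$.)

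\emph{Main obstacle.} The hardest part is this last local evaluation. I must check that the log integral is bounded near $E_{j_0}$ and that the branch of $w_+$ and the orientation produce $+\tfrac{i}{2}\delta_{j_0}$ rather than its negative. I would verify this directly by computing $\int_{E_{j_0}}^{\hat E_{j_0}}\frac{ds}{\sqrt{s-E_{j_0}}(s-z)}$ in closed form.
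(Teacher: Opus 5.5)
Your plan follows the paper's own route: the paper omits this proof and refers to the region-I argument. However, the key step of item (c) fails as written, because you import the sign of \eqref{asy logr}, and that sign is wrong.

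\emph{The sign of $\log(1-r_3r_4)$.} On the gaps, $\det S=1$ and $S_{22}=\overline{S_{11}}$ (Proposition \ref{Pro a r}) give $1-r_3r_4=1/(S_{11}S_{22})=|S_{11}|^{-2}$. So $|r_3(p)|=1$ means $1-r_3r_4\to 0$, and $\log(1-r_3r_4)\to-\infty$. In the generic case $|S_{11}|\asymp|s-p|^{-1/2}$ you need this exact order, not merely the upper bound of Proposition \ref{Pro asy ar}. It gives $\log(1-r_3r_4)=+\log|s-p|+c_p+\mathcal{O}(|s-p|^{1/2})$. The same identity also gives $1-r_3r_4>0$, which your symmetry argument in (a) needs for the principal logarithm to be real.

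\emph{Consequence for $\delta$ near $p$.} With the correct sign and the $+$ in \eqref{def delta II}, the local jumps of $\delta$ at $p$ are $\delta_+-\delta_-=\log|s-p|+c_p+\mathcal{O}(|s-p|^{1/2})$ on the gap side and $\delta_++\delta_-=\mathrm{const}$ on the band side. Both are reproduced by $m(z)=\pm\frac12\log(z-p)$ for $\pm\Im z>0$. One checks $\delta-m=\mathcal{O}(1)$ by dividing by $w$ and removing the singularity. Hence $\delta(z)=\frac12\log(z-p)+\mathcal{O}(1)$ as $z\to p$ from $\mathbb{C}^+$, i.e.\ $e^{\delta}=\mathcal{O}((z-p)^{1/2})$ there. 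The behaviour $|e^{\delta}|\asymp|z-p|^{-1/2}$ occurs only from $\mathbb{C}^-$. The stated bound $e^{\delta}=\mathcal{O}((z-p)^{-1/2})$ from $\mathbb{C}^+$ is therefore true, but only trivially. Your intermediate claim $\delta=-\frac12\log(z-p)+\mathcal{O}(1)$ from $\mathbb{C}^+$ is false. The region-I computation \eqref{eq:estdelta1}--\eqref{eq:estdelta2} rests on the same slip.

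\emph{The coefficient $\delta^{(1)}$.} The expansion you set up, with $w(z)=z^{n+1}\bigl(1-\frac{1}{2z}\sum_{j}(E_j+\hat E_j)+\mathcal{O}(z^{-2})\bigr)$, gives $\delta^{(1)}=-\frac12\delta(\infty)\sum_{j=0}^n(E_j+\hat E_j)-\frac{1}{2\pi i}[\,\cdots]$, with the same bracket as in \eqref{def delta1 II}. So \eqref{def delta1 II} has a factor-$2$ error in its first term; this is not a sign convention for you to fix. Also, the log contour is unbounded, so the geometric series for $(s-z)^{-1}$ does not converge on it. Replace it by a finite expansion up to $k=n+1$ with remainder, justified by the decay of $\log(1-r_3r_4)$ at $+\infty$ (Proposition \ref{Pro a r}(e)).

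\emph{The value at $E_{j_0}$.} This is what you flagged as the main obstacle, and it needs neither an arctan computation nor branch bookkeeping. The log contour does not reach $E_{j_0}$, so by (b) the function $(\delta-\frac{i}{2}\delta_{j_0})/w$ has no jump in a punctured disc around $E_{j_0}$. Since $\delta$ is bounded there, this function is $\mathcal{O}(|z-E_{j_0}|^{-1/2})$, hence analytic. This forces $\delta=\frac{i}{2}\delta_{j_0}+\mathcal{O}((z-E_{j_0})^{1/2})$, with the correct sign and the correct point in the error term.
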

We next set 
\begin{align}
	&\Omega_1:=\Omega_1(\xi)=\{z\in\mathbb{C}:\ \pi-\varphi_0\leq\arg(z-z_0)\leq\pi\},\ \Sigma_1:=\Sigma_1(\xi)=z_0+e^{i(\pi-\varphi_0)}\mathbb{R}^+,	\label{def:Omega1 II}\\
	&\Omega_2:=\Omega_2(\xi)=\{z\in\mathbb{C}:\ 0\leq\arg(z-z_{j_0})\leq\varphi_0\},\ \Sigma_2:=\Sigma_2(\xi)=z_{j_0}+e^{i\varphi_0}\mathbb{R}^+, \label{def:Omega2 II}
\end{align}
where $\varphi_0$ is sufficiently small such that 
\begin{equation}\label{eq:theta II}
    \Im \theta(z) \left\{
   \begin{array}{ll}
     <0, & \hbox{$z\in\Sigma_1\setminus\{z_0\}$,} \\
     >0, & \hbox{$z\in\Sigma_2\setminus\{z_{j_0}\}$,}
   \end{array}
 \right.
\end{equation}
see Figure \ref{fig II 1} for an illustration. 
\begin{figure}
	\tikzset{every picture/.style={line width=0.75pt}} 
	
	\begin{tikzpicture}[x=0.75pt,y=0.75pt,yscale=-0.6,xscale=0.6]
		
		\draw [color={rgb, 255:red, 74; green, 144; blue, 226 }  ,draw opacity=1 ]   (82,209.42) -- (493,215.42) ;
		\draw [line width=1.5]    (272,212.42) -- (460,214.42) ;
		\draw [shift={(460,214.42)}, rotate = 0.61] [color={rgb, 255:red, 0; green, 0; blue, 0 }  ][fill={rgb, 255:red, 0; green, 0; blue, 0 }  ][line width=1.5]      (0, 0) circle [x radius= 4.36, y radius= 4.36]   ;
		\draw [shift={(272,212.42)}, rotate = 0.61] [color={rgb, 255:red, 0; green, 0; blue, 0 }  ][fill={rgb, 255:red, 0; green, 0; blue, 0 }  ][line width=1.5]      (0, 0) circle [x radius= 4.36, y radius= 4.36]   ;
		\draw [color={rgb, 255:red, 65; green, 117; blue, 5 }  ,draw opacity=1 ]   (85,173.42) -- (223.9,211.4) ;
		\draw [shift={(223.9,211.4)}, rotate = 15.29] [color={rgb, 255:red, 65; green, 117; blue, 5 }  ,draw opacity=1 ][fill={rgb, 255:red, 65; green, 117; blue, 5 }  ,fill opacity=1 ][line width=0.75]      (0, 0) circle [x radius= 3.35, y radius= 3.35]   ;
		\draw [shift={(159.27,193.73)}, rotate = 195.29] [fill={rgb, 255:red, 65; green, 117; blue, 5 }  ,fill opacity=1 ][line width=0.08]  [draw opacity=0] (10.72,-5.15) -- (0,0) -- (10.72,5.15) -- (7.12,0) -- cycle    ;
		\draw [color={rgb, 255:red, 65; green, 117; blue, 5 }  ,draw opacity=1 ]   (320,213.5) -- (463,263.42) ;
		\draw [shift={(396.22,240.11)}, rotate = 199.24] [fill={rgb, 255:red, 65; green, 117; blue, 5 }  ,fill opacity=1 ][line width=0.08]  [draw opacity=0] (10.72,-5.15) -- (0,0) -- (10.72,5.15) -- (7.12,0) -- cycle    ;
		\draw [color={rgb, 255:red, 65; green, 117; blue, 5 }  ,draw opacity=1 ]   (320,213.5) -- (468,168.42) ;
		\draw [shift={(398.78,189.5)}, rotate = 163.06] [fill={rgb, 255:red, 65; green, 117; blue, 5 }  ,fill opacity=1 ][line width=0.08]  [draw opacity=0] (10.72,-5.15) -- (0,0) -- (10.72,5.15) -- (7.12,0) -- cycle    ;
		\draw [shift={(320,213.5)}, rotate = 343.06] [color={rgb, 255:red, 65; green, 117; blue, 5 }  ,draw opacity=1 ][fill={rgb, 255:red, 65; green, 117; blue, 5 }  ,fill opacity=1 ][line width=0.75]      (0, 0) circle [x radius= 3.35, y radius= 3.35]   ;
		\draw [color={rgb, 255:red, 65; green, 117; blue, 5 }  ,draw opacity=1 ]   (82,250.42) -- (223.9,211.4) ;
		\draw [shift={(157.77,229.58)}, rotate = 164.63] [fill={rgb, 255:red, 65; green, 117; blue, 5 }  ,fill opacity=1 ][line width=0.08]  [draw opacity=0] (10.72,-5.15) -- (0,0) -- (10.72,5.15) -- (7.12,0) -- cycle    ;
		
		\draw (264,227.2) node [anchor=north west][inner sep=0.75pt]  [font=\footnotesize,xscale=0.8,yscale=0.8] [align=left] {$\displaystyle E_{j_{0}}$};
		\draw (453.4,223.39) node [anchor=north west][inner sep=0.75pt]  [font=\footnotesize,xscale=0.8,yscale=0.8] [align=left] {$\displaystyle \hat{E}_{j_{0}}$};
		\draw (228,188.9) node [anchor=north west][inner sep=0.75pt]  [xscale=0.8,yscale=0.8]  {$z_{0}$};
		\draw (363,171.9) node [anchor=north west][inner sep=0.75pt]  [font=\small,xscale=0.8,yscale=0.8]  {$\Sigma _{2}$};
		\draw (374,244.4) node [anchor=north west][inner sep=0.75pt]  [font=\small,xscale=0.8,yscale=0.8]  {$\Sigma _{2}^{*}$};
		\draw (149.5,166.4) node [anchor=north west][inner sep=0.75pt]  [font=\small,xscale=0.8,yscale=0.8]  {$\Sigma _{1}$};
		\draw (163.5,235.9) node [anchor=north west][inner sep=0.75pt]  [font=\small,xscale=0.8,yscale=0.8]  {$\Sigma _{1}^{*}$};
		\draw (410.73,193.18) node [anchor=north west][inner sep=0.75pt]  [font=\small,xscale=0.8,yscale=0.8]  {$\Omega _{2}$};
		\draw (409.5,219.4) node [anchor=north west][inner sep=0.75pt]  [font=\small,xscale=0.8,yscale=0.8]  {$\Omega _{2}^{*}$};
		\draw (81.5,189.4) node [anchor=north west][inner sep=0.75pt]  [font=\small,xscale=0.8,yscale=0.8]  {$\Omega _{1}$};
		\draw (79,213.82) node [anchor=north west][inner sep=0.75pt]  [font=\small,xscale=0.8,yscale=0.8]  {$\Omega _{1}^{*}$};
		\draw (311,222.4) node [anchor=north west][inner sep=0.75pt]  [font=\small,xscale=0.8,yscale=0.8]  {$z_{j_0}$};	
	\end{tikzpicture}
	\caption{\footnotesize   The contours $\Sigma_j$ and the domains $\Omega_j$, $j=1,2$,  in \eqref{def:Omega1 II} and \eqref{def:Omega2 II}.}
\label{fig II 1}
\end{figure}
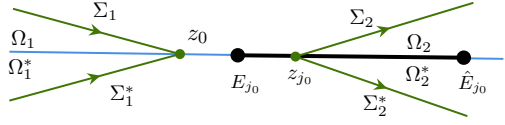
The first transformation is defined by
\begin{align}\label{trans 1 II}
 	M^{(1)}(z):=M^{(1)}(z;\xi,t)=e^{\delta(\infty)\sigma_3}N(z)G(z)e^{-\delta(z)\sigma_3},
 \end{align}
where $\delta$ is given in \eqref{def delta II} and 
\begin{equation}\label{def:G II}
 G(z): = G(z;\xi) =\left\{
   \begin{array}{ll}
     \begin{pmatrix}
 			1 & -r_4(z)e^{2it\theta(z)}\\
		0& 1
 	\end{pmatrix}, & \hbox{$z\in\Omega_1$,} \\[8pt]
     \begin{pmatrix}
 		1 & 0 \\
		-r_3(z)e^{-2it\theta(z)}  & 1
 	\end{pmatrix}, & \hbox{$z\in\Omega_1^*$,}
    \\[10pt]
   \begin{pmatrix}
 	1 &	0\\
\frac{r_3(z)e^{-2it\theta(z)}}{1-r_3(z)r_4(z)} & 1
 	\end{pmatrix}, & \hbox{$z\in\Omega_2$,} \\[12pt]
     \begin{pmatrix}
    1 &	\frac{ r_4(z)e^{2it\theta(z)}}{1-r_3(z)r_4(z)}
        \\
	0 & 1
 	\end{pmatrix}, & \hbox{$z\in\Omega_2^*$,}
    \\
     I, & \hbox{elsewhere,}
   \end{array}
 \right.
\end{equation}
In view of the factorizations of the jump matrix $\tilde{J}$ given in \eqref{open jump tJ1}--\eqref{open jump tJ4}, we see from RH problem \ref{RHP2} for $N$ that 
$M^{(1)}(z)$ is analytic in $\mathbb{C}\setminus\Sigma^{(1)}$, where  \begin{equation}\label{def:Sigma1caseIII}
		    \Sigma^{(1)}:=\Sigma^{(1)}(\xi)= \cup_{j=0}^n[E_j,\hat{E}_j]\cup\Sigma_1\cup\Sigma_1^*\cup\Sigma_2\cup\Sigma_2^*\cup [z_0,E_{j_0}],
		\end{equation}
and satisfies the jump condition $M^{(1)}_{+}(z)=M^{(1)}_{-}(z)J^{(1)}(z)$ for $z\in\Sigma^{(1)}$, where
		\begin{align}\label{def:J1 II}
		J^{(1)}(z)=	\left\{ \begin{array}{ll}
				e^{i(f_0x+g_0t-(B_j^fx+B_j^gt+\phi_j-\delta_j)/2)\hat{\sigma}_3} \begin{pmatrix} 
				0 & -i\\
				-i& 0\\
			\end{pmatrix} , & z\in (E_j,\hat{E}_j),\ j=0,\ldots,n,\\[12pt]
					\begin{pmatrix} 
					1 &  r_4(z)e^{2it\theta(z)+2\delta(z)}\\
					0 & 1	
				\end{pmatrix} ,&  z\in\Sigma_1,\\[12pt]
			 \begin{pmatrix}
					1 & 0\\
					-r_3(z)e^{-2it\theta(z)-2\delta(z)} & 1
				\end{pmatrix} , &z\in\Sigma_1^*,\\[12pt]
				 \begin{pmatrix} 
			1 &	0\\
			\frac{-r_3(z)e^{-2it\theta(z)-2\delta(z)}}{1-r_3(z)r_4(z)} & 1
			\end{pmatrix} ,&z\in\Sigma_2,\\[12pt]
			 \begin{pmatrix} 
				1 &	\frac{ r_4(z)e^{2it\theta(z)+2\delta(z)}}{1-r_3(z)r_4(z)}\\
			0 & 1
			\end{pmatrix} ,&z\in\Sigma_2^*,\\[12pt]
		 \begin{pmatrix} 
			1 & r_4(z)e^{2it\theta(z)+2\delta(z)}\\
			-r_3(z)e^{-2it\theta(z)-2\delta(z)} & 1-r_3(z)r_4(z)
		\end{pmatrix} , & z\in (z_0,E_{j_0}).
				\end{array}\right.
		\end{align}

\vspace{2mm} 
\paragraph{\textbf{Global and local parametrices}}
To approximate $M^{(1)}$ for large $t$ in global and local manners, we need to, as in  \eqref{construct M1}, construct global and local parametrices $M^{(glo)}$ and $M^{(loc)}$ such that
\begin{align}\label{def:E II}
	 M^{(1)}(z)= \left\{ \begin{array}{ll}
		E(z)M^{(glo)}(z), & z\in \mathbb{C}\setminus U ,\\
		E(z)M^{(loc)}(z),&  z\in U,
	\end{array}\right.
\end{align}
where
\begin{align}
	U:=\{z:\ |z-E_{j_0}|\leq c_0\}\label{def Uxi II}
\end{align}
with
\begin{align}
	c_0:=\min\left\{\frac{1}{2}(z_{j_0}-E_{j_0}),\ \frac{1}{2}(E_{j_0+1}-\hat{E}_{j_0}),\ 2(z_0-\hat{E}_{j_0})t^\epsilon\right\},\qquad\epsilon\in(1/3,2/3)\label{def c0 II}
\end{align}
is a shrinking disk around $E_{j_0}$.

By \eqref{eq:theta II} and \eqref{def:J1 II}, we have that the global parametrix  $M^{(glo)}$  is also given by \eqref{def Mmod}--\eqref{eq:Nglob22} but with the parameters $\{\delta_j\}_{j=1}^n$ therein replaced by those determined through \eqref{def deltaj II}.

The construction of local parametrix $M^{(loc)}$ is similar to that given in Section \ref{subsec loc I}, which involves a special case of the Painlev\'e XXXIV parametrix $M^{(P_{34})}$. To proceed, we observe that
\begin{align}\label{eq:thetaexp2}
	\theta(z)=\theta^{(0,j_0)}+(\xi-\xi_{j_0})\theta^{(1,j_0)}(E_{j_0}-z)^{1/2}+\frac{2}{3}\theta^{(3,j_0)}(E_{j_0}-z)^{3/2}+\mathcal{O}((E_{j_0}-z)^{5/2}),
\end{align}
as $z\to E_{j_0}$ from $\mathbb{C}\setminus(E_{j_0},+\infty)$, 
where
\begin{align}
&\theta^{(0,j_0)}=\theta(E_{j_0})=f_0\xi+g_0-\frac{1}{2}(B_{j_0}^f\xi+B_{j_0}^g), \qquad\theta^{(1,j_0)}=\frac{-2\prod_{j=0}^n(E_{j_0}-z_j^f)}{w^{(j_0)}(E_{j_0})},\label{theta j01}
    \\
	&\theta^{(3,j_0)}=\theta^{(3,j_0)}(\xi)=\frac{-1}{w^{(j_0)}(E_{j_0})^2}\left[ \partial_z F(E_{j_0};\xi)w^{(j_0)}(E_{j_0})-F(E_{j_0};\xi)(w^{(j_0)})'(E_{j_0})\right].\label{theta j03}
\end{align}
Here, $f_0$, $g_0$ and $B_{j_0}^f$, $B_{j_0}^g$ are given in \eqref{def f0g0} and \eqref{def:Bjfg}, the functions $F$  and $w^{(j_0)}$ are defined in \eqref{def F} and \eqref{def w}, respectively. Note that $\theta^{(3,j_0)}(\xi_{j_0})>0$, which implies $\theta^{(3,j_0)}(\xi)> 0$ in a neighborhood of $\xi_{j_0}$.  
Let
\begin{align}
	\zeta(z):=\big(\frac{3it}{2}(\theta(E_{j_0})+(\xi-\xi_{j_0})\theta^{(1,j_0)}(E_{j_0}-z)^{1/2}-
 	\theta(z))\big)^{2/3}.
\end{align}
Since \eqref{eq:thetaexp2},  $\zeta(z)$ is an analytic function in $U$ with 
\begin{align}
\zeta(E_{j_0})=0,\qquad\zeta'(E_{j_0})=|\theta^{(3,j_0)}(\xi_{j_0})|^{2/3}t^{2/3}>0.
\end{align}

For $z\in U$, the local parametrix $M^{(loc)}$ in transition region II is then given by
\begin{align*}
	M^{(loc)}(z)=\tilde{H}_1(z)t^{\sigma_3/6}\begin{pmatrix}
		1 & 0\\
		ia(s) & 1
	\end{pmatrix}M^{(P_{34})}(\zeta(z);s,-1/4,0)e^{\hat{\theta}(\zeta)\sigma_3}G_1(\zeta(z))G_2(\zeta(z)),
 \end{align*}
where
\begin{align}
	\tilde{H}_1(z)=M^{(glo)}(z)G_1(\zeta(z))^{-1}\frac{1}{\sqrt{2}}(I-i\sigma_1)((z-E_{j_0})|\theta^{(3,j_0)}(\xi)|^{2/3})^{\sigma_3/4}, \label{def TH1 II}
\end{align}
$a(s)$ is defined in \eqref{def:a} with
\begin{align}
    s:=-\frac{\theta^{(1,j_0)}}{(\theta^{(3,j_0)}(\xi_{j_0}))^{1/3}}(\xi-\xi_{j_0})t^{2/3},
\end{align}
\begin{align*}
\small	G_1(\zeta)=	\left\{ \begin{array}{ll}
		 e^{(-\pi i/4-i(f_0x+g_0t)+i(B_{j_0}^fx+B_{j_0}^gt+\phi_{j_0}-\delta_{j_0})/2)\sigma_3},&  \zeta\in\mathbb{C}^+,\\
		\begin{pmatrix} 
			0 & -e^{\pi i/4+i(f_0x+g_0t)-i(B_{j_0}^fx+B_{j_0}^gt+\phi_{j_0}-\delta_{j_0})/2} \\
			e^{-\pi i/4-i(f_0x+g_0t)+i(B_{j_0}^fx+B_{j_0}^gt+\phi_{j_0}-\delta_{j_0})/2} & 0
		\end{pmatrix} , &\zeta\in\mathbb{C}^-,
	\end{array}\right.
\end{align*}
and
\begin{align}\label{def:G2 II}
G_2(\zeta)=e^{it\theta(\hat{E}_{j_0})\hat{\sigma}_3}	\left\{ \begin{array}{ll}
		 \begin{pmatrix} 
			1 & -r_4(E_{j_0})e^{2\hat{\theta}(\zeta)+i\delta_{j_0}} \\
			0 & 1
		\end{pmatrix} ,&  \zeta\in\Omega_1^{(loc)},\\
		 \begin{pmatrix} 
			1 & 0\\
			-r_3(E_{j_0})e^{2\hat{\theta}(\zeta)-i\delta_{j_0}} & 1	
		\end{pmatrix} , &\zeta\in\Omega_1^{(loc)*},
	\end{array}\right.
\end{align}
In \eqref{def:G2 II}, the region $\Omega_1^{(loc)}$ is the same as in the case of transition region I, which is depicted in the right picture of Figure \ref{Fig move loc}.
\vspace{2mm}

\paragraph{\textbf{The small norm RH problem}}
Once we have completed the constructions of global and local parametrices, one can show that, by adopting the arguments in  Section \ref{subsec E}, the error function $E$ in \eqref{def:E II} exists uniquely for large $t$. Moreover, we have 
\begin{align}
	E(z)=I+\frac{E_1}{z}+\mathcal{O}(z^{-2}), \qquad z\to \infty,
\end{align}
where
\begin{align*}
E_1&=\frac{t^{-1/3}}{|\theta^{(3,j_0)}(\xi_{j_0})|^{2/3}}\tilde{H}_1(E_{j_0})
\begin{pmatrix} 
			0 & ia(s)\\
			0& 0
		\end{pmatrix}  \tilde{H}_1(E_{j_0})^{-1}+\mathcal{O}(t^{-\epsilon}), \quad t\to +\infty. 
\end{align*}

\section{Asymptotic analysis in Zakharov-Manakov region III and  fast decaying region IV}\label{Sec 5}

In this section, we perform asymptotic analysis of RH problem \ref{RHP1} for $M$ when $\xi$ belongs to Zakharov-Manakov region (i.e, $\xi \in(-\infty,\hat{\xi}_n)\cup_{j=1}^n(\xi_{j},\hat{\xi}_{j-1})\cup(\xi_0,+\infty)$) and fast decaying region (i.e., $\xi\in\cup_{j=0}^n(\hat{\xi}_j,\xi_{j})$). As discussed in Section \ref{subsec2.3}, the saddle point $z_0\in \mathbb{R}\setminus (\cup_{j=0}^n[E_j,\hat{E}_j])$ for Zakharov-Manakov region,
while  $z_0\in \cup_{j=0}^n(E_j,\hat{E}_j)$  for fast decaying region. Since the analysis is similar to those given in \cite{PX2,PX3,DM,MK,Alice}, we again only give a sketch of the analysis.


\vspace{2mm}
\paragraph{\textbf{First transformation}}
To define the first transformation, we introduce, similar to \eqref{def delta 1}, the auxiliary function
\begin{align}\label{def delta 1 III}
	\delta(z):=\frac{w(z)}{2\pi i}\left[\sum_{j=1}^n\delta_j\int_{E_j}^{\hat{E}_j}\frac{i\dif s}{w_+(s)(s-z)} -\int_{(-\infty,z_0)\setminus(\cup_{j=0}^{n}(E_j,\hat{E}_j))}\frac{\log (1-r_1(s)r_2(s))\dif s}{w(s)(s-z)}\right] ,
\end{align}
where the logarithm takes principal branch, the functions $w$, $r_1$ and $r_2$ are defined in \eqref{def:w} and \eqref{def r}, respectively. In \eqref{def delta 1 III}, the constants $\delta_j$, $j=1,\ldots,n$, are determined through the linear systems 
\begin{align}\label{def deltaj III}
	\int_{(-\infty,z_0)\setminus(\cup_{j=0}^{n}(E_j,\hat{E}_j))}\frac{\log (1-r_1(s)r_2(s))s^k\dif s}{w(s)}+\sum_{j=1}^n\delta_j\int_{E_j}^{\hat{E}_j}\frac{is^k\dif s}{w_+(s)} =0, \quad k=0,\ldots,n-1.
\end{align}
It is then readily seen that, as $z\to\infty$, $$\delta(z)=\delta(\infty)+\dfrac{\delta^{(1)}}{z}+\mathcal{O}(z^{-2}),$$ where
		\begin{align}\label{def delta inf III}
			\delta(\infty)&=\frac{1}{2\pi i}\left[\int_{(-\infty,z_0)\setminus(\cup_{j=0}^{n}(E_j,\hat{E}_j))}\frac{\log (1-r_1(s)r_2(s))s^n\dif s}{w(s)}-\sum_{j=1}^n\delta_j\int_{E_j}^{\hat{E}_j}\frac{is^n\dif s}{w_+(s)} \right]
            \end{align}
            and
            \begin{align}
			\delta^{(1)}&= -\delta(\infty)\sum_{j=0}^{n}(E_j+\hat{E}_j) \nonumber\\
			&\quad +\frac{1}{2\pi i}\left[\int_{(-\infty,z_0)\setminus(\cup_{n}^{j_0}(E_j,\hat{E}_j))}\frac{\log (1-r_1(s)r_2(s))s^{n+1}\dif s}{w(s)}-\sum_{j=1}^n\delta_j\int_{E_j}^{\hat{E}_j}\frac{is^{n+1}\dif s}{w_+(s)} \right].\label{def delta1 III}
		\end{align}
        
On account of the factorizations of the jump matrix $J$ given in \eqref{open jump J1}--\eqref{open jump J4}, we also need the matrix-valued function $G(z):=G(z;\xi)$ defined in \eqref{def:G} but with the regions $\Omega_1$ and $\Omega_2$ therein replaced by
\begin{align}
	&\Omega_1:=\Omega_1(\xi)=\{z\in\mathbb{C}:\ 0\leq\arg(z-z_0)\leq\varphi_0\},\ \Sigma_1:=\Sigma_1(\xi)=z_0+e^{i\varphi_0}\mathbb{R}^+,	\\
	&\Omega_2:=\Omega_2(\xi)=\{z\in\mathbb{C}:\ \pi-\varphi_0\leq\arg(z-z_0)\leq\pi\},\ \Sigma_2:=\Sigma_2(\xi)=z_0+e^{i(\pi-\varphi_0)}\mathbb{R}^+,
\end{align}
where $\varphi_0$ is chosen such that 
\begin{equation}\label{eq:thetainequality}
    \Im \theta(z) \left\{
   \begin{array}{ll}
     <0, & \hbox{$z\in\Sigma_1\setminus\{z_0\}$,} \\
     >0, & \hbox{$z\in\Sigma_2\setminus\{z_{0}\}$,}
   \end{array}
 \right.
\end{equation}
see Figure \ref{fig III 1} for an illustration. 
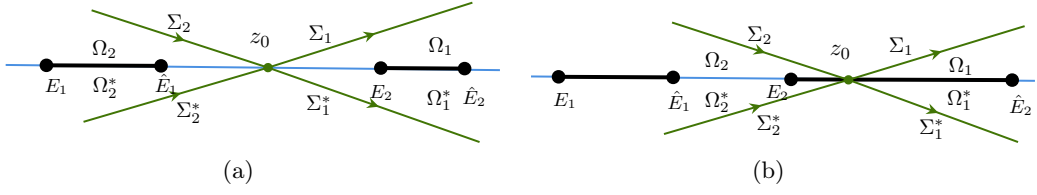
\begin{figure}
	\centering
	\subfigure[]{
		
		\tikzset{every picture/.style={line width=0.75pt}} 
		
		\begin{tikzpicture}[x=0.75pt,y=0.75pt,yscale=-0.55,xscale=0.55]
			
			\draw [color={rgb, 255:red, 74; green, 144; blue, 226 }  ,draw opacity=1 ]   (81.89,211.06) -- (530.47,215.8) ;
			\draw [line width=1.5]    (118.9,211.4) -- (222.9,211.9) ;
			\draw [shift={(222.9,211.9)}, rotate = 0.28] [color={rgb, 255:red, 0; green, 0; blue, 0 }  ][fill={rgb, 255:red, 0; green, 0; blue, 0 }  ][line width=1.5]      (0, 0) circle [x radius= 4.36, y radius= 4.36]   ;
			\draw [shift={(118.9,211.4)}, rotate = 0.28] [color={rgb, 255:red, 0; green, 0; blue, 0 }  ][fill={rgb, 255:red, 0; green, 0; blue, 0 }  ][line width=1.5]      (0, 0) circle [x radius= 4.36, y radius= 4.36]   ;
			\draw [line width=1.5]    (422.4,213.9) -- (498.4,214.39) ;
			\draw [shift={(498.4,214.39)}, rotate = 0.37] [color={rgb, 255:red, 0; green, 0; blue, 0 }  ][fill={rgb, 255:red, 0; green, 0; blue, 0 }  ][line width=1.5]      (0, 0) circle [x radius= 4.36, y radius= 4.36]   ;
			\draw [shift={(422.4,213.9)}, rotate = 0.37] [color={rgb, 255:red, 0; green, 0; blue, 0 }  ][fill={rgb, 255:red, 0; green, 0; blue, 0 }  ][line width=1.5]      (0, 0) circle [x radius= 4.36, y radius= 4.36]   ;
			\draw [color={rgb, 255:red, 65; green, 117; blue, 5 }  ,draw opacity=1 ]   (159.89,161.06) -- (320,213.5) ;
			\draw [shift={(320,213.5)}, rotate = 18.14] [color={rgb, 255:red, 65; green, 117; blue, 5 }  ,draw opacity=1 ][fill={rgb, 255:red, 65; green, 117; blue, 5 }  ,fill opacity=1 ][line width=0.75]      (0, 0) circle [x radius= 3.35, y radius= 3.35]   ;
			\draw [shift={(244.7,188.84)}, rotate = 198.14] [fill={rgb, 255:red, 65; green, 117; blue, 5 }  ,fill opacity=1 ][line width=0.08]  [draw opacity=0] (10.72,-5.15) -- (0,0) -- (10.72,5.15) -- (7.12,0) -- cycle    ;
			\draw [color={rgb, 255:red, 65; green, 117; blue, 5 }  ,draw opacity=1 ]   (320,213.5) -- (511.97,281.57) ;
			\draw [shift={(420.7,249.2)}, rotate = 199.52] [fill={rgb, 255:red, 65; green, 117; blue, 5 }  ,fill opacity=1 ][line width=0.08]  [draw opacity=0] (10.72,-5.15) -- (0,0) -- (10.72,5.15) -- (7.12,0) -- cycle    ;
			\draw [color={rgb, 255:red, 65; green, 117; blue, 5 }  ,draw opacity=1 ]   (320,213.5) -- (505.47,154.07) ;
			\draw [shift={(417.49,182.26)}, rotate = 162.23] [fill={rgb, 255:red, 65; green, 117; blue, 5 }  ,fill opacity=1 ][line width=0.08]  [draw opacity=0] (10.72,-5.15) -- (0,0) -- (10.72,5.15) -- (7.12,0) -- cycle    ;
			\draw [color={rgb, 255:red, 65; green, 117; blue, 5 }  ,draw opacity=1 ]   (152.56,263.06) -- (320,213.5) ;
			\draw [shift={(241.07,236.86)}, rotate = 163.51] [fill={rgb, 255:red, 65; green, 117; blue, 5 }  ,fill opacity=1 ][line width=0.08]  [draw opacity=0] (10.72,-5.15) -- (0,0) -- (10.72,5.15) -- (7.12,0) -- cycle    ;
			
			\draw (115.17,221.83) node [anchor=north west][inner sep=0.75pt]  [font=\footnotesize,xscale=0.8,yscale=0.8] [align=left] {$\displaystyle E_{1}$};
			\draw (215,217.17) node [anchor=north west][inner sep=0.75pt]  [font=\footnotesize,xscale=0.8,yscale=0.8] [align=left] {$\displaystyle \hat{E}_{1}$};
			\draw (410,227.2) node [anchor=north west][inner sep=0.75pt]  [font=\footnotesize,xscale=0.8,yscale=0.8] [align=left] {$\displaystyle E_{2}$};
			\draw (494.4,228.39) node [anchor=north west][inner sep=0.75pt]  [font=\footnotesize,xscale=0.8,yscale=0.8] [align=left] {$\displaystyle \hat{E}_{2}$};
			\draw (301,179.9) node [anchor=north west][inner sep=0.75pt]  [xscale=0.8,yscale=0.8]  {$z_{0}$};
			\draw (226,162.57) node [anchor=north west][inner sep=0.75pt]  [font=\small,xscale=0.8,yscale=0.8]  {$\Sigma _{2}$};
			\draw (234.33,243.4) node [anchor=north west][inner sep=0.75pt]  [font=\small,xscale=0.8,yscale=0.8]  {$\Sigma _{2}^{*}$};
			\draw (354.5,172.4) node [anchor=north west][inner sep=0.75pt]  [font=\small,xscale=0.8,yscale=0.8]  {$\Sigma _{1}$};
			\draw (352.5,236.9) node [anchor=north west][inner sep=0.75pt]  [font=\small,xscale=0.8,yscale=0.8]  {$\Sigma _{1}^{*}$};
			\draw (463.5,183.4) node [anchor=north west][inner sep=0.75pt]  [font=\small,xscale=0.8,yscale=0.8]  {$\Omega _{1}$};
			\draw (462,231.9) node [anchor=north west][inner sep=0.75pt]  [font=\small,xscale=0.8,yscale=0.8]  {$\Omega _{1}^{*}$};
			\draw (158.17,186.73) node [anchor=north west][inner sep=0.75pt]  [font=\small,xscale=0.8,yscale=0.8]  {$\Omega _{2}$};
			\draw (158.67,220.57) node [anchor=north west][inner sep=0.75pt]  [font=\small,xscale=0.8,yscale=0.8]  {$\Omega _{2}^{*}$};
					
	\end{tikzpicture}}
\subfigure[]{
	\tikzset{every picture/.style={line width=0.75pt}} 
	
	\begin{tikzpicture}[x=0.75pt,y=0.75pt,yscale=-0.55,xscale=0.55]
		
		\draw [color={rgb, 255:red, 74; green, 144; blue, 226 }  ,draw opacity=1 ]   (31.89,210.39) -- (489.23,214.39) ;
		\draw [line width=1.5]    (56.9,210.73) -- (160.9,211.23) ;
		\draw [shift={(160.9,211.23)}, rotate = 0.28] [color={rgb, 255:red, 0; green, 0; blue, 0 }  ][fill={rgb, 255:red, 0; green, 0; blue, 0 }  ][line width=1.5]      (0, 0) circle [x radius= 4.36, y radius= 4.36]   ;
		\draw [shift={(56.9,210.73)}, rotate = 0.28] [color={rgb, 255:red, 0; green, 0; blue, 0 }  ][fill={rgb, 255:red, 0; green, 0; blue, 0 }  ][line width=1.5]      (0, 0) circle [x radius= 4.36, y radius= 4.36]   ;
		\draw [line width=1.5]    (267.89,213.06) -- (468.56,213.73) ;
		\draw [shift={(468.56,213.73)}, rotate = 0.19] [color={rgb, 255:red, 0; green, 0; blue, 0 }  ][fill={rgb, 255:red, 0; green, 0; blue, 0 }  ][line width=1.5]      (0, 0) circle [x radius= 4.36, y radius= 4.36]   ;
		\draw [shift={(267.89,213.06)}, rotate = 0.19] [color={rgb, 255:red, 0; green, 0; blue, 0 }  ][fill={rgb, 255:red, 0; green, 0; blue, 0 }  ][line width=1.5]      (0, 0) circle [x radius= 4.36, y radius= 4.36]   ;
		\draw [color={rgb, 255:red, 65; green, 117; blue, 5 }  ,draw opacity=1 ]   (159.89,161.06) -- (320,213.5) ;
		\draw [shift={(320,213.5)}, rotate = 18.14] [color={rgb, 255:red, 65; green, 117; blue, 5 }  ,draw opacity=1 ][fill={rgb, 255:red, 65; green, 117; blue, 5 }  ,fill opacity=1 ][line width=0.75]      (0, 0) circle [x radius= 3.35, y radius= 3.35]   ;
		\draw [shift={(244.7,188.84)}, rotate = 198.14] [fill={rgb, 255:red, 65; green, 117; blue, 5 }  ,fill opacity=1 ][line width=0.08]  [draw opacity=0] (10.72,-5.15) -- (0,0) -- (10.72,5.15) -- (7.12,0) -- cycle    ;
		\draw [color={rgb, 255:red, 65; green, 117; blue, 5 }  ,draw opacity=1 ]   (320,213.5) -- (473.89,270.39) ;
		\draw [shift={(401.64,243.68)}, rotate = 200.29] [fill={rgb, 255:red, 65; green, 117; blue, 5 }  ,fill opacity=1 ][line width=0.08]  [draw opacity=0] (10.72,-5.15) -- (0,0) -- (10.72,5.15) -- (7.12,0) -- cycle    ;
		\draw [color={rgb, 255:red, 65; green, 117; blue, 5 }  ,draw opacity=1 ]   (320,213.5) -- (479.89,164.39) ;
		\draw [shift={(404.73,187.48)}, rotate = 162.93] [fill={rgb, 255:red, 65; green, 117; blue, 5 }  ,fill opacity=1 ][line width=0.08]  [draw opacity=0] (10.72,-5.15) -- (0,0) -- (10.72,5.15) -- (7.12,0) -- cycle    ;
		\draw [color={rgb, 255:red, 65; green, 117; blue, 5 }  ,draw opacity=1 ]   (152.56,263.06) -- (320,213.5) ;
		\draw [shift={(241.07,236.86)}, rotate = 163.51] [fill={rgb, 255:red, 65; green, 117; blue, 5 }  ,fill opacity=1 ][line width=0.08]  [draw opacity=0] (10.72,-5.15) -- (0,0) -- (10.72,5.15) -- (7.12,0) -- cycle    ;
		
		\draw (49.83,223.17) node [anchor=north west][inner sep=0.75pt]  [font=\footnotesize,xscale=0.8,yscale=0.8] [align=left] {$\displaystyle E_{1}$};
		\draw (153.67,221.17) node [anchor=north west][inner sep=0.75pt]  [font=\footnotesize,xscale=0.8,yscale=0.8] [align=left] {$\displaystyle \hat{E}_{1}$};
		\draw (242.67,215.87) node [anchor=north west][inner sep=0.75pt]  [font=\footnotesize,xscale=0.8,yscale=0.8] [align=left] {$\displaystyle E_{2}$};
		\draw (463.73,225.06) node [anchor=north west][inner sep=0.75pt]  [font=\footnotesize,xscale=0.8,yscale=0.8] [align=left] {$\displaystyle \hat{E}_{2}$};
		\draw (301,179.9) node [anchor=north west][inner sep=0.75pt]  [xscale=0.8,yscale=0.8]  {$z_{0}$};
		\draw (226,162.57) node [anchor=north west][inner sep=0.75pt]  [font=\small,xscale=0.8,yscale=0.8]  {$\Sigma _{2}$};
		\draw (234.33,243.4) node [anchor=north west][inner sep=0.75pt]  [font=\small,xscale=0.8,yscale=0.8]  {$\Sigma _{2}^{*}$};
		\draw (354.5,172.4) node [anchor=north west][inner sep=0.75pt]  [font=\small,xscale=0.8,yscale=0.8]  {$\Sigma _{1}$};
		\draw (381.83,247.57) node [anchor=north west][inner sep=0.75pt]  [font=\small,xscale=0.8,yscale=0.8]  {$\Sigma _{1}^{*}$};
		\draw (408.83,190.73) node [anchor=north west][inner sep=0.75pt]  [font=\small,xscale=0.8,yscale=0.8]  {$\Omega _{1}$};
		\draw (407.33,220.57) node [anchor=north west][inner sep=0.75pt]  [font=\small,xscale=0.8,yscale=0.8]  {$\Omega _{1}^{*}$};
		\draw (187.5,186.07) node [anchor=north west][inner sep=0.75pt]  [font=\small,xscale=0.8,yscale=0.8]  {$\Omega _{2}$};
		\draw (187.33,222.57) node [anchor=north west][inner sep=0.75pt]  [font=\small,xscale=0.8,yscale=0.8]  {$\Omega _{2}^{*}$};	
\end{tikzpicture}}
	\caption{\footnotesize The contours $\Sigma_i$ and the regions $\Omega_i$, $i=1,2$, in Zakharov-Manakov region if $\xi\in(\xi_2,\hat{\xi}_1)$ (Figure (a)) and in fast decaying region if $\xi\in(\hat{\xi}_2,\xi_2)$ (Figure (b)).}
	\label{fig III 1}
\end{figure}

The first transformation is the same as that given in \eqref{trans 1 I}
with different definitions of $\delta$, $\delta_{\infty}$ and $G$. A direct calculation shows that $M^{(1)}(z)$ is analytic in $\mathbb{C}\setminus\Sigma^{(1)}$, where  \begin{equation}\label{def:Sigma1caseIII}
		    \Sigma^{(1)}:=\Sigma^{(1)}(\xi)= \cup_{j=0}^n[E_j,\hat{E}_j]\cup\Sigma_1\cup\Sigma_1^*\cup\Sigma_2\cup\Sigma_2^*,
		\end{equation}
and satisfies the jump condition $M^{(1)}_{+}(z)=M^{(1)}_{-}(z)J^{(1)}(z)$ for $z\in \Sigma^{(1)}$, where
\begin{align}\label{def:J1caseIII}
    J^{(1)}(z)=\left\{ \begin{array}{ll}
				e^{i(f_0x+g_0t-(B_j^fx+B_j^gt+\phi_j-\delta_j)/2)\hat{\sigma}_3} \begin{pmatrix} 
				0 & -i\\
				-i& 0
			\end{pmatrix} , & z\in (E_j,\hat{E}_j),\ j=0,\ldots,n,\\
					 \begin{pmatrix} 
					1 & 0\\
					-r_2(z)e^{-2it\theta(z)-2\delta(z)} & 1	
				\end{pmatrix} ,&  z\in\Sigma_1,\\[12pt]
			 \begin{pmatrix} 
					1 & r_1(z)e^{2it\theta(z)+2\delta(z)} \\
					0 & 1
				\end{pmatrix} , &z\in\Sigma_1^*,\\[12pt]
				 \begin{pmatrix} 
			1 &	\frac{ r_1(z)e^{2it\theta(z)+2\delta(z)}}{1-r_1(z)r_2(z)}\\
			0 & 1
			\end{pmatrix} ,&z\in\Sigma_2,\\[12pt]
			 \begin{pmatrix} 
				1 &	0\\
			\frac{-r_2(z)e^{-2it\theta(z)-2\delta(z)}}{1-r_1(z)r_2(z)} & 1
			\end{pmatrix} ,&z\in\Sigma_2^*.
				\end{array}\right.
\end{align}

\vspace{2mm}
\paragraph{\textbf{Global and local parametrices }}

\begin{align}\label{def:EcaseIII}
	M^{(1)}(z)=\left\{ \begin{array}{ll}
		E(z)M^{(glo)}(z), & z\in \mathbb{C}\setminus U ,\\
		E(z)M^{(glo)}(z)M^{(loc)}(z),&  z\in U,
	\end{array}\right.
\end{align}
where 
\begin{align}
	U:=\left\{ \begin{array}{ll}
		\{z:\ |z-z_0|\leq c_0\},& \textrm{$\xi$ belongs to Zakharov-Manakov region}, 
        \\
	\emptyset, & \textrm{$\xi$ belongs to fast decaying region},
	\end{array}\right.\label{def Uxi III}
\end{align}
with 
\begin{align}\label{def c0 III}
	c_0:=\min_{j=0,\ldots,n}\left\{\frac{|\hat{E}_{j_0}-z_0|}{2},\ \frac{|E_j-z_0|}{2}\right\},
\end{align}

By \eqref{eq:thetainequality} and \eqref{def:J1caseIII}, we have that $M^{(glo)}$ also satisfies RH problem \ref{RHPM mod}, whose  solution is explicitly given through \eqref{def Mmod}--\eqref{eq:Nglob22}. 


For the local parametrix, we only need to consider the case 
when $\xi$ belongs to Zakharov-Manakov region, which reads as follows.
\begin{RHP}
	\hfil
	\begin{itemize}
		\item [$\mathrm{(a)}$] $M^{(loc)}(z)$ is analytic in $\mathbb{C}\setminus(\Sigma^{(1)}\cap U)$, where $\Sigma^{(1)}$ and $U$ are defined in  \eqref{def:Sigma1caseIII} and \eqref{def Uxi III}, respectively. 
		\item [$\mathrm{(b)}$]  For $z\in\Sigma^{(1)}\cap U$, $M^{(loc)}(z)$  satisfies the jump condition $$M^{(loc)}_{+}(z)=M^{(loc)}_{-}(z)J^{(1)}(z),$$
        where $J^{(1)}$ is given in \eqref{def:J1caseIII}.
		\item [$\mathrm{(c)}$] As $z\to\infty$, we have $M^{(loc)}(z)=I+\mathcal{O}(z^{-1})$.
	\end{itemize}
\end{RHP}

To solve the above RH problem, we note that
\begin{align}\label{eq:thetaexpIII}
	\theta(z)=\theta(z_0)-	\theta^{(z_0,2)}(\xi)(z-z_0)^2+\mathcal{O}\big((z-z_0)^3\big),\quad z\to z_0,
\end{align}
where
\begin{align}
	\theta^{(z_0,2)}(\xi):=\frac{\partial_z F(z_0;\xi)}{w(z_0)}>0 \label{def theta z0}
\end{align}
with $F$ and $w$ defined in \eqref{def F} and \eqref{def:w}, respectively.

Let
\begin{align}\label{def r0}
	r_0:&=-\left( 2\sqrt{\theta^{(z_0,2)}(\xi)}\right) ^{\frac{i}{2\pi}\log (1-r_1(z_0)r_2(z_0))}r_2(z_0)\nonumber
    \\
	&\qquad \times e^{ \frac{w(z_0)}{2\pi i}\left[\sum_{j=0}^n\delta_j\int_{E_j}^{\hat{E}_j}\frac{i\dif s}{w_+(s)(s-z_0)} \right]-2it\theta(z_0)+\frac{\log (1-r_1(z_0)r_2(z_0))\log(c_0)}{2\pi i} }
    \nonumber 
    \\
	&\qquad \times e^ {-\frac{w(z_0)}{2\pi i}\int_{(-\infty,z_0)\setminus(\cup_{j=0}^{n}(E_j,\hat{E}_j))}\left[\frac{\log (1-r_1(s)r_2(s))}{w(s)(s-z_0)} -\frac{\chi_{(z_0-c_0,z_0)}\log (1-r_1(z_0)r_2(z_0))}{w(z_0)(s-z_0)}\right]\dif s  } ,
\end{align}
where $c_0$ is given in \eqref{def c0 III} and $\chi_I$ stands for the characteristic function of the interval $I$.  Note that one can check $|r_0|=|r_2(z_0)| < 1$. From \eqref{def:J1caseIII}, \eqref{eq:thetaexpIII} and \eqref{def r0}, it follows that the jump of $M^{(loc)}$ is approximated by
\begin{align*}
	J^{(loc)}(z)=\left\{ \begin{array}{ll}
		 \begin{pmatrix} 
			1 & 0\\
			r_0\zeta^{-\log(1-r_1(z_0)r_2(z_0))/\pi i}e^{\frac{i\zeta^2}{2}} & 1	
		\end{pmatrix} ,&  z\in\Sigma_1\cap U,\\[12pt]
		 \begin{pmatrix} 
			1 & -\bar{r}_0\zeta^{\log(1-r_1(z_0)r_2(z_0))/\pi i}e^{-\frac{i\zeta^2}{2}} \\
			0 & 1
		\end{pmatrix}, &z\in\Sigma_1^*\cap U,\\[12pt]
		 \begin{pmatrix} 
			1 &	\frac{- \bar{r}_0\zeta^{\log(1-r_1(z_0)r_2(z_0))/\pi i}e^{-\frac{i\zeta^2}{2}}}{1-|r_0|^2}\\
			0 & 1
		\end{pmatrix} ,&z\in\Sigma_2\cap U,\\[12pt]
		 \begin{pmatrix} 
			1 &	0\\
			\frac{r_0\zeta^{\log(1-r_1(z_0)r_2(z_0))/\pi i}e^{-\frac{i\zeta^2}{2}}}{1-|r_0|^2} & 1
		\end{pmatrix} ,&z\in\Sigma_2^*\cap U,
	\end{array}\right.
\end{align*}
for large $t$, where $\zeta=(z-z_0)2\sqrt{\theta^{(z_0,2)}(\xi)}t^{1/2}$. Similar to \cite{sdmRHp,PX2,PX21,PX3,DM,MK}, this leads to an RH problem solved explicitly by using the parabolic cylinder parametrix given in \cite{its1}. At the end, it is accomplished that
\begin{align}
	M^{(loc)}(z)=I+\frac{\begin{pmatrix}
		0&\beta_{21}\\
	\beta_{12}&0\\
	\end{pmatrix}t^{-1/2}}{2(z-z_0)\sqrt{\theta^{(z_0,2)}(\xi)}}+\mathcal{O}(t^{-1}),\qquad t\to \infty,
\end{align}
 where
\begin{align}\label{def beta12}
	\beta_{12}=\frac{\sqrt{2\pi}e^{\frac{1}{4}(\pi i-\log(1-|r_0|^2))}}{r_0\Gamma(i\log(1-|r_0|^2)/2\pi)},\quad 
    \beta_{21}=\frac{\log(1-|r_0|^2)}{2\pi \beta_{12}}.
\end{align}

\vspace{2mm}
\paragraph{\textbf{The small norm RH problem}}
By using discussions similar to those presented in Section \ref{subsec E}, we have that the error function $E$ in \eqref{def:EcaseIII} exists uniquely for large $t$. When $\xi$ belongs to fast decaying region,  there exists a nonzero constant $C$ such that
\begin{align*}
    \theta(z)=C(z-z_0)+\mathcal{O}\big((z-z_0)^2\big), \qquad z\to z_0.
\end{align*}
This, together with the fact $J^{(1)}(z) \to I$ exponentially fast for $z\in \Sigma^{(1)}\setminus \mathbb{R}$ and bounded away from $z_0$, implies that 
$$E(z)=I+\mathcal{O}(t^{-1}).$$
When $\xi$ belongs to Zakharov-Manakov region, we have
\begin{align}
	E(z)=I+\frac{E_1}{z}+\mathcal{O}(z^{-2}), \qquad z\to \infty,
\end{align}
where	
\begin{align}\label{eq:asyE1 III}
	E_1&=\frac{t^{-1/2}}{2(z-z_0)\sqrt{\theta^{(z_0,2)}(\xi)}}M^{(glo)}(z_0)\begin{pmatrix}
		0&\beta_{21}\\
	\beta_{12}&0\\
	\end{pmatrix}M^{(glo)}(z_0)^{-1}+\mathcal{O}(t^{-1}),\qquad t\to +\infty. 
\end{align}

\section{Proof of Theorem \ref{mainthm}}\label{sec:proof}
In view of the reconstruction formulas established in Proposition \ref{prop:reconstuction}, the idea of the proof  
is to invert a series of transformations performed in previous  sections. 

\vspace{2mm}
\paragraph{\textbf{Proofs of \eqref{eq:asyq I} and \eqref{eq:asyq II}}}
For the asymptotics of $q$ in transition region I, we see from \eqref{trans 1 I} and \eqref{construct M1} that, as $t\to +\infty$,  
\begin{align*}
	M(z)=e^{-\delta(\infty)\sigma_3}E(z)M^{(glo)}(z)e^{\delta(z)\sigma_3}G(z)^{-1}, \quad z\in \mathbb{C}\setminus U,
\end{align*}
where $\delta(\infty)$, $\delta$, $E$, $M^{(glo)}$, $G$ and $U$ are defined in \eqref{def delta inf}, \eqref{def delta 1}, \eqref{expression:E(z)}, \eqref{def Mmod}, \eqref{def:G} and \eqref{def Uxi I}, respectively. This, together with \eqref{eq:Eneari}, \eqref{asy Mmod} and \eqref{eq:deltaexp}, implies that 
\begin{align}
	M(z)&=e^{-\delta(\infty)\sigma_3}\left(I+\frac{E_1}{z}\right)\left(I+\frac{M^{(glo)}_1}{z}\right)e^{\delta(\infty)\sigma_3}\left(I+\frac{\delta^{(1)}}{z}\sigma_3\right)+\mathcal{O}(z^{-2})
    \nonumber \\
	&=I+\frac{1}{z}\left( e^{-\delta(\infty)\hat{\sigma}_3}(E_1+M^{(glo)}_1)+\delta^{(1)}\sigma_3\right) +\mathcal{O}(z^{-2}), \qquad z\to \infty, \label{eq:M asy}
\end{align}
where $\delta^{(1)}$ is given in \eqref{def delta1}. 
From the reconstruction formula \eqref{res q},  we obtain from \eqref{eq:asyE1}, \eqref{asy Mmod12} and the above formula that, as $t \to +\infty$, 
\begin{align*}
q(x,t)&=2i\lim_{z\to\infty}zM_{12}=2ie^{-2\delta(\infty)}\left((E_{1})_{12}+\left(M^{(glo)}_{1}\right)_{12}\right)
    \\
	&= e^{-2\delta(\infty)}q^{(AG)}(x,t;\E, \hat{\E}, \boldsymbol{\phi-\delta})
	 +t^{-1/3}\frac{2e^{-2\delta(\infty)}}{|(\theta^{(3,\hat{j_0})}(\hat{\xi}_{j_0}))^{2/3}|}a(s)(H_{1})_{11}(\hat{E}_{j_0})^2+\mathcal{O}(t^{-\epsilon}).
\end{align*}
On account of the definition of $H_1$ in \eqref{def H1 I}, the asymptotic formula \eqref{eq:asyq I} follows directly by setting
\begin{align}\label{Hhj0}
	H_{\hat{E}_{j_0}}&= \frac{2e^{-2\delta(\infty)}}{|(\theta^{(3,\hat{j_0})}(\hat{\xi}_{j_0}))^{2/3}|}(H_{1})_{11}(\hat{E}_{j_0})^2
    \nonumber 
    \\
    &=\frac{e^{i(t\theta(\hat{E}_{j_0};\xi)-\phi_{j_0}+\delta_{j_0})-2\delta(\infty)}}{|\theta^{(3,\hat{j_0})}(\hat{\xi}_{j_0})|^{1/3}}\left( \frac{\prod_{j=0}^n(\hat{E}_{j_0}-E_j)}{\prod_{j=0,\cdots,n,j\neq j_0}(\hat{E}_{j_0}-\hat{E}_{j})}\right) ^{1/2}\nonumber\\
	&\quad \times \left( \dfrac{\Theta(\mathcal{A}(\hat{E}_{j_0})+\boldsymbol{C}(x,t;\boldsymbol{\phi-\delta})+\mathcal{A}(\mathcal{D})+\boldsymbol{K})\Theta(\mathcal{A}(\infty)+\mathcal{A}(\mathcal{D})+\boldsymbol{K})}{\Theta(\mathcal{A}(\hat{E}_{j_0})+\mathcal{A}(\mathcal{D})+\boldsymbol{K})\Theta(\mathcal{A}(\infty)+\boldsymbol{C}(x,t;\boldsymbol{\phi-\delta})+\mathcal{A}(\mathcal{D})+\boldsymbol{K})}\right)^2.
\end{align}

The asymptotics of $q$ in transition region II given in \eqref{eq:asyq II} can be proved in a similar manner by noting
	\begin{align}\label{HEj0}
			\tilde{H}_{E_{j_0}}&=\frac{e^{\pi i/2+i(t\theta(E_{j_0})-\phi_{j_0}+\delta_{j_0})-2\delta(\infty)}}{|\theta^{(3,j_0)}(\xi_{j_0})|^{1/3}}\left( \frac{\prod_{j=0}^n(E_{j_0}-\hat{E}_j)}{\prod_{j=0,\cdots,n,j\neq j_0}(E_{j_0}-E_{j})}\right) ^{1/2}\nonumber\\
			&\quad \times  \left(\dfrac{\Theta(\mathcal{A}(E_{j_0})+\boldsymbol{C}(x,t;\boldsymbol{\phi-\delta})+\mathcal{A}(\mathcal{D})+\boldsymbol{K})\Theta(\mathcal{A}(\infty)+\mathcal{A}(\mathcal{D})+\boldsymbol{K})}{\Theta(\mathcal{A}(E_{j_0})+\mathcal{A}(\mathcal{D})+\boldsymbol{K})\Theta(\mathcal{A}(\infty)+\boldsymbol{C}(x,t;\boldsymbol{\phi-\delta})+\mathcal{A}(\mathcal{D})+\boldsymbol{K})}\right)^2.
		\end{align}

\vspace{2mm}
\paragraph{\textbf{Proofs of \eqref{eq:asyIII} and \eqref{eq:asyq IV}}}
We only give the proof of \eqref{eq:asyIII}, which corresponds to the case of Zakharov-Manakov region. In this case, we still have \eqref{eq:M asy} but with the functions therein understood in the context of Section \ref{Sec 5}. 
Thus, a combination of \eqref{res q}, \eqref{asy Mmod12} and \eqref{eq:asyE1 III}
gives us that, as $t \to +\infty$, 
\begin{align*}
q(x,t)&=2i\lim_{z\to\infty}zM_{12}=2ie^{-2\delta(\infty)}\left((E_{1})_{12}+\left(M^{(glo)}_{1}\right)_{12}\right) \\
          & = e^{-2\delta(\infty)}q^{(AG)}(x,t;\E, \hat{\E}, \boldsymbol{\phi-\delta})\\
             &~~~ +\frac{2ie^{-2\delta(\infty)}}{2\sqrt{\theta^{(z_0,2)}(\xi)}}\left(\beta_{21}M^{(glo)}_{11}(z_0)^2-\beta_{12}M^{(glo)}_{12}(z_0)^2\right)t^{-1/2}+\mathcal{O}(t^{-1}),
\end{align*}
which is \eqref{eq:asyIII}. In particular, we see from the definition of $M^{(glo)}$ in \eqref{def Mmod} that
\begin{align}
			M^{(glo)}_{11}(z_0;\E, \hat{\E}, \boldsymbol{\phi-\delta})&=\frac{1}{2}\left(\left(\prod_{j=0}^n\frac{z_0-E_j}{z_0-\hat{E}_j} \right)^{1/4}+\left(\prod_{j=0}^n\frac{z_0-E_j}{z_0-\hat{E}_j} \right)^{-1/4}\right)\nonumber\\
			&\quad \times \dfrac{\Theta(\mathcal{A}(z_0)+\boldsymbol{C}(x,t;\boldsymbol{\phi-\delta})+\mathcal{A}(\mathcal{D})+\boldsymbol{K})\Theta(\mathcal{A}(\infty)+\mathcal{A}(\mathcal{D})+\boldsymbol{K})}{\Theta(\mathcal{A}(z_0)+\mathcal{A}(\mathcal{D})+\boldsymbol{K})\Theta(\mathcal{A}(\infty)+\boldsymbol{C}(x,t;\boldsymbol{\phi-\delta})+\mathcal{A}(\mathcal{D})+\boldsymbol{K})},\label{Mglo11}\\
			M^{(glo)}_{12}(z_0;\E, \hat{\E}, \boldsymbol{\phi-\delta})&=\frac{e^{-2i(f_0x+g_0t)}}{2}\left(\left(\prod_{j=0}^n\frac{z_0-E_j}{z_0-\hat{E}_j} \right)^{1/4}-\left(\prod_{j=0}^n\frac{z_0-E_j}{z_0-\hat{E}_j} \right)^{-1/4}\right)\nonumber\\
			&\quad \times 
            \dfrac{\Theta(-\mathcal{A}(z_0)+\boldsymbol{C}(x,t;\boldsymbol{\phi-\delta})+\mathcal{A}(\mathcal{D})+\boldsymbol{K})\Theta(-\mathcal{A}(\infty)+\mathcal{A}(\mathcal{D})+\boldsymbol{K})}{\Theta(-\mathcal{A}(z_0)+\mathcal{A}(\mathcal{D})+\boldsymbol{K})\Theta(-\mathcal{A}(\infty)+\boldsymbol{C}(x,t;\boldsymbol{\phi-\delta})+\mathcal{A}(\mathcal{D})+\boldsymbol{K})},\label{Mglo12}
		\end{align}
		with the two constants $f_0$ and $g_0$ given in \eqref{def f0g0}.

This completes the proof of Theorem \ref{mainthm}. \qed

\vspace{2mm}

%
%
%
%

\appendix

\section{The Painlev\'{e}  XXXIV ($P_{34}$)  parametrix}
\label{APP P34}

The $P_{34}$ parametrix $M^{(P_{34})}(\zeta)=M^{(P_{34})}(\zeta;s,\alpha,\omega)$ is a $2\times 2$  matrix-valued function
depending on the parameters $s,\alpha$ and $\omega$. It satisfies the following RH problem.


\begin{RHP}\label{RH P34}
	\hfill
	\begin{enumerate}
		\item [$\mathrm{(a)}$]
		$M^{(P_{34})}(\zeta)$ is analytic for $\zeta$ in
		$\mathbb{C} \setminus \{\cup_{j=1}^4\Sigma_j\cup\{0\}\}$, where
		\begin{align}
		\Sigma_1=\mathbb{R}^+, ~~ \Sigma_2=e^{\frac{2 \pi i}{3}}\mathbb{R}^+, ~~\Sigma_3=e^{   \pi i  }\mathbb{R}^+,~~\Sigma_4=e^{-\frac{2 \pi i}{3}}\mathbb{R}^+,\label{def jumpcureve 34}
		\end{align}
		with the orientations as shown in Figure \ref{fig:jumpsPsi}.
		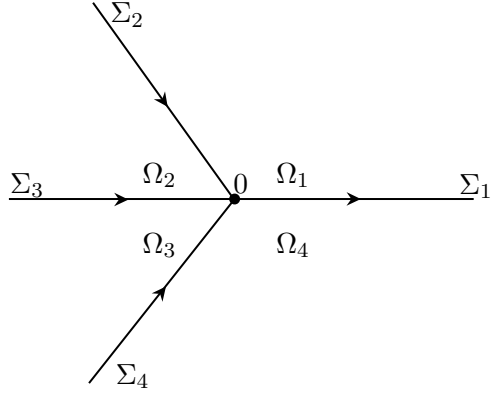
\begin{figure}[t]
			\begin{center}
				
			\tikzset{every picture/.style={line width=0.75pt}} 
			
			\begin{tikzpicture}[x=1pt,y=1pt,yscale=-0.5,xscale=0.5]
				
				\draw    (155.19,159) -- (324.19,159) ;
				\draw [shift={(324.19,159)}, rotate = 0.34] [color={rgb, 255:red, 0; green, 0; blue, 0 }  ][fill={rgb, 255:red, 0; green, 0; blue, 0 }  ][line width=0.75]      (0, 0) circle [x radius= 3.35, y radius= 3.35]   ;
				\draw [shift={(244.69,159.43)}, rotate = 180.34] [fill={rgb, 255:red, 0; green, 0; blue, 0 }  ][line width=0.08]  [draw opacity=0] (10.72,-5.15) -- (0,0) -- (10.72,5.15) -- (7.12,0) -- cycle    ;
				\draw    (324.19,159) -- (503.19,159) ;
				\draw [shift={(418.69,159)}, rotate = 180.32] [fill={rgb, 255:red, 0; green, 0; blue, 0 }  ][line width=0.08]  [draw opacity=0] (10.72,-5.15) -- (0,0) -- (10.72,5.15) -- (7.12,0) -- cycle    ;
				\draw    (218.19,10.9) -- (324.19,159.9) ;
				\draw [shift={(274.09,89.47)}, rotate = 234.57] [fill={rgb, 255:red, 0; green, 0; blue, 0 }  ][line width=0.08]  [draw opacity=0] (10.72,-5.15) -- (0,0) -- (10.72,5.15) -- (7.12,0) -- cycle    ;
				\draw    (215.19,297.9) -- (324.19,159) ;
				\draw [shift={(272.79,224.97)}, rotate = 128.3] [fill={rgb, 255:red, 0; green, 0; blue, 0 }  ][line width=0.08]  [draw opacity=0] (10.72,-5.15) -- (0,0) -- (10.72,5.15) -- (7.12,0) -- cycle    ;
				\draw (321,138) node [anchor=north west][inner sep=0.75pt]   [align=left] { $\displaystyle 0$};
				\draw (154,137) node [anchor=north west][inner sep=0.75pt]   [align=left]  {$\displaystyle \Sigma_{3}$};
				\draw (490,138) node [anchor=north west][inner sep=0.75pt]   [align=left]  {$\displaystyle \Sigma_{1}$};
				\draw (229,8.4) node [anchor=north west][inner sep=0.75pt]  [align=left]  {$\displaystyle \Sigma_{2}$};
				\draw (233,281.4) node [anchor=north west][inner sep=0.75pt]   [align=left]  {$\displaystyle \Sigma_{4}$};
				\draw (353,129.4) node [anchor=north west][inner sep=0.75pt] [align=left]   {$\displaystyle \Omega_{1}$};
				\draw (253,129.4) node [anchor=north west][inner sep=0.75pt]   [align=left]  {$\displaystyle \Omega_{2}$};
				\draw (353,182.4) node [anchor=north west][inner sep=0.75pt]   [align=left]  {$\displaystyle \Omega_{4}$};
				\draw (253,182.4) node [anchor=north west][inner sep=0.75pt]   [align=left]  {$\displaystyle \Omega_{3}$};		
			\end{tikzpicture}
			\caption{\footnotesize  The jump contours $\Sigma_j$ and the regions $\Omega_j$, $j=1,2,3,4$, of the RH problem for $M^{(P_{34})}$.}
				\label{fig:jumpsPsi}
			\end{center}
		\end{figure}
		
		\item[\rm(b)]  $M^{(P_{34})}$  satisfies the jump condition $	M^{(P_{34})}_+ (\zeta)=M^{(P_{34})}_- (\zeta)J^{(P_{34})}(\zeta),$ with 
		\begin{equation}\label{eq:Psi-jump}
		J^{(P_{34})}(\zeta)=
			\left\{ \begin{array}{ll}
				\begin{pmatrix}
					1 & \omega 
                    \\
					0 & 1
				\end{pmatrix}, &\quad \zeta \in {\Sigma}_1,
				\\[9pt]
				\begin{pmatrix}
					1 & 0 \\
					e^{2\alpha\pi i} & 1
				\end{pmatrix}, &\quad \zeta \in {\Sigma}_2,
				\\[9pt]
				\begin{pmatrix}
					0 & 1 \\
					-1 & 0
				\end{pmatrix},& \quad
				\zeta \in {\Sigma}_3,
				\\[9pt]
				\begin{pmatrix}
					1 & 0 \\
					e^{-2\alpha\pi i} & 1
				\end{pmatrix},  & \quad \zeta \in \Sigma_4.
			\end{array}
			\right .
		\end{equation}
		\item [\rm(c)]  As $\zeta \to \infty$, there exists a function $a(s)=a(s;\alpha,\omega)$ such that
		\begin{multline} \label{eq:Psi-infinity}
			M^{(P_{34})}(\zeta) = \begin{pmatrix}
				1 & 0\\
				-ia(s) & 1
			\end{pmatrix}
			\left(I+\frac {M^{(P_{34})}_1(s)}{\zeta}
			+\mathcal{O} \left( \zeta^{-2} \right) \right)
            \\ \times
			\frac{\zeta^{-\frac{1}{4}\sigma_3}}{\sqrt{2}}
			\begin{pmatrix}
				1 & i
				\\
				i & 1
			\end{pmatrix} e^{-(\frac{2}{3}\zeta^{3/2}+s\zeta^{1/2}) \sigma_3},
		\end{multline}
          where we take principle branch for the fractions and
		\begin{equation}\label{eq:12entry}
        \left(M^{(P_{34})}_1\right)_{12}(s)=ia(s).
        \end{equation}
		\item[\rm(d)] As $\zeta \to 0$, we have, if  $-1/2 < \alpha < 0$
        \begin{equation}\label{eq:resP34}
            M^{(P_{34})}(\zeta)=\mathcal{O}(\zeta^{\alpha}), 
        \end{equation}
        and if $\alpha \geq 0$,
         \begin{equation*}
            M^{(P_{34})}(\zeta)=\left\{ \begin{array}{ll}
				\begin{pmatrix}
					\mathcal{O}(\zeta^{\alpha}) & \mathcal{O}(\zeta^{-\alpha}) 
                    \\
					\mathcal{O}(\zeta^{\alpha}) & \mathcal{O}(\zeta^{-\alpha})
				\end{pmatrix}, &\quad \zeta \in \Omega_1\cup\Omega_4,
				\\
				\mathcal{O}(\zeta^{-\alpha}), &\quad \zeta \in \Omega_2\cup\Omega_3,
			\end{array}
			\right .
        \end{equation*}

    where  the regions $\Omega_j$, $j=1,2,3,4$, are shown in Figure \ref{fig:jumpsPsi}.
		
%
\end{enumerate}
\end{RHP}

By \cite{ikj2008,ikj2009,XZ2011}, the above RH problem is uniquely solvable for $\alpha>-1/2$, $\omega \in \mathbb{C}\setminus (-\infty,0)$, and $s\in \mathbb{R}$. Moreover, with $a(s)$ given in
\eqref{eq:Psi-infinity}, the function
$$
u(s):=u(s;\alpha,\omega)=a'(s;\alpha,\omega)-\frac s2
$$
satisfies the  Painlev\'{e}  XXXIV equation \eqref{eq:P34}
and is pole-free on the real axis. Particularly, one has 
\begin{equation}\label{eq:uasygen}
    u(s;\alpha, 0)=\left\{ \begin{array}{ll}
				\alpha/\sqrt{s}+\mathcal{O}(s^{-2}), &\qquad s\to +\infty,
				\\
				-s/2+\mathcal{O}(s^{-2}), &\qquad s\to -\infty.
			\end{array}
			\right .
\end{equation}
This, together with the fact that $a(s;\alpha,0)\to 0$ as $s\to -\infty$ (cf. \cite[Equations (2.10) and (3.60)]{DXZ20}), implies that
\begin{equation}
    a(s;\alpha,0)=\int_{-\infty}^s \left( u(t;\alpha, 0) + \frac t2\right) \dif t.
\end{equation}

\section*{Acknowledgements}
The authors are grateful to the anonymous reviewers for their careful reading of the manuscript, as well as their insightful comments and suggestions. This work is supported by the National Science Foundation of China under grant numbers 12271104, 12271105 and 12501325. Gaozhan Li is partially supported by China Postdoctoral Science Foundation. Yiling Yang is partially supported by the Fundamental Research Funds for the Central Universities under grant number 2025CDJ-IAISYB-011 and by Natural Science Foundation of Chongqing under grant number CSTB2025NSCQ-GPX0727.

\vspace{2mm}
\section*{Data availability statement}
Data sharing is not applicable to this article as no datasets were generated or analysed
during the current study.
\section*{Conflict of interest}
On behalf of all authors, the corresponding author states that there is no conflict of
interest. All authors read and approved the final manuscript.

\section*{Ethical Statement}
This manuscript is original and has not been published and will not be submitted elsewhere for publication. 
The study is not split up into several parts to increase the quantity of submissions and submitted to various journals or to one journal over time.

\hspace*{\parindent}
\\


\begin{thebibliography}{10}

\bibitem{Abl74}
M. J. Ablowitz, D. J. Kaup, A. C. Newell and H.Segur, The inverse scattering transform-Fourier analysis for
nonlinear problems, Stud. Appl. Math., \textbf{53} (1974), 249-315.
	
\bibitem{PRL2006}
J. D. Ania-Casta\~n\'on, T. J. Ellingham, R. Ibbotson, X. Chen, L. Zhang and S. K. Turitsyn, Ultralong raman fiber lasers as virtually lossless optical media, Phys. Rev. Lett., \textbf{96} (2006), 023902.

\bibitem{BC} 	
	R. Beals and R. R. Coifman, Scattering and inverse scattering for first order systems, Commun. Pure
	Appl. Math., \textbf{37} (1984), 39-90.
	


\bibitem{AGbook} D. Belokolos, A. I. Bobenko, V. Z. Enolskii, A. R. Its and V. B. Matveev, Algebro-geometric
approach to nonlinear integrable equations, Springer-Verlag, New York, (1994).


\bibitem{Miller1} D. Bilman, L. M.  Ling and  P. D. Miller,  Extreme superposition: rogue waves of infinite order and the Painlev\'e-III hierarchy, Duke Math. J.,
\textbf{169} (2020), 671-760.



\bibitem{Monvel}
A. Boutet de Monvel, A. Its and  D. Shepelsky, Painlev\'{e}-type asymptotics for the Camassa-Holm  equation,
SIAM J.  Math.  Anal.,  \textbf{42}  (2010), 1854-1873.

  \bibitem{Monvel2}   A. Boutet de Monvel,   J. Lenells and D. Shepelsky,
The focusing NLS equation with step-like oscillating background: scenarios of long-time asymptotics, Commun. Math. Phys., \textbf{383} (2021), 893-952.

\bibitem{Monvel5}   A. Boutet de Monvel,   J. Lenells and D. Shepelsky,
The focusing NLS equation with step-like oscillating background: asymptotics in a transition zone, J. Differential Equations, \textbf{429} (2025), 747-801.




\bibitem{lenellsboss}
C. Charlier and J. Lenells, On Boussinesq's equation for water waves, arXiv:2204.02365.

\bibitem{C1964} R. Y. Chiao, E. Garmire, and C. H. Townes, Self-trapping of optical beams, Phys. Rev. Lett., \textbf{13} (1964), 479-482.

\bibitem{PRL2005}
C. Connaughton, C. Josserand, A. Picozzi, Y. Pomeau and S. Rica, Condensation of classical nonlinear waves, Phys. Rev. Lett., \textbf{95} (2005), 263901.

\bibitem{CJ}
S. Cuccagna and R. Jenkins,
On the asymptotic stability of $N$-soliton solutions of the defocusing nonlinear Schr\"odinger equation,
\newblock  Commun. Math. Phys., \textbf{343}  (2016), 921-969.

\bibitem{DXZ20}
D. Dai, S. X. Xu and L.  Zhang,
On integrals of the tronqu\'{e}e solutions and the associated Hamiltonians for the Painlev\'{e} II equation,
J. Differential Equations, \textbf{269} (2020), 2430-2476.

\bibitem{DXZ2019}
D. Dai, S. X. Xu and L.  Zhang,
Gaussian unitary ensembles with pole singularities near the soft edge and a system of coupled Painlev\'e XXXIV equations,
Ann. Henri Poincar\'e, \textbf{20} (2019), 3313-3364.

\bibitem{DT1979}
P. Deift and E. Trubowitz,
Inverse scattering on the line,
Commun. Pure Appl. Math., \textbf{32} (1979), 121-251.

\bibitem{sdmRHp}
P. Deift and X. Zhou, A steepest descent method for oscillatory Riemann-Hilbert problems,
\newblock {Ann. Math.}, \textbf{137} (1993), 295-368.


\bibitem{PX2}
P. Deift and X. Zhou,
\newblock Long-time behavior of the non-focusing nonlinear Schr\"odinger equation, a case study, Lectures in Mathematical Sciences,
 New Ser., Vol.5, Graduate School of Mathematical Sciences, University of Tokyo, (1994).

\bibitem{PX21}  
P. Deift and X. Zhou, Long-time asymptotics for integrable systems. Higher order theory, Commun. Math. Phys., \textbf{165} (1994), 175-191.



\bibitem{PX3}
P. Deift and  X. Zhou,
\newblock Long-time asymptotics for solutions of the NLS equation with initial data in a weighted Sobolev space,
\newblock Commun. Pure Appl. Math., \textbf{56} (2002), 1029-1077.

%
\bibitem{DM}
M. Dieng and K. T. R. McLaughlin,
Long-time asymptotics for the NLS equation via $\bar{\partial}$ methods, arXiv:0805.2807.


\bibitem{MK}
M. Dieng, K. T. R. McLaughlin and P. D. Miller,
\newblock{Dispersive asymptotics for linear and integrable equations by the $\bar{\partial}$ steepest descent method,
Nonlinear dispersive partial differential equations and inverse scattering, Fields Inst. Comm., 83},
Springer, New York, (2019), 253-291.

\bibitem{Farbook}
H. Farkas and I. Kra, Riemann Surfaces, 2nd edition, Springer, New York, (1992).




\bibitem{lenells}
S. Fromm, J.  Lenells and R. Quirchmayr,
\newblock   The defocusing nonlinear Schr\"odinger equation with step-like oscillatory initial data, Adv. Differential Equations {\bf 30} (2025), 455-525. 

\bibitem{AGbook2}
F. Gesztesy and H. Holden,
\newblock
Soliton Equations and Their Algebro-Geometric Solutions, Cambridge University Press, (2003).

\bibitem{G1955}
V. L. Ginzburg, On the theory of superconductivity, Il Nuovo Cimento, \textbf{2}
(1955), 1234-1250.

\bibitem{killpnls} B. Harrop-Griffith, R. Killip and M. Vi\c{s}an, \newblock  Sharp wellposednes for the cubic NLS and mKdV in $H^s(\mathbb{R})$,  Forum Math. Pi, \textbf{12} (2024), e6, 1-86.

\bibitem{HL20}
L. Huang and J. Lenells, Asymptotics for the Sasa-Satsuma equation in terms of a modified Painlev\'e II transcendent, 
J. Differential Equations, \textbf{268} (2020), 7480-7504.

\bibitem{Huanglin}
L. Huang and  L. Zhang, Higher order Airy and Painlev\'e asymptotics for the mKdV hierarchy,
SIAM J. Math. Anal., \textbf{54} (2022), 5291-5334.


\bibitem{its1}  A. R. Its, Asymptotics of solutions of the nonlinear Schr\"odinger equation and isompnpdromic
deformations of systems of linear equation, Sov. Math. Dokl., \textbf{24} (1981), 452-456.





\bibitem{ikj2008}
A. R. Its, A. B. J. Kuijlaars  and J. \"{O}stensson, Critical edge behavior in unitary
random matrix ensembles and the thirty fourth Painlev\'{e}
transcendent, { Int. Math. Res. Not.}, {\bf 2008} (2008), rnn017.


\bibitem{ikj2009}
A. R. Its, A. B. J. Kuijlaars  and J. \"{O}stensson,  Asymptotics for a special solution of the thirty fourth Painlev\'e equation, { Nonlinearity}, \textbf{22} (2009), 1523-1558.

\bibitem{Jenkins} R. Jenkins,
\newblock{Regularization of a sharp shock by the defocusing nonlinear Schr\"odinger equation,
	Nonlinearity}, \textbf{28} (2015), 2131-2180.

\bibitem{Duke}
H. Koch and D. Tataru, Conserved energies for the cubic nonlinear Schr\"odinger equation in one dimension,
Duke Math. J., \textbf{167} (2018), 3207-3313.

\bibitem{BA} V. Kotlyarov and D. Shepelsky, Planar unimodular Baker-Akhiezer function for the nonlinear
Schr\"odinger equation, Ann. Math. Sci. Appl., \textbf{2} (2017), 343-384.





\bibitem{Lake1977}  B. M. Lake, H. C. Yuen, H. Rungaldier and W. E. Ferguson,
Nonlinear deep-water waves: theory
and experiment. part 2. evolution of a continuous wave train, J.  Fluid Mech.,
\textbf{11} (1977), 849-874.


\bibitem{LenellsRobin}
J. M. Lee and J. Lenells, The nonlinear Schr\"odinger equation on the half-line with homogeneous Robin
boundary conditions, Proc. Lond. Math. Soc., \textbf{126} (2023), 334-389.



\bibitem{Alice}
A. Mikikits-Leitner and G. Teschl,
Long-time asymptotics of perturbed finite-gap Korteweg-de Vries solutions,
J. Anal. Math., \textbf{116} (2012), 163-218.



\bibitem{OO2001}  M. Onorato, A. R. Osborne, M. Serio and S. Bertone, Freak waves in random oceanic sea states, Phys. Rev. Lett.,
\textbf{86} (2001), 5831-5834.

\bibitem{AblwzP1}
H. Segur and M. J. Ablowitz, Asymptotic solutions of nonlinear evolution equations and a Painlev\'e
transcendent, Phys D., \textbf{3} (1981), 165-184.


\bibitem{PRL1970}
R. Taylor, D. Baker and H. Ikezi, Observation of colisionless electrostatic shocks, Phys. Rev. Lett., \textbf{24} (1970), 206-208.


\bibitem{VAH1}
A. H. Vartanian,
\newblock Large-time continuum asymptotics of dark solitons, Inverse Problems, \textbf{16} (2000), 39-46.

\bibitem{VAH2}
A. H. Vartanian,
\newblock Long-time asymptotics of solutions to the Cauchy problem for the defocusing nonlinear
Schr\"odinger equation with finite-density initial data. II. Dark solitons on continua,
\newblock Math. Phys. Anal. Geom., \textbf{5} (2002), 319-413.

\bibitem{VAH3}
A. H. Vartanian,
\newblock Exponentially small asymptotics of solutions to the defocusing nonlinear Schr\"odinger
equation,
\newblock Appl. Math. Lett., \textbf{16} (2003), 425-434.



\bibitem{WF}   Z. Y. Wang and  E. G. Fan,  The defocusing NLS equation with nonzero background: Large-time asymptotics in the solitonless region,   J. Differential Equations, \textbf{336} (2022), 334-373.

\bibitem{WZY}   Z. Y. Wang and  E. G. Fan,  The defocusing nonlinear Schr\"odinger equation with a nonzero background: Painlev\'e asymptotics in two transition regions, Commun. Math. Phys., \textbf{402} (2023),  2879-2930.

\bibitem{wang} D. S. Wang and P. Yan, Rigorous asymptotic analysis for the Riemann problem of the defocusing nonlinear Schr\"odinger hydrodynamics, arXiv:2305.12968.


\bibitem{XZ2011}
S. X. Xu and Y. Q. Zhao,
Painlev\'e XXXIV asymptotics of orthogonal polynomials for the Gaussian weight with a jump at the edge, Stud. Appl. Math., \textbf{127} (2011), 67-105.

\bibitem{XYZ}
T. Y. Xu, Y. L. Yang and L. Zhang,
Transient asymptotics of the modified Camassa-Holm equation,
J. Lond. Math. Soc., \textbf{110} (2024), e12967.

%






\bibitem{ZS3}
V. E. Zakharov and  S. V. Manakov,
\newblock Asymptotic behavior of non-linear wave systems integrated by the inverse scattering method,
\newblock  {Sov. Phys. JETP}, \textbf{44} (1976), 106-112.5.




 \bibitem{ZS1}
    V. E. Zahkarov and  A. B. Shabat,
    \newblock Exact theory of two-dimensional self-focusing and one-dimensional self-modulation
    of waves in nonlinear media,
    \newblock {Sov. Phys. JETP}, \textbf{34} (1972), 62-69.


 \bibitem{ZS2}
    V. E. Zahkarov and  A. B. Shabat,
    \newblock Interaction between solitons in a stable medium,
    \newblock {Sov. Phys. JETP}, \textbf{37} (1973), 823-828.










\end{thebibliography}
\end{document}